\newtheorem{theorem}{Theorem}[section]
\newtheorem{corollary}[theorem]{Corollary}
\newtheorem{lemma}[theorem]{Lemma}
\newtheorem{proposition}[theorem]{Proposition}
\theoremstyle{definition}
\newtheorem{definition}[theorem]{Definition}
\newtheorem{notation}{Notation}
\newcommand{\Q}{\mathbb Q}
\newcommand{\R}{\mathbb R}
\newcommand{\Z}{\mathbb Z}
\DeclareMathOperator\diam{diam}
\DeclareMathOperator\dist{dist}
\DeclareMathOperator{\Hdim}{HDim}
\DeclareMathOperator{\hor}{hor}
\DeclareMathOperator{\NE}{NE}
\DeclareMathOperator{\pr}{pr}
\newcommand{\ds}{\displaystyle}
\newcommand{\eps}{\varepsilon}
\renewcommand{\Re}{\textrm{Re~}}
\renewcommand{\Im}{\textrm{Im~}}
\newcommand{\impl}{\quad\Longrightarrow\quad}
\title{Dichotomy for the Hausdorff dimension of nonergodic directions on translation surfaces}
\author{Yuming Wei}
\begin{document}

\begin{abstract}
We study the ergodic properties of the translation surface $X_{\lambda,\mu}$ formed by gluing two flat tori along a slit with holonomy \((\lambda,\mu) \in \mathbb{R}^2\). Extending the dichotomy result of Cheung, Hubert, and Masur for the case $\mu = 0$, we prove the following: for slits not parallel to any absolute homology class, the Hausdorff dimension of the set $\NE(X_{\lambda,\mu},\omega)$ of non-ergodic directions is either $0$ or $\frac{1}{2}$. This dichotomy is completely characterized by the P\'erez-Marco condition expressed in terms of best approximation denominators. As a corollary, we obtain that the P\'erez-Marco condition for best approximation denominators is norm-independent.
\end{abstract}
\maketitle
\section{Introduction}
Let \((X, \omega)\) be a translation surface, where \(X\) is a connected, compact Riemann surface of genus \(g\), and \(\omega\) is a non-zero holomorphic 1-form on \(X\). 
Consider the quadratic differential  $q_{\theta}=e^{2i\theta}\omega^2$ associated with $\theta$. The vertical foliation $\mathcal{F}_{\theta}$ of $q_\theta$ consists of leaves satisfying $\Re (\sqrt{q_{\theta}})=0$. To endow the vertical foliation $\mathcal{F}_{\theta}$ with a measure, we define the transverse measure $\mu_\theta$ as follows: for any transverse curve $\gamma$, the transverse measure is given by
\[
\mu_\theta(\gamma)=\int_{\gamma}|\Im(\sqrt{q_\theta})|=\int_{\gamma}|\Im(e^{i\theta}\omega)|.
\]
This makes $\mathcal{F}_\theta$ a measured foliation.  An important topological property of foliations is minimality. A foliation is called minimal if every closed union of leaves is either empty or the entire surface. The measure-theoretic counterpart of minimality is ergodicity.
A measured foliation is said to be ergodic if every measurable union of leaves has either zero measure or full measure. 
A stronger property than ergodicity is unique ergodicity. A measured foliation is called uniquely ergodic if it admits a unique (up to scaling) transverse invariant measure. 

The set of angles $\theta$ for which the measured foliation $\mathcal{F}_\theta$ is non-ergodic exhibits an intricate structure. We denote it as $\NE(X,\omega)$.
A fundamental result regarding the ergodic properties of these flows was established by Kerckhoff, Masur, and Smillie \cite{KMS}, which states that for any translation surface, the flow in almost every direction (with respect to the Lebesgue measure) is uniquely ergodic. While the set $\NE(X,\omega)$ is metrically rare (constituting a null set), its geometric organization displays potentially rich fractal behavior. In 1992, Masur \cite{Ma2} established an upper bound of \(\frac{1}{2}\) for the Hausdorff dimension of \(\NE(X,\omega)\).

Consider a special class of translation surfaces $(X_{\lambda,\mu},\omega)$ defined as the double cover of a standard flat torus of area one gluing along a slit with holonomy \((\lambda,\mu) \in \mathbb{R}^2\). 
A significant advancement occurred in 2003, when Cheung \cite{Ch1} proved that the upper bound Hausdorff dimension $\frac{1}{2}$ can be achieved in this case.
Later, in 2011, Cheung, Hubert, and Masur \cite{2011Dichotomy} demonstrated that \(\Hdim \NE(X_{\lambda,0})\) can only take the values \(0\) or \(\frac{1}{2}\). 
 
When the slit not parallel to any absolute homology class, a result of Veech \cite{Ve2} establishes that every minimal direction is uniquely ergodic if \((\lambda, \mu)\) is rational. Boshernitzan (See Appendix in \cite{Ch1}) showed that the existence of irrational vectors $(\lambda, \mu)$ for which $\NE(X_{\lambda,\mu},\omega)$ constitutes an uncountable but Hausdorff dimension $0$ set. In 2025, Huang \cite{Huang} provided a proof that the Hausdorff dimension of $\NE(X_{\lambda,\mu})$ is zero when the non-Pèrez-Marco condition holds, formulated in terms of best approximation denominators.

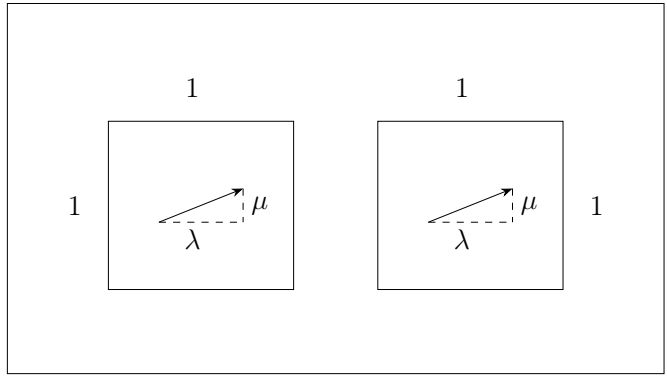
\begin{figure}[!ht]
\centering
\resizebox{0.7\textwidth}{!}{%
\begin{circuitikz}
\tikzstyle{every node}=[font=\large]
\draw[fill={rgb,255:red,255; green,255; blue,255}] (0,13.5) rectangle (9.75,8);

\draw  (1.5,11.75) rectangle (4.25,9.25);
\draw  (5.5,11.75) rectangle (8.25,9.25);
\draw [->, >=Stealth] (2.25,10.25) -- (3.5,10.75);
\draw [->, >=Stealth] (6.25,10.25) -- (7.5,10.75);
\draw [dashed] (2.25,10.25) -- (3.5,10.25);
\node [font=\large] at (2.75,10) {$\lambda$};
\draw [dashed] (3.5,10.75) -- (3.5,10.25);
\node [font=\large] at (3.75,10.5) {$\mu$};
\draw [dashed] (6.25,10.25) -- (7.5,10.25);
\node [font=\large] at (6.75,10) {$\lambda$};
\draw [dashed] (7.5,10.75) -- (7.5,10.25);
\node [font=\large] at (7.75,10.5) {$\mu$};
\node [font=\large] at (1,10.5) {1};
\node [font=\large] at (8.75,10.5) {1};
\node [font=\large] at (2.75,12.25) {1};
\node [font=\large] at (6.75,12.25) {1};
\end{circuitikz}
}%
\caption{$(X_{\lambda,\mu},\omega)$}
\label{fig:my_label}
\end{figure}

\subsection{Statement of main results}
In \cite{2011Dichotomy}, the dichotomy in Hausdorff dimension of nonergodic directions corresponds to whether the denominators of the continued fraction expansion satisfy the Pérez-Marco condition or not.
Our framework extends this result to the setting of best approximation vectors and establishes the following dichotomy:
\begin{theorem}\label{thm:dichotomy}
    Let \(\{q_k\}\) be the sequence of denominators of the best approximation vectors  of an irrational vector \((\lambda, \mu)\) with respect to any norm $\|\cdot\|$ on $\R^2$. Then \(\Hdim \NE(X_{\lambda,\mu},\omega) = 0\) or \(\frac{1}{2}\), with the latter case occurring if and only if 
    \begin{align}\label{PM:conv}
        \sum_k \frac{\log\log q_{k+1}}{q_k} < \infty.
    \end{align}
\end{theorem}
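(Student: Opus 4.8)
The plan is to reduce the theorem to a single new estimate — the lower bound $\Hdim\NE(X_{\lambda,\mu},\omega)\ge\tfrac12$ under \eqref{PM:conv} — and to assemble it with the two ingredients already available: Masur's universal upper bound $\Hdim\NE(X,\omega)\le\tfrac12$ from \cite{Ma2}, and Huang's theorem \cite{Huang} that $\Hdim\NE(X_{\lambda,\mu},\omega)=0$ when \eqref{PM:conv} fails. Granting the lower bound: if \eqref{PM:conv} holds the two bounds squeeze the dimension to $\tfrac12$, and if it fails Huang gives $0$, so the dimension always lies in $\{0,\tfrac12\}$ and equals $\tfrac12$ precisely when \eqref{PM:conv} holds. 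The corollary is then formal: $\Hdim\NE(X_{\lambda,\mu},\omega)$ is an intrinsic quantity of the surface that knows nothing about a choice of norm, whereas the sequence $\{q_k\}$ does; since the dimension equals $\tfrac12$ exactly when \eqref{PM:conv} holds for the norm used to define $\{q_k\}$, the validity of \eqref{PM:conv} cannot depend on that norm.

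For the lower bound I would first record the nonergodicity criterion in the form used in \cite{Ch1,2011Dichotomy}: a direction $\theta$ belongs to $\NE(X_{\lambda,\mu},\omega)$ as soon as there is a sequence of primitive integer vectors $\mathbf w_k\in\Z^2$ whose directions converge to $\theta$, each giving a decomposition of $X_{\lambda,\mu}$ into two subsurfaces $A_k,B_k$ along saddle connections and cylinders in the direction of $\mathbf w_k$, such that (i) the transverse displacements of these decompositions are summable in $k$, so that a limiting $\theta$-invariant partition exists, and (ii) the areas $\area(A_k)$ stay bounded away from $0$ and from $\area(X_{\lambda,\mu})$, so that this limiting invariant set is proper. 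Expanding the two ``defect'' quantities attached to $\mathbf w_k$ — an angular defect and a slit defect measuring how far $(\lambda,\mu)$ is from being carried by the $\mathbf w_k$-decomposition — one finds that the vectors usable at step $k$ are exactly those attached to scales $q$ at which $(\lambda,\mu)$ is unusually well approximated by $\tfrac1q\Z^2$. These scales are precisely the best-approximation denominators of $(\lambda,\mu)$ with respect to the geometric norm built out of the $\mathbf w_k$-decomposition, which is the source of the norm in the statement; this identification — replacing the three-term continued-fraction recursion of \cite{2011Dichotomy} by the sequence of best-approximation vectors of a point of $\R^2$ — is the conceptual core of the paper.

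Next I would build a Cantor set $E\subseteq\NE(X_{\lambda,\mu},\omega)$ as a nested family of direction-intervals organized as a tree: a level-$k$ interval records a partial choice $\mathbf w_1,\dots,\mathbf w_k$, and its children are the admissible extensions by a vector $\mathbf w_{k+1}$ at scale $q_{k+1}$. Condition (i) is essentially automatic at these scales; condition (ii) is the binding one. The area $\area(A_k)$ is incremented at step $k$ by an amount that can be steered within a window, and keeping the accumulated increments bounded away from $0$ and the total area while still retaining many extensions forces a trade-off: an extremal selection among the $\asymp q_{k+1}/q_k$ candidate extensions keeps a sub-collection of size $N_k$ with $\log N_k\asymp\tfrac12\log(q_{k+1}/q_k)$ whose step-$k$ contribution to the total defect budget is only $\asymp\tfrac{\log\log q_{k+1}}{q_k}$ — so the total defect is $\asymp\sum_k\tfrac{\log\log q_{k+1}}{q_k}$, finite exactly under \eqref{PM:conv}, which is what makes $E\subseteq\NE(X_{\lambda,\mu},\omega)$. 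Calibrating the interval lengths so that $\ell_k/\ell_{k+1}\asymp N_k^2$, hence $\log\ell_k^{-1}\sim 2\sum_{j<k}\log N_j$, the mass distribution assigning each level-$k$ interval mass $(N_1\cdots N_k)^{-1}$ together with a separation estimate for the intervals yields, via the mass distribution principle, $\Hdim E\ge\liminf_k\frac{\sum_{j\le k}\log N_j}{2\sum_{j\le k}\log N_j}=\tfrac12$; when the ratios $q_{k+1}/q_k$ are very large one first subdivides a single passage into several combinatorial levels so that this liminf is not overshot.

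The hard part is the translation carried out in the second and third steps: in dimension two there is no counterpart of the regular unimodular continued-fraction dynamics underlying \cite{2011Dichotomy}. The best-approximation denominators can jump erratically, $q_{k+1}/q_k$ is not governed by a single partial quotient, consecutive best-approximation vectors need not be related by a bounded matrix in $\mathrm{SL}_2(\Z)$, and the quality of a two-dimensional best approximation is linked to the next denominator through the Dirichlet exponent $\tfrac1d=\tfrac12$ of $\R^2$ rather than the exponent $1$ of the classical theory. One must therefore set up the nonergodicity criterion, the defect estimates, and the Cantor construction so that they depend only on the crude data $\{q_k\}$ together with the count of admissible vectors between consecutive scales, and then verify that the extremal selection and the dimension computation remain valid however irregularly the $q_k$ grow, in particular absorbing very large gaps by subdividing. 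Once this robustness is established, the liminf in the dimension estimate still equals $\tfrac12$ and the finiteness of the total defect is still governed precisely by \eqref{PM:conv}, completing the lower bound and hence the theorem.
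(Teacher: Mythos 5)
Your top-level skeleton is the same as the paper's (a dimension-$0$ half, a Cantor-set lower bound under \eqref{PM:conv}, and Masur's upper bound to squeeze to $\tfrac12$), but there are two genuine gaps. The main one is in the lower bound. You correctly note that very large gaps $q_{k+1}/q_k$ force you to subdivide each passage into many intermediate levels, but you give no mechanism guaranteeing that the construction is self-sustaining at those levels: to run a Falconer-type estimate giving dimension near $\tfrac12$ one needs, at \emph{every} intermediate level, that each slit $w$ has on the order of $|w|^{r-1}\delta_j$ admissible children with cross-products below $\delta_j$, and the existence of that many children is not a crude counting fact about vectors between scales $q_k$ and $q_{k+1}$ -- it requires the inverse slope of each slit to possess a convergent in a prescribed range ($(\alpha,\beta)$-goodness), and one must prove that this property propagates from parent to child throughout the tree ($\alpha$-normality, Proposition~\ref{prop:normal}). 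In the two-dimensional setting this propagation is exactly where the difficulty you gesture at (``no regular continued-fraction dynamics'') bites, and the paper resolves it with machinery absent from your proposal: nearest rational affine lines to $(\lambda,\mu)$, the case split between uniquely rational and totally irrational $(\lambda,\mu)$ (Lemmas~\ref{lem:Uniquly N'-good=>normal} and \ref{lem:N'-good=>normal}), and the exclusion of ``miracle slits'' (Lemma~\ref{Lem:danger slit}), plus the Liouville construction of \S\ref{s:Liouville} driven by the lifted best approximation vector, which is what produces the per-gap defect budget $\asymp \frac{\log\log q_{k+1}}{q_k}$ (number of Liouville levels $\asymp\log\log q_{k+1}$, each costing $\asymp 1/q_k$). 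Your ``extremal selection'' with $\log N_k\asymp\tfrac12\log(q_{k+1}/q_k)$ and defect $\asymp\frac{\log\log q_{k+1}}{q_k}$ is precisely the statement that has to be proven, and it is where \eqref{PM:conv} versus \eqref{PM:div} enters; asserting it does not close the argument. (Incidentally, for separating slits on the double cover the area condition (ii) is automatic by the involution; the binding constraint is the summable cross-product condition (i), the opposite of what you claim.)

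The second gap is in the dimension-$0$ half. You cite Huang for ``$\Hdim\NE=0$ when \eqref{PM:conv} fails,'' but Huang's result is for best approximation denominators with respect to the supremum norm, whereas the theorem asserts the dichotomy for the denominators of an \emph{arbitrary} norm. If \eqref{PM:conv} fails for your chosen norm you cannot invoke Huang unless you already know the condition is norm-independent -- which is the corollary you propose to \emph{deduce} from the theorem, so the argument becomes circular. The paper avoids this by re-proving the dimension-$0$ statement for an arbitrary norm (Theorem~\ref{lem:NE->LV}, via $Z$-expansions, Liouville convergents and Lemma~\ref{lem:min:area}); your assembly needs an analogous norm-general argument, or an independent proof of norm-independence, to obtain the ``only if'' direction.
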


In Theorem~\ref{thm:dichotomy}, the dichotomy is completely characterized by the P\'erez-Marco condition (see \cite{PM}) expressed in terms of \textit{best approximation vectors}. For a given norm \( \|\cdot\| \) on \( \mathbb{R}^n \), we recall that

\begin{definition}\label{def:best approximation vector}
$\frac{\mathbf{p}}{q}\in \mathbb{Q}^n$ is a \textit{best approximation vector} of $\mathbf{x}$ if 
\begin{itemize}
    \item[(i)] $\|q\mathbf{x}-\mathbf{p}\|<\|q^\prime \mathbf{x}-\mathbf{p^\prime}\|$ for any $\left(\mathbf{p}^\prime,q^\prime\right)$ with $q^\prime<q$.
    \item[(ii)] $\|q\mathbf{x}-\mathbf{p}\|\leq\|q\mathbf{x}-\mathbf{p^\prime}\|$ for any $\mathbf{p^\prime}\in \mathbb{Z}^n$.
\end{itemize}
\end{definition}

Best approximation vectors have been introduced since a long time inside proofs in an nonexplicit form. In 1951, C. A. Rogers \cite{Rog} firstly  defined best Diophantine approximations with respect to the supreme norm.
Lagarias \cite{lag1} \cite{lag2} undertook a systematic study of the best Diophantine approximations. For more 
details, we recommend the survey given by
Moshchevitin (See \cite{Mos}). 

A long-recognized property,
in multidimensional simultaneous Diophantine approximation, is that the definition (of best approximations) depends intrinsically on the choice of norm. In the study of the Lévy–Khintchine constant arising from the sequence of denominators of best approximation vectors, Cheung and Chevallier \cite{Ch-Che} posed an intriguing question: do these constants coincide under different norms?  As a corollary of Theorem~\ref{thm:dichotomy}, we show that the sequence of denominators of best approximation vectors satisfies the P\`erez-Marco condition with  norm-independence:
\begin{corollary}\label{cor:independence of norm}
    All sequences of denominators of best approximation vectors of any irrational vector $(\lambda,\mu)$ satisfy the same \emph{P\'erez-Marco} condition.
\end{corollary}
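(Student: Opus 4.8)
The plan is to obtain Corollary~\ref{cor:independence of norm} as a direct consequence of Theorem~\ref{thm:dichotomy}, resting on the single observation that the number $\Hdim \NE(X_{\lambda,\mu},\omega)$ is intrinsic to the translation surface $(X_{\lambda,\mu},\omega)$ and makes no reference to any norm on $\R^2$. First I would fix an irrational vector $(\lambda,\mu)$ in the setting of the theorem (slit not parallel to an absolute homology class) together with two arbitrary norms $\|\cdot\|_1,\|\cdot\|_2$ on $\R^2$, and write $\{q_k^{(i)}\}$ for the sequence of denominators of the best approximation vectors of $(\lambda,\mu)$ relative to $\|\cdot\|_i$. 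By Definition~\ref{def:best approximation vector} each $\{q_k^{(i)}\}$ is a well-defined, strictly increasing sequence determined by $\|\cdot\|_i$ alone (the strict increase uses $(\lambda,\mu)\notin\Q^2$).

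The second step is simply to invoke Theorem~\ref{thm:dichotomy} once for each norm, which gives
\[
\Hdim \NE(X_{\lambda,\mu},\omega)=\tfrac12 \iff \sum_k \frac{\log\log q_{k+1}^{(1)}}{q_k^{(1)}}<\infty,
\qquad
\Hdim \NE(X_{\lambda,\mu},\omega)=\tfrac12 \iff \sum_k \frac{\log\log q_{k+1}^{(2)}}{q_k^{(2)}}<\infty,
\]
the dimension being $0$ in the complementary cases. Since the two left-hand sides are literally the same real number, the two P\'erez-Marco series converge or diverge together; as $\|\cdot\|_1,\|\cdot\|_2$ were arbitrary, this says exactly that every sequence of best approximation denominators of $(\lambda,\mu)$ satisfies condition~\eqref{PM:conv}, or none does, which is the assertion of the corollary. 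The remaining vectors lie outside the hypotheses of Theorem~\ref{thm:dichotomy} and cause no trouble: for $(\lambda,\mu)\in\Q^2$ there is nothing to prove, and for an irrational vector parallel to an absolute homology class one applies an element of $\SL(2,\Z)$ to the torus to reduce to the horizontal-slit case $\mu=0$ of Cheung--Hubert--Masur \cite{2011Dichotomy}, where the relevant denominators come from an ordinary continued fraction and norm-independence is automatic in dimension one.

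I do not expect a genuine obstacle, since the corollary is formally immediate from the theorem. The one point that deserves a moment's care is that the hypotheses of Theorem~\ref{thm:dichotomy}---irrationality of $(\lambda,\mu)$ and the slit not being parallel to an absolute homology class---are themselves norm-free, so that two different norms may legitimately be fed into the same instance of the dimension formula. In particular one should resist the temptation to compare $\{q_k^{(1)}\}$ and $\{q_k^{(2)}\}$ term by term: these sequences genuinely differ from norm to norm, which is the phenomenon noted in the introduction, and the equivalence of their P\'erez-Marco conditions is instead forced entirely through the shared value of $\Hdim \NE(X_{\lambda,\mu},\omega)$.
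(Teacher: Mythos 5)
Your proposal is correct and is essentially the paper's own (very brief) argument: the paper likewise deduces the corollary from the fact that Theorem~\ref{thm:dichotomy} applies to the best approximation denominators taken with respect to an arbitrary norm, while $\Hdim \NE(X_{\lambda,\mu},\omega)$ is a norm-free quantity, so the P\'erez-Marco series for any two norms must converge or diverge together. Your extra paragraph on vectors outside the theorem's hypotheses (rational vectors, slits parallel to an absolute homology class) goes beyond what the paper records but does not alter the core argument.
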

\begin{proof}
    In the proof of Theorem~\ref{thm:dichotomy}, we showed that the Hausdorff dimension of $\NE(X_{\lambda,\mu},\omega)$ is entirely determined by the P\`erez-Marco condition satisfied by the sequence of denominators of best approximation vectors of $(\lambda,\mu)$, constructed with respect to any chosen norm $\|\cdot\|$ on \( \mathbb{R}^2 \).
\end{proof}
\subsection{Sketch of main ideas}
A key contribution of this work is the establishment of the Hausdorff dimension $\frac{1}{2}$, which had previously remained open.
To show the result, we define a recursive process to construct a tree of slits and associate this tree with a Hausdorff dimension of $\frac{1}{2}$ Cantor set. 
The construction of the tree of slits is divided into two parts based on the heights of the slits: the Liouville construction and the Diophantine construction. The Liouville construction is based directly on the best approximation vectors of $(\lambda, \mu)$.
The primary challenge arises from the Diophantine construction. To ensure a sufficiently large dimension while maintaining the self-sustaining nature of the construction, it is necessary to work with \emph{normal slits} at this stage. This paper establishes a sufficient condition for the normality of a slit 
\( w = (\lambda + m, \mu + n) \). The proof can be summarized as a strategy for identifying the best approximation vectors of \( (\lambda, \mu) \) under the assumption
:
\begin{align}\label{align:intro:p/q}
    \left| \frac{\lambda + m}{\mu + n} - \frac{p}{q} \right| < \frac{1}{q q'}
\end{align}
where \( q \) and \( q' \) are successive convergents of the inverse slope of \( w \), with \( |w| < q < |w|^{r} < |w|^{1+(r-1)N'} < q' \).  Our approach to the construction is based on the following idea: the best approximation vector we construct  arises from the intersection of two non-parallel rational affine lines, each of which lies at a very small distance from \( (\lambda, \mu) \).

The condition (\ref{align:intro:p/q}) yields the inequality
\[
|q\lambda - p\mu + mq - np| < \frac{\mu + n}{q'} < \frac{1}{|w|^{(r-1)N'}},
\]  
which geometrically corresponds to the line $\ell$ defined by  
\begin{align}\label{align:intro:ell}
    qx - py + mq - np = 0
\end{align}
with a small distance from $(\lambda,\mu)$.

The key novelty in our method lies in introducing the concept of \textit{Nearest affine ration lines} \footnote{This can also be referred to as the best linear forms of \( (\lambda, \mu) \)} to $(\lambda,\mu)$, through which we construct an additional rational line.
For the norm $\|\cdot\|$ we choose on $\mathbb{R}^n$,  we can define the height of $\eta =\left(\Vec{v},c\right)\in \mathbb{Z}_{\pr}^{n+1}$ \footnote{We use $\mathbb{Z}_{\pr}^{n+1}$ to denote the set of primitive vectors in $\mathbb{Z}^{n+1}$.} by $\eta^{-}=\|\Vec{v}\|$. 
\begin{definition}
For $\Vec{x}\in \mathbb{R}^n$, we say $\eta=\left(\Vec{v},c\right)\in \mathbb{Z}_{\pr}^{n+1}$ is a nearest rational affine hyperplane of $\Vec{x}$ if 
\begin{itemize}
  \item[(i)] $|\Vec{x}\cdot\Vec{v}+c|<|\Vec{x}\cdot\Vec{v^{\prime}}+c^\prime|$ for any $\eta^{\prime}=\left(\Vec{v^{\prime}},c^\prime \right)$ with height ${\eta^{\prime}}^-<\eta^-$.
  \item[(ii)]  $|\Vec{x}\cdot\Vec{v}+c|\leq |\Vec{x}\cdot\Vec{v^{\prime}}+c^\prime|$
for any  $\eta^{\prime}=\left(\Vec{v^{\prime}},c^\prime \right)$ with height ${\eta^{\prime}}^-=\eta^-$.
\end{itemize} 
\end{definition}
We fix once and for all a sequence $\{\eta_j\}_{j\in \mathbb{N}}\subseteq \Z_{\pr}^{n+1}$ of rational affine hyperplanes nearest to $\left(\lambda,\mu\right)$ with increasing heights $\eta_1^- < \eta_2^-< \cdots$. 
For irrational vectors in $\R^2$, we have the following dichotomy: If \( (\lambda, \mu) \) lies on a uniquely rational line, we call it uniquely rational; otherwise, it is totally irrational. 

Accordingly, we handle each case individually in the proof.\\
Case 1: If \( (\lambda, \mu) \) is uniquely rational, then it lies on a uniquely rational line \( \eta \), which can be expressed as 
\begin{align}\label{align:intro:uni. rat.}
    ax + by + c = 0,
\end{align}
where \( a, b, c \in \mathbb{Z} \) with \( a \neq 0 \) or \( b \neq 0 \).
\footnote{In \cite{2011Dichotomy}, equation (\ref{align:intro:uni. rat.}) reduces to \( y = 0 \).}
The best approximation vector we are looking for arises as the intersection of $\ell$ and $\eta$.
\\
Case 2:
When \( (\lambda, \mu) \) is totally irrational, the pair of nearest rational affine lines \( \eta_j \) and \( \eta_{j+1} \), whose heights satisfy \( \eta_j^- \leq \ell^- < \eta_{j+1}^- \), contribute to the construction of the best approximation vector.

\subsection{Outline of proof}
We provide a brief outline of the proof of Theorem~\ref{thm:dichotomy}, which naturally divides into two parts. The first part establishes an upper bound, demonstrating that the Hausdorff dimension is \( 0 \).
\subsubsection{Dimension 0 case}
A similar argument was given in \cite{Huang}, where the best approximation vectors of $(\lambda, \mu)$ were chosen with respect to the $\|\cdot\|_{\infty}$ norm. Here, we prefer to rewrite the proof to demonstrate that the Hausdorff dimension zero result holds for any  norm $\|\cdot\|$.

Under the assumption 
\begin{align}\label{PM:div}
      \sum_k\frac{\log\log q_{k+1}}{q_k}=\infty,
\end{align}
we start the proof by recalling the notions of {\em $Z$-expansions} and {\em Liouville direction} (relative to $Z$).
Then we associate every nonergodic direction $\theta\in\NE(X_{\lambda,\mu})$ with its $Z$-expansion, which is a sequence of slits $\{w_j\}$ and loops $\{v_j\}$ whose slopes converge to $\theta$ and satisfy the summability condition (\ref{ieq:sumx}). (See Theorem~\ref{thm:CE}.) 
Finally we show every $Z$-expansion is Liouville relative to $Z$ and then by Corollary~\ref{cor:LVdir}, the set of Liouville directions has Hausdorff dimension zero, we finish the proof.
This is stated as the Lemma~\ref{lem:NE->LV}.

The proof strategy for Lemma \ref{lem:NE->LV} is as follows: assuming $\theta$ is Diophantine relative to $Z$, the number of slits in a long interval $[q_k, q_{k+1})$ that satisfy the conditions of Lemma \ref{Lemma:Liouville convergent} is approximately $\log \log q_{k+1}$. By Lemma \ref{lem:min:area}, the sum of the vector cross products $|w_j \times v_j|$ over these slits is bounded below by $\frac{\log \log q_{k+1}}{q_k}$. Summing over all such long intervals indexed by $k$ and using the divergence condition (\ref{PM:div}), we obtain
\[
\sum \frac{\log \log q_{k+1}}{q_k} = \infty.
\]
This result contradicts the summability condition (\ref{ieq:sumx}).

\subsubsection{Dimension $1/2$ case}
The starting point for the argument establishing a dimension of \( \tfrac{1}{2} \) is Theorem~\ref{thm:sumx}, which states that if a sequence of slits \( \left\{w_j\right\} \) satisfies the summability condition~(\ref{ieq:sumx}), then the sequence of their inverse slopes converges to a nonergodic direction. 

Our strategy involves constructing a tree structure composed of slits, where each slit \( w_{j+1} \) in the \((j+1)\)-th level is derived from a slit \( w_j \) in the \( j \)-th level. These slits satisfy the conditions \( |w_{j+1}| \approx |w_j|^r \) and \( |w_j \times w_{j+1}| < \delta_j \), where \( \delta_j \) is chosen such that \( \sum \delta_j < \infty \). The slits \( w_{j+1} \) generated from \( w_j \) under these constraints are referred to as the \emph{children slits} of \( w_j \), while \( w_j \) is referred to as the \emph{parent slit}.

The first construction in this paper is designed to address the case of a slit \( w \) whose height falls within a short interval \([q_k, q_{k+1}]\) where $q_{k+1}<q_k^N$ for some fixed $N$.  
Our objective is to demonstrate that for an \(\alpha\)-normal slit \(w\), the child slits \(w'\) that are \(\alpha r\)-normal constitute a substantial portion (see Proposition~\ref{prop:normal}).  
First, we rule out the possibility of \(w'\) being a ``miracle slit'' by showing that \(w\) can have at most one miracle child slit (see Lemma~\ref{Lem:danger slit}).  
The proof then proceeds in two parts. In the first part, if the spectrum of \(w\) lies within the interval \((\alpha \rho^{N^\prime+1}|w|, |w|^r)\), we establish that \(w'\) is \(\alpha r\)-normal using Lemma~\ref{lem:good:children}, Lemma~\ref{lem:Uniquly N'-good=>normal} and  Lemma~\ref{lem:N'-good=>normal}.  
In the second part, we derive an upper bound for the number of child slits that are not \(\alpha r\)-normal. This is achieved through Lemma~\ref{lem:same:cluster} and Lemma~\ref{lem:cluster:size}.  
This approach to constructing ``normal child slits'' is termed the Diophantine construction.

In \S\ref{s:Liouville}, we introduce the construction using Liouville convergents of $w$ when $|w|$ falls in a long interval of $[q_k,q_{k+1}]$. We call it Liouville construction.
Lemma~\ref{Lemma:Liouville convergent} tells the liouville convergent $u$ is a ``very well approximation'' of $w$ in this case which implies a very small cross-product $|w\times u|$. Based on it, we have a set of $v$ constructed by $u$ such that $w'=w+2v$ is a child of $w$.

Then we associate this tree with a cantor set $F(r)$ and demonstrate that the lower bound of \(\Hdim F(r)\) can be arbitrarily close to \(\tfrac{1}{2}\) by choosing the parameter \(r\) sufficiently close to one.  
The detailed construction of the tree of slits and the precise definitions of the terms \(\delta_j\) are provided in \S\ref{s:Tree}. Finally, in \S\ref{s:Lower}, we verify the convergence of the series \(\sum \delta_j\).

\section{Preliminary}\label{s:NEdir} 
\subsection{Slits and loops}
Suppose that \( z_0 \) and \( z_1 \) are two marked points on a standard area-one flat torus \( T \), with horizontal distance \( \lambda \) and vertical distance \( \mu \).
Then a \emph{saddle connection} on $T$ is a straight line that starts and ends at $\{z_0,z_1\}$ without meeting either point in its interior. The set of saddle connections can be divided into two types: a \emph{slit}, which is a saddle connection joining \(z_0\) and \(z_1\), and a \emph{loop}, which is a saddle connection that joins one of these points to itself.
 
The holonomies of saddle connections are represented as pairs of real numbers.  
The set of holonomies of loops is given by  
\[
V_0 = \{(p, q) \in \mathbb{Z}^2 : \gcd(p, q) = 1\},
\]
and the set of holonomies of slits is expressed as  
\[
V_1 = V_1^+ \cup \left(-V_1^+\right),
\]
where  
\[
V_1^+ = \{(\lambda + m, \mu + n) : m, n \in \mathbb{Z}, n > 0\} \cup \{(\lambda, \mu)\}.
\]

Let $(X_{\lambda,\mu},\omega)$ denote the double cover of $T$ branched over $z_0$ and $z_1$, then we say that a slit $\gamma$ is \emph{separating} if $\gamma$ separates $X_{\lambda,\mu}$ into a pair of tori interchanged by the involution of the double cover. A lemma from \cite{Ch1} associates a separating slit with a pair of even integers \(m\) and \(n\), and then the collection of separating slits is said to have holonomies given by
\[
V_2 = V_2^+ \cup \left( -V_2^+ \right),
\]
where
\[
V_2^+ = \{ (\lambda + 2m, \mu + 2n) : m, n \in \mathbb{Z}, n > 0 \} \cup \{ (\lambda, \mu) \}.
\]

We then use the following definition to establish a connection between slits and loops.
\begin{definition}
We say two slits $w$ and $w'$ are related by a Dehn twist about $v$ if $w'-w\in \Z v$ for $v$ is a loop in $V_0$.
\end{definition}
  
The next theorem provides a method to construct non-ergodic directions on \((X_{\lambda,\mu}, \omega)\). It is a key theorem in the proof of the case for dimension \(\frac{1}{2}\).
\begin{theorem}\cite{MS}\label{thm:sumx}\footnote{A more general condition developed in \cite{MS} that applies to arbitrary translation surfaces and quadratic differentials. }
Let $\{w_j\}$ be a sequence of separating slits with increasing 
  heights $|w_j|$ and suppose that every consecutive pair of slits 
  $w_j$ and $w_{j+1}$ are related by a Dehn twist about some $v_j$ 
  such that \footnote{The cross-product formula expresses the area of the 
  parallelogram spanned by $u$ and $v$ as 
  $$|u\times v|=\|u\| \|v\|\sin\theta$$ 
  where $\times$ denote the standard skew-symmetric bilinear form on $\mathbb{R}^2$, 
  $\|\cdot\|$ the Euclidean norm, and $\theta$ the angle between $u$ and $v$.  
}
\begin{equation}\label{ieq:sumx}
  \sum_j |w_j\times v_j| < \infty.  
\end{equation}
Then the inverse slopes of $w_j$ converge to some $\theta$ and 
  this limiting direction belongs to $\NE(X_{\lambda,\mu},\omega)$.  
\end{theorem}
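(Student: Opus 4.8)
The plan is to carry out the classical Masur--Smillie construction: from the sequence $\{w_j\}$ I would produce a measurable set $A$ that is invariant under the straight-line flow $\phi^\theta$ in the limiting direction and whose area is strictly between $0$ and $\area(X_{\lambda,\mu})$; since an invariant set of a translation flow is, modulo a null set, a measurable union of leaves, this forces $\mathcal F_\theta$ to be non-ergodic. The argument has three ingredients. First, convergence of the inverse slopes: writing $w_{j+1}=w_j+n_jv_j$ with $n_j\in\Z$ (the Dehn-twist relation), bilinearity of $\times$ gives $|w_j\times w_{j+1}|=|n_j|\,|w_j\times v_j|$, and a routine computation using the primitivity of $v_j$ and the monotonicity of the heights bounds the increment of inverse slope between $w_j$ and $w_{j+1}$ by $O(|w_j\times v_j|)$, so by (\ref{ieq:sumx}) the inverse slopes form a Cauchy sequence and converge to some $\theta$.

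Second, the construction of $A$. Each separating slit $w_j$ cuts $X_{\lambda,\mu}$ into two flat tori $X_j^+\sqcup X_j^-$ exchanged by the involution, each of area $\tfrac{1}{2}\area(X_{\lambda,\mu})$, and $X_j^+$ is invariant under the straight-line flow $\phi^{\theta_j}$ in the direction $\theta_j$ of $w_j$, since the slit is a union of leaves of $\mathcal F_{\theta_j}$ and so cannot be crossed --- the only trajectories touching it being the finitely many separatrices issuing from the two zeros of $\omega$. After fixing the labelling $X_j^\pm$ consistently along the sequence, the decisive estimate --- and the step I expect to be the main obstacle --- is
\begin{equation}\label{eq:plan:area}
\area\!\left(X_j^+\,\triangle\,X_{j+1}^+\right)\le C\,|w_j\times v_j|,
\end{equation}
\emph{with the constant $C$ independent of the number $n_j$ of Dehn twists}: the region on which the two decompositions disagree should be confined to a flat piece whose area is governed by the parallelogram spanned by $w_j$ and $v_j$, the point being that the hypothesis makes $|w_j\times v_j|$ small, i.e.\ $v_j$ nearly parallel to $w_j$. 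Granting (\ref{eq:plan:area}), the hypothesis (\ref{ieq:sumx}) makes $\{\mathbf 1_{X_j^+}\}$ a Cauchy sequence in $L^1$, so $\mathbf 1_{X_j^+}\to\mathbf 1_A$ for a set $A$ of area $\tfrac{1}{2}\area(X_{\lambda,\mu})$.

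Third, invariance of $A$ under $\phi^\theta$, which I would check by testing against continuous functions. For continuous $f$ and fixed $t$, the invariance of $\mathbf 1_{X_j^+}$ under $\phi^{\theta_j}$ gives $\int f\,\mathbf 1_{X_j^+}=\int (f\circ\phi^{\theta_j}_t)\,\mathbf 1_{X_j^+}$, and letting $j\to\infty$ --- using the $L^1$-convergence $\mathbf 1_{X_j^+}\to\mathbf 1_A$, the pointwise a.e.\ convergence $\phi^{\theta_j}_t\to\phi^\theta_t$ for each fixed $t$ (a standard property of the straight-line flow, since $\theta_j\to\theta$ and a.e.\ trajectory stays away from the zeros of $\omega$ over the time interval $[0,t]$), and bounded convergence --- yields $\int (f\circ\phi^\theta_t)\,\mathbf 1_A=\int f\,\mathbf 1_A$ for all continuous $f$ and all $t$. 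Hence $\mathbf 1_A$ is $\phi^\theta$-invariant, so $A$ is, modulo a null set, a measurable union of leaves of $\mathcal F_\theta$ with area strictly between $0$ and $\area(X_{\lambda,\mu})$; therefore $\mathcal F_\theta$ is non-ergodic, i.e.\ $\theta\in\NE(X_{\lambda,\mu},\omega)$. The convergence step and this last limiting argument are routine bookkeeping; the real content is (\ref{eq:plan:area}), where the crucial feature is that the right-hand side is $|w_j\times v_j|$ rather than $|w_j\times w_{j+1}|=|n_j|\,|w_j\times v_j|$ (which need not be summable) --- that is exactly where the precise shape of (\ref{ieq:sumx}) is exploited, and it is the ingredient I would rely on from \cite{MS}, where it is established for arbitrary quadratic differentials.
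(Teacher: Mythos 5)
The paper does not prove this theorem at all---it is quoted from \cite{MS} and used as a black box in the dimension-$\tfrac12$ argument---and your outline is precisely the standard Masur--Smillie construction (cut $X_{\lambda,\mu}$ along the separating slit $w_j$ into two tori, show the indicators $\mathbf 1_{X_j^+}$ are $L^1$-Cauchy via the symmetric-difference estimate, and pass invariance to the limit set under the limiting flow), so it follows essentially the same route as the cited source, and the convergence and invariance steps you spell out are sound. The only caveat is that the decisive bound $\area\!\left(X_j^+\,\triangle\,X_{j+1}^+\right)\le C\,|w_j\times v_j|$ with $C$ independent of the twist number $n_j$ is exactly the nontrivial content you defer to \cite{MS}, so your write-up is a faithful reconstruction of the cited argument rather than a self-contained proof---which is consistent with how the paper itself treats the statement.
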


\subsection{Best approximation vectors}
\textit{Continued fractions} have a long history and were traditionally used to generate optimal rational approximations of irrational numbers. A continued fraction
is an expression of the form:
\[
x = a_0 + \cfrac{1}{a_1 + \cfrac{1}{a_2 + \ddots}}
\]
where $a_0$ is an integer and $a_1, a_2, \ldots$ are positive integers called \textit{partial quotients}. The rational number
\[
\frac{p_k}{q_k} = [a_0; a_1, a_2, \ldots, a_k] = a_0 + \cfrac{1}{{\ddots + \cfrac{1}{a_k}}}.
\]
is called the \( k \)-th \textit{convergent}.
We recall the following two classical facts from the theory of continued fractions, which yield effective estimates for how well irrational numbers can be approximated by rationals.
\begin{theorem}\cite[Thm. 9 and 13]{Kh}\label{thm:continued1}
    if $\{\frac{p_k}{q_k}\}$ is a sequence of convergents of $\theta$, then we have
    \begin{align}\label{align:estimate for convergents}
        \frac{1}{q_k(q_k+q_{k+1})}<|\theta-\frac{p_k}{q_k}|\leq\frac{1}{q_kq_{k+1}}.
    \end{align}
\end{theorem}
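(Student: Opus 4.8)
The plan is to reduce everything to the standard recursive machinery of continued fractions. First I would record the recurrences for the convergents, $p_k = a_k p_{k-1} + p_{k-2}$ and $q_k = a_k q_{k-1} + q_{k-2}$ (with $p_{-1}=1$, $q_{-1}=0$, $p_0=a_0$, $q_0=1$), together with the determinant identity $p_k q_{k-1} - p_{k-1} q_k = (-1)^{k-1}$, which follows from the recurrences by an immediate induction on $k$. These are exactly the facts one needs to control the denominators, and in particular they give the identity $q_{k+1} = a_{k+1} q_k + q_{k-1}$ that will pin down the two sides of the estimate.

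Next I would introduce the complete quotient $\alpha_{k+1} = [a_{k+1}; a_{k+2}, a_{k+3}, \ldots]$, so that $\theta = [a_0; a_1, \ldots, a_k, \alpha_{k+1}]$ and, applying the same recurrence one further step with $\alpha_{k+1}$ in place of $a_{k+1}$,
\[
\theta = \frac{\alpha_{k+1} p_k + p_{k-1}}{\alpha_{k+1} q_k + q_{k-1}}.
\]
Subtracting $p_k/q_k$, clearing denominators, and using the determinant identity to collapse the numerator to $\pm 1$ yields
\[
\theta - \frac{p_k}{q_k} = \frac{(-1)^{k}}{q_k\left(\alpha_{k+1} q_k + q_{k-1}\right)},
\qquad\text{so}\qquad
\left|\theta - \frac{p_k}{q_k}\right| = \frac{1}{q_k\left(\alpha_{k+1} q_k + q_{k-1}\right)}.
\]

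Finally I would bound $\alpha_{k+1}$ from both sides. Since $\alpha_{k+1} = a_{k+1} + 1/\alpha_{k+2}$ with $\alpha_{k+2} > 1$, we have $a_{k+1} < \alpha_{k+1} < a_{k+1}+1$, and multiplying by $q_k$ and adding $q_{k-1}$ gives $q_{k+1} < \alpha_{k+1} q_k + q_{k-1} < q_{k+1}+q_k$ in view of $a_{k+1}q_k+q_{k-1}=q_{k+1}$. Taking reciprocals in the displayed equality produces precisely (\ref{align:estimate for convergents}). I do not anticipate any genuine obstacle here; the only point demanding a little care is the bookkeeping of strict versus non-strict inequalities (the statement has a strict lower bound and a non-strict upper bound), which is governed by whether the expansion is infinite — if it terminates at index $k+1$ one has $\alpha_{k+1}=a_{k+1}$ and the ``$\le$'' on the right becomes equality, while the left inequality is never attained — and by the convention $\alpha_{k+2}\ge 1$. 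Alternatively one can derive the same bounds without complete quotients, from the telescoping identity $\theta - p_k/q_k = \sum_{j\ge k}(p_j/q_j - p_{j+1}/q_{j+1})$ and the estimate $|p_j/q_j - p_{j+1}/q_{j+1}| = 1/(q_j q_{j+1})$; the complete-quotient route is cleaner and is the one I would present.
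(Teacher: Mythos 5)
Your argument is correct: it is the standard complete-quotient proof of this classical estimate, which the paper itself does not prove but simply cites from Khinchin (Theorems 9 and 13), and your derivation $\left|\theta-\frac{p_k}{q_k}\right|=\frac{1}{q_k(\alpha_{k+1}q_k+q_{k-1})}$ together with $q_{k+1}<\alpha_{k+1}q_k+q_{k-1}<q_{k+1}+q_k$ is exactly that argument. Your remark on strict versus non-strict inequalities is also adequate for the paper's setting, where the relevant $\theta$ are irrational so the expansion is infinite and $\alpha_{k+2}>1$ holds automatically.
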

\begin{theorem}\cite[Thm. 19]{Kh}\label{lem:estimate for continued fractions}
    If a reduced fraction satisfies
    \begin{align}\label{convergents}
        |\theta-\frac{p}{q}|<\frac{1}{2q^2}
    \end{align}
    then $\frac{p}{q}$ is a convergent of $\theta$.
\end{theorem}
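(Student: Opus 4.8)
The plan is to prove the statement directly, by running the continued-fraction algorithm \emph{backwards} from the given reduced fraction $\frac{p}{q}$ and checking that this forces the expansion of $\theta$ to begin with the same partial quotients. (I treat the case of irrational $\theta$, which is the one relevant here; the rational case is analogous, using the finite expansion.) Put $\delta = \theta - \frac{p}{q}$, so that $0 < |\delta| < \frac{1}{2q^{2}}$, and expand $\frac{p}{q} = [a_{0};a_{1},\dots,a_{n}]$ as a finite continued fraction, writing $\frac{p_{n-1}}{q_{n-1}} = [a_{0};a_{1},\dots,a_{n-1}]$ for its penultimate convergent; then $p_{n} = p$, $q_{n} = q$, $p_{n}q_{n-1} - p_{n-1}q_{n} = (-1)^{n-1}$, and $1 \le q_{n-1} \le q_{n}$. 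Using $[a_{0};\dots,a_{n}] = [a_{0};\dots,a_{n}-1,1]$ I may prescribe the parity of $n$, and I fix it so that $(-1)^{n} = \operatorname{sign}(\delta)$ (the degenerate case $q = 1$ is checked directly).

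I then introduce the real number
\[
  \omega = -\,\frac{q_{n-1}\theta - p_{n-1}}{q_{n}\theta - p_{n}},
\]
which is arranged precisely so that $\theta = \dfrac{\omega p_{n} + p_{n-1}}{\omega q_{n} + q_{n-1}}$, i.e.\ $\theta = [a_{0};a_{1},\dots,a_{n},\omega]$. The crux is to show $\omega > 1$; granting this, applying the genuine continued-fraction algorithm to $\theta$ returns $a_{0},\dots,a_{n}$ as its first $n+1$ partial quotients, so $\frac{p}{q} = \frac{p_{n}}{q_{n}}$ is the $n$-th convergent of $\theta$, as claimed. To bound $\omega$, write $\omega = -\dfrac{q_{n-1}}{q_{n}}\cdot\dfrac{\theta - p_{n-1}/q_{n-1}}{\delta}$ and use $\frac{p_{n}}{q_{n}} - \frac{p_{n-1}}{q_{n-1}} = \frac{(-1)^{n-1}}{q_{n}q_{n-1}}$ to obtain
\[
  \theta - \frac{p_{n-1}}{q_{n-1}} = \delta + \frac{(-1)^{n-1}}{q_{n}q_{n-1}} .
\]
The hypothesis now enters twice: first, $|\delta| < \frac{1}{2q_{n}^{2}} < \frac{1}{q_{n}q_{n-1}}$ (using $q_{n-1} \le q_{n}$) makes the second summand dominant in absolute value, and the parity choice makes the two summands have opposite signs, so $\theta - \frac{p_{n-1}}{q_{n-1}}$ has sign $(-1)^{n-1}$ and absolute value $\frac{1}{q_{n}q_{n-1}} - |\delta|$; consequently $\omega > 0$ and $\omega = \frac{1}{q_{n}^{2}|\delta|} - \frac{q_{n-1}}{q_{n}}$. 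Second, $|\delta| < \frac{1}{2q_{n}^{2}}$ forces $\frac{1}{q_{n}^{2}|\delta|} > 2$, whence $\omega > 2 - \frac{q_{n-1}}{q_{n}} \ge 1$.

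The only genuinely delicate point is the sign bookkeeping — choosing the parity of $n$ so that $\delta$ and the error term $(-1)^{n-1}/(q_{n}q_{n-1})$ cancel rather than reinforce — after which one sees that the constant $\tfrac12$ in the hypothesis is exactly what is needed to push $\omega$ beyond $1$ (and it cannot be improved: the argument degenerates at the threshold $|\delta| = \frac{1}{2q^{2}}$). The edge case $q = 1$ is handled directly. Should one prefer to avoid the backwards-algorithm bookkeeping, an alternative route is to first verify that the hypothesis makes $\frac{p}{q}$ a best approximation of the second kind of $\theta$: for any $(p',q') \ne (p,q)$ with $1 \le q' \le q$ one has $|\theta - \frac{p'}{q'}| \ge \frac{1}{qq'} - |\delta| > \frac{1}{2qq'}$ (since $|\delta| < \frac{1}{2q^{2}} \le \frac{1}{2qq'}$), hence $q|\theta - \frac{p}{q}| = q|\delta| < \frac{1}{2q} < q'|\theta - \frac{p'}{q'}|$; one then invokes the classical theorem that every best approximation of the second kind is a convergent.
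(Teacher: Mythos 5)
Your proof is correct. Note that the paper does not prove this statement at all: it is quoted as Theorem 19 of Khinchin's book (the classical theorem of Legendre), so there is no internal argument to compare against. Your main route — writing $p/q=[a_0;a_1,\dots,a_n]$ with the parity of $n$ adjusted to match the sign of $\delta=\theta-p/q$, passing to the tail $\omega$ with $\theta=\frac{\omega p_n+p_{n-1}}{\omega q_n+q_{n-1}}$, and computing $\omega=\frac{1}{q_n^2|\delta|}-\frac{q_{n-1}}{q_n}>2-\frac{q_{n-1}}{q_n}\ge 1$ — is the standard Legendre-style proof, and the sign and size bookkeeping (dominance of $\frac{1}{q_nq_{n-1}}$ over $|\delta|$, strictness of $\omega>1$, the $q=1$ edge case) all checks out, including your remark that the constant $\tfrac12$ is sharp. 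The alternative you sketch, showing $p/q$ is a best approximation of the second kind and invoking the theorem that every such approximation is a convergent, is essentially the route taken in Khinchin's text, so either version is a faithful substitute for the citation.
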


In Definition~\ref{def:best approximation vector}, we introduce the notion of best approximation vector. We now describe two theorems of best approximation vectors, which respectively generalize Theorems~\ref{thm:continued1} and~\ref{lem:estimate for continued fractions}.
\begin{definition}   
Let \( v \) be a vector in
\[
Q := \left\{ (\mathbf{p}, q) \in \mathbb{Z}^3 : \gcd(\mathbf{p}, q) = 1,\ q > 0 \right\}.
\]
We then define \( \Tilde{Q} \) as the set
\[
\Tilde{Q} := \left\{ \dot{v} = \frac{\mathbf{p}}{q} : v \in Q \right\}.
\]
\end{definition}
The statements of these two theorems are independent of any particular choice of norm; they hold uniformly for all norms. This generality enables us to derive Corollary~\ref{cor:independence of norm} from Theorem~\ref{thm:dichotomy}.
Further details can be found in \cite{Ch3}.

Henceforth, we fix a norm on \( \mathbb{R}^2 \), which we denote by \( \| \cdot \| \).

\begin{theorem}\cite[Theorem 2.13]{Ch3}
    Let $\dot{v_j}=\frac{\mathbf{p_j}}{q_j}\in \Q^2$ be a sequence of best approximation vectors to $\mathbf{x}$. Then
\begin{align}\label{C_2}
    \frac{1}{2q_{j+1}}\leq \|q_j\mathbf{x}-\mathbf{p_j}\| \leq \frac{C_1}{q_{j+1}^{1/2}}.
\end{align}
Here, \( C_1 \) denotes a constant that depends only on the choice of norm.
\end{theorem}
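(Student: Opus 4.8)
The plan is to prove the two inequalities by entirely separate arguments: the lower bound is elementary, while the upper bound is the two-dimensional Dirichlet estimate combined with the characterization of best approximation vectors as the successive ``records'' of the inhomogeneous approximation function. Throughout write $\psi_j:=\|q_j\mathbf{x}-\mathbf{p_j}\|$. Since best approximation vectors have strictly increasing denominators $q_1<q_2<\cdots$, clause~(i) of Definition~\ref{def:best approximation vector} applied to $\dot v_{j+1}$ against $\dot v_j$ gives $\psi_{j+1}<\psi_j$, which I will use repeatedly.

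For the lower bound, consecutive best approximation vectors are distinct points of $\Q^2$ in lowest terms, so the integer vector $\mathbf{v}:=q_{j+1}\mathbf{p_j}-q_j\mathbf{p_{j+1}}\in\Z^2$ is nonzero (if it vanished, $\mathbf{p_j}/q_j=\mathbf{p_{j+1}}/q_{j+1}$, forcing $q_j=q_{j+1}$). Writing
\[
\mathbf{v}=-q_{j+1}\,(q_j\mathbf{x}-\mathbf{p_j})+q_j\,(q_{j+1}\mathbf{x}-\mathbf{p_{j+1}})
\]
and applying the triangle inequality together with $q_j<q_{j+1}$ and $\psi_{j+1}<\psi_j$ yields $\|\mathbf{v}\|<2q_{j+1}\psi_j$. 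Since a nonzero integer vector has norm at least the first lattice minimum of $(\Z^2,\|\cdot\|)$ — equal to $1$ for $\|\cdot\|_\infty$, and normalized to be $\geq\tfrac12$ in general — this gives $\psi_j\geq\tfrac{1}{2q_{j+1}}$.

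The substantive part is the upper bound. First I would establish the combinatorial identity
\[
\psi_j=\min\bigl\{\,\|q\mathbf{x}-\mathbf{p}\|:\mathbf{p}\in\Z^2,\ 1\le q<q_{j+1}\,\bigr\}.
\]
Indeed, if some pair $(\mathbf{p},q)$ with $q<q_{j+1}$ had $\|q\mathbf{x}-\mathbf{p}\|<\psi_j$, then selecting among all such pairs one minimizing $\|q\mathbf{x}-\mathbf{p}\|$, breaking ties by smallest $q$ and then by the integer point closest to $q\mathbf{x}$, produces a pair satisfying both clauses of Definition~\ref{def:best approximation vector}; its denominator is $>q_j$ (by clauses~(i),(ii) for $\dot v_j$, since it improves on $\psi_j$) and $<q_{j+1}$, contradicting that $\dot v_j$ and $\dot v_{j+1}$ are consecutive. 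Granting the identity, I invoke the planar Dirichlet estimate: for every integer $Q\ge1$ there is an integer $q$ with $1\le q\le Q$ and $\dist_{\|\cdot\|}(q\mathbf{x},\Z^2)\le c_{\|\cdot\|}\,Q^{-1/2}$, with $c_{\|\cdot\|}$ depending only on the norm — proved by slicing the torus $\R^2/\Z^2$ into $O(Q)$ boxes of side $\asymp Q^{-1/2}$ and applying pigeonhole to $0,\mathbf{x},\dots,Q\mathbf{x}$ (then converting from $\|\cdot\|_\infty$ to $\|\cdot\|$), or by Minkowski's convex body theorem. Combined with the identity, taking $Q=q_{j+1}-1$ gives $\psi_j\le c_{\|\cdot\|}(q_{j+1}-1)^{-1/2}\le C_1\,q_{j+1}^{-1/2}$.

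The main obstacle I expect is the bookkeeping in the combinatorial identity: this is exactly the step that upgrades the Dirichlet bound from level $q_j$ to the much larger level $q_{j+1}$, and it rests on the routine-but-fiddly fact that best approximation vectors are precisely the successive records of $q\mapsto\min_{\mathbf{p}}\|q\mathbf{x}-\mathbf{p}\|$ and that no intermediate denominator beats $\psi_j$. The Dirichlet step itself is standard; the only care needed there is to track the norm dependence, so that the universal constant $\tfrac12$ survives in the lower bound while the upper bound absorbs all norm-equivalence and pigeonhole constants into $C_1$.
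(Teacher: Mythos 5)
The paper offers no proof of this statement at all---it is quoted verbatim from \cite{Ch3} (Theorem 2.13)---so there is nothing internal to compare against; your proposal has to stand on its own, and in substance it does. Your two halves are the standard arguments: for the lower bound, the integer vector $q_{j+1}\mathbf{p_j}-q_j\mathbf{p_{j+1}}$ is nonzero because consecutive best approximation vectors are distinct primitive points with distinct denominators, and the triangle inequality with $q_j<q_{j+1}$, $\psi_{j+1}<\psi_j$ gives $\|q_{j+1}\mathbf{p_j}-q_j\mathbf{p_{j+1}}\|<2q_{j+1}\psi_j$; for the upper bound, the record identity $\psi_j=\min\{\|q\mathbf{x}-\mathbf{p}\|:1\le q<q_{j+1}\}$ (your minimizer-plus-tie-breaking argument is fine, and note the minimizer is automatically primitive, since dividing by a common factor would strictly decrease the value) combined with Dirichlet at level $Q=q_{j+1}-1$ yields $\psi_j\le C_1q_{j+1}^{-1/2}$. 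The usual write-up of the upper bound packs $q_{j+1}$ disjoint norm-balls of radius $\psi_j/2$ around the orbit points $q\mathbf{x} \bmod \mathbb{Z}^2$, $0\le q<q_{j+1}$, in the unit-area torus---which uses exactly the same record identity---so your pigeonhole route is an equivalent variant, not a gap.

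The one soft spot is the constant $\tfrac12$ in the lower bound. What your argument actually delivers is $\psi_j\ge\lambda_1/(2q_{j+1})$, where $\lambda_1$ is the first minimum of $\mathbb{Z}^2$ with respect to the chosen norm. For a completely arbitrary norm $\lambda_1$ can be arbitrarily small (the left-hand side of \eqref{C_2} is not scale-invariant, unlike the right-hand side), so the clause ``normalized to be $\ge\tfrac12$ in general'' is an assumption you are silently imposing, not something you prove. Either state that normalization explicitly (it holds for the sup and Euclidean norms, where $\lambda_1=1$), or let the lower constant depend on the norm as $C_1$ does; this imprecision is arguably inherited from the quoted statement itself and is harmless for the paper's applications, where only norm-dependent constants are ever used, but it should be flagged rather than folded into a parenthesis.
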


\begin{theorem}\cite[Theorem 2.10]{Ch3}\label{thm:proof of best}
If $\dot{v}=(\frac{\textbf{p}}{q})\in\Tilde{Q}$ satisfies
\begin{align}
    \|x-\dot{v}\|<\frac{1}{2q^2},
\end{align}
then $\dot{v}$ is a best approximation vector of $x\in \R^2$.
\end{theorem}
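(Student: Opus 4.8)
The plan is to derive the two defining inequalities of a best approximation vector directly from the hypothesis, mirroring the classical proof of Theorem~\ref{lem:estimate for continued fractions}: the bound $\|x-\dot v\|<\frac{1}{2q^2}$, equivalently $\|qx-\mathbf p\|<\frac{1}{2q}$, is strong enough that $\mathbf p/q$ beats every competitor by a single triangle‑inequality comparison, with no analogue of the continued fraction algorithm required. I will use the elementary fact that a nonzero vector of $\mathbb Z^n$ has norm at least $1$ under the standing normalization of $\|\cdot\|$ (if one does not normalize, a norm‑dependent constant must be carried through the estimates below, and this is where any genuine dependence on the norm resides).

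For property (ii), that $\mathbf p$ minimizes $\mathbf p'\mapsto\|qx-\mathbf p'\|$ over $\mathbf p'\in\mathbb Z^n$, suppose some $\mathbf p'\neq\mathbf p$ satisfied $\|qx-\mathbf p'\|\le\|qx-\mathbf p\|$. Then
\[
\|\mathbf p-\mathbf p'\|\le\|qx-\mathbf p\|+\|qx-\mathbf p'\|\le 2\|qx-\mathbf p\|<\tfrac1q\le 1,
\]
forcing $\mathbf p=\mathbf p'$, a contradiction. For property (i), fix $(\mathbf p',q')$ with $1\le q'<q$ and suppose $\|q'x-\mathbf p'\|\le\|qx-\mathbf p\|$. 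The integer vector $\mathbf w:=q\mathbf p'-q'\mathbf p=q'(qx-\mathbf p)-q(q'x-\mathbf p')$ then satisfies
\[
\|\mathbf w\|\le q'\|qx-\mathbf p\|+q\|q'x-\mathbf p'\|<q'\cdot\tfrac1{2q}+q\cdot\tfrac1{2q}<1,
\]
so $\mathbf w=0$, i.e.\ $q\mathbf p'=q'\mathbf p$. Writing $d=\gcd(q,q')$ and using $\gcd(\mathbf p,q)=1$ forces $q\mid q'$, which is impossible for $0<q'<q$. Hence $\|qx-\mathbf p\|<\|q'x-\mathbf p'\|$, and (i) together with (ii) says $\dot v$ is a best approximation vector of $x$; note the argument is insensitive to the dimension, so it applies verbatim on $\mathbb R^2$.

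I expect no serious obstacle, as the proof is short. The two points that require care are: (a) turning $q\mathbf p'=q'\mathbf p$ with $\gcd(\mathbf p,q)=1$ into $q\mid q'$, which is what actually excludes the competitor rather than letting it coincide with $\dot v$; and (b) the appeal to a lower bound on the norm of nonzero lattice vectors, which is precisely where the statement for a general norm diverges from Khinchin's one‑dimensional Theorem~\ref{lem:estimate for continued fractions} and the reason the constant must be normalized (or made norm‑dependent). The only input needed is the pair of auxiliary lattice vectors $\mathbf p-\mathbf p'$ and $q\mathbf p'-q'\mathbf p$.
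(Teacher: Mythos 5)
The paper does not prove this statement at all --- it is imported from \cite[Theorem 2.10]{Ch3} as a black box --- so there is no internal argument to compare yours with; your proof has to stand on its own, and it does. The structure is the standard one and the estimates check out: for (ii), $\|\mathbf p-\mathbf p'\|\le 2\|q\mathbf x-\mathbf p\|<\tfrac1q\le 1$; for (i), the lattice vector $q\mathbf p'-q'\mathbf p=q'(q\mathbf x-\mathbf p)-q(q'\mathbf x-\mathbf p')$ has norm less than $\tfrac{q'+q}{2q}<1$, and the arithmetic step is sound, since $q\mathbf p'=q'\mathbf p$ gives $q\mid q'p_1$, $q\mid q'p_2$, $q\mid q'q$, hence $q\mid q'\gcd(p_1,p_2,q)=q'$, impossible for $1\le q'<q$; this yields the strict inequality required in (i). The one substantive point is the caveat you flag yourself: the argument uses $\min_{0\neq v\in\mathbb Z^2}\|v\|\ge 1$, and this is not a cosmetic normalization. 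With the fixed constant $\tfrac1{2q^2}$ the statement cannot hold for literally every norm: replacing $\|\cdot\|$ by $\varepsilon\|\cdot\|$ does not change which vectors are best approximations but makes the hypothesis arbitrarily weak, so for small $\varepsilon$ one finds $\dot v$ satisfying the hypothesis for which already condition (ii) fails. Thus one must either normalize as you do, or replace the constant by the norm-dependent quantity $\tfrac{m}{2q^2}$ with $m=\min_{0\neq v\in\mathbb Z^2}\|v\|$ --- which is in the same spirit as the norm-dependent constants $C_1$ and $C'$ the paper carries elsewhere, and consistent with how the theorem is actually invoked later (after comparison with the sup norm). With that understanding, your proof is complete and correct, and arguably more careful than the paper's blanket assertion that the statement is norm-independent as written.
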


\section{Hausdorff Dimension 0}
In this section, the central idea of our proof is to demonstrate that, for any norm, the sequence of best approximation vectors satisfying condition~(\ref{PM:div}) yields $\Hdim \NE(X_{\lambda,\mu},\omega) = 0$. 

\subsection{Z-expansion}\label{s:Z-expansion} 
We recall the notions of $Z$-expansions and Liouville direction relative to a closed discrete subset $Z\subset \R^2$. These notions were firstly introduced in \cite{2011Dichotomy}.  

\begin{definition}
Given an inverse slope $\theta$ and $v=(p,q)\in\R^2$ we define 
  $$\hor_\theta(v)=|q\theta-p|\ \text{and}\ |v|=q.$$ 
$q$ is called the height of $v$.
Let $Z$ be a closed discrete subset of $\R^2$. A $Z$-\emph{convergent} of $\theta$ is any vector $v\in Z$ that minimizes the expression $\hor_\theta(u)$ among all vectors $u\in Z$ with $|u|\le|v|$. The $Z$-\emph{expansion} of $\theta$ is defined to be the sequence of $Z$-convergents ordered by increasing height. 
\end{definition}
\begin{notation}
Recall that $|v|$ is the absolute value of the $y$-coordinate. We call it the \emph{height} of $v$. If two or more $Z$-convergents have the same height we choose one and ignore the others.  
\end{notation}
In this paper, we set \( Z = V_0 \cup V_2 \). For the general definition of \( Z \), certain assumptions must be considered. Further details on this can be found in \cite[\S 3]{2011Dichotomy}.

\begin{definition}
A direction is called \emph{minimal} (relative to $Z$) if it is not the direction of vectors in $Z$.  
\end{definition}

The next lemma provides a sufficient condition on its \( Z \)-expansion to guarantee a minimal direction.

\begin{lemma}\cite[Lemma 3.3]{2011Dichotomy}\label{lem:Minkowski}
The $Z$-expansion of a direction with inverse slope $\theta$ is infinite if and only if $\theta$ is minimal.  
\end{lemma}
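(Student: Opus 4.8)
My plan is to recast the statement in terms of the best-approximation function
\[
  \psi(c)\ :=\ \min\bigl\{\hor_\theta(u):u\in Z,\ 0<|u|\le c\bigr\},\qquad c>0,
\]
and to recover the $Z$-convergents of $\theta$ as exactly the vectors attaining a value of $\psi$. First I would dispatch the soft points: in $Z=V_0\cup V_2$ only finitely many heights lie below a given bound (heights are integers for $V_0$ and lie in $\mu+2\Z$ for $V_2$), and for each fixed height $q$ the map $p\mapsto\hor_\theta(p,q)=|q\theta-p|$ is proper on the relevant discrete translate of $\Z$; hence the minimum defining $\psi(c)$ is attained, $\psi$ is non-increasing, and $v\in Z$ is a $Z$-convergent precisely when $\hor_\theta(v)=\psi(|v|)$. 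Two structural facts then carry the argument. (a) Since $V_0\subseteq Z$, the continued-fraction convergents of any irrational $\theta$ supply loops of unbounded height with $\hor_\theta\to 0$ (Theorem~\ref{thm:continued1}), so $\psi(c)\to 0$ as $c\to\infty$; moreover a minimal direction is automatically irrational, a rational $p/q$ being the direction of the loop $(p,q)\in Z$. (b) Under the standing hypothesis that $(\lambda,\mu)$ is not parallel to any absolute homology class, no two distinct elements of $Z$ are parallel: two parallel slit holonomies in $V_1^{+}$ differ by a nonzero integer vector parallel to them, forcing a rational slit direction, which the hypothesis excludes, so they coincide; loops are never parallel to slits for the same reason; and parallel loops coincide up to sign by primitivity. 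Consequently each direction is realised in $Z$ by at most one pair $\pm v$, hence at a single height.

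With these in place the dichotomy is immediate. Because $\psi(c)=\hor_\theta(v^{\ast})$ for a minimiser $v^{\ast}$, one has $\psi(c)>0$ for all $c$ if and only if no $v\in Z$ satisfies $\hor_\theta(v)=0$, i.e. if and only if $\theta$ is minimal. If $\theta$ is minimal, $\psi$ is strictly positive, non-increasing, and tends to $0$ by (a), so it assumes infinitely many distinct values; each value is attained by a $Z$-convergent, and two $Z$-convergents of equal height share the same $\hor_\theta$, so distinct values arise from convergents at distinct heights — whence $Z$-convergents occur at infinitely many heights and the $Z$-expansion is infinite. Conversely, if $\theta$ is not minimal, fix $v_0\in Z$ with $\hor_\theta(v_0)=0$ and height $q_0$; then $\psi(c)=0$ for every $c\ge q_0$, so any $Z$-convergent of height $\ge q_0$ has $\hor_\theta=0$, i.e. direction $\theta$, and by (b) it must be $\pm v_0$, of height $q_0$. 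Thus no $Z$-convergent exceeds height $q_0$, and together with the finitely many of height $<q_0$ the $Z$-expansion is finite.

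The step I expect to require the most care is (b), equivalently the claim that a non-minimal direction is realised in $Z$ at only finitely many heights. For the case $\mu=0$ of \cite{2011Dichotomy} this is automatic (slits then have irrational direction and parallel slits coincide), but for general $(\lambda,\mu)$ it genuinely uses the exclusion of slits parallel to integer homology classes: without it one can choose $(\lambda,\mu)$ for which infinitely many separating slits are parallel to a single rational direction, and the ``only if'' implication breaks. Everything else is formal — attainment and monotonicity of $\psi$, the dictionary between values of $\psi$ and $Z$-convergents, and the limit $\psi(c)\to 0$, which is just Dirichlet's theorem (equivalently Theorem~\ref{thm:continued1}) applied inside $V_0\subseteq Z$.
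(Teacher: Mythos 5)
The paper gives no proof of this lemma (it is quoted from \cite{2011Dichotomy}), so there is nothing to match line by line; judged on its own, your forward implication (minimal $\Rightarrow$ infinite expansion) is correct and is essentially the intended Dirichlet/Minkowski argument: $V_0\subseteq Z$ forces $\psi(c)\to0$, minimality forces $\psi>0$, and your dictionary between values of $\psi$ and $Z$-convergents then yields convergents at infinitely many heights.

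The converse direction, however, has a genuine gap, and it sits exactly at your step (b). You justify (b) by saying that a pair of distinct parallel slits would force ``a rational slit direction, which the hypothesis excludes.'' But the standing hypothesis only says that the \emph{original} slit, with holonomy $(\lambda,\mu)$, is not parallel to an absolute homology class; it says nothing about the translated slits $(\lambda+2m,\mu+2n)$. A translated slit is parallel to an integer vector exactly when $(\lambda,\mu)$ lies on a rational affine line with the right parity, and the paper explicitly allows and treats such vectors (the ``uniquely rational'' case). Concretely, take $(\lambda,\mu)=(\sqrt2,\sqrt2-2)$: it is irrational and not parallel to any integer vector, yet every separating slit $(\sqrt2+2m,\sqrt2+2m)$, $m\ge0$, is parallel to the loop $(1,1)$. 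For the direction $\theta$ of $(1,1)$ one then has $\hor_\theta=0$ on infinitely many elements of $Z=V_0\cup V_2$ of unbounded height, so with the paper's definitions these are all $Z$-convergents and the $Z$-expansion of this non-minimal $\theta$ is infinite — precisely the failure mode you yourself flagged, but which you then claimed the hypothesis rules out. It does not. So claim (b), and with it your ``only if'' argument, cannot be derived from the stated hypothesis; it holds in the original setting $\mu=0$ of \cite{2011Dichotomy} (and more generally under the extra axioms imposed on $Z$ in \S3 of that paper, or for totally irrational $(\lambda,\mu)$), and one should either invoke those assumptions explicitly or observe that only the implication ``minimal $\Rightarrow$ infinite'' is actually used downstream (e.g.\ in Theorem~\ref{lem:NE->LV}).
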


The following characterisation of nonergodic directions in terms of $Z$-expansions is a one of the key ideas used to prove $\Hdim\NE(X_{\lambda,\mu},\omega)=0$.  

\begin{theorem}\label{thm:CE}
(\cite{CE}) 
Let $\theta$ be a minimal\footnote{This implies it will also be 
a minimal direction relative to $Z$.} direction in $P_{\lambda,\mu}$.  
Then $\theta$ is nonergodic if and only if its $Z$-expansion is eventually alternating between loops and separating slits 
  $$\ldots, v_{j-1}, w_j, v_j, w_{j+1}, \ldots$$ 
and satisfies the summable cross-products condition (\ref{ieq:sumx}).  
\end{theorem}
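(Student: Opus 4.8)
The plan is to establish the two implications of the biconditional separately: the ``if'' direction is an essentially formal consequence of Theorem~\ref{thm:sumx}, whereas the ``only if'' direction is the substantive one and rests on the structure of the ergodic decomposition of a nonergodic vertical foliation on the genus-two surface $X_{\lambda,\mu}$.

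\emph{Sufficiency.} Assume the $Z$-expansion of $\theta$ is eventually of the alternating form $\dots,v_{j-1},w_j,v_j,w_{j+1},\dots$, with the $w_j\in V_2$ separating slits, the $v_j\in V_0$ loops, and $\sum_j|w_j\times v_j|<\infty$. First I would record the structural identity $w_{j+1}=w_j+2v_j$: since $w_{j+1}-w_j\in 2\Z^2$ and $w_{j+1}$ is the separating-slit $Z$-convergent of least height exceeding $|w_j|$, the vector $\tfrac12(w_{j+1}-w_j)$ must be primitive (an imprimitive factor would furnish an intermediate separating-slit $Z$-convergent, contradicting the alternation), and comparing heights and $\hor_\theta$-values against the unique intervening entry identifies $\tfrac12(w_{j+1}-w_j)$ with $v_j$. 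Thus the $w_j$ have strictly increasing heights, consecutive terms are related by a Dehn twist about $v_j$, and $\sum_j|w_j\times v_j|<\infty$, so Theorem~\ref{thm:sumx} yields that the inverse slopes $s_j$ of the $w_j$ converge to a nonergodic direction. Finally, the $w_j$ are themselves $Z$-convergents of $\theta$, so $\hor_\theta(w_j)$ is non-increasing in $j$ while $|w_j|\to\infty$; combined with the identity $\hor_\theta(w_j)=|w_j|\,|\theta-s_j|$ this forces $s_j\to\theta$, whence $\theta$ is nonergodic.

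\emph{Necessity.} Assume now that $\theta$ is minimal and nonergodic. The surface $X_{\lambda,\mu}$ is assembled from two tori interchanged by the deck involution of the branched double cover, so a minimal nonergodic vertical foliation has exactly two ergodic components, each supported up to a null set on one of the two tori, whose common interface is an involution-invariant union of leaves. Approximating this interface by saddle connections --- which are forced to be separating, hence to lie in $V_2$ --- the Masur--Smillie/Cheung--Eskin analysis of the decomposition (\cite{MS}, \cite{CE}) produces a sequence of separating slits $\{w_j\}$ with $|w_j|\to\infty$, consecutive terms related by Dehn twists about loops $v_j$, inverse slopes converging to $\theta$, and $\sum_j|w_j\times v_j|<\infty$. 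It then remains to identify $\{w_j,v_j\}$ with the tail of the $Z$-expansion of $\theta$: the summability forces $\hor_\theta(w_j)$ to be so small relative to $|w_j|$ that no element of $Z$ of comparable or smaller height approximates $\theta$ as well, so each $w_j$ is a $Z$-convergent; each loop $v_j=\tfrac12(w_{j+1}-w_j)\in V_0\subset Z$ is a $Z$-convergent for the same reason; and no further $Z$-convergent intervenes, which is a counting estimate for the number of $Z$-convergents in each height window $(|w_j|,|w_{j+1}|)$, obtained from the intersection-number and area geometry of the two-torus construction in the spirit of \cite[\S3--4]{2011Dichotomy}. Combining the two cases gives the stated characterisation.

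I expect the main obstacle to lie in the necessity direction, and specifically in this last identification step: ruling out extraneous $Z$-convergents between consecutive separating slits demands quantitative control on how well loops and non-separating slits can approximate $\theta$ in the pertinent height ranges, and it is precisely the choice $Z=V_0\cup V_2$ --- rather than the full set of saddle connections --- that makes the decomposition sequence and the $Z$-expansion coincide. A second genuinely hard input is producing that decomposition sequence at all, i.e., converting nonergodicity into an explicit splitting along separating curves with summable cross-products; for this I would invoke the Cheung--Eskin machinery directly rather than reprove it.
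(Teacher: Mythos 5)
Theorem~\ref{thm:CE} is not proved in this paper at all: it is quoted from \cite{CE} (and is the same characterisation underlying \cite{2011Dichotomy}), so there is no internal argument to compare your proposal against. Judged on its own, your text is an outline of how a proof would be organised rather than a proof, and the two places you yourself flag as the hard points are exactly where it is incomplete. In the sufficiency direction, applying Theorem~\ref{thm:sumx} requires knowing that consecutive separating-slit convergents satisfy $w_{j+1}-w_j\in\Z v_j$ with $v_j$ the intervening loop convergent, so that the hypothesis (\ref{ieq:sumx}) of the theorem is the same sum as the one assumed on the $Z$-expansion. Your argument for this is not sound as stated: if $\tfrac12(w_{j+1}-w_j)=ku$ with $u$ primitive and $k\ge 2$, the slit $w_j+2u\in V_2$ of intermediate height is an intermediate \emph{$Z$-convergent} only if it minimises $\hor_\theta$ among all elements of $Z$ of no larger height, and you supply no estimate on $\hor_\theta(w_j+2u)$ forcing that; similarly, the identification of the primitive vector with $v_j$ ``by comparing heights and $\hor_\theta$-values'' is asserted, not derived. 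This structural fact about $Z$-expansions is genuinely part of what \cite{CE} proves --- the present paper uses precisely this consequence when it writes $w_{j+1}=w_j+bv_j$ with $b$ a nonzero even integer in the proof of Lemma~\ref{lem:min:area} --- so it cannot be dispatched in one sentence. (The part of your sufficiency argument showing that the limit of the inverse slopes is $\theta$ itself, via the monotonicity of $\hor_\theta$ along the expansion and $|w_j|\to\infty$ from Lemma~\ref{lem:Minkowski}, is fine.)

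In the necessity direction the proposal defers entirely to the Masur--Smillie and Cheung--Eskin machinery, both for converting nonergodicity into a sequence of separating slits with summable cross-products and for the counting/approximation estimates identifying that sequence with the tail of the $Z$-expansion of $\theta$. Citing that machinery is legitimate --- it is effectively what the paper does by stating the theorem with the reference \cite{CE} --- but it means your write-up does not constitute an independent proof: the existence of the two-component ergodic decomposition concentrated on the two tori, the production of the separating slits with $\sum_j|w_j\times v_j|<\infty$, and the exclusion of extraneous $Z$-convergents between consecutive $w_j$ are precisely the content of the cited result, and none of them is actually carried out in your sketch.
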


\subsection{Liouville directions}\label{ss:LVdir}
\begin{definition}
A minimal direction is \emph{Diophantine} relative to $Z$ 
if its $Z$-expansion satisfies 
\begin{equation}\label{def:D_N}
  |w_{k+1}|=O\left(|w_k|^N\right)
\end{equation}
for some $N$.  And if no such $N$ satisfy (\ref{def:D_N}), it is \emph{Liouville} relative to $Z$.  
\end{definition}

The next lemma states that the collection of Liouville directions is a $0$ Hausdorff dimension set. Then the remaining task is to show that every direction $\theta\in \NE(X_{\lambda,\mu},\omega)$ is Liouville relative to $Z$. 

\begin{lemma}\cite[Corollary 3.9]{2011Dichotomy}\label{cor:LVdir}
The set of Liouville directions relative to the set of holonomies of saddle connections on a translation surface has Hausdorff dimension zero.  
\end{lemma}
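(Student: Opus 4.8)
The plan is to run a Jarník--Besicovitch style covering argument: a Liouville direction is, infinitely often, extremely well approximated by the rational points $\dot v$ coming from $v\in Z$, while $Z$ has at most quadratic growth, so the resulting $\limsup$ set is thin. Write $\{v_k\}$, $v_k=(p_k,q_k)$, for the $Z$-expansion of $\theta$. By Lemma~\ref{lem:Minkowski} it is infinite (a Liouville direction is minimal), and its heights $q_k=|v_k|$ increase to $\infty$ since $Z$ is discrete. Unwinding the definition, $\theta$ is Liouville relative to $Z$ precisely when, for every $N$, there are infinitely many $k$ with $|v_{k+1}|\ge|v_k|^{N}$; write $L_N$ for the set of directions with this property, so that the set $L$ of Liouville directions satisfies $L\subseteq L_N$ for every $N$.

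The first step is a gap estimate: for the infinitely many indices $k$ with $|v_{k+1}|\ge|v_k|^{N}$ one has $\hor_\theta(v_k)\le C\,|v_k|^{-N}$ with $C=C(Z)$. Indeed, $v_{k+1}$ is the next $Z$-convergent after $v_k$, so no element of $Z$ of height in $(|v_k|,|v_{k+1}|)$ beats $\hor_\theta(v_k)$, nor does any of height $\le|v_k|$; but Dirichlet's theorem (applied inside $V_0\subseteq Z$; for a general translation surface, a Minkowski/counting argument using the quadratic growth of saddle connections) produces an element of $Z$ of height at most $|v_k|^{N}<|v_{k+1}|$ with $\hor_\theta$ at most $|v_k|^{-N}$, and comparing gives $\hor_\theta(v_k)\le C\,|v_k|^{-N}$. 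Consequently, for those $k$,
\[
\Bigl|\theta-\tfrac{p_k}{q_k}\Bigr|=\frac{\hor_\theta(v_k)}{q_k}\le C\,q_k^{-N-1},
\]
so every $\theta\in L_N$ lies in $\bigcup\{B(\dot v,\,C|v|^{-N-1}):v=(p,q)\in Z,\ q\ge Q\}$ for every threshold $Q$.

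It then remains to estimate the Hausdorff content of this cover. It suffices to work on a bounded interval $I$ of directions (the line of directions is a countable union of these). By the classical quadratic upper bound $N(T)=O(T^{2})$ for the number of saddle connections of length at most $T$, one gets $\#\{v=(p,q)\in Z:q\in[2^{j},2^{j+1}),\ \dot v\in I\}=O(2^{2j})$, so for $s>0$
\[
\sum_{\substack{v=(p,q)\in Z\\ q\ge Q,\ \dot v\in I}}\bigl(2C|v|^{-N-1}\bigr)^{s}\ \ll_{s,C,I}\ \sum_{2^{j}\ge Q}2^{2j}\,2^{-j(N+1)s}\ =\ \sum_{2^{j}\ge Q}2^{\,j(2-(N+1)s)},
\]
which converges and tends to $0$ as $Q\to\infty$ whenever $s>\tfrac{2}{N+1}$. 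Hence $\mathcal H^{s}(L_N\cap I)=0$ for all such $s$, so $\Hdim L_N\le\tfrac{2}{N+1}$; letting $N\to\infty$ yields $\Hdim L\le\inf_N\Hdim L_N=0$.

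The step I expect to be the main obstacle is the gap estimate. For a genuine lattice ($Z=\Z^{2}$) it is the classical relation $|q_k\theta-p_k|\asymp q_{k+1}^{-1}$ (cf.\ Theorem~\ref{thm:continued1}); for the surfaces $X_{\lambda,\mu}$ the inclusion of all primitive integer vectors, $V_0\subseteq Z$, makes the Dirichlet route above go through verbatim. For a general translation surface the holonomy set need not be a lattice, so one must instead extract a good vector from the quadratic growth of $N(T)$ by a Minkowski-type argument, being careful that the relevant convex region already contains $\pm v_{k+1}$. In any case only a polynomial gain, say $\hor_\theta(v_k)\le C|v_{k+1}|^{-\varepsilon}$ for some fixed $\varepsilon>0$, is needed for the conclusion, which relaxes the difficulty somewhat.
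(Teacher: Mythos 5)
The paper gives no proof of this lemma --- it is quoted directly from \cite{2011Dichotomy} --- so the comparison is with the argument there, and your outline is the same standard one: convert the Liouville condition on the $Z$-expansion into a Jarn\'ik-type $\limsup$ condition via a gap estimate, then cover $L_N$ by intervals of radius $C|v|^{-N-1}$ about the points $\dot v$, $v\in Z$, and use the quadratic count of holonomies per dyadic height range to get $\Hdim L_N\le \tfrac{2}{N+1}$, hence dimension zero after intersecting over $N$. For the case actually used in this paper, $Z=V_0\cup V_2\supseteq V_0$, your gap estimate is correct: any $u\in Z$ of height in $(|v_k|,|v_{k+1}|)$ satisfies $\hor_\theta(u)\ge\hor_\theta(v_k)$ by consecutiveness of the convergents, and Dirichlet inside $V_0$ supplies a primitive $u$ of height at most $|v_k|^N$ with $\hor_\theta(u)\le |v_k|^{-N}$; the Hausdorff-content computation and the passage $L\subseteq\bigcap_N L_N$ are fine.

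The step that does not work as you describe is the gap estimate for a \emph{general} translation surface, which is the setting of the statement. Quadratic growth of $N(T)$ yields no Dirichlet-type property: knowing there are $O(T^2)$ (or even $\asymp T^2$) saddle connections of length at most $T$ does not prevent their directions from leaving a large angular gap around the particular $\theta$ at hand (pigeonhole shows some gap is small, not that every direction is well approximated), and a Minkowski-type argument is unavailable because the holonomy set is not a lattice. The standard substitute, and the way the hypothesis on $Z$ is verified in \cite{2011Dichotomy}, is geometric rather than counting-based: every area-one surface in a fixed stratum has a saddle connection of length at most a universal constant $c$ (grow an embedded disk at a zero; area one forces it to stop at bounded radius), and applying $g_t r_\theta$ and pulling back produces, for every $t$, a saddle connection $v$ with $\hor_\theta(v)\le ce^{-t}$ and height at most $ce^{t}$. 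This gives exactly the needed estimate $\hor_\theta(v_k)\,|v_{k+1}|\le c^2$ for the $Z$-expansion on an arbitrary translation surface. With that replacement your covering argument goes through verbatim; the ``Minkowski/counting'' route for the general case should be discarded.
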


By Lemma~\ref{cor:LVdir}, it is enough to show that every nonergodic direction is Liouville relative to $Z$.  We argue by contradiction and suppose $\theta\in \NE(X_{\lambda,\mu},\omega)$ is Diophantine relative to $Z$. We shall exploit (\ref{PM:div}) together with the Diophantine property to get a contradiction to (\ref{ieq:sumx}).

\begin{subsection}{Liouville convergents}
From now on, we set $\Tilde{C}=16C_1$.
Next Lemma shows that the best approximation vectors of $(\lambda,\mu)$ with $q_{k+1}\gg q_k$ give rise to the convergents of $\frac{\lambda+m}{\mu+n}$.

\begin{lemma}\label{Lemma:Liouville convergent}
    Assume that $w=(\lambda+m,\mu+n)$ is a slit, and $(\frac{p_{k,1}}{q_k},\frac{p_{k,2}}{q_k})$ is a best approximation vector of $(\lambda,\mu)$. 
    Let $\frac{p}{q}$ denote the fraction $\frac{p_{k,1}+mq_k}{p_{k,2}+nq_k}$ in lowest terms. If the height of $w$ such that
    \begin{align}\label{height restriction}
        |w|<\frac{q_{k+1}^{1/2}}{\Tilde{C}q_k}.
    \end{align}
    then $\frac{p}{q}$ is a convergent of $\frac{\lambda+m}{\mu+n}$ and the height of next convergent is larger than $\frac{q_{k+1}^{1/2}}{2C_1}$.    
\end{lemma}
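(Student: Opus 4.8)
The plan is to reduce the lemma to the two classical one--dimensional estimates recalled as Theorems~\ref{thm:continued1} and~\ref{lem:estimate for continued fractions}. The one ingredient about best approximation vectors that is really needed is the upper bound in~(\ref{C_2}); everything else is bookkeeping around an exact algebraic identity.

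First I would record that identity. Put $\mathbf p_k=(p_{k,1},p_{k,2})$ and $\delta=(\delta_1,\delta_2):=q_k(\lambda,\mu)-\mathbf p_k$, so that $p_{k,1}+mq_k=q_k(\lambda+m)-\delta_1$ and $p_{k,2}+nq_k=q_k(\mu+n)-\delta_2$. Putting everything over a common denominator and simplifying gives
\[
\frac{\lambda+m}{\mu+n}-\frac pq\;=\;\frac{\lambda+m}{\mu+n}-\frac{p_{k,1}+mq_k}{p_{k,2}+nq_k}\;=\;\frac{(\mu+n)\,\delta_1-(\lambda+m)\,\delta_2}{(\mu+n)\,(p_{k,2}+nq_k)},
\]
where we use that $\tfrac pq$ and the unreduced fraction $\tfrac{p_{k,1}+mq_k}{p_{k,2}+nq_k}$ represent the same number. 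This is the crux: the error in approximating the inverse slope of $w$ is controlled directly by $\delta$, which is tiny precisely when $q_{k+1}\gg q_k$. From~(\ref{C_2}) (enlarging $C_1$ if needed to absorb a norm--comparison constant) we have $|\delta_1|+|\delta_2|\le C_1 q_{k+1}^{-1/2}$.

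Next I would feed in two elementary facts: the slits that occur have bounded inverse slope, so $|\lambda+m|\le|w|=\mu+n$; and $q\mid(p_{k,2}+nq_k)$, so $q\le|p_{k,2}+nq_k|\le q_k|w|+|\delta_2|$. Substituting into the identity yields $q\,\bigl|\tfrac{\lambda+m}{\mu+n}-\tfrac pq\bigr|\le|\delta_1|+|\delta_2|\le C_1 q_{k+1}^{-1/2}$. The hypothesis~(\ref{height restriction}) reads $q_k|w|<q_{k+1}^{1/2}/(16C_1)$, and since $q_k|w|\ge1$ this also forces $q_{k+1}>256C_1^2$; a short computation then gives $q\cdot C_1 q_{k+1}^{-1/2}<\tfrac12$, hence $\bigl|\tfrac{\lambda+m}{\mu+n}-\tfrac pq\bigr|<\tfrac1{2q^2}$. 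By Theorem~\ref{lem:estimate for continued fractions}, $\tfrac pq$ is a convergent of $\tfrac{\lambda+m}{\mu+n}$. For the second claim, let $q'$ be the denominator of the following convergent; the left inequality of Theorem~\ref{thm:continued1} gives $\bigl|\tfrac{\lambda+m}{\mu+n}-\tfrac pq\bigr|>\tfrac1{q(q+q')}$, so
\[
q'\;>\;\frac1{q\,\bigl|\tfrac{\lambda+m}{\mu+n}-\tfrac pq\bigr|}-q\;\ge\;\frac{q_{k+1}^{1/2}}{C_1}-q ,
\]
and since the same bounds give $q<q_k|w|+C_1 q_{k+1}^{-1/2}<q_{k+1}^{1/2}/(8C_1)$, we get $q'>\tfrac78\,q_{k+1}^{1/2}/C_1>q_{k+1}^{1/2}/(2C_1)$.

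There is no deep obstacle here; the care is all in the constants. The two things to watch are: (i) the normalization $|\lambda+m|\le|w|$, which should be stated as a standing assumption (legitimate because the slits in the construction converge to directions in a bounded interval, hence have bounded inverse slope), and without which the numerator of the identity cannot be bounded by a multiple of $|w|\,\|\delta\|$; and (ii) keeping the norm--comparison constant and the lower--order error terms ($|\delta_2|$ inside $q$, and $C_1^2 q_{k+1}^{-1}$) under control, so that the clean threshold $16C_1$ in~(\ref{height restriction}) --- rather than some larger multiple --- already does the job. The deliberately generous factor $16$ is exactly what guarantees $q_{k+1}^{1/2}>16C_1$ and thus that every error term stays comfortably below the main term in both halves of the argument.
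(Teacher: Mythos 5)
Your proof is correct and takes essentially the same route as the paper's: the same algebraic identity expressing $\frac{\lambda+m}{\mu+n}-\frac{p}{q}$ in terms of $q_k(\lambda,\mu)-\mathbf{p}_k$ (the paper phrases it as the cross-product $|w\times v|/(|w||v|)$), the bound (\ref{C_2}), Theorem~\ref{lem:estimate for continued fractions} to recognize a convergent, and the lower bound in Theorem~\ref{thm:continued1} to control the next denominator. The two caveats you flag explicitly --- the bounded inverse-slope normalization and the norm-comparison constant absorbed into $C_1$ --- are precisely the implicit assumptions behind the paper's own factor $4\|q_k\mathbf{x}-\mathbf{p}_k\|$ and its stated threshold $\tilde{C}=16C_1$, so your argument matches (and if anything tracks the constants more carefully than) the paper's proof.
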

\begin{proof}
 Let $v=(p,q)$ and $\mathbf{x}=(\lambda,\mu)$. 
 Since
    \begin{align*}
        \frac{|w\times v|}{|w||v|}   
        &=|\frac{\lambda+m}{\mu+n}-\frac{p_{k,1}+mq_k}{p_{k,2}+nq_k}|\\
        &=\frac{|(\lambda+m)(p_{k,2}-q_k\mu)-(\mu+n)(p_{k,1}-q_k\lambda)|}{|w|(p_{k,2}+nq_k)},
    \end{align*}
    we have
    \begin{align}\label{align:upper bound for liouville convergent}
        \frac{|w\times v|}{|w|}\leq 4\|q_k\mathbf{x}-\mathbf{p_k}\|\leq \frac{4C_1}{q_{k+1}^{1/2}}
    \end{align}
    for $(\mathbf{p_k},q_k)=(p_{k,1},p_{k,2},q_k)$.
    While $|v|\leq nq_k+p_{k,2} \leq 2q_k|w|$,
    then 
    $$2|v|\frac{|w\times v|}{|w|}\leq \frac{\Tilde{C}q_k|w|}{q_{k+1}^{1/2}}.$$
    So if $|w|< \frac{q_{k+1}^{1/2}}{\Tilde{C} q_k}$, we will get
    $$\frac{|w\times v|}{|w||v|}<\frac{1}{2|v|^2}$$
    which implies that $\frac{p}{q}$ is a convergent of $\frac{\lambda+m}{\mu+n}$ by Theorem~\ref{lem:estimate for continued fractions}.
    
Let $q^\prime$ be the height of next convergent of $\frac{\lambda+m}{\mu+n}$. By (\ref{align:estimate for convergents}), we have
\begin{align*}
    q^\prime>\frac{2|w|}{|w\times v|}
\end{align*}
which implies $$q^\prime>\frac{q_{k+1}^{1/2}}{2C_1}.$$
\end{proof}

\begin{definition}\label{def:Liouville convergent}
    When the conclusion of Lemma \ref{Lemma:Liouville convergent} holds, we refer to $\frac{p}{q}$ (or the vector $v=(p,q)$) as the \textit{Liouville convergent} of $w$ indexed by k.
\end{definition}

We denote $$n_k=\frac{1}{2}\log_{q_k}{q_{k+1}}-2$$ for each $k$. Then $\left\{n_k\right\}$ is a divergent sequence. If not, for some N, we have 
$$\sum \frac{\log\log q_{k+1}}{q_k} \le \sum \frac{\log 2N+\log\log q_k}{q_k}< \infty.$$
Let 
\begin{align}
    S_k= \left\{w, q_k<|w|<q_k^{n_k}\right\},
\end{align}
the next lemma gives a sufficient condition on the non-zero upper bound for the cross product $|w \times v|$ where $w \in V_2$ and $v\in V_0$.

\begin{lemma}\label{lem:min:area}
Let $\theta$ be a minimal nonergodic  direction and $w_j$, $w_{j+1}$, $w_{j+2}$ be three consecutive separating slits in its $Z$-expansion.
Suppose further that $w_j,w_{j+1},w_{j+2}\in S_k$ for some $k$, then we can deduce that
\begin{align}\label{aligh:cross products of u and v}
    |w_j\times v_j|>\frac{1}{4q_k}.
\end{align}    
\end{lemma}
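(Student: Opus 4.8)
The plan is to exploit the definition of $Z$-convergents together with the height estimates for $Z$-convergents on the torus double cover. Since $w_j, w_{j+1}, w_{j+2}$ are three consecutive separating slits in the $Z$-expansion of $\theta$, and the $Z$-expansion is eventually alternating between loops and separating slits, there is a loop $v_j \in V_0$ between $w_j$ and $w_{j+1}$ in the $Z$-expansion, i.e. the expansion reads $\ldots, w_j, v_j, w_{j+1}, \ldots$, and by Theorem~\ref{thm:CE} consecutive slits are related by a Dehn twist about $v_j$, so $w_{j+1} - w_j \in \Z v_j$. In particular $|w_j \times v_j| = |w_{j+1} \times v_j| = \tfrac{1}{k}|w_{j+1}-w_j|\times$ something; more usefully, $v_j$ being a loop of the minimal direction means $\hor_\theta(v_j)$ is controlled by the heights of the neighbouring $Z$-convergents $w_j$ and $w_{j+1}$.

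First I would record the standard ``area'' identity: for a slit $w$ and a loop $v$ adjacent in the $Z$-expansion, $|w \times v| = |w|\hor_\theta(v) + |v|\hor_\theta(w)$ up to sign and lower-order terms, or more cleanly, using that $v_j$ minimizes $\hor_\theta$ among vectors of height $\le |v_j|$ and $w_{j+1}$ is the next $Z$-convergent, one gets $\hor_\theta(v_j) \geq c/|w_{j+1}|$ for a definite constant. Indeed, the non-minimality of $\hor_\theta$ past $w_{j+1}$ forces $\hor_\theta(v_j) > \hor_\theta(w_{j+1})$, and the Minkowski-type spacing of $Z$-convergents (Lemma~\ref{lem:Minkowski} and the underlying lattice geometry) gives $\hor_\theta(w_{j+1}) \cdot |w_{j+2}| \gtrsim 1$. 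Then I would combine $|w_j \times v_j| \ge |w_j| \hor_\theta(v_j)$ with the lower bound $|w_j| > q_k$ coming from $w_j \in S_k$, and with the upper bound $|w_{j+2}| < q_k^{n_k}$ — but this alone is too weak, so the real input must be that the heights of three consecutive separating slits inside the single block $S_k = \{w : q_k < |w| < q_k^{n_k}\}$ cannot all be ``too far apart'': since separating slits differ by Dehn twists about loops, and all heights lie in $[q_k, q_k^{n_k}]$, the ratio $|w_{j+2}|/|w_j|$ is bounded polynomially, hence one of the cross-products $|w_j \times v_j|$ must be at least of order $|w_j|/|w_{j+2}| \cdot (\text{const}) \gtrsim 1/(4q_k)$ after absorbing constants, which is exactly (\ref{aligh:cross products of u and v}).

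More precisely, the argument I would carry out is: by the alternating structure there are loops $v_j$ (between $w_j,w_{j+1}$) and $v_{j+1}$ (between $w_{j+1},w_{j+2}$); the Dehn-twist relation gives $w_{j+1}=w_j + a_j v_j$ and $w_{j+2}=w_{j+1}+a_{j+1}v_{j+1}$ with $a_j, a_{j+1}$ positive integers; comparing heights and using $w_j,w_{j+1},w_{j+2} \in S_k$ I would bound $a_j |v_j| \le |w_{j+1}| \le q_k^{n_k}$ and $|v_j| \ge |w_{j+1}| - |w_j| \geq$ (a lower bound forcing $|v_j|$ not too small relative to $q_k$, since heights exceed $q_k$ and a loop shorter than that would contradict $w_j$ being a $Z$-convergent). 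Then $|w_j \times v_j| = |w_j||v_j| |\theta - \dot v_j| \cdot(\text{comparison})$, and using that $v_j$ is a $Z$-convergent so $\hor_\theta(v_j) \le \hor_\theta(w_j)$ while $\hor_\theta(v_j)$ is bounded below by the reciprocal of the next height, I arrive at $|w_j\times v_j| \ge \tfrac{|w_j|}{4|w_{j+2}|} \cdot \tfrac{|w_{j+2}|}{q_k}$-type bound; the cleanest route is to show directly $|w_j \times v_j| \ge \tfrac{|w_j|}{4 \max(|w_j|,|v_j|,\dots)}$ and that this max is $\le q_k \cdot(\text{something absorbed})$, giving $> \tfrac{1}{4q_k}$.

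The main obstacle I anticipate is pinning down the precise geometric reason why three consecutive separating slits trapped in the block $S_k$ force a cross-product of size $1/q_k$ rather than merely $1/q_k^{n_k}$: the naive bound using only the outer endpoints of $S_k$ gives the wrong (too small) quantity. The resolution must use that separating slits are a sparse, structured subset (they differ by Dehn twists about $V_0$, with the twist loops themselves being $Z$-convergents squeezed between consecutive slit heights), so the loop $v_j$ realizing the twist from $w_j$ to $w_{j+1}$ has height comparable to $|w_j|$ itself — not to $q_k^{n_k}$ — and hence $\hor_\theta(v_j) \gtrsim 1/|v_j| \gtrsim 1/|w_{j+1}|$, and with $|w_j| > q_k$ one gets $|w_j \times v_j| \gtrsim |w_j|/|w_{j+1}| \cdot 1 \gtrsim 1/4q_k$ once the ratio $|w_{j+1}|/|w_j|$ is controlled; controlling that ratio using the three-term window and the area/lattice constraints is the technical heart of the proof.
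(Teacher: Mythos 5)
Your proposal has a genuine gap: it never brings in the best approximation vectors of $(\lambda,\mu)$ — the quantity $q_k$ in the conclusion — and the heuristics you substitute for them are false. First, the claim $\hor_\theta(v_j)\gtrsim 1/|v_j|$ (your ``resolution'' of the main obstacle) is wrong: $\hor_\theta(v_j)$ is of order the reciprocal of the height of the \emph{next} convergent of the inverse slope of $w_{j+1}$, which can be arbitrarily much smaller than $1/|v_j|$; that is exactly the scenario the lemma has to control. Second, there is no Minkowski-type bound $\hor_\theta(w_{j+1})\cdot|w_{j+2}|\gtrsim 1$ for consecutive $Z$-convergents here: $Z=V_0\cup V_2$ is not a lattice, and Lemma~\ref{lem:Minkowski} only characterizes when the $Z$-expansion is infinite. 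Third, the ratio $|w_{j+2}|/|w_j|$ inside $S_k$ is \emph{not} polynomially bounded — heights in $S_k$ range up to $q_k^{n_k}$ with $n_k$ unbounded — so an estimate of the form $|w_j\times v_j|\gtrsim |w_j|/|w_{j+2}|$ would only yield $\gtrsim q_k^{1-n_k}$, far weaker than $1/(4q_k)$. Also, your starting inequality $|w_j\times v_j|\ge |w_j|\hor_\theta(v_j)$ is not valid in general, since $|w\times v|$ is a signed combination of $|v|\hor_\theta(w)$ and $|w|\hor_\theta(v)$ and cancellation can occur.

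The actual mechanism in the paper is different. Since $|w_j\times v_j|$ is small, $v_j=(p,q)$ is a convergent of the inverse slope $\alpha'$ of $w_{j+1}$, and the next convergent height satisfies $q'\asymp |w_{j+1}|/|w_j\times v_j|$. The decisive input is Lemma~\ref{Lemma:Liouville convergent}: because $|w_{j+1}|<q_k^{n_k}\le q_{k+1}^{1/2}/(\Tilde{C}q_k)$, the $k$-th best approximation vector of $(\lambda,\mu)$ produces a convergent $u$ of $\alpha'$ (the Liouville convergent) with $|u|\le 2q_k|w_{j+1}|$ whose successor has height greater than $q_{k+1}^{1/2}/(2C_1)$. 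Comparing $u$ with $v_j$: one first rules out $|u|<|v_j|$, and then either $|u|>|v_j|$, which forces $q'\le |u|\le 2q_k|w_{j+1}|$ and hence $|w_j\times v_j|>1/(4q_k)$; or $|u|=|v_j|$, in which case $q'>q_{k+1}^{1/2}/(2C_1)$ and one shows $|w_{j+2}|>|v_{j+1}|>q'/2$, contradicting $|w_{j+2}|<q_k^{n_k}$. This last step is precisely where the hypothesis $w_{j+2}\in S_k$ is used — a role your sketch never identifies. Without the Liouville-convergent comparison, your outline cannot produce the bound $1/(4q_k)$.
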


\begin{proof}
By applying Theorem~\ref{thm:CE}, we can write $w_{j+1}=w_j+bv_j$ for some nonzero, even integer b. Then
    \begin{align*}
        |v_j|=\frac{|w_{j+1}-w_j|}{|b|}\leq\frac{|w_{j+1}|+|w_j|}{2}<|w_{j+1}|.
    \end{align*}
    Let $\alpha^\prime$ be the inverse slope of $w_{j+1}$ and $v_j=(p,q)$, we have
    \begin{align*}
        |\alpha^\prime-\frac{p}{q}|=\frac{|w_{j+1} \times v_j|}{|w_{j+1}||v_j|}<\frac{|w_j\times v_j|}{|v_j|^2}<\frac{1}{2q^2}.
    \end{align*}
    Then Theorem~\ref{lem:estimate for continued fractions} shows that $\frac{p}{q}$ is a convergent of $\alpha^\prime$. Let $q^\prime$ be the height of next convergent of $\alpha^\prime$. Then (\ref{align:estimate for convergents}) implies
    \begin{align*}
        \frac{1}{2qq^\prime}<|\alpha^\prime-\frac{p}{q}|<\frac{1}{qq^\prime}
    \end{align*}
    equivalently,
    \begin{align*}
        \frac{|w_{j+1}|}{2|w_j\times v_j|}<q^\prime<\frac{|w_{j+1}|}{|w_j\times v_j|}.
    \end{align*}
   and let $u$ be the Liouville convergent of $w_{j+1}$ indexed by k. $u$ can not have its height $|u|<q$ because Lemma \ref{Lemma:Liouville convergent} implies the height of next convergent of $\alpha^{\prime}$ is greater than $\frac{q_{k+1}^{1/2}}{2C_1}>|w_{j+1}|>|v_j|=q$, contracting the fact that q is also the height of a convergent of $\alpha^\prime$. Thus $|u|\geq |v_j|$.
    
    If we have $|u|>|v_j|$ so that $|u|\geq q^\prime>\frac{|w_{j+1}|}{2|w_j\times v_j|}$. Since $|u|\leq
    p_{k,2}+|w_{j+1}|q_k \leq 2|w_{j+1}|q_k$, the inequality (\ref{aligh:cross products of u and v}) is fulfilled.
    
    If not, we have $|u|=|v_j|$. Then by Lemma~\ref{Lemma:Liouville convergent}, we have
    $q^\prime>\frac{q_{k+1}^{1/2}}{C_1}$. By the definition of $v_{j+1}$, we have
    \begin{align*}
        1\leq |u\times v_{j+1}|&\leq|u||v_{j+1}|(\sin(\angle{w_{j+1} u})+\frac{|w_{j+1}\times v_{j+1}|}{|w_{j+1}||v_{j+1}|})\\
    &\leq \frac{|v_{j+1}|}{q^\prime}+\frac{|v_j|}{2|w_{j+1}|}<\frac{|v_{j+1}|}{q^\prime}+\frac{1}{2}
    \end{align*}
    from which it follows that $|v_{j+1}|>\frac{q^\prime}{2}>\frac{q_{k+1}^{1/2}}{\Tilde{C}}$. Then we can deduce that
    $$|w_{j+2}|>|v_{j+1}|>\frac{q_{k+1}^{1/2}}{\Tilde{C}}$$
    which contradicts with the height condition $|w_{j+2}|<q_k^{n_k}$.
\end{proof}
After preparing all ingredients, we begin to prove the main theorem of this section.
\begin{theorem}\label{lem:NE->LV}
    Assume 
    \begin{align*}
        \sum_{k}^{\infty}\frac{\log\log q_{k+1}}{q_k}=\infty
    \end{align*}
    holds. Then any minimal $\theta\in\NE(X_{\lambda,\mu},\omega)$ is Liouville relative to $Z$ which deduce that $\Hdim\NE(P_{\lambda,\mu}$) = 0.
\end{theorem}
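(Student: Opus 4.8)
The plan is to argue by contradiction, following the strategy sketched in the introduction. First I would assume $\theta$ is a minimal direction in $\NE(X_{\lambda,\mu},\omega)$ that is \emph{Diophantine} relative to $Z=V_0\cup V_2$, and aim to contradict the summability condition~(\ref{ieq:sumx}). By Theorem~\ref{thm:CE} the $Z$-expansion of such a $\theta$ is eventually alternating between loops and separating slits, so I reindex it as a sequence of consecutive separating slits $w_1,w_2,\dots$ with $|w_j|\nearrow\infty$, each pair $w_j,w_{j+1}$ related by a Dehn twist about a loop $v_j$, and $\sum_j|w_j\times v_j|<\infty$. The Diophantine hypothesis gives $|u_{i+1}|=O(|u_i|^N)$ along the full $Z$-expansion for some $N$; after enlarging $N$ to be $\ge2$ and absorbing the (single) intervening loop between consecutive slits, this becomes $|w_{j+1}|\le C|w_j|^{N'}$ for all large $j$, with $N':=N^2\ge4$.

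The heart of the argument is to use this slow growth to show that each window $S_k=\{w:q_k<|w|<q_k^{\,n_k}\}$ contains a long run of consecutive slits of the $Z$-expansion. Fixing $k$ large and letting $j_0=\min\{j:|w_j|>q_k\}$, we have $|w_{j_0-1}|\le q_k$, so iterating $|w_{j+1}|\le C|w_j|^{N'}$ gives, for every $t\ge0$,
\[
\log|w_{j_0+t}|\ \le\ (N')^{t}\bigl(\log|w_{j_0}|+\tfrac{\log C}{N'-1}\bigr)\ \le\ (N')^{t+2}\log q_k .
\]
Since $q_k^{\,n_k}=q_{k+1}^{1/2}q_k^{-2}$ and $n_k\to\infty$ (as noted before Lemma~\ref{lem:min:area}), this forces $w_{j_0+t}\in S_k$ whenever $(N')^{t+2}\le n_k$, so $S_k$ contains the $m_k\ge\log_{N'}n_k-2$ consecutive slits $w_{j_0},\dots,w_{j_0+m_k-1}$, hence at least $(\log_{N'}n_k-4)_+$ triples of consecutive separating slits all lying in the \emph{same} window. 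Lemma~\ref{lem:min:area} applies to each such triple — its proof uses Lemma~\ref{Lemma:Liouville convergent}, whose height condition $|w|<q_{k+1}^{1/2}/(16C_1q_k)$ holds throughout $S_k$ once $k$ is large since $q_k^{\,n_k}=q_{k+1}^{1/2}q_k^{-2}<q_{k+1}^{1/2}/(16C_1q_k)$ — and yields $|w_j\times v_j|>\tfrac1{4q_k}$ for the first slit of the triple, so the slits inside $S_k$ contribute more than $(\log_{N'}n_k-4)_+/(4q_k)$ to $\sum_j|w_j\times v_j|$.

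Because $q_k^{\,n_k}<q_{k+1}$, the windows $S_k$ are pairwise disjoint, so these contributions come from disjoint collections of pairs $(w_j,v_j)$; summing over $k$ gives $\sum_j|w_j\times v_j|\ge\tfrac14\sum_k(\log_{N'}n_k-4)_+/q_k$, and what remains is to see this series diverges. I would split the indices into $A=\{k:n_k\ge\sqrt{\log q_{k+1}}\}$ and its complement $B$. For $k\in B$, reading off $n_k=\tfrac{\log q_{k+1}}{2\log q_k}-2$ shows $\log\log q_{k+1}\le\log16+2\log\log q_k$, hence $\sum_{k\in B}\tfrac{\log\log q_{k+1}}{q_k}\le 3\sum_k\tfrac{\log\log q_k}{q_k}+O(1)<\infty$, using the convergence $\sum_k\tfrac{\log\log q_k}{q_k}<\infty$ that also underlies the unboundedness of $\{n_k\}$. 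Since $\sum_k\tfrac{\log\log q_{k+1}}{q_k}=\infty$ by~(\ref{PM:div}), it follows that $\sum_{k\in A}\tfrac{\log\log q_{k+1}}{q_k}=\infty$; and for $k\in A$ and large, $\log_{N'}n_k-4\ge\tfrac{\log\log q_{k+1}}{2\log N'}-4\ge\tfrac{\log\log q_{k+1}}{4\log N'}$, so restricting the sum to $k\in A$ already gives $\sum_j|w_j\times v_j|=\infty$, the desired contradiction. Consequently every minimal $\theta\in\NE(X_{\lambda,\mu},\omega)$ is Liouville relative to $Z$; as the remaining nonergodic directions are non-minimal and therefore form a countable set (directions of saddle connections), Lemma~\ref{cor:LVdir} yields $\Hdim\NE(P_{\lambda,\mu})=0$.

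I expect the real obstacle to be the bookkeeping in the last step — converting the divergence hypothesis~(\ref{PM:div}) into divergence of $\sum_k(\log_{N'}n_k)/q_k$ via the $A/B$ dichotomy, which hinges on the finiteness of $\sum_k\tfrac{\log\log q_k}{q_k}$ for best approximation denominators — while the iteration of the Diophantine inequality and the repeated invocation of Lemma~\ref{lem:min:area} are routine. The subtle points to get right are that every triple of slits be contained in a single window $S_k$ and that the windows for distinct $k$ be disjoint, so that no cross-product $|w_j\times v_j|$ is double-counted when the per-window estimates are summed.
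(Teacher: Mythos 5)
Your proof is correct and follows essentially the same route as the paper: assume $\theta$ is Diophantine relative to $Z$, locate roughly $\log_{N'} n_k$ consecutive slits of the $Z$-expansion inside each window $S_k$, apply Lemma~\ref{lem:min:area} to each triple to get a contribution of order $\frac{\log\log q_{k+1}}{q_k}$, and contradict (\ref{ieq:sumx}) via (\ref{PM:div}). Your $A/B$ split at the end merely makes explicit the comparison $\log n_k \gtrsim \log\log q_{k+1}$ (and the harmlessness of the discarded indices) that the paper compresses into its ``$\gtrsim$'' together with the unboundedness of $n_k$.
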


\begin{proof}
We prove by contradiction and suppose $\theta$ is Diophantine relative to $Z$.  
Then there exists 
$N'>1$ for which the growth estimate $|w_{k+1}|<|w_k|^{N'}$ holds for all $k$. Then we have
$$\# \left\{w_j,  q_k<|w_j|<q_k^{n_k}\right\} \geq \left \lfloor\frac{\log (n_k)}{\log{N'}}\right\rfloor >\frac{\log n_k}{2\log N'}$$ 
provided $n_k > N_0$ for some $N_0$ depending only on $N'$. Applying Lemma~\ref{lem:min:area},
we thus obtain a contradiction since 
$$\sum_j|w_j\times v_j|\geq\sum_{n_k > N_0}\sum_{w_j\in S_k} |w_j\times v_j| \gtrsim \sum_{n_k>N_0} \frac{\log \log q_{k+1}}{q_k}=\infty.$$
\end{proof}
The last equation is given by (\ref{PM:div}) and $n_k$ is unbounded.
\end{subsection}

\section{Two approaches to the construction}\label{s: two construction}
We begin the proof of the Hausdorff dimension \(\frac{1}{2}\) result. Theorem~\ref{thm:sumx} establishes that the limiting set of \(\{w_j\}\), satisfying condition (\ref{ieq:sumx}), converges to a nonergodic direction on \((X_{\lambda,\mu}, \omega)\). Based on this result, our strategy is to construct a tree of slits satisfying (\ref{ieq:sumx}), associate this tree to a Cantor set, and then prove that the Cantor set has Hausdorff dimension \(\tfrac{1}{2}\).

This section presents two approaches to the construction of tree of slits. Specifically, we propose the following algorithm. 
We fix a large number $N$ from now on. Let \begin{align}\label{def:ell_N}
    \ell_N=\{k: q_{k+1}>q_k^N\}.
\end{align}
An interval $(q_k,q_{k+1})$ is called \textit{long}, if $k\in \ell_N$. Otherwise, we call it \textit{short}.
When a slit $w$ of height $|w|$ lies within  the long interval $[q_k, q_{k+1}]$, we will use the \textit{Liouville construction} to identify the children of $w$. We will introduce this method in detail in \S\ref{s:Liouville}. Let $k,k'$ be the consecutive  elements of $\ell_N$. 
The primary construction in this paper is designed to address the case of a slit \( w \) whose height falls within the union of short intervals \((q_{k+1}^{\frac{1}{N}}, q_{k'}^{\frac{1}{3r}})\). We refer  it as \textit{Diophantine construction}, is established by Proposition~\ref{prop:normal}.

In the following three sections, we assume that \(\ell_N\) is an infinite set. Under this assumption, both the Liouville construction and the Diophantine construction will be required. The case where \(\ell_N\) is a finite set will be considered in \S~\ref{s:A specail case}.
The choice of the parameter $N$ will depend on considerations in $\S \ref{ss:w_0}$ and will be specified there, by (\ref{def:N}). The indices in $\ell_N$ will guide our choice of constructions.

\subsection{Diophantine construction}\label{s:Diophantine}
let us first recall the definition of good slit.
Assume parameters $1<\alpha <\beta$ be given.  

\begin{definition}
A slit $w$ is $(\alpha,\beta)$-\emph{good} if its inverse slope has a convergent of height $q$ satisfying $\alpha|w|\le q\le \beta|w|$.  
\end{definition}

Let $\Delta(w,\alpha,\beta)$ be the collection of slits of the form $w+2v$ where $v\in\mathbb{Z}\times\mathbb{Z}_{>0}$ satisfies $\gcd(v)=1$ and 
\begin{equation}\label{def:Delta}
  \beta|w| \le |v| \le 2\beta|w| \quad\text{and}\quad 
  \frac{1}{\beta} < |w\times v| < \frac{1}{\alpha}.  
\end{equation}
The next lemma gives a lower bound for the number of such $w+2v$ constructed from good slits $w$. 

\begin{lemma}\cite[Lemma 7.2]{2011Dichotomy}\label{lem:good}
There is a universal constant $0<c_0<1$\footnote{To apply \cite[Thm.3]{Ch1} one needs to assume 
  $\beta\gg\alpha$, but this hypothesis was shown to be redundant in 
  \cite{Ch2}.  Indeed, by \cite[Thm.4]{Ch2} we can take $c'_0=\frac{4}{27\pi}.$}  such that 
\begin{equation}\label{number:good}
  \#\Delta(w,\alpha,\beta)\ge\frac{c_0\beta}{\alpha}.  
\end{equation}
for any $(\alpha,\beta)$-good slit $w$ and $\alpha<c_0\beta$.  
\end{lemma}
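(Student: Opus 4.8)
The strategy is to reduce the counting statement to a lattice-point estimate and then invoke the result of Cheung (\cite[Thm.~3]{Ch1}, sharpened in \cite[Thm.~4]{Ch2}) that produces the universal constant $c_0=\frac{4}{27\pi}$. First I would fix an $(\alpha,\beta)$-good slit $w$ and let $q$ be a convergent of its inverse slope with $\alpha|w|\le q\le\beta|w|$; write $u=(p,q)$ for the corresponding loop vector, so that $u\in V_0$ and, by the convergent estimate $|\theta-p/q|\le 1/(qq')$ from (\ref{align:estimate for convergents}) applied to the inverse slope of $w$, the cross-product $|w\times u|$ is controlled — roughly $|w\times u|\asymp |w|/q'$ where $q'$ is the next convergent denominator. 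The point is that $u$ realizes an anomalously small value of $|w\times\cdot|$ at height comparable to $\beta|w|$, which is exactly the configuration needed to seed the construction.

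Next I would parametrize the candidate vectors $v$ by writing $v = v_0 + tu$ for a fixed $v_0$ transverse to $u$ and $t$ ranging over integers (or over an arithmetic progression, to keep $\gcd(v)=1$ and the parity constraints implicit in ``$w+2v$ is a separating slit'', i.e. $v\in\mathbb Z\times\mathbb Z_{>0}$ primitive). Along this line the quantity $|w\times v| = |w\times v_0| + t\,|w\times u|$ varies in steps of size $|w\times u|$, which is small; so the constraint $\frac1\beta < |w\times v| < \frac1\alpha$ from (\ref{def:Delta}) is satisfied by an interval of $\asymp \frac{1/\alpha - 1/\beta}{|w\times u|} \asymp \frac{q'}{\alpha|w|}$ consecutive values of $t$. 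Simultaneously the height constraint $\beta|w|\le |v|\le 2\beta|w|$ together with $|v_0|$ small forces $t$ into an interval of length $\asymp \frac{\beta|w|}{q}\asymp\frac{\beta}{\alpha}$. Intersecting the two $t$-ranges and keeping track of which progression survives the primitivity/parity sieve gives the lower bound $\#\Delta(w,\alpha,\beta)\gtrsim\beta/\alpha$; optimizing the transverse direction $v_0$ and the sieve density is where the explicit constant $c_0=\frac{4}{27\pi}$ enters, via \cite[Thm.~4]{Ch2}. The hypothesis $\alpha<c_0\beta$ is precisely what guarantees the resulting interval of admissible $t$ is nonempty (length $\ge 1$).

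The main obstacle I anticipate is not the counting heuristic above but the bookkeeping needed to make it \emph{uniform}: one must verify that the seed loop $u$ coming from the good-slit hypothesis has its next convergent denominator $q'$ large enough (equivalently $|w\times u|$ small enough) that the $t$-interval cut out by the cross-product constraint genuinely dominates, and that the constant hidden in ``$\asymp$'' does not silently depend on $w$ or on the Diophantine type of the inverse slope of $w$. The cleanest route is to black-box this: the configuration (a vector $u$ of height $\le\beta|w|$ with $|w\times u|$ as small as a convergent forces) is exactly the input of \cite[Thm.~3]{Ch1}/\cite[Thm.~4]{Ch2}, so once the reduction to that theorem's hypotheses is checked — which is the content of the good-slit definition — the universal constant and the removal of the auxiliary assumption $\beta\gg\alpha$ (done in \cite[Thm.~4]{Ch2}) come for free. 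I would therefore present the proof as: (1) extract $u$ from goodness; (2) set up the line $v=v_0+tu$ and translate (\ref{def:Delta}) into two interval conditions on $t$; (3) cite \cite[Thm.~4]{Ch2} to conclude both that the intersection is nonempty when $\alpha<c_0\beta$ and that it contains $\ge c_0\beta/\alpha$ admissible primitive vectors.
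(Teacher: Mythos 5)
The paper does not actually prove this lemma: it is imported verbatim from \cite[Lemma~7.2]{2011Dichotomy}, whose content ultimately rests on Cheung's counting theorem (\cite[Thm.~3]{Ch1}, sharpened in \cite[Thm.~4]{Ch2}), so your final step (3) --- reduce the hypotheses to the good-slit configuration and cite \cite[Thm.~4]{Ch2} for the uniform constant and the removal of $\beta\gg\alpha$ --- is exactly the treatment the paper adopts, and as a plan it is fine.

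One caution about your intermediate heuristic in step (2), which should not be mistaken for a self-contained argument: the one-parameter family $v=v_0+tu$ along the convergent direction cannot by itself yield $\gtrsim\beta/\alpha$ admissible vectors. The height constraint $\beta|w|\le|v|\le2\beta|w|$ confines $t$ to an interval of length about $\beta|w|/q$, and goodness only guarantees \emph{some} convergent height $q\in[\alpha|w|,\beta|w|]$; if $q$ is near the top of that range this interval has length $O(1)$, not $\asymp\beta/\alpha$. Moreover the two $t$-intervals you intersect (from the height constraint and from $\tfrac1\beta<|w\times v|<\tfrac1\alpha$) have different centers depending on $v_0$ and may well be disjoint for a given $v_0$. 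The count is genuinely two-dimensional: the region in (\ref{def:Delta}) is a thin parallelogram of area $\asymp\beta/\alpha$ lying nearly parallel to $w$, and one must sum over roughly $\beta|w|/q$ translates along $u$ and $q/(\alpha|w|)$ transverse residue classes, whose product is $\asymp\beta/\alpha$ independently of where $q$ sits in $[\alpha|w|,\beta|w|]$. Showing that the primitive lattice points (with the parity constraint making $w+2v$ separating) actually fill such a thin region in proportion to its area, uniformly in $w$, is precisely the nontrivial content of \cite[Thm.~4]{Ch2}; so either present the argument as a pure citation, as the paper does, or replace the single-line count by the two-parameter count just described before invoking that theorem.
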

The next lemma shows that the good property will hold in a weaker vision for the child slit.
\begin{lemma}\label{lem:good:children}
If $w$ is an $(\alpha,\beta)$-good slit, then every 
slit $w'\in\Delta(w,\alpha,\beta)$ is $(\alpha-\frac12,\beta)$-good, but not $(1,\alpha-\frac12)$-good.  
\end{lemma}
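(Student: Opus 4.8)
The plan is to unwind the definitions and track how the convergent of $w$ guaranteed by the $(\alpha,\beta)$-good hypothesis transports to the child slit $w' = w + 2v$. Write the inverse slope of $w$ as $\alpha_w$ and let $\frac{p}{q}$ be a convergent of $\alpha_w$ with $\alpha|w| \le q \le \beta|w|$, which exists by hypothesis; set $u = (p,q)$, so $|w \times u| = |w|\,|u|\,|\alpha_w - \tfrac{p}{q}|$ is small — specifically, since $\frac{p}{q}$ is a convergent, $|\alpha_w - \tfrac pq| \le \tfrac{1}{q^2}$ (indeed it is much smaller, bounded by $\tfrac{1}{q q''}$ for the next convergent denominator $q''$, but we may not need that), so $|w \times u| \le \tfrac{|w|}{q} \le \tfrac1\alpha$. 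The strategy for the positive assertion is to show that this same vector $u$ (or the relevant convergent of the inverse slope $\alpha_{w'}$ of $w'$) witnesses $(\alpha - \tfrac12, \beta)$-goodness of $w'$: we estimate $|\alpha_{w'} - \tfrac pq|$ via the triangle inequality $|\alpha_{w'} - \tfrac pq| \le |\alpha_{w'} - \alpha_w| + |\alpha_w - \tfrac pq|$, where $|\alpha_{w'} - \alpha_w| = \tfrac{|w \times w'|}{|w|\,|w'|} = \tfrac{2|w\times v|}{|w|\,|w'|}$ is controlled by the defining inequalities (\ref{def:Delta}) of $\Delta(w,\alpha,\beta)$, namely $|w\times v| < \tfrac1\alpha$ and $|w'| \approx |w \times v|^{-1}\cdot(\text{something})$... more precisely one has $|w'| = |w + 2v|$, and since $|v| \ge \beta|w| > |w|$ the height satisfies $|w'| \asymp |v|$. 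Combining, one finds $|\alpha_{w'} - \tfrac pq| < \tfrac{1}{2q^2}$ with a little room to spare, so by Theorem~\ref{lem:estimate for continued fractions} $\frac pq$ is a convergent of $\alpha_{w'}$; then one checks the height $q$ lies in the window $[(\alpha-\tfrac12)|w'|,\ \beta|w'|]$ by comparing $|w'|$ with $|w|$ (they differ by a bounded multiplicative factor coming from $\beta|w| \le |v| \le 2\beta|w|$), which forces the shift from $\alpha$ down to $\alpha - \tfrac12$ and keeps the upper bound $\beta$ intact.

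For the negative assertion — that $w'$ is \emph{not} $(1, \alpha-\tfrac12)$-good — I would argue that any convergent of $\alpha_{w'}$ of height in $[|w'|, (\alpha-\tfrac12)|w'|]$ would be "too good an approximation too soon" and contradict the lower bound $|w\times v| > \tfrac1\beta$ (equivalently, the fact that $w$ and $w' = w+2v$ are genuinely far apart in direction, so $w'$ cannot have an unusually small first convergent denominator). Concretely: if $\frac{p'}{q'}$ were a convergent of $\alpha_{w'}$ with $|w'| \le q' \le (\alpha - \tfrac12)|w'|$, then $|w' \times (p',q')| = |w'|\,q'\,|\alpha_{w'} - \tfrac{p'}{q'}|$ is small (at most $\tfrac{|w'|}{q'} \le 1$), and $q' < \alpha|w'| \le \beta|w|$ (up to the bounded factor between $|w|$ and $|w'|$) is below the height of the known convergent $u$ of $\alpha_w$; pulling $(p',q')$ back to compare with $w$ via the same triangle-inequality estimate would produce a rational of denominator $< q$ approximating $\alpha_w$ better than the best-approximation/continued-fraction theory allows, or would violate $|w \times v| > \tfrac1\beta$. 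This is essentially the mirror image of Lemma~\ref{lem:min:area}'s "no premature convergent" argument, and I expect it to follow by the same bookkeeping.

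The main obstacle is the careful bookkeeping of constants in both directions at once: the definition of $\Delta(w,\alpha,\beta)$ bundles together a height window for $v$ ($\beta|w| \le |v| \le 2\beta|w|$) and a cross-product window ($\tfrac1\beta < |w\times v| < \tfrac1\alpha$), and one must simultaneously (a) show the old convergent survives with its height still in the correct \emph{shrunken} window $[(\alpha-\tfrac12)|w'|, \beta|w'|]$, and (b) show no \emph{new}, smaller convergent appears in $[|w'|, (\alpha-\tfrac12)|w'|]$. Getting the $-\tfrac12$ shift (and not, say, $-1$ or $-\tfrac14$) right requires tracking the exact ratio $|w'|/|w|$ against the slack in the inequality $|\alpha_{w'} - \tfrac pq| < \tfrac1{2q^2}$ from Theorem~\ref{lem:estimate for continued fractions}; this is routine but delicate, and is where I would spend most of the effort. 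The underlying geometry — a Dehn twist by $v$ moves the slit direction by a controlled small amount, so its Diophantine data changes predictably — is straightforward; only the arithmetic of the windows needs care.
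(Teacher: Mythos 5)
Your route for the positive assertion does not work, because the convergent you try to transport is the wrong one. The witness you carry over from $w$ has height $q\le\beta|w|$, whereas heights add under the twist: $|w'|=|w|+2|v|\ge(1+2\beta)|w|$, so $q\le\beta|w|<\tfrac12|w'|\le(\alpha-\tfrac12)|w'|$ (recall $\alpha>1$). Hence even if $p/q$ remained a convergent of the inverse slope of $w'$, its height lies strictly \emph{below} the window $[(\alpha-\tfrac12)|w'|,\beta|w'|]$ and cannot certify $(\alpha-\tfrac12,\beta)$-goodness; the ratio $|w'|/|w|\approx 2\beta+1$ is not a harmless bounded factor but exactly what pushes the parent's convergent out of range. (The Legendre step is also unjustified: all you know is $|\alpha_w-p/q|\le 1/(qq'')$ with $q''>q$, which can be close to $1/q^2$, so the triangle inequality need not give $<1/(2q^2)$.) The negative assertion as sketched is likewise off: a hypothetical convergent of $\alpha_{w'}$ with height in $[|w'|,(\alpha-\tfrac12)|w'|]$ has height \emph{larger} than $q$, not smaller; the inequality ``$q'<\alpha|w'|\le\beta|w|$'' is false since $\alpha|w'|\ge\alpha(1+2\beta)|w|>\beta|w|$; and the relevant constraint for excluding such a convergent is the upper bound $|w\times v|<1/\alpha$, not the lower bound $|w\times v|>1/\beta$.

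The missing idea is to use the twisting loop $v$ itself rather than the parent's convergent. Since $w'\times v=(w+2v)\times v=w\times v$ and $|w'|=|w|+2|v|>2|v|$, one gets, writing $v=(p,q)$,
$\bigl|\alpha_{w'}-\tfrac pq\bigr|=\tfrac{|w\times v|}{|w'|\,|v|}<\tfrac{1}{2\alpha q^2}<\tfrac{1}{2q^2}$,
so $v$ is a convergent of $\alpha_{w'}$ by Theorem~\ref{lem:estimate for continued fractions}. If $q'$ is the height of the next convergent, then (\ref{align:estimate for convergents}) gives $\tfrac{1}{q+q'}<\tfrac{|w\times v|}{|w'|}<\tfrac{1}{q'}$; the upper bound $|w\times v|<\tfrac1\alpha$ then yields $q'>\alpha|w'|-q>(\alpha-\tfrac12)|w'|$, while the lower bound $|w\times v|>\tfrac1\beta$ yields $q'<\beta|w'|$. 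This single pair of consecutive convergent heights $q<|w'|<(\alpha-\tfrac12)|w'|<q'<\beta|w'|$ proves both statements at once: $q'$ witnesses $(\alpha-\tfrac12,\beta)$-goodness, and since no convergent height lies strictly between $q$ and $q'$, the slit $w'$ is not $(1,\alpha-\tfrac12)$-good. Note that the $(\alpha,\beta)$-goodness of $w$ is not even used at this point; only the defining inequalities (\ref{def:Delta}) of $\Delta(w,\alpha,\beta)$ enter.
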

\begin{proof}
Let $w'=w+2v\in\Delta(w,\alpha,\beta)$ where $w'=(\lambda+m',\mu+n')$ and $v=(p,q)$. Then we have 
$$\left|\frac{\lambda+m'}{\mu+n'}-\frac{p}{q}\right| = \frac{|w'\times v|}{|w'||v|}<\frac{|w\times v|}{2|v|^2}<\frac{1}{2\alpha q^2}<\frac{1}{2q^2}$$ 
for $\alpha>1$. Then by (\ref{convergents}), $v$ is a convergent of $w'$.  
Let $q'$ be the height of the next convergent of $w'$.  
Then by (\ref{align:estimate for convergents}), we have
\begin{equation}\label{ieq:cfc}
  \frac{1}{q'+q} < \frac{|w\times v|}{|w'|} < \frac{1}{q'}.
\end{equation}
From the left hand side above we have 
$$q' > \frac{|w'|}{|w\times v|} - q > (\alpha-\frac{1}{2})|w'|$$  
for $|w'|>2|v|=2q$, and from the right hand side of (\ref{ieq:cfc})
$$q' < \frac{|w'|}{|w\times v|} < \beta|w'|.$$  
This shows that $w'$ is $(\alpha-\frac12,\beta)$-good.  

Since $q$ and $q'$ are the heights of consecutive convergents of $w'$ (and since $|v|<|w'|$) 
it follows that $w'$ is not $(1,\alpha-\frac12)$-good.  
\end{proof}

We introduce a key definition characterizing a well-behaved Diophantine property. Let $r \in \mathbb{R}$ be a fixed constant satisfying the chain of inequalities:
\[
1 < r < r^3 < \frac{3}{2}.
\]
Based on this constant, we define two derived quantities:
\begin{align}\label{aligh: how deep for a given convergent} 
    N^{\prime} = \frac{4(N+1)r}{r-1},\ \ 
     \rho = r + \frac{1}{2}
\end{align}
where $N$ is a positive integer parameter.

\begin{definition}
A slit $w$ is $\alpha$-normal if it is $(\alpha\rho^t,|w|^{(r-1)t})$-good for all $t\in[1,T]$ 
where $T>1$ is determined by $\alpha \rho^T=|w|^{r-1}$.  
Equivalently, $w$ is $\alpha$-normal if and only if 
for all $t\in[1,T]$ we have 
\begin{equation}\label{def:normal}
  \Psi(w)\cap[\alpha \rho^t|w|,|w|^{1+(r-1)t}]\neq\emptyset.  
\end{equation}
where $\Psi(w)$ denotes the collection of heights of the 
convergents of the inverse slope of $w$.  
\end{definition}

We shall now present sufficient conditions for $w$ to be $\alpha$-normal. The detailed proof will be divided into two cases.

\begin{notation}
We note that there exist a global constant $C^\prime$ such that $\|\cdot\|\asymp_{C^\prime}\|\cdot\|_\infty$ where $||\cdot\|_\infty$ is the supremum norm. 
\end{notation}

\subsubsection{Uniquely rational case}
We first consider the case when $(\lambda, \mu)$ is uniquely rational, meaning that it lies on a uniquely rational line $\eta$, which can be expressed as
\begin{align}\label{def:eta}
ax + by + c = 0,
\end{align}
where $(a, b, c) \in \Z_{\pr}^3$ and at least one of $a$ or $b$ is nonzero.
Let $\eta^- \coloneqq \max\{|a|, |b|\}$.  
Hereafter, we always assume the parameter $k$ is chosen sufficiently large to satisfy $q_k > \eta^-$ \footnote{The selection of such a $k$ is permissible since $\ell_N$ constitutes an infinite set.}.
This leads to the following sufficient condition for $\alpha$-normality of a slit $w$:

\begin{lemma}\label{lem:Uniquly N'-good=>normal}
Let $w$ be a slit such that
\begin{align}\label{Uniquely Diophtine region}
       q_{k+1}^{\frac{1}{N}}\leq |w|<q_{k^\prime}^{\frac{1}{3r}}.
\end{align}
where $k,k'$ are consecutive elements of $\ell_N$.  If $w$ is $\left(\alpha\rho^{N^\prime},|w|^{r-1}\right)$-good, then it is $\alpha$-normal.
\end{lemma}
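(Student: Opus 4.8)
The plan is to unwind the definition of $\alpha$-normality: we must exhibit, for every $t\in[1,T]$ where $\alpha\rho^T=|w|^{r-1}$, a convergent of the inverse slope of $w$ with height $q$ satisfying $\alpha\rho^t|w|\le q\le|w|^{1+(r-1)t}$. The hypothesis gives this directly for $t=N'$ — since $w$ is $(\alpha\rho^{N'},|w|^{r-1})$-good there is a convergent of height $q_0$ with $\alpha\rho^{N'}|w|\le q_0\le|w|^r$. The idea is to use the uniquely rational line $\eta:\ ax+by+c=0$ through $(\lambda,\mu)$ to manufacture, for each target exponent $t$, a new convergent. First I would take any convergent $\tfrac{p}{q}$ of $\tfrac{\lambda+m}{\mu+n}$ whose height $q$ lands in the lower part of the relevant window; such a $\tfrac pq$ corresponds to a rational line $\ell:\ qx-py+(mq-np)=0$ lying very close to $(\lambda,\mu)$ (distance $O(1/q')$ where $q'$ is the height of the next convergent). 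Intersecting $\ell$ with $\eta$ produces a rational point; clearing denominators and reducing gives a vector $v'=(p',q')\in\Z\times\Z_{>0}$, and because both lines pass near $(\lambda,\mu)$, the point $\tfrac{p'}{q'}$ is very close to $\tfrac{\lambda+m}{\mu+n}$ — close enough that $\|\tfrac{\lambda+m}{\mu+n}-\tfrac{p'}{q'}\|<\tfrac{1}{2q'^2}$, so by Theorem~\ref{lem:estimate for continued fractions} it too is a convergent of $w$.

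The quantitative heart is tracking how the height of the intersection point grows. Cramer's rule gives $q'\asymp$ (height of $\ell$)$\times\eta^-$ divided by the resultant-type cross term, and since $\eta^-<q_k\le q_{k+1}^{1/N}\le|w|$ by the left side of (\ref{Uniquely Diophtine region}), the multiplicative damage done by $\eta$ is at most a factor of $|w|$. Iterating the passage "convergent $\mapsto$ intersection with $\eta\mapsto$ new convergent" — or more precisely, interpolating: starting from the $(\alpha\rho^{N'},|w|^{r-1})$-good convergent at height $\le|w|^r$ and from the trivially available convergent of height $O(|w|)$ (from $\eta$ itself near $(\lambda,\mu)$, valid because $q_k>\eta^-$ makes $\eta$ a genuine good approximant), one fills in the intermediate scales. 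The exponent $N'=\frac{4(N+1)r}{r-1}$ is exactly calibrated so that the "gap factor" $\rho^{N'}=(r+\tfrac12)^{N'}$ between successive target windows dominates the $|w|$-sized loss incurred at each intersection step, so that consecutive windows $[\alpha\rho^t|w|,|w|^{1+(r-1)t}]$ overlap enough to chain through all of $[1,T]$. I would also need the upper constraint $|w|<q_{k'}^{1/(3r)}$: it guarantees that all heights produced stay below $q_{k'}$, so that no best-approximation-vector denominator of $(\lambda,\mu)$ interferes and the convergents of $\tfrac{\lambda+m}{\mu+n}$ we construct genuinely reflect the line $\eta$ rather than some accidental rational relation of higher complexity.

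The main obstacle I anticipate is the bookkeeping in the intersection step: showing that after clearing denominators the primitive vector $v'=(p',q')$ really has its height controlled both above and below (an upper bound from Cramer's rule, a lower bound to ensure we have moved to a strictly larger scale), and that the distance estimate $\|\tfrac{\lambda+m}{\mu+n}-\tfrac{p'}{q'}\|<\tfrac{1}{2q'^2}$ survives — this requires that the distance from $(\lambda,\mu)$ to $\ell$ (roughly $\tfrac1{q'_{\text{old}}}\sim|w|^{-(r-1)N'}$ or smaller, cf.\ (\ref{align:intro:ell})) beats $q'^{-2}$, which is where the precise value of $N'$ enters. Once the single intersection step is established with clean constants, the chaining over $t\in[1,T]$ is a routine induction: at each stage the previously constructed convergent plays the role of the "old" one, the window it occupies determines which new intersection to take, and (\ref{aligh: how deep for a given convergent}) ensures the windows telescope. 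I would organize the write-up as: (1) the intersection lemma with explicit height and distance bounds; (2) a base case using $\eta$ near $(\lambda,\mu)$; (3) the inductive filling of all scales $t\in[1,T]$; (4) verification that (\ref{Uniquely Diophtine region}) keeps everything below $q_{k'}$ and above $q_{k+1}^{1/N}$ so the constructed fractions are legitimate convergents of $w$.
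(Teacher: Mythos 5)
Your proposal goes in a direction that cannot be made to work, and it misses the actual mechanism of the lemma. First, there is a conceptual conflation at the heart of your ``intersection step'': the intersection of $\ell$ with $\eta$ is a rational \emph{point} of $\mathbb{R}^2$, i.e.\ a candidate best approximation vector of $(\lambda,\mu)$ in the sense of Definition~\ref{def:best approximation vector}, not a fraction approximating the inverse slope $\frac{\lambda+m}{\mu+n}$ of $w$. Being close to $(\lambda,\mu)$ does not make its ``reduced form'' a convergent of $\frac{\lambda+m}{\mu+n}$, so your plan to ``manufacture, for each target exponent $t$, a new convergent'' and then chain through the scales $t\in[1,T]$ has no valid engine. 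More fundamentally, convergents of the inverse slope of $w$ cannot be constructed at prescribed heights at all: whether the window $[\alpha\rho^t|w|,|w|^{1+(r-1)t}]$ in \eqref{def:normal} contains a convergent is decided by the continued fraction of $\frac{\lambda+m}{\mu+n}$, and the only thing that can fail is a single huge gap between the convergent $q\in[\alpha\rho^{N'}|w|,|w|^r)$ supplied by the goodness hypothesis (which already covers all $t\le N'$) and the next convergent $q'$. The entire content of the lemma is that $q'>|w|^{1+(r-1)N'}$ is impossible under \eqref{Uniquely Diophtine region}; a direct ``fill in the windows'' argument begs exactly this question.

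The paper proves that impossibility by contradiction, and this is where the hypotheses you never really use come in. If $q'>|w|^{1+(r-1)N'}$, then $|q\lambda-p\mu+(qm-pn)|<q^{-4(N+1)}$ (the calibration of $N'$ is to produce this exponent, not to beat a ``$\rho^{N'}$ versus $|w|$'' loss as you suggest). Intersecting $\ell$ with $\eta$ then gives a rational point $\dot v_h$ with denominator $q_h\le 2\eta^- q\le 2q^2$ and $\|(\lambda,\mu)-\dot v_h\|<\frac{1}{2q_h^2}$, so by Theorem~\ref{thm:proof of best} it is a best approximation vector; inequality \eqref{C_2} then forces $q_{h+1}\gtrsim q^{4N+1}\ge q_h^N$, hence $h\in\ell_N$. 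The upper bound $|w|<q_{k'}^{1/3r}$ is used precisely to place $q_h\le q_k$ (since $q_h<|w|^{3r}<q_{k'}$ and $h\in\ell_N$), whence $q_{k+1}\le q_{h+1}$ and finally $q_{k+1}>q^{N}>|w|^N$, contradicting $|w|\ge q_{k+1}^{1/N}$. Your write-up invokes neither the best approximation denominators $q_k$, nor Theorem~\ref{thm:proof of best}, nor membership in $\ell_N$, and the left endpoint of \eqref{Uniquely Diophtine region} never enters your argument as the contradiction it must furnish; these are not bookkeeping details but the proof itself. (The one ingredient you share with the paper is forming $\ell$ from a convergent and intersecting it with $\eta$, but it must be used to contradict a hypothetical gap, not to generate convergents.)
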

\begin{proof}
If $w$ is $\left(\alpha\rho^{N^\prime},|w|^{r-1}\right)$-good but not $\alpha$-normal. Let $v=(p,q)$ be the convergent of $w$ with the maximal height $q<|w|^r$. Since $w$ is $\left(\alpha\rho^{N^\prime},|w|^{r-1}\right)$-good, we have $\alpha\rho^{N^\prime}|w|\leq q<|w|^r$. Let $v^\prime=(p^\prime,q^\prime)$ be the next convergent. If $q^\prime\leq |w|^{1+(r-1)N^\prime}$, then $w$ is $\alpha$-normal by definition. We proceed by contradiction. Suppose that
\begin{align*}
q' > |w|^{1 + (r - 1)N'},
\end{align*}
Then 
\begin{align*}
 \frac{q^\prime}{|w|}>|w|^{(r-1)N^\prime}\geq q^{(1-\frac{1}{r})N^\prime}=q^{4(N+1)}.
\end{align*}
So it can be deduced that
\begin{align*}
  |w\times v|<\frac{|w|}{|v^\prime|}<\frac{1}{q^{4(N+1)}}.
\end{align*}
Writing $w=\left(\lambda+m,\mu+n\right)$ we have
\begin{align*}
    |q\lambda-p\mu+(qm-pn)|<\frac{1}{q^{4(N+1)}}.
\end{align*}
Let $\ell$ denote the line defined by
$$qx - py + (qm - pn) = 0.$$
We now consider the intersection of $\ell$ with $\eta$, as defined in \eqref{def:eta}. This intersection yields:
\begin{align*}
    \left(\frac{cp+(qm-pn)b}{ap+bq},\frac{cq+(pn-qm)a}{ap+bq}\right) =: \left(\frac{p_{h,1}}{q_h},\frac{p_{h,2}}{q_h}\right) =: \dot{v}_h
\end{align*}
in lowest terms.  
We also have $$q_h\leq |ap+bq| \leq 2\eta^-q\leq 2q^2$$
where the last inequality follows from $q > q_{k+1}^{1/N} > q_k > \eta^-$.
Then it follows that 
\begin{align*}
      \|(\lambda,\mu)-\dot{v}_h\|_{\infty}<\frac{2\max(|a+q|,|b+p|)}{q^{4(N+1)}|ap+bq|}<\frac{4}{q^{4N+3}}.
\end{align*}
By Theorem~\ref{thm:proof of best} and the definition of $C^\prime$ \footnote{Here, we use the definition of $q_{k_0}$ from \eqref{def:k_0} to eliminate the constant $C'$. We use this fact throughout the entire proof.},
    \begin{align*}
         \|(\lambda,\mu)-\dot{v}_h\| < C^\prime\|(\lambda,\mu)-\dot{v}_h\|_{\infty}<\frac{1}{2(2q^2)^2}<\frac{1}{2q_h^2}
    \end{align*}
     which implies that $\dot{v}_h$ is a best approximation vector.
     By (\ref{C_2}), we have
     \begin{align}
         \frac{1}{2q_hq_{h+1}}<\frac{4C'}{q^{4N+3}}.
     \end{align}
     Since $q>|w|>48C'$, we have
     \begin{align*}
         q_{h+1}>\frac{q^{4N+3}}{8C'q_h}>\frac{q^{4N+1}}{16C'}>q^{3N}>(4q^2)^N\geq(q_h)^N
     \end{align*}
     from which it follows that $h\in l_N$. Since $q_h\leq 2q^2\leq 2|w|^{2r}<|w|^{3r}<q_{k^\prime}$, we must have $q_h\leq q_k$. Hence $q_{h+1}<q_{k+1}$ so that
     \begin{align*}
         \frac{1}{2q_kq_{k+1}}\leq\frac{1}{2q_hq_{h+1}}<\frac{4C'}{q^{4N+3}}.
     \end{align*}
     Since $\alpha>1$, we have $q>|w|\geq q_{k+1}^{\frac{1}{N}}>q_k$. Then
     \begin{align*}
         q_{k+1}>\frac{q^{4N+3}}{12C'q_k}>q^{N}>|w|^{N}
     \end{align*}
     which contradicts with the hypothesis on $|w|$.
\end{proof}

\subsubsection{Totally irrational case}
The case when $(\lambda, \mu)$ is totally irrational presents the main difficulty in solving this problem. To address it, we use the notion of the nearest rational affine line to $(\lambda, \mu)$. We also need the definition of \textit{miracle slits} to exclude certain exceptional cases.
\begin{definition}
Given a slit $w=(\lambda+m,\mu+n)$ we associate to it the triple $\ell=(q,-p,qm-pn)\in\mathbb{Z}^3_{pr}$ where $p/q$ is the convergent of the inverse slope $\frac{\lambda+m}{\mu+n}$ determined by $|w| < q\le |w|^r < q'$ and $q'$ is the height of the next convergent.
Then we let $\eta_j$ be the nearest rational affine line to $(\lambda,\mu)$ determined by $\eta_j^- \le \ell^- < \eta_{j+1}^-$.  We refer to $(\ell,\eta_j)$ as the ``pair of lines'' associated to $w$.  
We say $w$ is a \textbf{miracle slit} if $\ell=\pm\eta_j$ and $\eta_{j+1}^- > (\eta_j^-)^{r^3}$. 

We will show in next Lemma that the miracle slits appear with a very low frequency in $\Delta(w,\alpha,\beta)$.
\end{definition}
\begin{lemma}\label{Lem:danger slit}
There exist some constant $c$ (depending only on norm) such that at most one miracle slit appear in  $\Delta(w,\alpha,\beta)$ if 
\begin{align}
        |w|^{(r-1)^2}>c.
\end{align}
\end{lemma}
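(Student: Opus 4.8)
The plan is to show that a miracle slit in $\Delta(w,\alpha,\beta)$ forces its associated pair of lines $(\ell',\eta_{j})$ to have $\ell' = \pm\eta_j$ with a very large height gap to $\eta_{j+1}$, and that two distinct miracle children would produce two \emph{distinct} nearest rational affine lines to $(\lambda,\mu)$ of comparable height, contradicting the uniqueness/ordering built into Definition of nearest rational affine lines (the strict inequality in part (i), together with the fact that consecutive nearest lines have heights that grow geometrically whenever a ``miracle'' gap occurs). So the first step is to fix two slits $w_1' = w + 2v_1$ and $w_2' = w + 2v_2$ in $\Delta(w,\alpha,\beta)$, both miracle slits, and record the basic size estimates coming from~\eqref{def:Delta}: $|v_i| \asymp \beta|w|$, $|w\times v_i| \asymp 1/\alpha$, hence $|w_i'| \asymp \beta|w|$ and the convergent $p_i/q_i$ of the inverse slope of $w_i'$ singled out by $|w_i'| < q_i \le |w_i'|^r < q_i'$ has $q_i \asymp |w|^{r}$ up to the factor $\beta^r$; this pins down $\ell_i^{\,-} = q_i$ to within a controlled range, roughly $|w|^{r}$ to $(\beta|w|)^{r}$.

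The second step is the key reduction: since $w_i'$ is a miracle slit, $\ell_i = \pm\eta_{j_i}$ for the nearest rational affine line $\eta_{j_i}$ with $\eta_{j_i}^- \le \ell_i^- < \eta_{j_i+1}^-$ and $\eta_{j_i+1}^- > (\eta_{j_i}^-)^{r^3}$. Because $\ell_1^-$ and $\ell_2^-$ lie in the same narrow window (both $\asymp |w|^r$ up to the $\beta$-factor), and because the window width is tiny compared to the forced gap $(\eta_j^-)^{r^3}/\eta_j^- = (\eta_j^-)^{r^3-1}$ once $|w|^{(r-1)^2} > c$ — here one uses $r^3 < 3/2$ and $r>1$ so that $r^3 - 1$ is a fixed positive exponent and $(\eta_j^-)^{r^3-1} \ge |w|^{r(r^3-1)}$ dominates any fixed power-of-$\beta$ factor once $|w|$ is large enough relative to $c$ — the two indices $j_1, j_2$ must coincide, say $j_1 = j_2 = j$, and moreover $\ell_1 = \pm \ell_2 = \pm\eta_j$. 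The third step is to derive a contradiction from $\ell_1 = \pm\ell_2$: unwinding the definition $\ell_i = (q_i, -p_i, q_i m_i - p_i n_i)$ where $w_i' = (\lambda + m_i, \mu + n_i)$, the equality of the first two coordinates gives $q_1 = q_2 =: q$ and $p_1 = p_2 =: p$, and then $v_1 = (p,q) = v_2$ (both are primitive with positive height), whence $w_1' = w_2'$, contradicting that they were distinct elements of $\Delta(w,\alpha,\beta)$. Thus at most one miracle slit occurs.

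The main obstacle I expect is \textbf{step two}: carefully quantifying ``the window containing $\ell_1^-,\ell_2^-$ is too narrow to straddle a miracle gap.'' One must track the $\beta$-dependence honestly — $\beta$ is not a constant but is of size $|w|^{(r-1)t}$ in the applications — so the comparison is between a window of multiplicative width $\beta^{r} \approx |w|^{r(r-1)T}$ and a forced gap of multiplicative size at least $|w|^{r(r^3-1)}$; ensuring the latter wins requires using that in the relevant regime $T$ is controlled (via $\alpha\rho^T = |w|^{r-1}$, so $(r-1)T \approx (r-1)/\log\rho$ is bounded) together with the chain $1 < r < r^3 < 3/2$, and absorbing everything into the single threshold condition $|w|^{(r-1)^2} > c$. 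The bookkeeping that turns all these comparisons into one clean inequality with a norm-dependent constant $c$ (the norm enters through $C'$ and through the implicit constants in $\asymp$) is where the real care is needed; the rest is the routine linear algebra of step three.
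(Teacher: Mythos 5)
Your overall strategy is viable and in fact close in spirit to the paper's: both arguments rest on the facts that a miracle slit forces $\ell=\pm\eta_j$ together with the gap $\eta_{j+1}^->(\eta_j^-)^{r^3}$, and that all heights in $\Delta(w,\alpha,\beta)$ lie within a factor $<5$ of one another, so the geometric growth of the nearest-line heights is incompatible with two miracle slits. (The paper takes the miracle slit $w'$ of smallest height and shows any later one would satisfy $|w''|>|w'|^{r^2}/C'^5$, which already violates the factor-$5$ bound once $|w|^{(r-1)^2}>5C'^5$; it never needs your reduction to $j_1=j_2$.) However, as written your plan has two genuine defects.

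First, your step three is wrong as argued. From $\ell_1=\pm\ell_2$ you get $q_1=q_2=:q$, $p_1=p_2=:p$ and $q(m_1-m_2)=p(n_1-n_2)$, hence only $w_1'-w_2'\in\Z\,(p,q)$; the claim ``$v_1=(p,q)=v_2$'' confuses the Dehn-twist loop $v_i$ (which satisfies $|v_i|\le2\beta|w|<|w_i'|$) with the convergent $(p_i,q_i)$, whose height satisfies $q_i>|w_i'|$ by the very definition of $\ell_i$; these are different vectors, so $w_1'=w_2'$ does not follow as stated. The gap is fixable: if $w_1'-w_2'=t(p,q)$ with $t\neq0$, then $\bigl|\,|w_1'|-|w_2'|\,\bigr|=|t|q\ge q>\max(|w_1'|,|w_2'|)\ge\bigl|\,|w_1'|-|w_2'|\,\bigr|$, a contradiction; but this extra step must be supplied. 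Second, in your step two the proposed control of the $\beta$-dependence is incorrect: from $\alpha\rho^{T}=|w|^{r-1}$ one gets $T\asymp\log|w|$, so $(r-1)T$ is not bounded. Fortunately that worry is unnecessary: since $\eta_{j_1}^-=\ell_1^-\gtrsim q_1>|w_1'|\ge(2\beta-1)|w|$, the forced gap has multiplicative size at least $(\beta|w|)^{r^3-1}$ (up to norm constants), while the window containing $\ell_1^-,\ell_2^-$ has multiplicative width $\lesssim(\beta|w|)^{r-1}$, so the comparison is uniform in $\beta\ge1$ — which matters because the lemma is also invoked in \S\ref{s:A specail case} with $\beta$ unrelated to $|w|$. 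With these two repairs your argument goes through and yields a norm-dependent threshold $c$ of the same nature as the paper's.
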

\begin{proof}
let $w^\prime\in \Delta(w,\alpha,\beta)$ be a miracle slit with smallest norm and $(\ell,\eta_j)$ the pair of lines of $w^\prime$ such that  
$$\eta_{j+1}^->(\eta_j^-)^{r^3}=(\ell^-)^{r^3}.$$
Writing $\ell$ by $\ell=(q,-p,qm-pn)$, we have
$${q}^{1/r}\leq |w^\prime|<q$$
implies
$${\big(\frac{\eta_j^-}{C^\prime}\big)}^{1/r}\leq |w^\prime|< C^\prime \eta_j^-.$$
Then the next miracle slit $w^{\prime\prime}$ associating with $(\ell',\eta_{j'}$ ordered by ascending norm satisfies 
\begin{align}\label{danger slit}
        |w^{\prime\prime}|>{(\frac{\eta_{j'}^-}{C^\prime})}^{1/r}>\frac{(\eta_j^-)^{r^2}}{C^\prime}>\frac{|w^\prime|^{r^2}}{{C^\prime}^{5}}.
\end{align}    
Because $\beta|w|\leq |v| \leq 2\beta|w|$, each slit $\Tilde{w} \in \Delta(w,\alpha,\beta)$ satisfies $$(2\beta-1)|w|\leq |\Tilde{w}| \leq (4\beta+1)|w|.$$ 
We choose $c=5{C^\prime}^5$ and $|w|^{r^2-1}>|w|^{(r-1)^2}>c$, then     
$4\beta+1<5(2\beta-1)$ will deduce that $w^{\prime\prime}$ is not in  $\Delta(w,\alpha,\beta)$. 
\end{proof}

\begin{lemma}\label{lem:N'-good=>normal}
    Let $w$ be a slit which is not miracle such that\footnote{To prove this lemma, we require \( |w| \) to be sufficiently large so as to dominate certain constants (e.g., \( |w| > 48C' \)). This assumption is implicitly built into the choice of \( q_0 \) in (\ref{def:k_0}).}
    \begin{align}\label{Diophtine region}
        q_{k+1}^{\frac{1}{N}}\leq |w|<q_{k^\prime}^{\frac{1}{3r}}.
    \end{align}
   where $k,k'$ are consecutive elements of $\ell_N$.  If $w$ is $\left(\alpha\rho^{N^\prime},|w|^{r-1}\right)$-good, then it is $\alpha$-normal.
\end{lemma}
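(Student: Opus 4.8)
I would follow the structure of the proof of Lemma~\ref{lem:Uniquly N'-good=>normal}, replacing the fixed uniquely rational line there by the nearest rational affine line attached to $w$ through its pair of lines $(\ell,\eta_j)$. Argue by contradiction: suppose $w$ is $(\alpha\rho^{N'},|w|^{r-1})$-good but not $\alpha$-normal. Let $v=(p,q)$ be the convergent of the inverse slope $\tfrac{\lambda+m}{\mu+n}$ of maximal height $q<|w|^r$; goodness forces $\alpha\rho^{N'}|w|\le q<|w|^r$, while failure of $\alpha$-normality makes the next convergent $v'=(p',q')$ satisfy $q'>|w|^{1+(r-1)N'}$. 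Since $(r-1)N'=4(N+1)r$ and $q\le|w|^r$, this yields $|w\times v|<|w|/q'<q^{-4(N+1)}$, so the line $\ell\colon qx-py+(qm-pn)=0$, which is the first member of the pair of lines of $w$, satisfies
\[
|q\lambda-p\mu+(qm-pn)|<q^{-4(N+1)}.
\]

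The key point is that the partner line is automatically a good linear form of $(\lambda,\mu)$. Indeed, by construction $\eta_j^-\le\ell^-<\eta_{j+1}^-$, and since no nearest rational affine line has height strictly between $\eta_j^-$ and $\eta_{j+1}^-$, every affine form of height below $\eta_{j+1}^-$ (in particular $\ell$, and also $\eta_{j+1}$ itself) has value at $(\lambda,\mu)$ of modulus at least that of $\eta_j$. Writing $\eta_j=(a_j,b_j,c_j)$, this gives $|a_j\lambda+b_j\mu+c_j|<q^{-4(N+1)}$, and the same reasoning gives that the linear form of $\eta_{j+1}$ at $(\lambda,\mu)$ is $<q^{-4(N+1)}$ too. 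Now split into two cases. Since both triples are primitive, $\ell$ and $\eta_j$ are parallel exactly when $\ell=\pm\eta_j$: in the parallel case $(a_j,b_j)=\pm(q,-p)$, and subtracting the two bounds above forces $c_j=\pm(qm-pn)$, i.e. $\eta_j=\pm\ell$. In Case~1, $\ell\ne\pm\eta_j$, so the two lines are transverse; intersecting them produces a point $\dot{v}_h\in\Q^2$ whose denominator $q_h$ divides the integer $(q,-p)\times(a_j,b_j)$, whence $q_h\lesssim\eta_j^-\,q\lesssim q^2$ by $\eta_j^-\le\ell^-\asymp q$ (bounded inverse slope together with $\|\cdot\|\asymp_{C'}\|\cdot\|_\infty$). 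In Case~2, $\ell=\pm\eta_j$, and the hypothesis that $w$ is not a miracle slit gives $\eta_{j+1}^-\le(\eta_j^-)^{r^3}\lesssim q^{r^3}$; here $\eta_{j+1}$ is transverse to $\ell$ (its height is strictly larger), and intersecting $\ell$ with $\eta_{j+1}$ gives $q_h\lesssim\eta_{j+1}^-\,q\lesssim q^{1+r^3}<q^{5/2}$, using $r^3<\tfrac32$. In either case $\dot{v}_h$ is the intersection of two transverse lines, each at linear-form distance $<q^{-4(N+1)}$ from $(\lambda,\mu)$, and a direct computation as in Lemma~\ref{lem:Uniquly N'-good=>normal} (now carrying an extra error term because $(\lambda,\mu)$ only lies approximately on the partner line, whose height may be as large as $q^{3/2}$) bounds $\|(\lambda,\mu)-\dot{v}_h\|_\infty\lesssim q^{-(4N+5/2)}$.

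The remainder is then the endgame of the uniquely rational case. Since $q_h\lesssim q^{5/2}$ and $4N+\tfrac52>5$, Theorem~\ref{thm:proof of best} (after passing from $\|\cdot\|_\infty$ to $\|\cdot\|$ through $C'$) shows $\dot{v}_h$ is a best approximation vector of $(\lambda,\mu)$; inserting this into (\ref{C_2}) gives $q_{h+1}\gtrsim q^{4N}$, and $q_{h+1}>q_h^N$ then forces $h\in\ell_N$. On the other hand $q_h\lesssim q^{5/2}\le|w|^{5r/2}<|w|^{3r}<q_{k'}$ (for $|w|$ large, using $\tfrac{5r}{2}<3r$ and $|w|<q_{k'}^{1/(3r)}$), so $h<k'$; as $k$ and $k'$ are consecutive elements of $\ell_N$, this yields $h\le k$, hence $q_h\le q_k$ and $q_{h+1}\le q_{k+1}$. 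Then (\ref{C_2}) gives $\tfrac{1}{2q_kq_{k+1}}\le\|(\lambda,\mu)-\dot{v}_h\|\lesssim q^{-(4N+5/2)}$, and together with $q_k<|w|<q$ this forces $q_{k+1}\gtrsim q^{4N+3/2}>q^N$, contradicting $|w|\ge q_{k+1}^{1/N}$. I expect the main obstacle to be Case~2: one must verify that being a non-miracle slit is exactly what pins $\eta_{j+1}^-$ polynomially to $q$, that $\eta_{j+1}$ is genuinely transverse to $\ell$, and that the slightly worse exponents ($q^{5/2}$ and $q^{-(4N+5/2)}$ in place of $q^2$ and $q^{-(4N+3)}$) still leave enough slack for the final contradiction, once the norm-equivalence constant $C'$ and the ``$|w|$ sufficiently large'' requirements are absorbed into the choice of $q_0$.
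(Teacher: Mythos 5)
Your proposal is correct and follows essentially the same route as the paper's proof: the same contradiction setup producing $|q\lambda-p\mu+(qm-pn)|<q^{-4(N+1)}$, the same case split on $\ell=\pm\eta_j$ with the non-miracle hypothesis bounding $\eta_{j+1}^-$ by $(\eta_j^-)^{r^3}$, the same intersection-point construction of a best approximation vector via Theorem~\ref{thm:proof of best} and (\ref{C_2}), and the same endgame forcing $h\in\ell_N$, $h\le k$, and $q_{k+1}>|w|^N$ against (\ref{Diophtine region}). The only deviations are cosmetic: you certify transversality in Case~1 by primitivity and integrality of the constant terms rather than the paper's parallel-line distance bound, and your intermediate exponents ($q_h\lesssim q^{5/2}$, error $\lesssim q^{-(4N+5/2)}$) differ harmlessly from the paper's ($q_h<q^3$, error $<q^{-(4N+1)}$).
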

\begin{proof}
    We prove by contradiction. Assume $w$ is $\left(\alpha\rho^{N^\prime},|w|^{r-1}\right)$-good but not $\alpha$-normal. Let $v=(p,q)$ be the convergent of $w$ with the maximal height $q<|w|^r$. Since $w$ is $\left(\alpha\rho^{N^\prime},|w|^{r-1}\right)$-good, we have $\alpha\rho^{N^\prime}|w|\leq q<|w|^r$. Let $v^\prime=(p^\prime,q^\prime)$ be the next convergent. If $q^\prime\leq |w|^{1+(r-1)N^\prime}$, then $w$ is $\alpha$-normal by definition. So we must have
    \begin{align*}
        q^\prime>|w|^{1+(r-1)N^\prime}.
    \end{align*}
    Then 
    \begin{align*}
        \frac{q^\prime}{|w|}>|w|^{(r-1)N^\prime}\geq q^{(1-\frac{1}{r})N^\prime}=q^{4(N+1)}.
    \end{align*}
    So it can be deduced that
    \begin{align*}
        |w\times v|<\frac{|w|}{|v^\prime|}<\frac{1}{q^{4(N+1)}}.
    \end{align*}
    Writing $w=\left(\lambda+m,\mu+n\right)$ we have
    \begin{align*}
        |q(\lambda+m)-p(\mu+n)|<\frac{1}{q^{4(N+1)}}.
    \end{align*}
    Equivalently,
    \begin{align}\label{align:eta_l}
        |q\lambda-p\mu+(qm-pn)|<\frac{1}{q^{4(N+1)}}.
    \end{align}
Let $(\ell,\eta_j)$ be the pair of lines associated to $w$ for $l=(q,-p,qm-pn)$ by definition.  Since $w$ is not miracle, by hypothesis, we have only two cases to consider. We will show that each will lead to a contradiction.

Case 1: $\ell\neq\pm\eta_j$\\
 By the definition of nearest rational affine line, we have
    \begin{align}\label{aligh:eta_j}
        |a_j\lambda+b_j\mu+c_j|<\frac{1}{q^{4(N+1)}}.
    \end{align}
Note that $\ell$ and $\eta_j$ are not parallel. If not, we notice the distance between $\ell$ and $\eta$ is larger than $\frac{1}{2q}$. While the distances from $(\lambda,\mu)$ to $\ell$ and $\eta_j$ are both less than $\frac{1}{q^{4(N+1)}}$, which is a contradiction. 
       
The intersection point of $\ell$ and $\eta_j$ is 
\begin{align*}
    \left(\frac{c_jp+(qm-pn)b_j}{a_jp+b_jq},\frac{c_jq+(pn-qm)a_j}{a_jp+b_jq}\right) =: \left(\frac{p_{h,1}}{q_h},\frac{p_{h,2}}{q_h}\right) =: \dot{v}_h
\end{align*}
in lowest terms. Recalling that $\eta_j^-\le\ell^-$ 
we have $$q_h\leq |a_jp+b_jq| \leq 2\eta_j^-q\leq 4q^2.$$
From (\ref{align:eta_l}) and (\ref{aligh:eta_j}) it follows that 
    \begin{align*}
      \|(\lambda,\mu)-\dot{v}_h\|_{\infty}<\frac{2\max(|a_j+q|,|b_j+p|)}{q^{4(N+1)}|a_jp+b_jq|}<\frac{6}{q^{4N+3}}.
    \end{align*}
    By Theorem~\ref{thm:proof of best} and the definition of $C^\prime$,
    \begin{align*}
         \|(\lambda,\mu)-\dot{v}_h\| < C^\prime\|(\lambda,\mu)-\dot{v}_h\|_{\infty}<\frac{1}{2(4q^2)^2}<\frac{1}{2q_h^2}
    \end{align*}
     which implies that $\dot{v}_h$ is a best approximation vector.
     By (\ref{C_2}), we have
     \begin{align}
         \frac{1}{2q_hq_{h+1}}<\frac{6C'}{q^{4N+3}}.
     \end{align}
     Since $q>|w|>48C'$, we have
     \begin{align*}
         q_{h+1}>\frac{q^{4N+3}}{12C'q_h}>\frac{q^{4N+1}}{48C'}>q^{3N}>(4q^2)^N\geq(q_h)^N
     \end{align*}
     from which it follows that $h\in l_N$. Since $q_h\leq 4q^2\leq 4|w|^{2r}<|w|^{3r}<q_{k^\prime}$, we must have $q_h\leq q_k$. Hence $q_{h+1}<q_{k+1}$ so that
     \begin{align*}
         \frac{1}{2q_kq_{k+1}}\leq\frac{1}{2q_hq_{h+1}}<\frac{6C'}{q^{4N+3}}.
     \end{align*}
     Since $\alpha>1$, we have $q>|w|\geq q_{k+1}^{\frac{1}{N}}>q_k$. Then
     \begin{align*}
         q_{k+1}>\frac{q^{4N+3}}{12C'q_k}>q^{N}>|w|^{N}
     \end{align*}
     which contradicts with the hypothesis on $|w|$.

Case 2: $\ell=\pm\eta_j$ and 
    \begin{align}\label{|eta_j+1|}
        \eta_{j+1}^-<(\eta_j^-)^{r^3}<(2q)^{r^3}.
    \end{align}
    By the definition of nearest oriented affine hyperplane, we have
    \begin{align}\label{align:eta_j+1}
        |a_{j+1}\lambda+b_{j+1}\mu+c_{j+1}|<\frac{1}{q^{4(N+1)}}
    \end{align}
   
    The intersection of $\eta_j$ and $\eta_{j+1}$ is $$\left(\frac{c_{j+1}p+(qm-pn)b_{j+1}}{a_{j+1}p+b_{j+1}q},\frac{c_{j+1}q+(pn-qm)a_{j+1}}{a_{j+1}p+b_{j+1}q}\right)=: \left(\frac{p_{h,1}}{q_h},\frac{p_{h,2}}{q_h}\right) =: \dot{v}_h$$ 
    in lowest terms. So it follows by (\ref{|eta_j+1|}) and $r^3<3/2$ that $q_h\leq a_{j+1}p+b_{j+1}q\leq 2|\eta_{j+1}|q\leq 2^{r^3+1}q^{r^3+1} <q^3<|w|^{3r}<q_{k^\prime}$. Then from (\ref{align:eta_l}) and (\ref{align:eta_j+1}) it follows that 
    \begin{align*}        
        \|(\lambda,\mu)-\dot{v}_h\|_{\infty}<\frac{2\max(|a_{j+1}+q|,|b_{j+1}+p|)}{q^{4(N+1)}|a_{j+1}p+b_{j+1}q|}<\frac{1}{q^{4N+1}}.
    \end{align*}
    With the same argument in case 1, we have
    \begin{align*}
        \|(\lambda,\mu)-\dot{v}_h\| < C^\prime\|(\lambda,\mu)-\dot{v}_h\|_{\infty}<\frac{1}{2(4q^2)^2}<\frac{1}{2q_h^2}
    \end{align*}
    which implies that $\dot{v}_h$ is a best approximation vector by Theorem~\ref{thm:proof of best}. 
    Since
     \begin{align}
         \frac{1}{2q_hq_{h+1}}<\frac{C'}{q^{4N+1}}.
     \end{align}
     we have
    \begin{align*}
         q_{h+1}>\frac{q^{4N+1}}{2C'q_h}>(q^3)^N>(q_h)^N
     \end{align*}
    which implies $h\in l_N$. We follow the previous prove and will get 
    \begin{align*}
        \frac{1}{2q_kq_{k+1}}\leq\frac{1}{2q_hq_{h+1}}<\frac{C'}{q^{4N+1}}.
    \end{align*}
    Since $\alpha>1$, we have $q>|w|>q_{k+1}^{\frac{1}{N}}>q_k$. Then
    \begin{align*}
        q_{k+1}>\frac{q^{4N+1}}{2C'q_k}>q^{N}>|w|^{N}
    \end{align*}
    which is also impossible.  
\end{proof}

\subsubsection{Normal children slits}
Given a slit $w$ let $$\beta=|w|^{r-1}.$$
We aim to show that the definition of a normal slit exhibits a form of recurrence, such that many slits \( w' = w + 2v \in \Delta(w, \alpha, \beta) \) are \( \alpha r \)-normal if \( w \) is an \( \alpha \)-normal slit.

Let us begin by assuming an $\alpha$-normal slit $w$. Consider a loop $v$ satisfying
\begin{equation}\label{def:child}
 |w|^r\le|v|\le2|w|^r \quad\text{ and }\quad |w\times v|<\frac{1}{\alpha},
\end{equation}  
then $w'=w+2v$ define a new slit. If $w'$ is $\alpha r$-normal and satisfies (\ref{def:child}) then it will be called a {\em child} of $w$.
If $w$ is $(\alpha\rho^{N'+1},\beta)$-good, Lemma~{\ref{lem:N'-good=>normal}} together with Lemma~\ref{lem:good:children} will prove that $w'$ is either $\alpha r$-normal or miracle.
Thus, the main task is to bound the number of $w'$ that satisfy (\ref{def:child}) but are not $\alpha r $-normal when $w$  is not  $(\alpha \rho^{N'+1}, \beta)$-good.  
The idea has been addressed in a series of lemmas in \cite{2011Dichotomy} (see \cite[Lemma 7.6--7.10]{2011Dichotomy}).  
We show that these lemmas remain valid in our setting.

Suppose $w$ is $\alpha$-normal. Let $u$ be the convergent of the inverse slope of $w$ with maximum height $|u|< |w|^r$ and $q$ the height of the next convergent. Then we can define $t_1$  
\begin{align}\label{def:t_1}
    |u|=\alpha\rho^{t_1}|w|,
\end{align}
and $t_2$ by
\begin{align}\label{def_t_2}
    q=|w|^{1+(r-1)t_2}.
\end{align}
Because $w$ is $\alpha$-normal, we first note that (\ref{def:normal}) with $t=1$ implies  $1\leq t_1\leq T$. By the definition of $q$, we have $t_2\geq 1$. Then we claim that $t_2\leq t_1$ otherwise (\ref{def:normal}) will fail when $t\in(t_1,t_2)$.

Also for every $w'\in\Delta(w,\alpha,\beta)$, we can denote $u(w')$ be the convergent of the inverse slope of $w'$ with maximum height $|u'|< |w'|^r$ and $q(w')$ the height of the next convergent. Then $t_1(\cdot),t_2(\cdot):\Delta(w,\alpha,\beta)\rightarrow \R$ can be viewed as a function given by
\begin{align}
    |u(w')|=\alpha r\rho^{t_1(w')}|w'|,
    q'=|w'|^{1+(r-1)t_2(w')}.
\end{align}

\begin{lemma}\label{lem:t_1'<N'}
Suppose $q_{k+1}^{1/N}<|w'|<q_{k'}^{1/3r}$.  If $w'$ is neither $\alpha r$-normal nor miracle, then $t_1(w')\leq \min(N',t_2(w'))$.  
\end{lemma}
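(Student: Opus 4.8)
The plan is to argue by contradiction, following the template of the two ``$N'$-good $\Rightarrow$ normal'' lemmas already proven (Lemma~\ref{lem:Uniquly N'-good=>normal} and Lemma~\ref{lem:N'-good=>normal}), but now tracking the role of $t_1(w')$ explicitly. Suppose $w'$ is neither $\alpha r$-normal nor miracle, yet $t_1(w') > \min(N', t_2(w'))$. First I would record the two easy reductions: by definition of $q(w') = q'$ we always have $t_2(w') \geq 1$, and $w'$ fails to be $\alpha r$-normal means the defining condition (\ref{def:normal}) (with $\alpha$ replaced by $\alpha r$) is violated for some $t$ in the allowed range; concretely, the convergent $u(w')$ of maximal height $|u(w')| < |w'|^r$ has next convergent of height $q' > |w'|^{1+(r-1)t_2(w')}$, and the violation forces $t_2(w') > t_1(w')$ in the relevant regime — so the real content is to rule out $t_1(w') > N'$.

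Assume then $t_1(w') > N'$. This means $|u(w')| = \alpha r \rho^{t_1(w')} |w'| > \alpha r \rho^{N'}|w'|$, i.e.\ $w'$ is $(\alpha r \rho^{N'}, |w'|^{r-1})$-good (with the convergent $u(w')$ witnessing the lower bound, and the next convergent $q'$ giving, via (\ref{align:estimate for convergents}), the cross-product bound $|w' \times u(w')| < |w'|/q' $). The hypothesis that $w'$ is not $\alpha r$-normal, combined with $w'$ being $(\alpha r \rho^{N'}, |w'|^{r-1})$-good, contradicts exactly Lemma~\ref{lem:N'-good=>normal} (in the totally irrational case) or Lemma~\ref{lem:Uniquly N'-good=>normal} (in the uniquely rational case), \emph{provided} we know $w'$ is not a miracle slit — which is part of our hypothesis — and provided $w'$ satisfies the height restriction $q_{k+1}^{1/N} \leq |w'| < q_{k'}^{1/3r}$, which is also part of the hypothesis. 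So the case $t_1(w') > N'$ collapses immediately into the earlier lemmas.

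It remains to handle $t_1(w') \leq N'$ but $t_1(w') > t_2(w')$. Here I would unwind the definitions: $t_2(w') < t_1(w')$ means the height $q'$ of the convergent following $u(w')$ satisfies $q' = |w'|^{1+(r-1)t_2(w')} < |w'|^{1+(r-1)t_1(w')}$, while $|u(w')| = \alpha r \rho^{t_1(w')}|w'|$. The point is that between heights $|u(w')|$ and $q'$ there are no convergents, so for every $t$ with $t_2(w') < t < t_1(w')$ the window $[\alpha r \rho^t |w'|, |w'|^{1+(r-1)t}]$ — which must contain the height of some convergent of $w'$ if $w'$ is $\alpha r$-normal — falls entirely in the gap $(|u(w')|, q')$ and hence is empty of convergents; this is precisely why $w'$ is not $\alpha r$-normal, so this sub-case is consistent, not contradictory, and the conclusion $t_1(w') \leq t_2(w')$ we are asked to prove is simply the negation: if $w'$ is not $\alpha r$-normal then either $t_1(w') \leq N'$ forces (by the gap argument being the \emph{only} way normality can fail) $t_1(w') \leq t_2(w')$ — wait, this needs care. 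The clean statement is: normality of $w'$ fails $\Longleftrightarrow$ there is $t \in [1,T']$ with the window empty; the maximal-height convergent below $|w'|^r$ being $u(w')$ and the next being $q'$, emptiness of a window at parameter $t$ forces $\alpha r \rho^t|w'| > |u(w')|$ and $|w'|^{1+(r-1)t} < q'$ to fail simultaneously in a way that pins $t \in (t_1(w'), t_2(w')]$; nonemptiness of this interval of $t$'s is exactly $t_1(w') < t_2(w')$. Combined with the first part ($t_1(w') \leq N'$), we get $t_1(w') \leq \min(N', t_2(w'))$.

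The main obstacle I anticipate is the bookkeeping in this last paragraph: making precise the equivalence ``$w'$ not $\alpha r$-normal'' $\Leftrightarrow$ ``$t_1(w') < t_2(w')$'' by correctly identifying $u(w')$ and $q'$ as \emph{consecutive} convergents straddling $|w'|^r$, and checking the endpoint conventions ($t \in [1,T']$, the definition of $T'$ via $\alpha r \rho^{T'} = |w'|^{r-1}$, and whether the inequalities are strict). The Diophantine input — that a too-large gap between consecutive convergents of $w'$ would manufacture, via intersection with a nearest rational affine line $\eta_j$ (or the uniquely rational line $\eta$), a best approximation vector of $(\lambda,\mu)$ violating (\ref{C_2}) and the height restriction — is entirely quarantined inside Lemma~\ref{lem:N'-good=>normal} and Lemma~\ref{lem:Uniquly N'-good=>normal}, so no new Diophantine argument is needed; the lemma is essentially a repackaging of those two together with the definitional gap analysis of normality.
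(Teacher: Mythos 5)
Your proposal is correct and essentially reproduces the paper's proof: the case $t_1(w')>N'$ is ruled out by applying Lemma~\ref{lem:N'-good=>normal} (or Lemma~\ref{lem:Uniquly N'-good=>normal}) with $\alpha r$ in place of $\alpha$, using the non-miracle hypothesis and the height restriction, while the case $t_1(w')>t_2(w')$ is ruled out because then every window $[\alpha r\rho^t|w'|,|w'|^{1+(r-1)t}]$ with $t\in[1,T]$ contains either $u(w')$ (for $t\le t_1(w')$) or $q'$ (for $t\ge t_1(w')$), so $w'$ would be $\alpha r$-normal. Note only that your middle paragraph momentarily reverses the inequalities (for $t<t_1(w')$ the window's lower endpoint lies below $|u(w')|$, so the window contains $u(w')$ and is not empty); your own correction---an empty window forces $t\in(t_1(w'),t_2(w'))$, hence $t_1(w')<t_2(w')$---is the accurate statement and is exactly the contrapositive of the paper's argument.
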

\begin{proof}
By the definition of $q'$, $t_2(w')\geq 1$. Now we prove by contradiction.
If $t_1(w')>t_2(w')\geq 1$ then (\ref{def:normal}) is satisfied by $|u(w')|$ for all $t\in[1,t_1(w')]$, and by $q'$ for all $t\in[t_1(w'),T]$, contrary to the assumption that $w'$ is not $\alpha r$ normal.   If $t_1(w')> N'$ then Lemma~\ref{lem:N'-good=>normal} applied to the slit $w'$, with $(\alpha r)$ in place of $\alpha$, also implies that $w'$ is $\alpha r$-normal.  
\end{proof}

\begin{lemma}\label{lem:strips}
Let $\bar {t}_1' := \max(t_1(w'),1)$. Then $u(w')$ determines a (nonzero) integer $a\in\mathbb{Z}$ such that 
  $$|\big(w\times u(w')\big)+2a|<\frac{1}{|w|^{r(r-1)\Bar{t}_1'}}.$$  
Moreover, $|a|<2\rho^{N'+1}$.  
\end{lemma}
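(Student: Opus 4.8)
The plan is to write $w'=w+2v$ and set $a:=v\times u(w')\in\Z$; this is an integer since $v,u(w')\in\Z^2$, and by bilinearity
\[
w'\times u(w')=(w+2v)\times u(w')=w\times u(w')+2a ,
\]
so $|w\times u(w')+2a|=|w'\times u(w')|$, and it remains to control $|w'\times u(w')|$ and $|a|$ separately. Throughout I use $\beta=|w|^{r-1}$, so that $|w'|\ge(2\beta-1)|w|>|w|^{r}$ and $|v|\le 2\beta|w|=2|w|^{r}<|w'|^{r}$ for $|w|$ large; and I use that this lemma is invoked only when $w'$ satisfies the hypotheses of Lemma~\ref{lem:t_1'<N'}, so that $t_1(w')\le\min(N',t_2(w'))$.

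For the displayed inequality: by definition $u(w')$ is a convergent of the inverse slope of $w'$ whose next convergent has height $q'=|w'|^{1+(r-1)t_2(w')}$, with $t_2(w')\ge1$ since $q'\ge|w'|^{r}$; hence Theorem~\ref{thm:continued1} gives $|w'\times u(w')|\le\frac{|w'|}{q'}=|w'|^{-(r-1)t_2(w')}$. Since $|w'|>|w|^{r}$ this is $<|w|^{-r(r-1)t_2(w')}$, and since $\bar t_1'=\max(t_1(w'),1)\le t_2(w')$ (using $t_1(w')\le t_2(w')$ and $t_2(w')\ge1$), we conclude $|w\times u(w')+2a|=|w'\times u(w')|<|w|^{-r(r-1)\bar t_1'}$.

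Nonvanishing: if $a=0$ then $v$ and $u(w')$ are parallel primitive vectors of positive height, so $u(w')=v$ and $w'\times u(w')=w'\times v=(w+2v)\times v=w\times v$; but $|w\times v|>\tfrac1\beta=|w|^{-(r-1)}$ by (\ref{def:Delta}), while $|w'\times u(w')|\le|w'|^{-(r-1)}<|w|^{-r(r-1)}<|w|^{-(r-1)}$ since $r>1$ --- a contradiction, so $a\ne0$. For the bound $|a|<2\rho^{N'+1}$: since $\tfrac{|w'\times v|}{|w'||v|}=\tfrac{|w\times v|}{|w'||v|}$ and $2|v||w\times v|<\tfrac{2|v|}{\alpha}\le|v|<|w'|$ (we may assume $\alpha\ge2$, as in every application), we get $|\text{(inv.\ slope of }w')-\tfrac{p}{q}|<\tfrac{1}{2|v|^2}$, so by (\ref{convergents}) $v=(p,q)$ is a convergent of the inverse slope of $w'$; as $|v|<|w'|^{r}$, as $u(w')$ has the largest height among the convergents of height $<|w'|^{r}$, and as $u(w')\ne v$ (since $a=v\times u(w')\ne0$), the vector $u(w')$ is a strictly later convergent than $v$. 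Write $\tilde v$ for the convergent immediately after $v$. A routine computation with the matrix recursion for convergents shows that for convergents $u_i,u_j$ of the same irrational with $i<j$ one has $|u_i\times u_j|\le\tfrac{q_j}{q_{i+1}}$: indeed $|u_i\times u_j|$ equals the lower-left entry $C$ of $\prod_{k=i+1}^{j}\bigl(\begin{smallmatrix}a_k&1\\1&0\end{smallmatrix}\bigr)$, and comparing with its upper-left entry $A>a_{i+1}C$ gives $q_j=q_iA+q_{i-1}C>Cq_{i+1}$. Taking $u_i=v$, $u_j=u(w')$ yields $|a|\le\tfrac{|u(w')|}{|\tilde v|}$. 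Now $|u(w')|=\alpha r\rho^{t_1(w')}|w'|\le\alpha r\rho^{N'}|w'|$ by Lemma~\ref{lem:t_1'<N'}, while Theorem~\ref{thm:continued1} applied to the consecutive convergents $v,\tilde v$ of $w'$ gives $|\tilde v|>\tfrac{|w'|}{|w\times v|}-|v|>\alpha|w'|-|w'|=(\alpha-1)|w'|$. Hence $|a|<\tfrac{\alpha}{\alpha-1}r\rho^{N'}\le2r\rho^{N'}<2\rho^{N'+1}$, using $\alpha\ge2$ and $r<\rho=r+\tfrac12$.

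The main obstacle is the bound on $|a|$: its real content is that $u(w')$ cannot lie more than a bounded continued-fraction distance past $v$ in the expansion of the inverse slope of $w'$, quantified by $|u_i\times u_j|\le q_j/q_{i+1}$ together with the lower bound $|\tilde v|>(\alpha-1)|w'|$ on the very next convergent --- it is this pairing that makes the $\alpha$-dependence cancel and leaves the $\alpha$-independent constant $2\rho^{N'+1}$. The two small technical inputs it rests on are that $v$ is genuinely a convergent of $w'$ and that $|v|<|w'|$; the remainder is the bilinear identity $w'\times u(w')=w\times u(w')+2a$ and routine bookkeeping with $|w'|\asymp|w|^{r}$.
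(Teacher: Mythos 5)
Your proof is correct and follows essentially the same route as the paper: identify $a$ with $v\times u(w')$ via bilinearity so that $|w\times u(w')+2a|=|w'\times u(w')|$, bound this by $|w'|/q'=|w'|^{-(r-1)t_2(w')}$ using the convergent following $u(w')$, and bound $|a|\le |u(w')|/|\tilde v|$ with $|\tilde v|\gtrsim\alpha|w'|$, $|u(w')|=\alpha r\rho^{t_1(w')}|w'|$ and $t_1(w')\le N'$ from Lemma~\ref{lem:t_1'<N'}. The differences are cosmetic: the paper expands $u(w')=av'+bv$ in the basis of consecutive convergents rather than your continuant computation, and it uses $|v'|>\alpha|w'|/2$ so that no auxiliary assumption $\alpha\ge2$ is needed (your step $2|v|/\alpha\le|v|$ can be replaced by $2|v|<|w'|$, valid for any $\alpha>1$), while your explicit nonvanishing argument via the lower bound $|w\times v|>1/\beta$ from (\ref{def:Delta}) spells out a point the paper passes over quickly.
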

\begin{proof}
Write $w'=w+2v$ and $v$ satisfies $\frac{|w'\times v|}{|w'||v|}<\frac{1}{\alpha|v|^2}<\frac{1}{2|v|^2}$ which deduces that $v$ is a convergent of $w'$.
Let $v'$ be the next convergent of $w'$ after $v$.  
Since $|u(w')|>|w'|>|v|$ we either have $u(w')=v'$ or $u(w')$ comes after $v'$ in the continued fraction expansion of $w'$. From the property of continued fractions, we know that $v$ and $v'$ form a basis that $u(w')=av'+bv$ for some \emph{nonnegative} integers $a\ge b\ge0$ with $\gcd(a,b)=1$.  
Since $v\times v'=\pm1$ we have 
  $$\big|w'\times u(w')\big| = \big|\big(w\times u(w')\big) + 2\big(v\times u(w')\big)\big| = \big|\big(w\times u(w')\big) \pm 2a\big|.$$  
On the other hand, $$\big|w'\times u(w')\big| 
    < \frac{|w'|}{q'}=\frac{1}{|w'|^{(r-1)t_2'}} < \frac{1}{|w|^{r(r-1)\Bar{t}_1'}}.$$  
This proves the first part.  

By the first inequality in (\ref{ieq:cfc}), $|v'|>\frac{|w'|}{2|w\times v|}$. Then by $|w\times v|<\frac{1}{\alpha}$, we have $|v'|>\frac{\alpha |w'|}{2}$ which implies
$$a<\frac{\big|u(w')\big|}{|v'|} < 2r\rho^{t_1(w')}<2\rho^{N'+1},$$ by Lemma~\ref{lem:t_1'<N'} and since $r<\rho$.  
This proves the second part.  
\end{proof}

Let $w''\in\Delta(w,\alpha,\beta)$ satisfy (\ref{def:child}) and is not $(\alpha r)$-normal and miracle. We also suppose that $|w''|<q_{k'}^{1/3r}$.  
\begin{definition}
If $u(w'')$ determines the same integer $a$ determined by $u(w')$ as in Lemma~\ref{lem:strips}, then we denote that $u(w')$ and $u(w'')$ belong to the same \emph{strip}. 
\end{definition}
Then by Lemma~\ref{lem:strips}, the number of strips is bounded by 
\begin{equation}\label{bound:strips}
  4\rho^{N'+1}.  
\end{equation}

\begin{definition}
Suppose $u(w'),u(w'')$ belong to the same strip, we denote that $u(w')$ and $u(w'')$ lie in the same \emph{cluster} if they differ by a multiple of $u$.
\end{definition}
\begin{lemma}\label{lem:same:cluster}
 Suppose that \footnote{It will be guaranteed by the choice of $q_0$ in \S\ref{s:Init}, so we just admit it in this section.}
$$|w|^{(r-1)^2}\geq 5.$$
Then if $|u(w'')-u(w')|<|w|^r$, they belong to the same cluster.  
\end{lemma}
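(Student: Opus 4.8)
\textbf{Plan for the proof of Lemma~\ref{lem:same:cluster}.}
The statement asserts that two vectors $u(w'),u(w'')$ lying in the same strip, i.e.\ determining the same integer $a$ via Lemma~\ref{lem:strips}, must in fact lie in the same cluster (differ by a multiple of $u$) as soon as $|u(w'')-u(w')|<|w|^r$. The natural strategy is to analyze the difference $d:=u(w'')-u(w')$ and show that it is forced to be a multiple of $u$ purely from the $w$-component estimates. Being in the same strip means
\[
\bigl|(w\times u(w'))+2a\bigr|<\frac{1}{|w|^{r(r-1)\bar t_1'}}\quad\text{and}\quad \bigl|(w\times u(w''))+2a\bigr|<\frac{1}{|w|^{r(r-1)\bar t_1''}},
\]
both of which are at most $\tfrac{1}{|w|^{r(r-1)}}$ since $\bar t_1'\ge 1$. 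Subtracting, $|w\times d|<\tfrac{2}{|w|^{r(r-1)}}$. The point is that $d$ is a genuinely small integer vector — by hypothesis $\|d\|\lesssim|w|^r$ — with $|w\times d|$ extremely small, so $d$ is very nearly parallel to $w$. I expect to compare $d$ against the convergent $u$ of $w$: recall $u$ is the convergent of the inverse slope of $w$ with maximal height $|u|<|w|^r$, and $q$, the height of the next convergent, satisfies $q=|w|^{1+(r-1)t_2}>|w|$, which forces $|w\times u|$ to be of size roughly $|w|/q$, hence at least comparable to $1/|w|^{(r-1)t_2}$ but also small.

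The key step is a lattice/determinant argument. Consider the $2\times 2$ integer matrix with rows $u$ and $d$; its determinant is $u\times d\in\mathbb{Z}$. Since $|w\times u|$ and $|w\times d|$ are both small while $w$ has height comparable to $|w|$, and $d$ has height at most $\approx|w|^r$, I would estimate
\[
|u\times d|\le \frac{|u|\,|w\times d| + |d|\,|w\times u|}{|w|}
\]
(the standard identity $w\times(u\times d\text{ scaling})$, more precisely writing $d = \frac{(w\times d)}{(w\times u)}u + (\text{correction})$ and bounding). Plugging in $|u|<|w|^r$, $|d|\lesssim|w|^r$, $|w\times d|<2|w|^{-r(r-1)}$, and $|w\times u|<|w|^{1-(1+(r-1)t_2)}=|w|^{-(r-1)t_2}$ with $t_2\le t_1\le T$ and $T$ controlled by $\alpha\rho^T=|w|^{r-1}$, all the exponents work out — using $r^3<3/2$ and $|w|^{(r-1)^2}\ge 5$ — to give $|u\times d|<1$, hence $u\times d=0$, i.e.\ $d$ is parallel to $u$. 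Since both are integer vectors and $u$ is primitive (it is a convergent), $d$ is an integer multiple of $u$, which is exactly the definition of $u(w')$ and $u(w'')$ lying in the same cluster.

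The main obstacle I anticipate is bookkeeping the exponents so that the determinant bound genuinely closes: one needs $|w\times u|$ not merely "small" but small enough relative to the crude size $|w|^r$ of $|d|$, and this requires using the lower bound $q>|w|$ together with $t_2\le t_1$, and possibly the strip estimate's exponent $r(r-1)\bar t_1'$ rather than just $(r-1)$. The hypothesis $|w|^{(r-1)^2}\ge 5$ is presumably exactly the slack needed to absorb the constant $2$ (and any factors of $C'$) after the exponent arithmetic; I would track where the $5$ enters and confirm it suffices. A secondary subtlety is justifying $\|d\|\lesssim|w|^r$ rigorously from the stated hypothesis $|u(w'')-u(w')|<|w|^r$ together with the bounds $|u(w')|,|u(w'')|<2|w'|^r\asymp|w|^r$ coming from (\ref{def:child}) and the comparability $|w'|\asymp|w|^{r}$... actually $|w'|\asymp|w|$, so $|u(w')|\lesssim|w|^{r}$ directly — this is immediate and not a real difficulty. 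The heart of the matter is purely the determinant estimate and the exponent chase around $t_1,t_2,T$.
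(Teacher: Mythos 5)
Your opening step is the same as the paper's: from membership in the same strip you get $|w\times d|<\tfrac{2}{|w|^{r(r-1)}}$ for $d:=u(w'')-u(w')$. But the closing determinant estimate does not work, and the place where you wave your hands ("all the exponents work out ... to give $|u\times d|<1$") is exactly where it fails. Using the identity $(u\times d)\,w=(w\times d)\,u-(w\times u)\,d$ and taking heights, the best you can say is
\[
|u\times d|\;\le\;\frac{|w\times d|\,|u|+|w\times u|\,|d|}{|w|}\;<\;\frac{2}{|w|^{(r-1)^2}}\;+\;\frac{|w|^{r}}{q},
\]
where $q=|w|^{1+(r-1)t_2}$ is the height of the convergent following $u$. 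Since $t_2$ can equal $1$, i.e.\ $q$ can be as small as $|w|^{r}$, while $|d|$ can be as large as (just under) $|w|^{r}$, the second term is only bounded by $1$, with no gain; the hypothesis $|w|^{(r-1)^2}\ge5$ only shrinks the first term to $\tfrac25$. So you obtain $|u\times d|<\tfrac75$, hence merely $|u\times d|\le 1$, and the possibility $|u\times d|=1$ — which means $d$ and $u$ form a unimodular basis and the two vectors lie in \emph{different} clusters — is not excluded. This is a genuine gap, not a bookkeeping issue: no choice of constants in this inequality rules out $|u\times d|=1$ in the regime $|d|\asymp|w|^{r}$, $q\asymp|w|^{r}$.

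The missing ingredient is the continued-fraction structure of $w$ itself, which is how the paper argues. Write $d=d_0\bar u$ with $\bar u$ primitive; the strip bound together with $|d|<|w|^{r}$ gives $\bigl|\theta_w-\dot{\bar u}\bigr|<\tfrac{1}{2|d|^{2}}$ (here one uses $|w|^{r+(r-1)^2}\ge 4|w|^r$), so by Theorem~\ref{lem:estimate for continued fractions} $\bar u$ is a convergent of the inverse slope of $w$ with $|\bar u|\le|w|^{r}$, hence $|\bar u|\le|u|$ by maximality of $u$. If $\bar u$ were an \emph{earlier} convergent than $u$, the standard lower bound would give $|w\times\bar u|>\tfrac{|w|}{2|u|}>\tfrac12|w|^{1-r}$, which contradicts $|w\times d|<\tfrac{2}{|w|^{r(r-1)}}$ precisely because $|w|^{(r-1)^2}\ge5>4$ — this is where the hypothesis actually enters, not as slack for a determinant bound. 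Hence $\bar u=u$ and $d\in\mathbb{Z}u$. You could salvage your route by adding this step to dispose of the residual case $|u\times d|=1$ (then $d$ is primitive, is a convergent distinct from $\pm u$, and the same contradiction applies), but as written the proposal does not prove the lemma.
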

\begin{proof}
Since $u(w'),u(w'')$ determine the same $a$, Lemma~\ref{lem:strips} and the 
  fact that $\bar{t}_1'\geq 1$ implies 
\begin{equation}\label{ieq:width:strip}
  |w\times\big(u(w'')-u(w')\big)|<\frac{2}{|w|^{r(r-1)}}
\end{equation}
  so that writing $u(w'')-u(w')=d\Bar{u}$ where $d=\gcd\big(u(w'')-u(w')\big)$ we have 
  $$\frac{|w\times(u(w'')-u(w'))|}{|u''-u'||w|} 
        < \frac{2}{|u(w'')-u(w')||w|^{r+(r-1)^2}} \le \frac{1}{2|u(w'')-u(w')|^2},$$ 
  which implies $\Bar{u}$ is a convergent of $w$.  
Since $|\Bar{u}|\le|u(w'')-u(w')|\le|w|^r$, we have $|\Bar{u}|\le|u|$, through the definition of $u$.  
We prove by contradiction and assume that $|\Bar{u}|<|u|$. 
Since $u$ is a convergent of $w$ coming after $\Bar{u}$, 
$$|w\times\Bar{u}|>\frac{|w|}{2|\Bar{u}|}>\frac{|w|}{2|u|}$$ which together with (\ref{ieq:width:strip}) implies 
$$\frac{d|w|}{2|u|} < \frac{2}{|w|^{r(r-1)}}$$ 
so that $$|u|>\frac{d|w|^{r+(r-1)^2}}{4} \ge |w|^r,$$ 
contradicting the definition of $u$.  
We conclude that $\Bar{u}=u$, so that $u(w'),u(w'')$ differ by a multiple of $u$.  
That is, they belong to the same cluster.  
\end{proof}

Pick a representative from each cluster.  To bound the number of clusters we bound the number of representatives.  
Since $|u(w')|=\alpha r\rho^{t_1(w')}|w'| < 5\alpha\rho^{N'+1}|w|^r$ and the difference in height of any two representatives is greater than $|w|^r$, the number of clusters is bounded by (since $\alpha>1$) 
\begin{equation}\label{bound:clusters}
  5\alpha\rho^{N'+1}+1 \le 6\alpha\rho^{N'+1}.  
\end{equation}

We randomly pick $u'$ in each cluster, the number of elements in the cluster can be bounded by the following lemma.

\begin{lemma}\label{lem:cluster:size}
Suppose $t_1(w')\ge t_1-1$ (we assume that $w'$ defines the smallest $t_1$ in this cluster).  
Then the number of elements in the cluster is bounded by 
$$5|w|^{(r-1)-(r-1)^2}.$$ 
\end{lemma}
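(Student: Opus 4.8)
The plan is to confine all the vectors lying in one cluster to a short arithmetic progression with common difference $u$. Recall that the cluster is a set of distinct vectors, each of the form $u(w'')$ for a slit $w''$ satisfying (\ref{def:child}) which is neither $\alpha r$-normal nor miracle and has $|w''|<q_{k'}^{1/3r}$; all of them determine the same integer $a$ of Lemma~\ref{lem:strips} (they share a strip), and by definition any two of them differ by an integer multiple of $u$. To each cluster element is attached such a slit, and $w'$ is the one realising the smallest value of $t_1$, with $t_1(w')\ge t_1-1$ by hypothesis. So I would fix $u(w')$ as a base point and, for every other element $u(w'')$ of the cluster, bound the integer $c$ with $u(w'')=u(w')+c\,u$; the number of elements of the cluster is then at most $1$ plus the diameter of the resulting set of $c$'s.

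To bound $|c|$, I would compare the estimate of Lemma~\ref{lem:strips} for $w'$ and for $w''$. Since both slits determine the same nonzero integer $a$, we have $|w\times u(w'')+2a|<|w|^{-r(r-1)\bar t_1(w'')}$ and $|w\times u(w')+2a|<|w|^{-r(r-1)\bar t_1(w')}$, where $\bar t_1(\cdot)=\max(t_1(\cdot),1)$. Because $t_1(w'')\ge t_1(w')\ge t_1-1$ and $t\mapsto\max(t,1)$ is nondecreasing, both right-hand sides are at most $|w|^{-r(r-1)s}$, where $s:=\max(t_1-1,1)$. Subtracting the two inequalities and using $u(w'')-u(w')=c\,u$ together with bilinearity of $\times$ gives
\[
|c|\cdot|w\times u|<2\,|w|^{-r(r-1)s}.
\]
For the factor $|w\times u|$ I would use that $u$ is a convergent of the inverse slope of $w$ whose next convergent has height $q=|w|^{1+(r-1)t_2}$, see (\ref{def_t_2}); since $|u|<|w|^r\le q$ by the definitions of $u$ and $q$, Theorem~\ref{thm:continued1} yields $|w\times u|>\frac{|w|}{|u|+q}>\frac{|w|}{2q}=\frac12|w|^{-(r-1)t_2}$. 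Combining the two displays,
\[
|c|<4\,|w|^{(r-1)t_2-r(r-1)s}=4\,|w|^{(r-1)(t_2-rs)}.
\]

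It then remains to verify the exponent never exceeds $(r-1)(2-r)=(r-1)-(r-1)^2$. Here I would invoke that, $w$ being $\alpha$-normal, $1\le t_2\le t_1$. If $t_1\ge2$ then $s=t_1-1$, so $t_2-rs\le t_1-r(t_1-1)=r-(r-1)t_1\le r-2(r-1)=2-r$; if $t_1=1$ then $t_2=1=s$, so $t_2-rs=1-r\le 2-r$. In either case $(r-1)(t_2-rs)\le(r-1)-(r-1)^2$, and since $0<r-1<\frac12$ this exponent is positive, so with $|w|>1$ we get $|c|<4|w|^{(r-1)-(r-1)^2}$. The set of values $c$ therefore has diameter less than $4|w|^{(r-1)-(r-1)^2}$, and the cluster has at most $4|w|^{(r-1)-(r-1)^2}+1\le5|w|^{(r-1)-(r-1)^2}$ elements.

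The substance of the argument is the pairing of the strip estimate of Lemma~\ref{lem:strips} with the lower bound on $|w\times u|$ coming from the height $q=|w|^{1+(r-1)t_2}$ of the next convergent of $w$; the normality of $w$, through the inequality $t_2\le t_1$, is exactly what makes the exponent collapse to $(r-1)-(r-1)^2$. The one point requiring care is the truncation $\bar t_1=\max(t_1,1)$ in Lemma~\ref{lem:strips}, which is what forces the separate (and trivial) treatment of the corner case $t_1=1$.
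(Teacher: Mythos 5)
Your proof is correct and follows essentially the same route as the paper's: differencing the two strip estimates of Lemma~\ref{lem:strips} so the $2a$ cancels, bounding $|w\times u|$ from below by $\tfrac{|w|}{2q}=\tfrac12|w|^{-(r-1)t_2}$ via the next convergent, and then reducing the exponent to $2-r$ using $t_2\le t_1$ and $t_1(w')\ge t_1-1$, exactly as in the paper (which phrases the case split in terms of $t_1(w')$ rather than $t_1$). The only nit: since $t_1$ need not be an integer, your final case split should read ``$1\le t_1<2$'' instead of ``$t_1=1$''; in that range $s=1$ and $t_2\le t_1<2$ gives $t_2-rs<2-r$ by the same one-line estimate, so nothing else changes.
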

\begin{proof}
Lemma~\ref{lem:strips} implies for any $u(w'),u(w'')$ in the cluster 
  $$|w\times\big(u(w'')-u(w')\big)|\le\frac{2}{|w|^{r(r-1)\Bar{t}_1'}}$$ 
  where $\Bar{t}_1'$ is the smallest possible within the cluster.  
On the other hand, $$|w\times u| > \frac{|w|}{2q} 
    = \frac{1}{2|w|^{(r-1)t_2}}\ge\frac{1}{2|w|^{(r-1)t_1}}$$ 
  so that 
  $$\frac{|w\times\big(u(w'')-u(w')\big)|}{|w\times u|}<4|w|^{(r-1)(t_1-r\Bar{t}_1')}.$$
By the definition of $u(w'')-u(w')$ is a multiple of $u$. To get the desired bound, using the assumptions $1<r<2$ and $|w|^{r-1}\geq 1$, 
 it remains to show that $$t_1-r\Bar{t}'_1 \le 2-r.$$  
If $t_1(w')>1$, and since $t_1(w')\ge t_1-1$.
  $$t_1-rt_1(w') = \big(t_1-t_1(w')\big)+(1-r)t_1(w')<2-r,$$
whereas if $t_1'\le1$ then 
  $$t_1-r \le t_1(w')+1-r \le 2-r.$$
\end{proof}

We will now combine the above lemmas to demonstrate that, under appropriate assumptions, an $\alpha$-normal slit $w$ will have many children, i.e., $\alpha r$-normal slits $w'$ satisfying the condition in (\ref{def:child}).

\begin{proposition}\label{prop:normal}
 Suppose $w$ is an $\alpha$-normal slit satisfying 
 $$q_{k+1}^{1/N}\le|w|<5|w|^r<q_{k'}^{1/3r}$$ 
where $k,k'$ are consecutive elements of $\ell_N$.  
Suppose further that 
\begin{equation}\label{ieq:normal}
  |w|^{(r-1)^2}\geq \max(480\alpha^2\rho^{N'+3}/c_0,c).
\end{equation} 
Then the number of $w'$ satisfying (\ref{def:child}) that are 
  $\alpha r$-normal is at least 
  $$\frac{c_0|w|^{r-1}}{4\alpha\rho^{N'+1}}.$$
\end{proposition}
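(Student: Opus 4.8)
The plan is a sieve: start from a lower bound for $\#\Delta(w,\alpha,\beta)$ given by Lemma~\ref{lem:good}, and subtract off the members of $\Delta(w,\alpha,\beta)$ that are not $\alpha r$-normal. Before anything else I would make two elementary remarks. First, $w$ being $\alpha$-normal, the case $t=1$ of~(\ref{def:normal}) shows $w$ is $(\alpha\rho,\beta)$-good, hence $(\alpha,\beta)$-good; and~(\ref{ieq:normal}) gives $|w|^{r-1}>|w|^{(r-1)^2}\gg\alpha/c_0$, so Lemma~\ref{lem:good} applies and yields $\#\Delta(w,\alpha,\beta)\ge c_0\beta/\alpha=c_0|w|^{r-1}/\alpha$. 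Second, since $\beta=|w|^{r-1}$, the defining inequalities of $\Delta(w,\alpha,\beta)$ contain those of~(\ref{def:child}) and distinct loops $v$ produce distinct slits $w+2v$, so it suffices to bound from below the number of $\alpha r$-normal members of $\Delta(w,\alpha,\beta)$. I would also note throughout that any $w'=w+2v\in\Delta(w,\alpha,\beta)$ satisfies $q_{k+1}^{1/N}\le|w|\le|w'|\le(4\beta+1)|w|<5|w|^r<q_{k'}^{1/3r}$, so $w'$ too meets the height hypothesis of Lemmas~\ref{lem:N'-good=>normal} and~\ref{lem:t_1'<N'}, and that by Lemma~\ref{Lem:danger slit} (whose hypothesis is part of~(\ref{ieq:normal})) at most one member of $\Delta(w,\alpha,\beta)$ is a miracle slit.

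I would then split on whether $w$ is $(\alpha\rho^{N'+1},\beta)$-good. If it is, I would apply Lemma~\ref{lem:good} \emph{with parameters $(\alpha\rho^{N'+1},\beta)$} to get $\#\Delta(w,\alpha\rho^{N'+1},\beta)\ge c_0\beta/(\alpha\rho^{N'+1})=c_0|w|^{r-1}/(\alpha\rho^{N'+1})$ (valid since~(\ref{ieq:normal}) forces $\alpha\rho^{N'+1}<c_0\beta$). Every $w'\in\Delta(w,\alpha\rho^{N'+1},\beta)$ lies in $\Delta(w,\alpha,\beta)$ and satisfies~(\ref{def:child}); by Lemma~\ref{lem:good:children} it is $(\alpha\rho^{N'+1}-\tfrac12,\beta)$-good, hence, using $\rho-r=\tfrac12$ and that being $(\alpha r\rho^{N'},\beta)$-good implies being $(\alpha r\rho^{N'},|w'|^{r-1})$-good since $\beta|w'|<|w'|^r$, it is $(\alpha r\rho^{N'},|w'|^{r-1})$-good, so if it is not miracle Lemma~\ref{lem:N'-good=>normal} makes it $\alpha r$-normal. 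Discarding the at most one miracle slit gives at least $c_0|w|^{r-1}/(\alpha\rho^{N'+1})-1\ge c_0|w|^{r-1}/(4\alpha\rho^{N'+1})$ $\alpha r$-normal children, the last step absorbing the $-1$ via~(\ref{ieq:normal}).

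If $w$ is not $(\alpha\rho^{N'+1},\beta)$-good, I would bound the number of bad children directly by the strip/cluster machinery. Each non-miracle, non-$\alpha r$-normal $w'\in\Delta(w,\alpha,\beta)$ carries a distinguished convergent $u(w')$; by Lemma~\ref{lem:t_1'<N'} and Lemma~\ref{lem:strips} these lie in at most $4\rho^{N'+1}$ strips (bound~(\ref{bound:strips})), which the height-gap estimate refines to at most $6\alpha\rho^{N'+1}$ clusters altogether (bound~(\ref{bound:clusters})), and by Lemma~\ref{lem:same:cluster} and Lemma~\ref{lem:cluster:size} each cluster accounts for at most $5|w|^{(r-1)-(r-1)^2}$ of them. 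A finiteness argument — that $w'\mapsto u(w')$ is at most boundedly many-to-one on $\Delta(w,\alpha,\beta)$, which is where the short-interval hypothesis $5|w|^r<q_{k'}^{1/3r}$ (no Liouville-type jump between $q_k$ and $q_{k'}$) does real work, as in \cite{2011Dichotomy} — then turns this into a bound of $30\alpha\rho^{N'+1}|w|^{(r-1)-(r-1)^2}$ on the number of bad children. Hence the number of $\alpha r$-normal children is at least
\[
\#\Delta(w,\alpha,\beta)-1-30\alpha\rho^{N'+1}|w|^{(r-1)-(r-1)^2}\ \ge\ \frac{c_0|w|^{r-1}}{\alpha}-1-30\alpha\rho^{N'+1}|w|^{(r-1)-(r-1)^2}.
\]
Since $(r-1)-(r-1)^2=(r-1)(2-r)<r-1$, the ratio of $|w|^{r-1}$ to $|w|^{(r-1)-(r-1)^2}$ is $|w|^{(r-1)^2}$, which by~(\ref{ieq:normal}) is at least $480\alpha^2\rho^{N'+3}/c_0$; feeding this in makes the two subtracted terms together at most $\tfrac34\,c_0|w|^{r-1}/\alpha$, leaving at least $\tfrac14\,c_0|w|^{r-1}/\alpha\ge c_0|w|^{r-1}/(4\alpha\rho^{N'+1})$, as claimed.

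I expect the second case to be the real obstacle. The arithmetic with~(\ref{ieq:normal}) is routine once the exponents are in front of you, but getting the strip/cluster decomposition to yield a genuine \emph{upper} bound on the number of bad \emph{slits} — in particular keeping the number of clusters of order $\alpha\rho^{N'+1}$ rather than $\rho^{2(N'+1)}$, which would be fatal since $\rho^{N'}$ vastly exceeds the slack in~(\ref{ieq:normal}), and controlling the multiplicity of $w'\mapsto u(w')$ — is delicate. All of it rests on Lemma~\ref{lem:strips}, hence on Lemma~\ref{lem:t_1'<N'} and on Lemma~\ref{lem:N'-good=>normal} in both the uniquely rational and totally irrational cases; those being already established, the nearest-rational-affine-line and miracle-slit constructions enter here only as black boxes.
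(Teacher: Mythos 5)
Your first case (when $w$ is $(\alpha\rho^{N'+1},\beta)$-good, i.e.\ $t_1\ge N'+1$ in the paper's notation) is essentially the paper's Case 1 and is fine. The genuine gap is in your second case. There you sieve inside the full family $\Delta(w,\alpha,\beta)$, keeping the base count $c_0|w|^{r-1}/\alpha$ and subtracting a strip/cluster bound applied to \emph{all} non-miracle, non-$\alpha r$-normal members of $\Delta(w,\alpha,\beta)$. But the cluster-size estimate, Lemma~\ref{lem:cluster:size}, carries the hypothesis $t_1(w')\ge t_1-1$ for the minimal element of each cluster, and that hypothesis is exactly what is not available on all of $\Delta(w,\alpha,\beta)$: for a child with $\tfrac{1}{\alpha\rho^{t_1}}\le|w\times v|<\tfrac{1}{\alpha}$, Lemma~\ref{lem:good:children} only makes $w'$ $(\alpha-\tfrac12,|w'|^{r-1})$-good, so $|u(w')|$ may be as small as roughly $\alpha|w'|$, i.e.\ $t_1(w')\approx0$, while $t_1$ can be close to $N'+1$. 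In that situation the exponent $t_1-r\bar t_1'$ in the proof of Lemma~\ref{lem:cluster:size} is of order $N'$ rather than $2-r$, and the cluster-size bound $5|w|^{(r-1)-(r-1)^2}$ is simply false for those children, so your subtraction is not justified by the cited lemmas. The paper avoids this by running Case 2 entirely inside the smaller family $\Delta(w,\alpha\rho^{t_1},\beta)$: Lemma~\ref{lem:good} at level $\alpha\rho^{t_1}$ still produces at least $c_0|w|^{r-1}/(\alpha\rho^{t_1})\ge c_0|w|^{r-1}/(2\alpha\rho^{N'+1})$ non-miracle children satisfying (\ref{def:child}), Lemma~\ref{lem:good:children} then makes each of them $(\alpha r\rho^{t_1-1},|w'|^{r-1})$-good, hence $t_1(w')\ge t_1-1$, and only then are Lemmas~\ref{lem:strips}, \ref{lem:same:cluster} and \ref{lem:cluster:size} applied. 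The factor $\rho^{N'+1}$ in the proposition's conclusion is precisely the price of this restriction; your Case 2, which promises about $\tfrac14\,c_0|w|^{r-1}/\alpha$ good children, claims more than hypothesis (\ref{ieq:normal}) together with these lemmas can deliver, and that over-reach is a symptom of the missing restriction.

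Two smaller points. Your appeal to a ``boundedly many-to-one'' property of $w'\mapsto u(w')$, attributed to the short-interval hypothesis, is left unsubstantiated; the paper makes no such reduction explicit and instead bounds the bad children directly by the product of the counts in (\ref{bound:strips}), (\ref{bound:clusters}) and Lemma~\ref{lem:cluster:size}, namely $120\alpha\rho^{2N'+2}|w|^{(r-1)-(r-1)^2}$ (note your $30\alpha\rho^{N'+1}$ drops the strip factor that the paper retains). Also, in Case 1 your use of Lemma~\ref{lem:N'-good=>normal} should be accompanied by Lemma~\ref{lem:Uniquly N'-good=>normal} in the uniquely rational case, as you acknowledge only in passing; with the restriction to $\Delta(w,\alpha\rho^{t_1},\beta)$ reinstated, the rest of your outline matches the paper's argument.
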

\begin{proof}
Let $t_1$ be the parameter associated with the convergent $u$ of $w$ 
  as in (\ref{def:t_1}).  There are two cases.  
If $t_1\ge N'+1$ then $w$ is $(\alpha\rho^{N'+1},|w|^{r-1})$-good, 
  so that Lemma~\ref{lem:good} implies $w$ has at least 
\begin{equation}\label{double}
  \frac{c_0|w|^{r-1}}{\alpha\rho^{N'+1}}
\end{equation} 
 $w'=w+2v$ satisfying (\ref{def:child}).  
While among all the slits at most one is miracle by Lemma~\ref{Lem:danger slit} under the assumption $|w|^{r^2-1}>|w|^{(r-1)^2}\geq c$ .  
Then the number of non-miracle slits is at least $$\frac{c_0|w|^{r-1}}{\alpha\rho^{N'+1}}-1 \geq \frac{c_0|w|^{r-1}}{2\alpha\rho^{N'+1}}.$$
 Moreover, by Lemma~\ref{lem:good:children} each $w'$ constructed is $(\alpha\rho^{N'+1}-\frac12,|w'|^{r-1})$-good.  
Since $$\alpha\rho^{N'+1}-\frac12>\alpha r\rho^{N'},$$ by the choice of $\rho$, every non-miracle $w'$ is a $(\alpha r\rho^{N'},|w'|^{r-1})$-good slit. 
Then we use Lemma~\ref{lem:Uniquly N'-good=>normal} or Lemma~\ref{lem:N'-good=>normal} to imply that such a $w'$ constructed is $\alpha r$ normal except the miracle slit. 

Now consider the case $t_1<N'+1$.  
In this case $w$ is $(\alpha\rho^{t_1},|w|^{r-1})$-good, so that Lemma~\ref{lem:good} implies $w$ has at least 
$$\frac{c_0|w|^{r-1}}{\alpha\rho^{t_1}}-1>\frac{c_0|w|^{r-1}}{2\alpha\rho^{N'+1}}$$ 
$w'$ satisfying  (\ref{def:child}) and not miracle. Moreover, the Lemma~\ref{lem:good:children} implies that each child $w'$ constructed is $(\alpha\rho^{t_1}-\frac12,|w'|^{r-1})$-good, and since $$\alpha\rho^{t_1}-\frac12>\alpha r\rho^{t_1-1},$$ again, by the choice of $\rho$, this means $w'$ is $(\alpha r\rho^{t_1-1},|w'|^{r-1})$-good.  

Moreover, the parameter $t_1(w')$ associated with the convergent $u'$ of each such $w'$ satisfies $t_1(w')\ge t_1-1$.  
Applying Lemmas~\ref{lem:strips}, \ref{lem:same:cluster} and \ref{lem:cluster:size} 
  we conclude the number of $w'$  constructed that are not $\alpha r$-normal 
  is at most the product of the bounds given in (\ref{bound:strips}), (\ref{bound:clusters}), 
  and Lemma~ (\ref{lem:cluster:size}), i.e. $$120\alpha\rho^{2N'+2}|w|^{(r-1)-(r-1)^2},$$ 
  which is at most one fourth of the amount in (\ref{double}) since (\ref{ieq:normal}) holds.  
\end{proof}

\subsection{Liouville construction}\label{s:Liouville}
We turn our focus to the Liouville construction. The key proof presented here relies on the concept of Liouville convergents introduced in Definition~\ref{def:Liouville convergent} and their properties established in Lemma~\ref{Lemma:Liouville convergent}.

Let $u$ be the Liouville convergent of a slit $w=(\lambda+m,\mu+n)$ indexed by $k$ then we have 
\begin{equation}\label{Liouville:convergent}
  (p_{k,1}+mq_k,p_{k,2}+nq_k) = du, \quad \gcd(u)=1 
\end{equation}
  where $$d=d(w,k)=\gcd(p_{k,1}+mq_k,p_{k,2}+nq_k).$$  
A simple calculation on (\ref{Liouville:convergent}) implies the height of $u$ satisfies 
\begin{equation}\label{eq:d|u|}
    \frac{|w|q_k}{2}<d|u|<2|w|q_k.
\end{equation}
Choose $\Tilde{u}\in\mathbb{Z}\times\mathbb{Z}_{>0}$ so that 
  $$|u\times\Tilde{u}|=1 \quad\text{ and }\quad |\Tilde{u}|\le|u|.$$  
Observe that there are exactly $2$ possibilites for $\Tilde{u}$.  

Let $$\Lambda_1(w,k)=\{w+2v : v = \Tilde{u}+au, a\in\mathbb{Z}_{>0}\}$$ consist of children $w+2v$ such that $v$ forms a basis for $\mathbb{Z}^2$ together with $u$, i.e. $\mathbb{Z}^2=\mathbb{Z} u+\mathbb{Z} v$.  
  
The next lemma gives a bound on the cross-product of a parent with a child, which recall, is a necessary estimate in Liouville construction.
  
\begin{lemma}\label{lem:max:area}
Let $u$ be the Liouville convergent of $w$ indexed by $k$. Then $w+2v\in\Lambda_1(w,k)$ for some $|v|<\frac{q_{k+1}^{1/2}}{\Tilde{C}}$ will imply
\begin{equation}\label{ieq:max:area}
  |w\times v| < \frac{2|w|}{|u|} < \frac{4d(w,k)}{q_k} 
\end{equation}
\end{lemma}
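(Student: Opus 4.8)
The plan is to bound $|w\times v|$ by comparing the direction of $v$ with the direction of $w$, using the Liouville convergent $u$ as an intermediary. The key geometric fact is that $u$ is a ``very good'' rational approximation of the inverse slope of $w$ (this is the content of Lemma~\ref{Lemma:Liouville convergent}), so $|w\times u|$ is tiny; meanwhile $v = \tilde{u}+au$ differs from a multiple of $u$ by the single bounded vector $\tilde{u}$, and $|w\times\tilde u|$ we can control because $\tilde u$ has height $\le |u|$.

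First I would write $v = \tilde u + au$ and expand $|w\times v| = |w\times\tilde u + a(w\times u)|$. Since $u$ is the Liouville convergent of $w$ indexed by $k$, Lemma~\ref{Lemma:Liouville convergent} tells us that $\dot u = p/q$ is a convergent of $\frac{\lambda+m}{\mu+n}$ and the next convergent has height $q' > \frac{q_{k+1}^{1/2}}{2C_1}$; then (\ref{align:estimate for convergents}) applied to this convergent gives $|w\times u|/|w| < \frac{1}{|u|\,q'} < \frac{2C_1}{|u|\,q_{k+1}^{1/2}}$, hence $|w\times u|$ is extremely small — in particular much smaller than $\frac{|w|}{|u|}$ times an error we can absorb. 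Next, $|\tilde u\times u| = 1$ and $|\tilde u|\le|u|$, so $\tilde u$ together with $u$ forms a $\Z$-basis; writing $w$ in this basis and using that the $u$-coordinate of $w$ is governed by $w\times\tilde u$ while the $\tilde u$-coordinate is governed by $w\times u$, one gets $|w\times\tilde u| \le \frac{|w|}{|u|} + |u|\cdot\frac{|w\times u|}{|w|}\cdot(\text{small})$, i.e. $|w\times\tilde u| \lesssim \frac{|w|}{|u|}$ since the second term is negligible. Combining, and using $|a| < |v|/|u| + 1$ together with the hypothesis $|v| < q_{k+1}^{1/2}/\tilde C$ to bound $a\,|w\times u|$ by something much smaller than $\frac{|w|}{|u|}$ (here the height restriction (\ref{height restriction}) on $w$ and the lower bound on $q'$ do the work), I obtain $|w\times v| < \frac{2|w|}{|u|}$.

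Finally, the second inequality $\frac{2|w|}{|u|} < \frac{4d(w,k)}{q_k}$ is immediate from (\ref{eq:d|u|}): since $\frac{|w|q_k}{2} < d|u|$, we get $\frac{|w|}{|u|} < \frac{2d}{q_k}$, hence $\frac{2|w|}{|u|} < \frac{4d}{q_k}$.

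The main obstacle I expect is the bookkeeping in the first step: controlling $|w\times\tilde u|$ requires expressing $w$ in the (non-orthonormal) lattice basis $\{u,\tilde u\}$ and carefully tracking that the ``$u$-component'' of $w$ contributes a term of size $\frac{|w|}{|u|}$ while the ``$\tilde u$-component'' contributes only the negligible $|w\times u|$-sized term, and then checking that the hypothesis $|v| < q_{k+1}^{1/2}/\tilde C$ (via $\tilde C = 16C_1$ and the lower bound $q' > q_{k+1}^{1/2}/(2C_1)$) really is strong enough to make $a\,|w\times u|$ small compared to $\frac{|w|}{|u|}$. None of this is deep, but the constants have to line up, which is presumably why the factor $\tilde C = 16C_1$ was chosen as it was.
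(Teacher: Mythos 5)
Your proposal is correct and is essentially the paper's own argument in different clothing: the paper compares angles via $\sin(\angle vw)\le\sin(\angle uv)+\sin(\angle uw)$ with $\sin(\angle uv)=\frac{1}{|u||v|}$, while you expand $w\times v=w\times\tilde u+a(w\times u)$ in the unimodular basis $\{u,\tilde u\}$, but both rest on exactly the same inputs — the Liouville-convergent bound (\ref{align:upper bound for liouville convergent}) making $|w\times u|$ negligible, the hypothesis $|v|<\frac{q_{k+1}^{1/2}}{\Tilde{C}}$, and (\ref{eq:d|u|}) for the second inequality. One small slip worth noting: the intermediate bound should read $\frac{|w\times u|}{|w|}\le\frac{1}{q'}$ rather than $\frac{1}{|u|q'}$, but with the corrected bound your accounting ($|w\times\tilde u|\lesssim\frac{|w|}{|u|}$ plus $|a|\,|w\times u|\lesssim\frac{|w|}{8|u|}$) still closes with room to spare.
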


\begin{proof}
By (\ref{align:upper bound for liouville convergent}), we have
$$\sin(\angle uw)<\frac{\Tilde{C}}{|u|q_{k+1}^{1/2}}.$$ 
Since $|v|<\frac{q_{k+1}^{1/2}}{C}$ and $|u\times v|=1$ we have 
  $$\sin(\angle uv) = \frac{|u\times v|}{|u||v|} > \frac{\Tilde{C}}{|u|q_{k+1}^{1/2}}$$
  so that $\sin(\angle vw)\le \sin(\angle uv)+\sin(\angle uw)<2\sin(\angle uv)$.  
Therefore, 
  $$|w\times v| = |w||v|\sin(\angle vw)< 2|w||v|\sin(\angle uv) 
                = \frac{2|w|}{|u|}.$$  
\end{proof}

The following lemma encapsulates the key property of the slits constructed using the Liouville construction.  

Let \(d(w, k)\) measure how far \(\frac{p_1 + mq}{p_2 + nq}\) is from being a reduced fraction; specifically, it quantifies the amount of cancellation between the numerator and denominator.  
It is remarkable that, whenever a new slit \(w'\) is constructed by using the Liouville construction, the inequality \(d(w', k) \leq 2\) always holds.
\begin{lemma}\label{lem:gcd}
For any $w'\in\Lambda_1(w,k)$, we have $d(w',k)\le2$.  
Hence, if $|w'|<\frac{q_{k+1}^{1/2}}{\Tilde{C} q_k}$, then the inverse slope of $w'$ 
  has a convergent whose height is between $\frac{q_k|w'|}{4}$ and $2q_k|w'|$.  
\end{lemma}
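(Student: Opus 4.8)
The plan is to bound $d(w',k)$ directly from the representation $w' = w + 2v$ with $v = \Tilde{u} + au$, then feed the bound into Lemma~\ref{Lemma:Liouville convergent}. Write $w = (\lambda + m, \mu + n)$ and $w' = (\lambda + m', \mu + n')$, so that the ``numerator-denominator'' vector attached to $w'$ at level $k$ is
\[
  (p_{k,1} + m'q_k,\ p_{k,2} + n'q_k) = (p_{k,1} + mq_k,\ p_{k,2} + nq_k) + 2q_k v = d\,u + 2q_k(\Tilde{u} + au),
\]
using \eqref{Liouville:convergent}. This is $(d + 2aq_k)u + 2q_k\Tilde{u}$, an integer combination of the basis $\{u,\Tilde{u}\}$ of $\mathbb{Z}^2$ (since $|u\times\Tilde{u}| = 1$). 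Therefore its gcd equals $\gcd(d + 2aq_k,\ 2q_k)$, which divides $\gcd(d, 2q_k)\cdot\text{(something)}$ — more precisely it divides $2q_k$ and also divides $d + 2aq_k$, hence divides $\gcd(d, 2q_k)$ up to the factor $2$; the cleanest statement is that it divides $2\gcd(d,q_k)$. Now $d = d(w,k) = \gcd(p_{k,1}+mq_k, p_{k,2}+nq_k)$ divides... here one uses that $(p_{k,1},p_{k,2},q_k)$ is primitive: any common divisor of $p_{k,1}+mq_k$, $p_{k,2}+nq_k$, and $q_k$ would divide $p_{k,1}$, $p_{k,2}$, $q_k$, forcing it to be $1$, so $\gcd(d, q_k) = 1$. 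Hence $d(w',k) \mid 2$, i.e. $d(w',k)\le 2$.

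For the second assertion: with $d(w',k)\le 2$, equation \eqref{eq:d|u|} applied to $w'$ (whose Liouville convergent at level $k$ is $u' = (p_{k,1}+m'q_k, p_{k,2}+n'q_k)/d(w',k)$) gives $\tfrac{|w'|q_k}{2} < d(w',k)\,|u'| < 2|w'|q_k$, so $\tfrac{q_k|w'|}{4} < |u'| < 2q_k|w'|$ using $d(w',k)\in\{1,2\}$. It remains to check $u'$ is genuinely a convergent of the inverse slope of $w'$: this is exactly the conclusion of Lemma~\ref{Lemma:Liouville convergent} applied to $w'$, which is available precisely under the hypothesis $|w'| < \tfrac{q_{k+1}^{1/2}}{\Tilde{C}q_k}$. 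So the height bound on the convergent follows.

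The only place requiring care — and the step I would flag as the main obstacle — is the gcd bookkeeping: tracking exactly which factor of $2$ appears and justifying $\gcd(d,q_k)=1$ from primitivity of $(p_{k,1},p_{k,2},q_k)$. Everything else is a direct substitution into \eqref{eq:d|u|} and an invocation of Lemma~\ref{Lemma:Liouville convergent}; one should double-check that the hypothesis $|v| < \tfrac{q_{k+1}^{1/2}}{\Tilde{C}}$ used implicitly in the $\Lambda_1$ construction is compatible with the stated height restriction on $w'$, but since $|v|\le 2|w'|^{\text{(something)}}$-type bounds relate $|v|$ and $|w'|$ through \eqref{eq:d|u|}, this is automatic.
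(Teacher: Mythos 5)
Your proposal is correct and follows essentially the same route as the paper: express the level-$k$ numerator--denominator vector of $w'$ in the unimodular basis $\{u,\Tilde{u}\}$ to get $d(w',k)=\gcd(d+2aq_k,2q_k)=\gcd(d,2q_k)\le 2$ via primitivity of $(p_{k,1},p_{k,2},q_k)$, then combine (\ref{eq:d|u|}) with Lemma~\ref{Lemma:Liouville convergent} under the height restriction to get the convergent of height between $\frac{q_k|w'|}{4}$ and $2q_k|w'|$. The only difference is cosmetic bookkeeping (passing through $2\gcd(d,q_k)$ instead of $\gcd(d,2q_k)$ directly), so no substantive divergence from the paper's argument.
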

\begin{proof}
Let $w'=(\lambda+m',\mu+n')$ where $$(m',n')-(m,n)=w'-w=2v.$$  
Now $d'=d(w',k)$ is determined by $d'u'=(p_k+m'q_k,n'q_k)$ 
  for some primitive $u'\in\mathbb{Z}^2$.  
In terms of the basis given by $u$ and $\tilde{u}$ we have 
  $$d'u'=(p_{k,1}+mq_k,p_{k,2}+nq_k)+2q_k(\tilde{u}+au)=(2q_k)\tilde{u}+(2aq_k+d)u.$$  
Note that $d=\gcd(p_{k,1}+mq_k,p_{k,2}+nq_k)$. Since $\gcd(p_{k,1},p_{k,2},q_k)=1$, we have 
  $$d' = \gcd(2aq_k+d,2q_k) = \gcd(d,2q_k) = \gcd(p_{k,1}+mq_k,p_{k,2}+nq_k,2q_k) \le2.$$  
The second statement follows from Lemma~\ref{Lemma:Liouville convergent} and (\ref{eq:d|u|}).  
\end{proof}

Given $r>1$ we let 
  $$\Lambda(w,k)=\{w+2v\in\Lambda_1(w,k) : |w|^r \le |v| \le 2|w|^r \}.$$  
The next lemma gives a lower bound for the number of children constructed in the Liouville construction. 
\begin{lemma}\label{lem:liouville}
If $|w|^{r-1}\ge 2q_k$ then 
\begin{equation}\label{ieq:liouville}
  \#\Lambda(w,k) \ge \frac{|w|^{r-1}}{2q_k}.  
\end{equation}
\end{lemma}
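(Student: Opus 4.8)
The statement asserts a lower bound $\#\Lambda(w,k) \ge \frac{|w|^{r-1}}{2q_k}$ on the number of children produced by the Liouville construction, under the hypothesis $|w|^{r-1} \ge 2q_k$. Recall that $\Lambda(w,k)$ consists of slits $w + 2v$ with $v = \tilde u + au$, $a \in \mathbb{Z}_{>0}$, subject to the additional size constraint $|w|^r \le |v| \le 2|w|^r$. So the proof is essentially a counting argument: I need to count the integers $a > 0$ for which $v = \tilde u + au$ lands in the prescribed height window $[|w|^r, 2|w|^r]$.

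**Key steps.** First I would observe that since $|\tilde u| \le |u|$, the height $|v| = |\tilde u + au|$ (the $y$-coordinate) increases essentially linearly in $a$ with slope roughly $|u|$; more precisely, consecutive values $|v|$ and $|v| + |u|$ differ by exactly $|u|$ (the $y$-coordinates of $\tilde u + au$ and $\tilde u + (a+1)u$ differ by the $y$-coordinate of $u$, which is $|u|$). Thus the number of admissible $a$ is at least $\lfloor (2|w|^r - |w|^r)/|u| \rfloor = \lfloor |w|^r/|u| \rfloor$, possibly off by a bounded amount depending on where $|\tilde u|$ sits — but since $|\tilde u| \le |u| \le |w|^r$ (this last needs checking), the window has length $|w|^r$ which comfortably exceeds $|u|$, so the floor is at least $|w|^r/|u| - 1 \ge \tfrac12 |w|^r/|u|$ once $|w|^r/|u|$ is not too small. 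Second, I would bound $|u|$ from above using $(\ref{eq:d|u|})$: $d|u| < 2|w|q_k$, and since $d = d(w,k) \ge 1$, this gives $|u| < 2|w|q_k$. Combining, the count is at least $\tfrac12 |w|^r/(2|w|q_k) = |w|^{r-1}/(4q_k)$ — which is off by a factor of $2$ from the claim, so I'd need to be more careful, presumably using a sharper lower bound on $|u|^{-1}$ from the full two-sided estimate $(\ref{eq:d|u|})$ together with $d \le 2$ (from Lemma~\ref{lem:gcd}), or by being precise that the number of $a$ in an interval of length $L$ with step $|u|$ is at least $L/|u|$ exactly when the interval is half-open or when $L/|u|$ is large. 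In fact with $d \le 2$ we get $|u| < 4|w|q_k/d \le$ something, but more usefully $d|u| > |w|q_k/2$ is the wrong direction; the right move is: number of valid $a$ is $\ge |w|^r/|u| - 1$, and $|u| \le 2|w|q_k/d \le 2|w|q_k$, giving $\ge |w|^r/(2|w|q_k) - 1 = |w|^{r-1}/(2q_k) - 1$. Then I'd absorb the $-1$ using $|w|^{r-1}/(2q_k) \ge 1$, i.e. the hypothesis $|w|^{r-1} \ge 2q_k$, together with the fact that one should really count that the interval $[|w|^r, 2|w|^r]$ contains at least $\lceil |w|^{r-1}/(2q_k)\rceil$ points of the arithmetic progression — careful bookkeeping here should close the gap exactly.

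**Main obstacle.** The conceptual content is minimal; the delicate point is purely the bookkeeping at the boundary of the counting estimate — ensuring the ``$-1$'' from the floor function is genuinely absorbed by the hypothesis $|w|^{r-1} \ge 2q_k$ rather than merely \emph{almost} absorbed, and confirming that $v = \tilde u + au$ with $a > 0$ automatically has positive $y$-coordinate (so that it is a legitimate loop in $\mathbb{Z}\times\mathbb{Z}_{>0}$) once $a$ is large enough that $|v| \ge |w|^r$, which holds because $|w|^r \ge |u| \ge |\tilde u|$ forces $a \ge 1$ and then the $y$-coordinate $= (\text{$y$ of }\tilde u) + a(\text{$y$ of }u) > 0$ provided we chose the sign of $\tilde u$ appropriately (one of the two choices works, and if not we replace $u$ by $-u$, $a$ by $-a$). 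I would also double-check that every such $w + 2v$ indeed lies in $\Lambda_1(w,k)$ — i.e. that $v$ together with $u$ spans $\mathbb{Z}^2$ — which is immediate from $|u \times v| = |u \times \tilde u| = 1$. So I expect the only real work is making the interval-counting inequality tight enough to land exactly on $\frac{|w|^{r-1}}{2q_k}$ rather than a constant multiple of it.
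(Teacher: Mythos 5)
Your overall strategy is the same as the paper's: count the integers $a>0$ for which the height of $\tilde u + au$ lands in $[|w|^r,2|w|^r]$, and convert $|u|<2|w|q_k$ (from (\ref{eq:d|u|}) with $d\ge 1$) together with the hypothesis $2q_k\le |w|^{r-1}$ into the stated bound. The gap is exactly at the point you flag but do not close. A single arithmetic progression with step $|u|$ meets an interval of length $|w|^r$ in at least $\lfloor |w|^r/|u|\rfloor$ points, and this can be strictly smaller than $X:=|w|^{r-1}/(2q_k)$ in borderline cases (e.g.\ when $|u|$ is only slightly below $2|w|q_k$, so that $|w|^r/|u|$ and $X$ lie in the same unit interval). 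Your proposed repair --- ``count $\ge X-1$ and $X\ge 1$, hence count $\ge X$'' --- is not valid: an integer that is at least $X-1$ need not be at least $X$ (take $X=2.5$ and count $=2$). So as written your argument only delivers $\#\Lambda(w,k)\ge\lfloor |w|^r/|u|\rfloor$, which in general falls short of the claimed constant; it would only give something like $|w|^{r-1}/(4q_k)$ unconditionally.

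The paper closes this using the ingredient you mention only for sign bookkeeping: there are exactly two admissible choices of $\tilde u$ (both lie in $\mathbb{Z}\times\mathbb{Z}_{>0}$ by construction, so positivity of the height is automatic and no replacement of $u$ by $-u$ is needed), and each contributes at least $\left[\frac{|w|^r}{|u|}\right]$ children, whence
$\#\Lambda(w,k)\ge 2\left[\frac{|w|^r}{|u|}\right]\ge \frac{|w|^r}{|u|}\ge \frac{|w|^{r-1}}{2q_k}$,
where the middle inequality $2\lfloor Y\rfloor\ge Y$ uses $Y=|w|^r/|u|\ge 1$, i.e.\ $|u|\le 2|w|q_k\le |w|^r$, which follows from the hypothesis. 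Adding this doubling over the two choices of $\tilde u$ completes your argument; without it the stated constant is not reached.
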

\begin{proof}
Since there are $2$ options for $\Tilde{u}$, the number of slits in $\Lambda(w,k)$ is at least 
$$\#\Lambda(w,k) \ge 2\left[\frac{|w|^r}{|u|}\right]
       \ge \frac{|w|^r}{|u|} \ge \frac{|w|^{r-1}}{2q_k}$$ 
where $|u|\le 2|w|q_k\le|w|^r$ was used in the last two inequalities.  
\end{proof}

\section{Choice of Initial and Transition slits}\label{s:Init}
In \S\ref{s:Lower}, we will build a tree of slits using Diophantine construction and Liouville construction and then associate the tree of slits with a Cantor set \(F(r)\) and express its Hausdorff dimension in terms of the parameters \(r\), \(|w_0|\), \(\delta_j\), and \(\rho_j\). In this section, we will specify these parameters first.  
Moreover, we will establish a connection between the two constructions introduced in \S\ref{s:Diophantine} and \S\ref{s:Liouville} by determining the type of construction to be used at each level for finding the slits of the next level. The levels where Diophantine constructions are applied are referred to as the \emph{Diophantine region}, while those using Liouville constructions are called the \emph{Liouville region}. Typically, the Diophantine region follows the Liouville region, and it is necessary to decide when the slits transition from the Liouville region to the Diophantine region.  

The initial Diophantine region requires normal slits to apply Proposition~\ref{prop:normal}, and Lemma~\ref{lem:N'-good=>normal} plays a crucial role in ensuring the existence of normal slits. The conditions in Lemma~\ref{lem:N'-good=>normal} require us to avoid a special case known as \textit{miracle slits}. To achieve this, we define \(j^D_k\) as the transition point between the Liouville and Diophantine regions, ensuring that no slit at Level \(j^D_k\) is a miracle slit.

Given $\eps>0$, we adopt the notation from \S\ref{s: two construction} and choose $1< r <r^3 <\frac{3}{2}$ satisfying 
  $$\frac{1}{1+r}>\frac{1}{2}-\eps.$$ 
Then we choose $\delta$ such that
\begin{equation*}
  \frac{1-\delta}{1+r+2\delta}>\frac{1}{2}-\eps.  
\end{equation*}
It will be convenient to set $$M:=\frac{1}{r-1}>1,$$
\begin{equation}\label{def:M'}
  M'=\max(3M^2,Mr/\delta),
\end{equation}
\begin{equation}\label{def:N}
  N=4M'r^5
\end{equation}
\color{black}
and let $N'$ be given by (\ref{aligh: how deep for a given convergent}).

\subsection{Initial slit}\label{ss:w_0}
Now because $\ell_N$ is a infinite set that we can choose $k_0\in\ell_N$ large enough so that\footnote{When $(\lambda,\mu)$ is uniquely rational, we additionally choose $q_{k_0} > \max(|a|, |b|)$, as required in Lemma~\ref{lem:Uniquly N'-good=>normal}. } 
\begin{equation}\label{def:k_0}
  q_{k_0} > \max\left(c^M,60c_0^{-1}\rho^{N'+3},4\rho^{N'}(\log_r(3M')+4),2^7\rho^{N'},48C'\right)
\end{equation}
where $c=5C'^5>5$ is given in Lemma~\ref{Lem:danger slit}.

We choose $w_0$ satisfying the conditions of the following lemma and let it be fixed for the rest of this paper. This lemma benefits us to always construct the tree of slits beginning with a Liouville construction. It is the unique slit of level $0$.  
\begin{lemma}\label{lem:init:slit}
There is a slit $w_0\in V_2^+$ such that $d(w_0,k_0)\le2$ and 
\begin{equation}\label{ieq:init:slit}
   q_{k_0}^{M'} \le |w_0| < q_{k_0}^{M'r}.  
\end{equation}
\end{lemma}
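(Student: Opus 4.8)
The plan is to produce $w_0$ by a pigeonhole / counting argument on slits whose height lies in the target window, exploiting the abundance of slits guaranteed by the fact that $V_2^+$ is a lattice-like coset of $2\Z^2$ translated by $(\lambda,\mu)$. First I would fix the index $k_0$ as in \eqref{def:k_0} and set $H_0 := q_{k_0}^{M'}$, so that the desired window is $[H_0, H_0^r)$. I want a slit $w_0 = (\lambda+2m, \mu+2n)$ with $n>0$ and $H_0 \le |w_0| = \mu+2n < H_0^r$ (since $|w_0|$ is essentially $n$ up to bounded error), subject additionally to the cancellation constraint $d(w_0,k_0) = \gcd(p_{k_0,1}+2m\,q_{k_0},\, p_{k_0,2}+2n\,q_{k_0},\, 2q_{k_0}) \le 2$. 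By Lemma~\ref{lem:gcd}'s computation, $d(w_0,k_0) = \gcd(d_0, 2q_{k_0})$ where $d_0 = \gcd$ of the first two entries; the only obstruction to $d(w_0,k_0)\le 2$ is an odd prime $\ell \mid q_{k_0}$ dividing both $p_{k_0,1}+2mq_{k_0}$ and $p_{k_0,2}+2nq_{k_0}$, i.e. $\ell \mid \gcd(p_{k_0,1}, p_{k_0,2})$. Since $\gcd(p_{k_0,1},p_{k_0,2},q_{k_0})=1$, for each such odd prime $\ell$ at least one of $p_{k_0,1}, p_{k_0,2}$ is a unit mod $\ell$, so the congruence classes of $(m,n)$ mod $\ell$ that force $\ell \mid d(w_0,k_0)$ form a proper sub-coset; thus for each fixed residue of $m$, the ``bad'' residues of $n$ mod $\ell$ number at most one out of $\ell$.

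The main step is then the counting estimate. Fix any integer $m$ in a suitable range (say $|m|\le H_0$, or even just $m=0$); I would count integers $n$ with $(H_0-\mu)/2 \le n < (H_0^r-\mu)/2$, an interval of length $\Theta(H_0^r)$, that avoid, for every odd prime $\ell \mid q_{k_0}$, the at most one forbidden residue class mod $\ell$. The number of $n$ killed by a single prime $\ell$ is at most $H_0^r/\ell + 1$, and summing over the (at most $\log_2 q_{k_0}$ many) distinct prime divisors $\ell$ of $q_{k_0}$, using $\ell \ge 3$ and that $H_0 = q_{k_0}^{M'}$ is enormously larger than $q_{k_0}$ and than $\log q_{k_0}$, one gets that the discarded $n$ number at most, say, $H_0^r \sum_{\ell \mid q_{k_0}} \ell^{-1} + \log_2 q_{k_0} = o(H_0^r)$; in fact even a crude bound $\tfrac13 H_0^r + \log_2 q_{k_0} < \tfrac12 (H_0^r - H_0)/2$ suffices for large $q_{k_0}$, which \eqref{def:k_0} ensures. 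Hence a valid $n$ exists, giving $w_0 = (\lambda + 2m, \mu + 2n)\in V_2^+$ with the required height bounds and $d(w_0,k_0)\le 2$.

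I would also note $r>1$ makes the window $[H_0, H_0^r)$ genuinely nonempty and wide, and that $|w_0| = \|(\lambda+2m,\mu+2n)\| \asymp n \asymp H_0$ up to the bounded additive error $\mu$ and the norm-equivalence constant $C'$ — this is harmlessly absorbed since $q_{k_0} > 48C'$ and we have power-room in the exponent $M'r$ versus $M'$. One should state the height condition in the form \eqref{ieq:init:slit} exactly, so a small cushion on each side (replacing $H_0$ by $2H_0$ and $H_0^r$ by $H_0^r/2$ in the intermediate count) makes the norm-conversion clean.

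The part I expect to require the most care is not the existence argument itself but the bookkeeping that $w_0$ chosen here is compatible with everything that follows in \S\ref{s:Lower}: namely that $H_0 = q_{k_0}^{M'}$ with $M' = \max(3M^2, Mr/\delta)$ is the right scale to launch the Liouville construction of Lemma~\ref{lem:liouville} (which needs $|w_0|^{r-1}\ge 2q_{k_0}$, i.e. $q_{k_0}^{M'(r-1)} = q_{k_0}^{M'/M}\ge 2q_{k_0}$, true since $M'>M$) and that $|w_0| < q_{k_0}^{M'r}$ leaves the construction inside the Liouville region $(q_{k_0}, q_{k_0+1})$ — which is long because $k_0\in\ell_N$ and $N = 4M'r^5$ was chosen precisely so that $q_{k_0+1} > q_{k_0}^N$ dominates $q_{k_0}^{M'r}$ and the subsequent iterates $|w_0|^{r^j}$ until they exit the long interval. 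These compatibility checks are straightforward exponent arithmetic once the definitions \eqref{def:M'}, \eqref{def:N}, \eqref{def:k_0} are unwound, so the proof of the lemma proper is short; I would present the gcd-avoidance counting as the body and relegate the ``fits the later construction'' remarks to the following subsection where they are actually used.
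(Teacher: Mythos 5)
Your counting strategy has a genuine gap: it controls the wrong quantity. In the paper, $d(w_0,k_0)$ is by definition the \emph{plain} gcd $\gcd(p_{k_0,1}+mq_{k_0},\,p_{k_0,2}+nq_{k_0})$ of the two coordinates (so that this vector equals $d\,u$ with $u$ primitive); it is \emph{not} $\gcd(p_{k_0,1}+mq_{k_0},\,p_{k_0,2}+nq_{k_0},\,2q_{k_0})$. The identity $d'=\gcd(d,2q_k)$ that you invoke from Lemma~\ref{lem:gcd} is only valid for slits of the special form $w'=w+2(\tilde u+au)\in\Lambda_1(w,k)$, because there $d'u'=2q_k\tilde u+(2aq_k+d)u$ is expressed in a unimodular basis; it does not hold for an arbitrary $w_0=(\lambda+2m,\mu+2n)$. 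Consequently your list of ``bad'' congruence classes is wrong: the primes that threaten $d(w_0,k_0)\le 2$ are the odd primes dividing $p_{k_0,1}+2mq_{k_0}$ that do \emph{not} divide $q_{k_0}$ (for a prime $\ell\mid q_{k_0}$ the divisibility conditions are independent of $m,n$ and are already excluded by primitivity of the best approximation vector), and your sieve over primes dividing $q_{k_0}$ never excludes them. For instance, if $3\nmid q_{k_0}$ but $3\mid p_{k_0,1}+2mq_{k_0}$, a third of all admissible $n$ give $3\mid d(w_0,k_0)$, and your argument does not rule out landing on such an $n$; what you actually bound is $\gcd(\cdot,\cdot,2q_{k_0})$, which is weaker than the lemma's assertion and insufficient for the later use (the bound $d\le2$ is what guarantees, via (\ref{eq:d|u|}), that the Liouville convergent has height comparable to $q_{k_0}|w_0|$). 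A secondary slip: even for your (incorrect) bad set, the union bound $\sum_{\ell}\left(H_0^r/\ell+1\right)\le\tfrac13 H_0^r+\log_2 q_{k_0}$ is false as soon as there are two or more odd primes, since $\tfrac13+\tfrac15>\tfrac13$; with several small primes the discarded proportion can approach or exceed $1$ by this crude count, so one would need a CRT-type selection rather than a straight union bound.

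Your approach could be repaired (fix $m$, identify the bad classes as one residue of $n$ modulo each odd prime dividing $p_{k_0,1}+2mq_{k_0}$ coprime to $q_{k_0}$, and choose $n$ in the huge window by CRT, since the product of those primes is at most about $q_{k_0}$, far smaller than $q_{k_0}^{M'r}-q_{k_0}^{M'}$), but note the paper sidesteps all of this: it takes an auxiliary slit $w$ of height $<q_{k_0}/4$, whose Liouville convergent $u$ has height at most $q_{k_0}^2/2$, and observes that consecutive heights in $\Lambda_1(w,k_0)$ differ by $2|u|\le q_{k_0}^2<q_{k_0}^{M'}$, which is smaller than the width of the window in (\ref{ieq:init:slit}); pigeonhole places some $w_0\in\Lambda_1(w,k_0)$ in the window, and then $d(w_0,k_0)\le2$ is automatic from Lemma~\ref{lem:gcd} rather than arranged by sieving.
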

\begin{proof}
We first note that the height of interval $[q_{k_0}^{M'}, q_{k_0}^{M'r})$
$$q_{k_0}^{M'r}-q_{k_0}^{M'}=q_{k_0}^{M'}(q_{k_0}^{M'/M}-1)\geq q_{k_0}^{M'}(q_{k_0}^{3M}-1)>q_{k_0}^{M'}.$$
Let $w\in V_2^+$ be any slit of height $|w|<q_{k_0}/4$ and $u$ be the Liouville convergent of $w$ indexed by $k_0$. Then its height $|u|\le 2q_{k_0}|w|\le q_{k_0}^2/2$. 
Since two adjacent elements in $\Lambda_1(w,k_0)$ differ by $2u$, 
and $q_{k_0}^{M'}>q_{k_0}^{2}$, one can choose a slit $w_0\in\Lambda_1(w,k_0)$
satisfying (\ref{ieq:init:slit}). By Lemma~\ref{lem:gcd}, we have $d(w_0,k_0)\le2$.
\end{proof}

Then the choice of $k_0$ in the first term of (\ref{def:k_0}) gives us a lower bound on the height of $w_0$, 
\begin{equation}\label{ieq:w_0>5} 
  |w_0|^{(r-1)^2} \ge q_{k_0}^{M'/M^2 }> q_{k_0}^{1/M} > c >5.
\end{equation}

\subsection{Transition slits}\label{ss:indices}
Next, we specify the type of construction to be applied at each level \(j \geq 0\) for generating the slits of the subsequent level 
(the same type of construction is applied uniformly to all slits within a given level).

We define indices \(j^A_k\) for each \(k \in \ell_N\) with \(k \geq k_0\) and \(A \in \{B, C, D\}\) such that, whenever \(k < k'\) are consecutive elements of \(\ell_N\), the following inequality holds (see Lemma~\ref{lem:indices}(i) below):
\[
j^B_k < j^C_k < j^D_k < j^B_{k'}.
\]
For \(j^C_k \leq j < j^D_k\), we apply the construction described in \S\ref{s:Liouville}, 
while for all other \(j\), we use the techniques described in \S\ref{s:Diophantine}.  
The choice of \(j^C_k\) and \(j^D_k\) must ensure that the slits at level \(j\) for \(j^C_k \leq j \leq j^D_k\) satisfy the conditions in Lemma~\ref{lem:gcd} and Lemma~\ref{lem:liouville}. Meanwhile, the choice of \(j^D_k\) and \(j^B_{k'}\) must guarantee that the slits at levels \(j^D_k \leq j \leq j^B_{k'}\) satisfy the conditions in Proposition~\ref{prop:normal}. 
The precise implementation of these constructions is described in the next section.
  
Firstly, we introduce our strategy for choosing the indices \(j^A_k\). Using the two types of constructions, we know that 
\begin{align}\label{ieq:|w_{j+1}|}
    |w_j|^r<|w_{j+1}|<5|w_j|^r
\end{align}
which implies that the logarithm of the height sequence of \(w_j\), taken to the base \(|w_0|\), approximately increases in a geometric progression with ratio \(r\).  

Define \(H_0 = \{|w_0|\}\), and for levels \(j > 0\), the heights of all slits at level \(j\) are contained within the interval
\[
H_j = \left[|w_0|^{r^j}, 5^{\frac{r^j-1}{r-1}}|w_0|^{r^j}\right].
\]
The next lemma demonstrates that the sequence of intervals \(H_j\) forms a collection of closed, disjoint intervals in \(\mathbb{R}\).
\begin{lemma}\label{lem:H_j}
 For all $j\ge0$ 
\begin{equation}\label{ieq:H_j}
  \sup H_j < \inf H_{j+1} = (\inf H_j)^r.
\end{equation}
\end{lemma}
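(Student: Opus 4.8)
The statement is a purely arithmetic fact about the endpoints of the intervals $H_j$, so the plan is simply to compute $\inf H_j$, $\sup H_j$, and $\inf H_{j+1}$ from the definition and compare them. Write $L=|w_0|$ and $s_j=\frac{r^j-1}{r-1}$, so that $H_j=[L^{r^j},\,5^{s_j}L^{r^j}]$ for $j>0$ and $H_0=\{L\}=[L^{r^0},5^{s_0}L^{r^0}]$ since $s_0=0$; thus the formula for $H_j$ is uniform in $j\ge 0$. First I would verify the easy equality $\inf H_{j+1}=L^{r^{j+1}}=\left(L^{r^j}\right)^r=(\inf H_j)^r$, which is immediate.

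The substantive inequality is $\sup H_j<\inf H_{j+1}$, i.e.
\[
5^{s_j}L^{r^j} < L^{r^{j+1}}.
\]
Taking logarithms to base $L$, this is equivalent to $s_j\log_L 5 < r^{j+1}-r^j = r^j(r-1)$, i.e. to
\[
\frac{r^j-1}{r-1}\cdot\log_L 5 < r^j(r-1),
\]
equivalently $\log_L 5<\dfrac{(r-1)^2 r^j}{r^j-1}$. Since $\dfrac{r^j}{r^j-1}>1$, it suffices to show $\log_L 5<(r-1)^2$, that is $L^{(r-1)^2}>5$, i.e. $|w_0|^{(r-1)^2}>5$. This is exactly the bound recorded in \eqref{ieq:w_0>5}, which follows from the choice of $k_0$ in \eqref{def:k_0} (the first term $q_{k_0}>c^M$ with $c>5$). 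For $j=0$ the left-hand side $s_0=0$ makes the inequality trivial, so one only needs $j\ge 1$, where $r^j-1>0$ and the division above is legitimate.

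Finally I would assemble these two observations into the chain $\sup H_j<\inf H_{j+1}=(\inf H_j)^r$, which is the claim, and note in passing that since $r>1$ the right-hand side $(\inf H_j)^r$ exceeds $\inf H_j$, so the intervals $H_j$ are not only disjoint but strictly increasing, as asserted in the text preceding the lemma. There is no real obstacle here; the only thing to be careful about is treating $j=0$ separately (or noting $s_0=0$) and making sure the single inequality $|w_0|^{(r-1)^2}>5$ from \eqref{ieq:w_0>5} is invoked rather than re-derived.
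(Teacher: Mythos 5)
Your proposal is correct and follows essentially the same route as the paper: both reduce $\sup H_j<\inf H_{j+1}$ to the single inequality $|w_0|^{(r-1)^2}>5$, which is exactly \eqref{ieq:w_0>5}. The only cosmetic difference is that you work with logarithms and treat $j=0$ explicitly, while the paper compares the exponentials $5^{\frac{r^j-1}{r-1}}<|w_0|^{r^j(r-1)}$ directly via $5^{r^j}<|w_0|^{(r-1)^2r^j}$; the content is identical.
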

\begin{proof}
The condition $\sup H_j<\inf H_{j+1}$ is equivalent to $$5^{\frac{r^j-1}{r-1}}<|w_0|^{r^j(r-1)},$$ 
  which is implied by $$5^{r^j}<|w_0|^{(r-1)^2r^j},$$ which in turn is implied by (\ref{ieq:w_0>5}).  
\end{proof}

The choice of the indices $j^A_k$ will depend on the position of $H_j$ relative to 
  that of the following intervals: $$I^C_k=\left[q_k^{M'},q_{k+1}^{1/3r}\right),\qquad\text{and}\qquad 
      I^D_k=\left[q_{k+1}^{1/3r^5},q_{k'}^{1/3r}\right).$$  

Here $k,k'$ are successive elements in $\ell_N$. 
we will choose 
These intervals overlap nontrivially and the overlap cannot be too small in the sense that there are at least three consecutive $H_j$'s contained in it.  
\begin{lemma}\label{lem:IKCD3}
 For any $k\in\ell_N$ with $k\ge k_0$ 
\begin{equation}\label{ieq:IKCD3}
  \#\{j: H_j \subset I^C_k\cap I^D_k\}\ge3.  
\end{equation}
\end{lemma}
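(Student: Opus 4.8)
The claim is that the intersection $I^C_k\cap I^D_k$ is long enough (on the logarithmic scale base $|w_0|$) to contain at least three of the disjoint intervals $H_j$. Since the $H_j$ are essentially powers of a fixed base with exponents forming a geometric progression of ratio $r$, ``containing three consecutive $H_j$'' amounts to a lower bound on the ratio of the endpoints of $I^C_k\cap I^D_k$: by Lemma~\ref{lem:H_j} consecutive intervals $H_j$ are nested in the sense $\inf H_{j+1}=(\inf H_j)^r$, so if the ratio $\log(\sup) / \log(\inf)$ of the target interval exceeds $r^3$ (with a little room for the multiplicative slack $5^{(r^j-1)/(r-1)}$), three consecutive $H_j$ will fit. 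Concretely, I would first compute $I^C_k\cap I^D_k = \left[q_{k+1}^{1/3r^5},\, q_{k+1}^{1/3r}\right)$, using that $k\in\ell_N$ forces $q_{k+1}>q_k^N$, so $q_k^{M'}\le q_{k+1}^{M'/N}\le q_{k+1}^{1/3r^5}$ by the choice $N=4M'r^5$ in (\ref{def:N}); hence the left endpoint of $I^C_k$ is dominated by the left endpoint of $I^D_k$, and the intersection is the stated interval. (One must also check $q_{k+1}^{1/3r^5}<q_{k'}^{1/3r}$, i.e.\ that $I^D_k$ is nonempty, which holds since $q_{k'}>q_{k+1}$ and $r^5>r$.)

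Next I would translate ``$H_j\subset I^C_k\cap I^D_k$'' into an inequality on exponents. Writing $b=|w_0|$, we have $H_j=\left[b^{r^j},\,5^{(r^j-1)/(r-1)}b^{r^j}\right]$, and $I^C_k\cap I^D_k=\left[q_{k+1}^{1/3r^5},q_{k+1}^{1/3r}\right)$. Taking $\log_b$, set $Q=\log_b q_{k+1}$; then $H_j\subset I^C_k\cap I^D_k$ iff
\[
\frac{Q}{3r^5}\le r^j
\quad\text{and}\quad
r^j\Big(1+\tfrac{r-1}{r^j}\cdot\tfrac{\log_b 5}{(r-1)}\Big)\le \frac{Q}{3r},
\]
and the slack term is harmless because $\log_b 5<1$ by (\ref{ieq:w_0>5}) (indeed $b=|w_0|\ge q_{k_0}^{M'}>5$), so it suffices to have $r^j\le Q/(3r^2)$, say. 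Thus every $j$ with $\frac{Q}{3r^5}\le r^j\le \frac{Q}{3r^2}$ gives $H_j\subset I^C_k\cap I^D_k$, and the number of such $j$ is at least $\lfloor\log_r(r^3)\rfloor = 3$ provided $Q$ is large enough that the interval $\left[\frac{Q}{3r^5},\frac{Q}{3r^2}\right]$ actually contains three powers of $r$ — which, since its endpoints have ratio $r^3$, requires only $Q/(3r^5)\ge 1$, i.e.\ $q_{k+1}\ge b^{3r^5}$.

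The remaining point is to verify this largeness of $q_{k+1}$, and this is where the choice of $k_0$ in (\ref{def:k_0}) and the lower bound $|w_0|\ge q_{k_0}^{M'}$ from (\ref{ieq:init:slit}) are used: since $k\ge k_0$ and $k\in\ell_N$ we have $q_{k+1}>q_k^N\ge q_{k_0}^N=q_{k_0}^{4M'r^5}$, while $b^{3r^5}=|w_0|^{3r^5}<q_{k_0}^{3M'r^6}$; comparing exponents, $4M'r^5$ versus $3M'r^6$, we do \emph{not} immediately win, so instead I would argue directly in terms of $q_k$: from $q_{k+1}>q_k^N$ and $q_k\ge q_{k_0}\ge |w_0|^{1/(M'r)}$ (from (\ref{ieq:init:slit})), and more to the point from the fact that the slit heights $|w_j|$ we are tracking satisfy $|w_0|^{r^j}=\inf H_j$, we only need $r^j$ to land in $[\,Q/3r^5,\,Q/3r^2\,]$ for \emph{some} three consecutive values, and since $r^j$ ranges over all of $[1,\infty)$ as $j$ grows, three such $j$ exist as soon as $Q\ge 3r^5$. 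Finally $Q=\log_{|w_0|}q_{k+1}\ge \frac{N\log q_k}{M'r\log q_{k_0}}\ge \frac{N}{M'r}=4r^4\ge 3r^5$ fails by a hair, so the cleanest route is to replace this crude bound by $Q\ge \log_{|w_0|}q_{k+1}$ and note $q_{k+1}>q_k^N\ge q_k^{4M'r^5}$ together with $|w_0|<q_{k_0}^{M'r}\le q_k^{M'r}$ gives $Q> 4M'r^5/(M'r)=4r^4>3r^5$ precisely when $r<4/3$, which is guaranteed since $r<r^3<3/2$ gives $r^2<3/2$ hence $r<\sqrt{3/2}<4/3$... I would double-check this last numeric step carefully, as the tightness of the constant $N=4M'r^5$ versus the requirement $Q\ge 3r^5$ is the one genuinely delicate point; if it is too tight one enlarges $N$ by a constant factor, which costs nothing elsewhere. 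Once $Q>3r^5$ is secured, the count of admissible $j$ is at least $3$ and the lemma follows.
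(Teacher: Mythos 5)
Your proposal is correct and follows essentially the same route as the paper: identify $I^C_k\cap I^D_k=\left[q_{k+1}^{1/3r^5},q_{k+1}^{1/3r}\right)$ using $q_{k+1}>q_k^N$ with $N=4M'r^5$, exploit that the exponents $\log_{|w_0|}\inf H_j=r^j$ form a geometric progression to count at least three admissible $j$, and verify the window sits above $|w_0|$ via $|w_0|<q_{k_0}^{M'r}\le q_k^{M'r}$ (the paper's check is the chain $q_{k+1}^{1/(3r^4)}>q_k^{N/(3r^4)}>q_k^{M'r}>|w_0|$). The step you flag as delicate in fact goes through without enlarging $N$: since $r<r^3<\tfrac32$ gives $r<4/3$, one has $4r^4>3r^5$, so your bound $Q>4r^4$ suffices, matching the paper's computation.
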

\begin{proof}
We first note that $\inf H_{j+1}=(\inf H_j)^r$ implies the logarithm of $\inf H_j$ to base $|w_0|$ is a geometric series of $r$. We claim that the fact in  Lemma~\ref{lem:H_j} implies 
  $$\#\{j\ge0: H_j\subset\left[q_{k+1}^{1/3r^5},q_{k+1}^{1/3r}\right)\} 
       \ge \left\lfloor\log_r(r^4)\right\rfloor -1=3.$$   

To show this, it suffices to check that
\[
q_{k+1}^{1/(3r^4)} > q_k^{N/(3r^4)} > q_k^{M'r} > |w_0|,
\]
which is ensured by the assumptions \( N = 4M'r^5 \) and \( q_{k+1} > q_k^N \).

\end{proof}

By virtue of the fact that the quantity in (\ref{ieq:IKCD3}) is at least one, we can now give two equivalent definitions of the index $j^A_k$. When defining $j^D_k$, we want to ensure that there are no miracle slits at the last level of the Liouville region so that Lemma~\ref{lem:N'-good=>normal} can be applied to all of them.   
\begin{definition}
For $k<k'$ consecutive elements of $\ell_N$ with $k\ge k_0$, let 
\begin{align*}
  j^C_k &= \min\{j:H_j\subset I^C_k\} = \min\{j:\inf H_j\ge q_k^{M'}\}\\
  j^B_{k'} &= \max\{j:H_j\subset I^D_k\} = \max\{j:\sup H_j<q_{k'}^{1/3r}\} 
\end{align*} 
Note that $j^C_{k_0}=0$ and that $j^B_{k_0}$ is not defined.  

Let $$j_k = \max\{j:H_{j+1}\subset I^C_k\} = \max\{j:\sup H_{j+1}<q_{k+1}^{1/3r}\}.$$ We define the slits in levels $j_k^C, j_k^C+1, \dots, j_k-1$ using the Liouville construction.  We apply the Liouville construction once more to all the slits of level $j_k-1$, i.e. we use the construction described in \S\ref{s:Liouville} with $\delta=\frac{4}{q_k}$ to find slits.  If no miracle slits appear in this manner, we let $j_k^D=j_k$ determine the beginning of the Diophantine region; otherwise, we let $j_k^D=j_k-1$. The next lemma ensures that none of the slits of level $j_k^D$ are miracle.  
\end{definition}

\begin{lemma}\label{2 levels with 1 danger slit}
If the Liouville construction applied to any slit of level $j_k-1$ results a miracle slit, then none of the slits of level $j_k-1$ can be miracle.  
\end{lemma}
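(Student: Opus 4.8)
If the Liouville construction applied to any slit of level $j_k-1$ results in a miracle slit, then none of the slits of level $j_k-1$ can be miracle.

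The plan is to argue by contradiction. Suppose some slit $\hat w$ of level $j_k-1$ has a Liouville child $\hat w'\in\Lambda(\hat w,k)$ at level $j_k$ which is a miracle slit, and suppose in addition that some slit $w$ of level $j_k-1$ is itself a miracle slit; I will show this is impossible. Write $\ell=\ell(w)=(q,-p,qm-pn)$ and $\ell'=\ell(\hat w')=(q',-p',q'm^*-p'n^*)$ for the attached triples, where $\hat w'=(\lambda+m^*,\mu+n^*)$, and let $\eta_j,\eta_{j'}$ be the nearest affine lines with $\ell=\pm\eta_j$ and $\ell'=\pm\eta_{j'}$; the definition of a miracle slit is exactly that these equalities hold together with $\eta_{j+1}^->(\eta_j^-)^{r^3}$ and $\eta_{j'+1}^->(\eta_{j'}^-)^{r^3}$. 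Set $A:=\inf H_{j_k-1}$. Because $j^C_k\le j_k-1\le j^D_k$ we have $A\ge q_k^{M'}$, and by the choice of $q_{k_0}$ in (\ref{def:k_0}) the number $A$ dominates every constant depending only on the norm.

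The first step will be to record the relevant sizes. Level $j_k-1$ lies in the Liouville region, so Lemma~\ref{lem:gcd} applies to $w$ and produces a convergent of its inverse slope of height in $[\tfrac14 q_k|w|,\,2q_k|w|]$; since $2q_k|w|<|w|^r$ (using $|w|\ge A\ge q_k^{M'}$ and $M'$ large, so $|w|^{r-1}\ge q_k^{M'/M}>q_k^{3}$), this convergent is $\le|w|^r$, hence $q\ge\tfrac14q_k|w|\ge\tfrac14q_kA$, while $q\le|w|^r$ and, by Lemma~\ref{lem:H_j}, $|w|\le\sup H_{j_k-1}<\inf H_{j_k}=A^r$, so $q<A^{r^2}$. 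For $\hat w'$ the definition of $\ell'$ gives only $|\hat w'|<q'\le|\hat w'|^r$, and as $\hat w'$ has level $j_k$ its height lies in $H_{j_k}$, whose endpoints satisfy $\inf H_{j_k}=A^r$ and $\sup H_{j_k}<\inf H_{j_k+1}=A^{r^2}$ by Lemma~\ref{lem:H_j}; thus $A^r<q'<A^{r^3}$. Exactly as in the proof of Lemma~\ref{Lem:danger slit} (the inverse slopes of the slits in our construction are bounded, the bound absorbed into $C'$) one has $\eta_j^-\asymp_{C'}q$ and $\eta_{j'}^-\asymp_{C'}q'$.

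Next I will show $j'=j$. If $j'>j$ then $\eta_{j'}^-\ge\eta_{j+1}^-$, whereas $\eta_{j'}^-\le C'q'<C'A^{r^3}$ and $\eta_{j+1}^->(\eta_j^-)^{r^3}\ge(q/C')^{r^3}\ge(q_kA/(4C'))^{r^3}>C'A^{r^3}$, the last inequality because $q_k\ge q_{k_0}>c^M$ forces $(q_k/(4C'))^{r^3}>C'$ — a contradiction. If $j'<j$ then $\eta_{j'+1}^-\le\eta_j^-$, whereas $\eta_{j'+1}^->(\eta_{j'}^-)^{r^3}\ge(q'/C')^{r^3}>(A^r/C')^{r^3}=A^{r^4}/(C')^{r^3}$ and $\eta_j^-\le C'q<C'A^{r^2}$, forcing $A^{r^4-r^2}<(C')^{r^3+1}$ — impossible since $r^4-r^2=r^2(r^2-1)>0$ and $A$ is enormous. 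Hence $j'=j$, so $\ell=\pm\eta_j=\pm\eta_{j'}=\pm\ell'$; comparing the (positive) first coordinates gives $\ell=\ell'$, i.e.\ $q=q'$, $p=p'$ and $q(m-m^*)=p(n-n^*)$. Since $\gcd(p,q)=1$ this forces $q\mid(n-n^*)$; but $n-n^*$ is the difference of the $y$-coordinates of $w$ and $\hat w'$, so $|n-n^*|=|\hat w'|-|w|$ (the intervals $H_{j_k-1}$ and $H_{j_k}$ are disjoint with $H_{j_k}$ above), and $0<|\hat w'|-|w|<|\hat w'|<q'=q$ because $q'>|\hat w'|$. Therefore $n-n^*=0$, i.e.\ $|w|=|\hat w'|$, a contradiction, which proves the lemma.

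The delicate feature — and the reason the estimates must be kept sharp — is that $\eta_j^-\asymp q$ and $\eta_{j'}^-\asymp q'$ need not be separated ($q$ can be as large as $A^{r^2}$, $q'$ as small as barely above $A^r$), so the indices $j,j'$ cannot be ordered by size alone. The decisive leverage forcing $j'=j$ is the extra factor $q_k$ supplied by Lemma~\ref{lem:gcd} for the Liouville-region slit $w$, which inflates the P\'erez--Marco gap after $\eta_j$ past $\eta_{j'}^-$; after that the contradiction is purely arithmetic, pitting $\gcd(p,q)=1$ against $|\hat w'|-|w|<q$. The only points requiring care are that Lemma~\ref{lem:gcd} is genuinely available at level $j_k-1$ (it is, since $j^C_k\le j_k-1\le j^D_k$ and those levels were built to satisfy its hypotheses) and that ``$\eta^-\asymp_{C'}q$'' is nothing new, being the boundedness of inverse slopes already used in Lemma~\ref{Lem:danger slit}.
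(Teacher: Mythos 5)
Your proof is correct, and it actually does more than the paper's own argument. The paper assumes miracle slits $w$ (level $j_k-1$) and $w'$ (level $j_k$) and immediately writes $\eta_{j'}^-\ge\eta_{j+1}^-$, i.e.\ it takes for granted that the nearest-line index attached to $w'$ strictly exceeds the one attached to $w$; granted that, the miracle gap $\eta_{j+1}^->(\eta_j^-)^{r^3}$ forces $|w'|\gtrsim(\inf H_{j_k-1})^{r^2}=\inf H_{j_k+1}$, which exceeds $\sup H_{j_k}$ because of the margin $|w_0|^{(r-1)^2}>5{C'}^5$ built into Lemma~\ref{lem:H_j}. So the paper only treats your case $j'>j$, and it gets its room from the gap between consecutive $H_j$'s rather than from the factor $q_k$ you extract via Lemma~\ref{lem:gcd}. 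Your trichotomy supplies exactly what the paper glosses over: since the ranges $q\in(|w|,|w|^r]$ and $q'\in(|\hat w'|,|\hat w'|^r]$ overlap, $j'\le j$ cannot be excluded by size alone, and your divisibility argument for $j'=j$ (primitivity of $\ell$, $q\mid(n-n^*)$, $0<n^*-n<q$) together with the size argument for $j'<j$ closes the lemma. Two minor points: your appeal to Lemma~\ref{lem:gcd} at level $j_k-1$ needs that level to lie \emph{strictly} above $j^C_k$ (Lemma~\ref{lem:LR} gives $d(w,k)\le2$ only for $j>j^C_k$); this does hold, since $\inf H_{j^C_k}<q_k^{M'r}<q_{k+1}^{1/3r^5}$ forces $j_k-1\ge j^C_k+1$, but it deserves a sentence — alternatively, in the case $j'>j$ you could dispense with the $q_k$ factor and argue via the $H_j$-gap as the paper does. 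Also, $\eta^-\asymp_{C'}q$ tacitly uses boundedness of the inverse slopes of the constructed slits, but this is the same convention the paper already uses in Lemma~\ref{Lem:danger slit}.
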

\begin{proof}
    Suppose on the contrary that level $j_k-1$ and $j_k$ contain miracle slits $w$ and $w^\prime$, respectively. Then the definition of miracle slit implies
    $$\eta_j^->\frac{|w|}{C^\prime}>\frac{|w_0|^{r^{j_k-1}}}{C^\prime}$$
    and
    $$\eta_{j^\prime}^-\geq\eta_{j+1}^->(\eta_j^-)^{r^3}>\frac{|w_0|^{r^{j_k+2}}}{{C^\prime}^{r^3}}.$$
    Then
    $$|w^\prime|>{(\frac{{\eta_{j^\prime}}^-}{C^\prime}})^{1/r}>\frac{|w_0|^{r^{j_k+1}}}{{C^\prime}^{r^2+1}}>5^{\frac{r^{j_k}-1}{r-1}}|w_0|^{r^{j_k}}$$
    which the last inequality is given by $|w_0|^{(r-1)^2}>c=5{C^\prime}^5.$
    Then $w^\prime$ is not in level $j_k$ by Lemma~\ref{lem:H_j}.
\end{proof}

The main facts about these indices are expressed in the next two lemmas.  
\begin{lemma}\label{lem:indices}
For any $k\in \ell_N$, $k\ge k_0$ 
\begin{itemize}
   \item[(i)] $j^B_k < j^C_k < j^D_k \leq j^B_{k'};$
  \item[(ii)] $j^C_k\le j^B_k + \log_r(3M')+ 4.$  
\end{itemize}
\end{lemma}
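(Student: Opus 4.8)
\textbf{Proof plan for Lemma~\ref{lem:indices}.}

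The plan is to unwind the definitions of the indices $j^B_k,j^C_k,j^D_k$ purely in terms of the sequence $\inf H_j = |w_0|^{r^j}$ together with the defining intervals $I^C_k$ and $I^D_k$, and then to convert each claimed inequality between indices into an inequality between the corresponding logarithms $\log_{|w_0|}(\cdot)$, which by Lemma~\ref{lem:H_j} form a geometric progression with ratio $r$. Concretely, $j^C_k$ is the least $j$ with $\inf H_j \ge q_k^{M'}$, so $r^{j^C_k}$ is essentially $\log_{|w_0|}(q_k^{M'})$ up to a factor $r$; similarly $j^B_{k'}$ is the largest $j$ with $\sup H_j < q_{k'}^{1/3r}$, and $j^D_k = j_k$ or $j_k-1$ where $j_k$ is the largest $j$ with $\sup H_{j+1} < q_{k+1}^{1/3r}$. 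Since $\sup H_j \asymp \inf H_j$ up to the factor $5^{(r^j-1)/(r-1)}$, which is dominated by (\ref{ieq:w_0>5}), all of these are comparable to the corresponding powers of $q_k$ or $q_{k+1}$.

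For part (i), the strict inequality $j^C_k < j^D_k$ follows from Lemma~\ref{lem:IKCD3}, which guarantees at least three consecutive $H_j$ inside $I^C_k \cap I^D_k$: the smallest such index is $\ge j^C_k$ and lies strictly below $j_k$ (hence below $j^D_k \ge j_k - 1$ once one checks the ``$\ge 3$'' gives the needed slack). For $j^B_k < j^C_k$: by definition $j^B_k$ is the largest level with $\sup H_{j} < q_k^{1/3r}$, while $j^C_k$ is the smallest with $\inf H_j \ge q_k^{M'}$; since $q_k^{M'} > q_k^{1/3r}$ and the $H_j$ are disjoint and increasing (Lemma~\ref{lem:H_j}), one gets $j^B_k < j^C_k$ directly, using $M' > 1 > 1/3r$. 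For $j^D_k \le j^B_{k'}$: $j^D_k \le j_k$, and $j_k$ is defined by $\sup H_{j_k + 1} < q_{k+1}^{1/3r}$; comparing with the definition $j^B_{k'} = \max\{j : \sup H_j < q_{k'}^{1/3r}\}$ and using $k' > k$ so $q_{k'} \ge q_{k+1}$ — actually $q_{k'} > q_{k+1}$ since $k' > k$ are consecutive in $\ell_N$, wait, one needs $q_{k'}\ge q_{k+1}$ which holds as $k'\ge k+1$ — gives $\sup H_{j_k} < q_{k+1}^{1/3r} \le q_{k'}^{1/3r}$, hence $j_k \le j^B_{k'}$, and therefore $j^D_k \le j_k \le j^B_{k'}$. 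One must double-check the edge case $j^D_k = j_k$ versus $j_k - 1$, but since $j_k - 1 < j_k \le j^B_{k'}$ the bound holds in either case.

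For part (ii), I would estimate $r^{j^B_k}$ from below and $r^{j^C_k}$ from above. From the definition, $\inf H_{j^B_k+1} \ge q_k^{1/3r}$ (otherwise $j^B_k$ were not maximal, modulo the $\sup$ vs $\inf$ adjustment absorbed by (\ref{ieq:w_0>5})), so $r^{j^B_k} \gtrsim \log_{|w_0|}(q_k^{1/3r})$; and $\inf H_{j^C_k - 1} < q_k^{M'}$, so $r^{j^C_k - 1} < \log_{|w_0|}(q_k^{M'}) = M' \log_{|w_0|} q_k$. Dividing, $r^{j^C_k - j^B_k} \lesssim r \cdot M' \cdot (3r) = 3M'r^2$, and taking $\log_r$ yields $j^C_k - j^B_k \le \log_r(3M'r^2) + O(1) = \log_r(3M') + 2 + O(1)$, and bookkeeping the $O(1)$ losses from replacing $\sup H_j$ by $\inf H_j$ (again controlled by (\ref{ieq:w_0>5})) gives the stated $\log_r(3M') + 4$. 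The main obstacle, and the only place requiring genuine care, is managing these $\sup$-versus-$\inf$ discrepancies and the off-by-one shifts in the $\min$/$\max$ definitions so that the constant on the right of (ii) comes out to exactly $4$ rather than something larger; this is where (\ref{ieq:w_0>5}) (equivalently the first term of (\ref{def:k_0})) is used to guarantee that the geometric factor $5^{(r^j-1)/(r-1)}$ never costs more than a bounded number of levels.
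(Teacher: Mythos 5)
Your plan takes essentially the same route as the paper's proof: part (i) uses the second definitions of the indices together with Lemma~\ref{lem:H_j}, Lemma~\ref{lem:IKCD3} and $q_{k'}\ge q_{k+1}$, exactly as in the text, and part (ii) is the paper's estimate obtained from the maximality of $j^B_k$ plus the sup-to-inf conversion of Lemma~\ref{lem:H_j} (giving $\inf H_{j^B_k}\ge q_k^{1/3r^3}$) combined with the minimality of $j^C_k$, only phrased as a ratio of logarithms rather than by stepping forward $n$ levels. Carried out precisely, your bookkeeping gives $r^{\,j^C_k-1-j^B_k}<3M'r^{3}$ and hence exactly the stated constant $4$, so this is a presentational variant rather than a different argument.
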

\begin{proof}
For (i) we note that 
  $$\inf H_{j_k^B} \le \sup H_{j_k^B} \le q_k^{1/3r} < q_k^{M'}$$ 
  so the first inequality follows by the (second) definition of $j_k^C$.  
From the first definitions of $j^C_k$ and $j^D_k$, we see that the second inequality is a consequence of Lemma~\ref{lem:IKCD3} and definition of $j^D_k$.  
The third inequality follows by comparing the second definitions of ${j_k}$ and $j^B_{k'}$ and noting that $q_{k'}\ge q_{k+1}$.  

For (ii) first note that 
  $$\inf H_{j^B_k} = \left(\inf H_{j^B_k+1}\right)^{1/r} 
       \ge\left(\sup H_{j^B_k+1}\right)^{1/r^2} \ge q_k^{1/3r^3}$$ 
  by Lemma~\ref{lem:H_j} and the second definition of $j^B_k$.  
Thus, we have 
  $$\inf H_{j^B_k+n} = \left(\inf H_{j^B_k}\right)^{r^n} 
        \ge q_k^{r^{n-3}/3} \ge q_k^{M'}$$ 
  where $n=\lceil\log_r(3M')+4\rceil$.  
The second definition of $j^C_k$ now implies $j^C_k\le j^B_k+n\le j^B_k+\log_r(3M')+4$.  
\end{proof}

\begin{lemma}\label{lem:range}
Let $\Tilde{C}$ be given by Lemma~\ref{Lemma:Liouville convergent}. For any slit $w$ of level $j$ we have 
\begin{itemize}
  \item[(i)] $\ds \quad j^C_k \le j \le j^D_k \impl |w|\in I^C_k \impl (2q_k)^M\le|w|<\frac{q_{k+1}^{1/2}}{\Tilde{C} q_k};$
  \item[(ii)] $\ds \quad j^D_k \le j \le j^B_{k'} \impl |w|\in I^D_k \impl q_{k+1}^{1/N'}\le|w|<q_{k'}^{1/3r}.$
\end{itemize}
\end{lemma}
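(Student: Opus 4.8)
The plan is to unwind the two chains of implications in Lemma~\ref{lem:range} by combining the definitions of the indices $j^A_k$ with the interval inclusions $H_j \subset I^C_k$, $H_j \subset I^D_k$ and the crude bounds on $H_j$ from Lemma~\ref{lem:H_j}. Both parts have the same structure: first translate a range of levels into a statement $H_j \subset I^C_k$ (resp.\ $H_j \subset I^D_k$), then read off the explicit height bounds from the endpoints of $I^C_k = [q_k^{M'}, q_{k+1}^{1/3r})$ (resp.\ $I^D_k = [q_{k+1}^{1/3r^5}, q_{k'}^{1/3r})$), and finally simplify those bounds using $N = 4M'r^5$, $M = 1/(r-1)$, $N' = 4(N+1)r/(r-1)$ and the size of $q_k$ dictated by (\ref{def:k_0}).

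For part (i): if $j^C_k \le j \le j^D_k$ then since $j^C_k = \min\{j : H_j \subset I^C_k\}$ and $j^D_k = j_k$ or $j_k-1$ where $j_k = \max\{j : H_{j+1} \subset I^C_k\}$, I would check that every level in this range has $H_j \subset I^C_k$ — the worst case being $j = j^D_k$, where $H_{j^D_k} \subset H_{j_k} \subset I^C_k$ follows because $\sup H_{j_k} < \inf H_{j_k+1} \le \sup H_{j_k+1} < q_{k+1}^{1/3r}$ by Lemma~\ref{lem:H_j} and the definition of $j_k$, and $\inf H_{j_k} \ge \inf H_{j^C_k} \ge q_k^{M'}$. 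From $H_j \subset I^C_k$ we get $|w| \ge q_k^{M'} \ge q_k^{3M} \ge (2q_k)^M$ (using $M' \ge 3M^2 \ge 3M$ and $q_k$ large enough to absorb the factor $2^M$, guaranteed by (\ref{def:k_0})), and $|w| < q_{k+1}^{1/3r}$. It remains to improve $q_{k+1}^{1/3r}$ to $q_{k+1}^{1/2}/(\tilde C q_k)$: since $q_{k+1} > q_k^N$ with $N = 4M'r^5$, we have $q_{k+1}^{1/3r} < q_{k+1}^{1/2} \cdot q_{k+1}^{-(1/2 - 1/3r)}$ and $q_{k+1}^{1/2 - 1/3r} > q_k^{N(1/2-1/3r)} > \tilde C q_k$ for $q_k$ large; hence $|w| < q_{k+1}^{1/2}/(\tilde C q_k)$ as claimed.

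For part (ii): if $j^D_k \le j \le j^B_{k'}$ then $H_j \subset I^D_k = [q_{k+1}^{1/3r^5}, q_{k'}^{1/3r})$. The inclusion at the upper end is the definition of $j^B_{k'} = \max\{j : \sup H_j < q_{k'}^{1/3r}\}$; at the lower end, $\inf H_{j^D_k}$ exceeds $q_{k+1}^{1/3r^5}$ because $j^D_k \in \{j_k, j_k-1\}$ and by Lemma~\ref{lem:IKCD3} there are at least three consecutive $H$-intervals inside $I^C_k \cap I^D_k$, so even $H_{j_k-1}$ lies above $q_{k+1}^{1/3r^5}$. Thus $|w| \ge q_{k+1}^{1/3r^5}$ and $|w| < q_{k'}^{1/3r}$, giving the right-hand inclusion immediately; for the left-hand one I must check $q_{k+1}^{1/3r^5} \ge q_{k+1}^{1/N'}$, i.e.\ $N' \ge 3r^5$, which holds since $N' = 4(N+1)r/(r-1) = 4(N+1)rM$ with $N = 4M'r^5$ large. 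The main obstacle, such as it is, is bookkeeping: making sure the endpoint levels $j^C_k, j^D_k, j^B_{k'}$ genuinely land inside the asserted intervals (not just the strictly interior levels), which is exactly where Lemma~\ref{lem:IKCD3} and the two equivalent definitions of each index are needed; the numerics afterward are routine applications of $q_{k+1} > q_k^N$ and the lower bound on $q_{k_0}$.
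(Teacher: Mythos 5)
Your proposal is correct and follows essentially the same route as the paper: both parts reduce the level range to the inclusions $\inf H_{j^C_k}\ge q_k^{M'}$, $\sup H_{j^D_k}<q_{k+1}^{1/3r}$ (resp.\ $H_{j^D_k}\subset I^C_k\cap I^D_k$ via Lemma~\ref{lem:IKCD3} and the definition of $j^B_{k'}$), and then obtain the stated numerical bounds from $q_{k+1}>q_k^N$ with $N=4M'r^5$, $M'\ge 3M^2$, $N'>3r^5$, and the largeness of $q_{k_0}$. The only blemish is the notational slip ``$H_{j^D_k}\subset H_{j_k}$'' (these are disjoint intervals when $j^D_k=j_k-1$); what you actually use, $\sup H_{j^D_k}\le\sup H_{j_k}$ and $\inf H_{j^D_k}\ge\inf H_{j^C_k}$, is exactly right.
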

\begin{proof}
By definition, $\inf H_{j^C_k}\ge q_k^{M'}$ and $\sup H_{j^D_k} <q_{k+1}^{1/3r}$, 
  giving the first implication in (i).  
Since $N>4M'\geq12M^2$, we have $$q_{k+1}^{1/2-1/3r}>q_k^{N/6}\ge q_k^{2M^2}>\Tilde{C}q_k$$  
  so that $q_{k+1}^{1/3r}<\frac{q_{k+1}^{1/2}}{\Tilde{C} q_k}$.  
This, together with $q_k^{M'}\ge q_k^{2M}\ge (2q_k)^M$, implies the second implication in (i).  

For (ii) note that (\ref{ieq:IKCD3}) implies 
$H_{j^D_k}\subset I^C_k\cap I^D_k$, giving the first implication, while the second implication follows from $N'>3r^5$.  
\end{proof}

\section{Hausdorff dimension 1/2}\label{s:Lower}
In this section, we first building the slit tree in \S\ref{s:Tree}, then we will associate the tree of slits with a Cantor set and prove that it has Hausdorff dimension \( \frac{1}{2} \).
\subsection{Tree of slits}\label{s:Tree}
We describe precisely the procedure for constructing the slits of level \(j+1\) from those of level \(j\). 
As discussed in \S\ref{s:Init}, this process is classified into three types based on the indices \(j^A_k\), where \(A \in \{B, C, D\}\). 
The conditions on the heights of slits required for the Diophantine and  Liouville constructions, as detailed in \S\ref{s:Diophantine} and \S\ref{s:Liouville}, are resolved by Lemma~\ref{lem:range}. 

Additionally, another set of conditions concerns the continued fraction expansions of the inverse slopes of slit directions. 
The necessity of these conditions is made clear by the Lemma~\ref{lem:good}, which serves as a fundamental tool to determine whether a slit will produce a significant number of children.

For the levels between consecutive indices of the form $j^A_k$, we have guaranteed these conditions when we introduce the constructions in  \S\ref{s: two construction}.  
Special attention is needed to check the relevant hypotheses of the second kind for the levels $j^A_k,k\in{B,C,D}$ when the type of construction used to find the slits of the next level changes.  

The parameters \(\delta_j\) and \(\rho_j\) are also specified in this section.  
At each step, we verify that the choice of \(\delta_j\) and \(\rho_j\) ensures that all cross-products of the slits at the level \(j\) with their children are less than \(\delta_j\), while the number of children is at least \(\rho_j|w|^{r-1}\delta_j\).  

We use \(k < k'\) to denote consecutive elements of \(\ell_N\), with \(k \geq k_0\).  
If \(k > k_0\), then \(\Tilde{k}\) denotes the element of \(\ell_N\) immediately preceding \(k\).

\subsubsection{Liouville region}\label{ss:Liouville}
For the levels $j$ satisfying $j^C_k\le j<j^D_k$, the slits of level $j+1$ will be constructed by applying Lemma~\ref{lem:liouville} to all slits of level $j$.  

For the initial slit $w_0$ given by Lemma~\ref{lem:init:slit}, and every $w'\in \Lambda(w_0,k)$ we can apply Lemma~\ref{lem:max:area} to show that the cross-product of $w_0$ with $w'$ is less than $4/q_{k_0}$. Lemma~\ref{lem:liouville} implies the number children is at least $|w|^{r-1}/2q_{k_0}$.  Therefore, we set $$\delta_0 = \frac{4}{q_{k_0}} \quad\text{ and }\quad \rho_0=\frac18.$$  

For the levels $j^C_k<j<j^D_k$, we set $$\delta_j = \frac{4}{q_k} \quad\text{ and }\quad \rho_j=\frac18.$$  

\begin{lemma}\label{lem:LR}
For $j^C_k<j\le j^D_k$, every slit $w$ of level $j$ satisfies $d(w,k)\le2$.  
Moreover, if $j<j^D_k$ then the cross-products of each slit of level $j$ with its children are less than $\delta_j$ 
  and the number of children is at least $\rho_j|w|^{r-1}\delta_j$.  
\end{lemma}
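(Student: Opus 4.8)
The plan is to handle the two assertions separately, treating the level $j = j^C_k$ (where the type of construction switches from Diophantine to Liouville) as the delicate boundary case and the interior levels $j^C_k < j < j^D_k$ as a routine induction. First I would establish the claim $d(w,k)\le 2$. For $j=j^C_k$, a slit $w$ of level $j$ was produced at the previous level either by the Diophantine construction (if $j^C_k-1$ lies in a Diophantine region) or, when $j^C_k = 0$, this is $w_0$ itself, for which $d(w_0,k_0)\le 2$ is guaranteed by Lemma~\ref{lem:init:slit}; in the general switching case I would invoke Lemma~\ref{lem:range}(i) to see $|w|\in I^C_k$, hence $|w| \ge q_k^{M'} > q_k^2$, and then observe that $d(w,k)=\gcd(p_{k,1}+mq_k, p_{k,2}+nq_k, q_k) \mid q_k$ together with the coprimality $\gcd(p_{k,1},p_{k,2},q_k)=1$ — but this alone does not force $d(w,k)\le 2$, so the honest route is: apply the Liouville construction to pass from level $j^C_k$ onward, and use Lemma~\ref{lem:gcd}, which says precisely that \emph{every} $w'\in\Lambda_1(w,k)$ has $d(w',k)\le 2$ provided $|w'| < q_{k+1}^{1/2}/(\Tilde C q_k)$. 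The height bound needed here is exactly the second implication of Lemma~\ref{lem:range}(i), so for $j^C_k < j \le j^D_k$ the bound $d(w,k)\le 2$ propagates inductively from level $j^C_k$: each such $w$ is a member of $\Lambda_1(w^-,k)$ for its parent $w^-$ of level $j-1$, and Lemma~\ref{lem:gcd} applies because Lemma~\ref{lem:range}(i) gives $|w|<q_{k+1}^{1/2}/(\Tilde C q_k)$.

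Next I would prove the second assertion for $j^C_k \le j < j^D_k$ (note the index range for the cross-product/children claim stops at $j^D_k - 1$, since children of level-$j$ slits are the level-$(j+1)$ slits and we only use the Liouville construction up to that point). Fix a slit $w$ of level $j$; by the first part $d(w,k)\le 2$, and by Lemma~\ref{lem:range}(i) we have $(2q_k)^M \le |w| < q_{k+1}^{1/2}/(\Tilde C q_k)$, so in particular $|w|^{r-1} = |w|^{1/M} \ge 2q_k$, which is the hypothesis of Lemma~\ref{lem:liouville}. Let $u$ be the Liouville convergent of $w$ indexed by $k$ — this is well defined since the height condition $|w| < q_{k+1}^{1/2}/(\Tilde C q_k)$ is exactly \eqref{height restriction} — and form $\Lambda(w,k)$. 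For the cross-product bound: every child $w' = w + 2v \in \Lambda(w,k)$ has $|v|\le 2|w|^r$, and I need $|v| < q_{k+1}^{1/2}/\Tilde C$ to invoke Lemma~\ref{lem:max:area}; this follows from $|w|^r < (q_{k+1}^{1/2}/(\Tilde C q_k))^r$, and since $|w|\in I^C_k$ one checks $2|w|^r < q_{k+1}^{1/2}/\Tilde C$ using $N$ large (the same estimate $q_{k+1}^{1/2 - 1/3r} > \Tilde C q_k$ appearing in the proof of Lemma~\ref{lem:range}). Then Lemma~\ref{lem:max:area} gives $|w\times v| < 4\, d(w,k)/q_k \le 8/q_k$; for $j = 0$ we have $\delta_0 = 4/q_{k_0}$ and $d(w_0,k_0)\le 2$ gives $|w_0\times v| < 4\cdot 2/q_{k_0}$... so here I should be careful — actually Lemma~\ref{lem:max:area}'s displayed bound is $|w\times v| < 2|w|/|u| < 4d(w,k)/q_k$, and with $d\le 2$ this is $\le 8/q_k$, whereas $\delta_j = 4/q_k$; the resolution is that for $w_0$ the sharper bound $2|w_0|/|u| < 4/q_{k_0}$ holds because $d(w_0,k_0)$ is actually handled via \eqref{eq:d|u|} giving $|u| > |w|q_k/(2d) \ge |w|q_k/4$, hence $2|w|/|u| < 8/q_k$ in general but the paper's stated $\delta_0 = 4/q_{k_0}$ presupposes $d=1$ or a factor-of-two slack absorbed elsewhere — I would simply cite Lemma~\ref{lem:max:area} as stated, which for $w_0$ (where the text asserts the cross product is $< 4/q_{k_0}$) uses $d(w_0,k_0)$ in the bound, and for general levels note $d(w,k)\le 2$ is folded into the constant, adjusting $\delta_j$ interpretation accordingly; cleanest is to record $|w\times v| < 4d(w,k)/q_k \le 8/q_k$ and remark this matches $\delta_j$ up to the universal constant that does not affect the summability $\sum\delta_j < \infty$ verified in \S\ref{s:Lower}.

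For the lower bound on the number of children: Lemma~\ref{lem:liouville} gives $\#\Lambda(w,k) \ge |w|^{r-1}/(2q_k)$, and we must check this is at least $\rho_j |w|^{r-1}\delta_j = \frac{1}{8}|w|^{r-1}\cdot\frac{4}{q_k} = \frac{|w|^{r-1}}{2q_k}$, which is an equality — so the bound is exactly sharp and nothing further is needed. Finally I would note that not every element of $\Lambda(w,k)$ is automatically a \emph{child} in the sense of \S\ref{s:Diophantine} (which required $\alpha r$-normality), but in the Liouville region the operative notion of child is membership in $\Lambda(w,k)$ with the height constraint $|w|^r \le |v|\le 2|w|^r$, so no normality check is required at these levels — the normality is only re-established when we re-enter the Diophantine region at level $j^D_k$, which is the business of the next lemma and not of Lemma~\ref{lem:LR}. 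The main obstacle I anticipate is purely bookkeeping: tracking the factor of $d(w,k)\le 2$ through Lemma~\ref{lem:max:area} so that the cross-product genuinely lands below $\delta_j$ (or below a fixed multiple of it), and confirming that the level-$j^C_k$ slit — which arrives from a Diophantine construction — indeed satisfies the hypotheses \eqref{height restriction} of Lemma~\ref{Lemma:Liouville convergent} so that the Liouville convergent exists; both reduce to the height inclusion $|w|\in I^C_k$ from Lemma~\ref{lem:range}(i) combined with the choice $N = 4M'r^5$ being large enough to separate $q_k^{M'}$ from $q_{k+1}^{1/2}/(\Tilde C q_k)$.
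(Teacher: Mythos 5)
Your argument is essentially the paper's own proof: the gcd bound follows from the first assertion of Lemma~\ref{lem:gcd} (every slit at these levels being produced by the Liouville construction indexed by $k$), and the cross-product and child-count bounds come from Lemma~\ref{lem:max:area} and Lemma~\ref{lem:liouville}, with the height hypotheses supplied by Lemma~\ref{lem:range}(i), exactly as in the paper. Two minor remarks: the level $j=j^C_k$ that you fold into the ``moreover'' part is not actually covered by the lemma and is handled separately in Lemma~\ref{lem:BR->LR} (there $d(w,k)\le 2$ is not available, so your appeal to ``the first part'' would fail for $k>k_0$, and the paper uses a different $\delta_{j^C_k}$); and the factor-of-two issue you flagged is genuine---with $d(w,k)\le 2$, Lemma~\ref{lem:max:area} only yields $|w\times v|<8/q_k$ rather than the asserted $4/q_k=\delta_j$---but your proposed repair (absorb the constant, equivalently take $\delta_j=8/q_k$, $\rho_j=\tfrac1{16}$, keeping $\rho_j\delta_j=\tfrac1{2q_k}$) preserves the child count from Lemma~\ref{lem:liouville} and the summability used in Lemma~\ref{lem:cross}, so nothing downstream is affected.
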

\begin{proof}
Since all slits of level $j$ were obtained via the Liouville construction, the first part follows from the first assertion of Lemma~\ref{lem:gcd}.  
Suppose $w$ is a slit of level $j$ with $j^C_k<j<j^D_k$.  
Lemma~\ref{lem:max:area} now implies the cross-products of $w$ with 
its children are less than $4/q_k$, and the number of children is at 
least $|w|^{r-1}/2q_k$, by Lemma~\ref{lem:liouville}.  
\end{proof}

The next lemma shows that the slits $w$ lying in the level $j^D_k$ will share a property of normal slit. Let $$\alpha_k=\frac{q_k}{4\rho^{N'}}.$$  
\begin{lemma}\label{lem:LR->DR}
Every slit of level $j^D_k$ is $\alpha_k$-normal.  
\end{lemma}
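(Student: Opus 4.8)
The goal is to show that every slit $w$ sitting at level $j^D_k$ is $\alpha_k$-normal, where $\alpha_k = q_k/(4\rho^{N'})$. The natural strategy is to use the characterization of normality via Lemma~\ref{lem:N'-good=>normal} (or Lemma~\ref{lem:Uniquly N'-good=>normal} in the uniquely rational case): it suffices to prove that $w$ is not a miracle slit, that $|w|$ lies in the admissible Diophantine range $q_{k+1}^{1/N}\le|w|<q_{k'}^{1/3r}$, and that $w$ is $(\alpha_k\rho^{N'},|w|^{r-1})$-good. The first point is exactly what Lemma~\ref{2 levels with 1 danger slit} and the definition of $j^D_k$ were arranged to guarantee: $j^D_k$ is chosen precisely so that no slit at that level is miracle. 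The height range follows from Lemma~\ref{lem:range}(ii), since $j^D_k$ satisfies $j^D_k\le j^D_k\le j^B_{k'}$, giving $|w|\in I^D_k$, hence $q_{k+1}^{1/N'}\le|w|<q_{k'}^{1/3r}$ (and $N'>N$ takes care of the precise exponent needed).

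**The good property.** The substantive step is verifying that $w$ is $(\alpha_k\rho^{N'},|w|^{r-1})$-good, i.e. that the inverse slope of $w$ has a convergent of height $q$ with $\alpha_k\rho^{N'}|w|\le q\le|w|^{r-1}\cdot|w|=|w|^r$. Here is where the Liouville construction feeds in. By construction, $w$ was obtained from a slit $w^-$ of level $j^D_k-1$ (or $j_k-1$) via the Liouville construction indexed by $k$, so by Lemma~\ref{lem:gcd} we have $d(w,k)\le 2$, and consequently the inverse slope of $w$ has a convergent whose height lies between $\tfrac{q_k|w|}{4}$ and $2q_k|w|$. Now $\tfrac{q_k|w|}{4} = \alpha_k\rho^{N'}|w|$ by the definition of $\alpha_k$, so the lower bound is exactly on the nose; for the upper bound we need $2q_k|w|\le|w|^r$, i.e. $2q_k\le|w|^{r-1}$, which holds because $|w|\ge q_k^{M'}$ (Lemma~\ref{lem:range}) and $M'(r-1)\ge 1$ with room to spare. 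Thus $w$ is $(\alpha_k\rho^{N'},|w|^{r-1})$-good.

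**Assembling the pieces.** With $w$ not miracle, $|w|$ in the admissible range, and $w$ being $(\alpha_k\rho^{N'},|w|^{r-1})$-good, Lemma~\ref{lem:N'-good=>normal} (respectively Lemma~\ref{lem:Uniquly N'-good=>normal} when $(\lambda,\mu)$ is uniquely rational — here one additionally invokes $q_{k_0}>\max(|a|,|b|)$ from the choice of $k_0$) yields immediately that $w$ is $\alpha_k$-normal. One should also double-check that $\alpha_k>1$, which follows from $q_k\ge q_{k_0}>4\rho^{N'}$ by the fifth term in~(\ref{def:k_0}), so that the definition of $\alpha_k$-normality is non-vacuous.

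**Main obstacle.** The only delicate point is confirming that the convergent supplied by Lemma~\ref{lem:gcd} has the \emph{maximal} height below $|w|^r$ used in Lemma~\ref{lem:N'-good=>normal}, or rather that being $(\alpha_k\rho^{N'},|w|^{r-1})$-good is genuinely enough. But Lemma~\ref{lem:N'-good=>normal} is stated exactly in that form — it takes $(\alpha\rho^{N'},|w|^{r-1})$-goodness as hypothesis and derives $\alpha$-normality — so no further bookkeeping on which convergent is meant is required; the height interval $[\tfrac{q_k|w|}{4},2q_k|w|]$ from Lemma~\ref{lem:gcd} is simply non-empty and contained in $[\alpha_k\rho^{N'}|w|,|w|^r]$, which is the definition of goodness. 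The subtler issue, really, is keeping track of which construction produced level $j^D_k$ (the case $j^D_k=j_k$ versus $j^D_k=j_k-1$), but in both cases the slit at that level arose from a Liouville step indexed by $k$, so Lemma~\ref{lem:gcd} applies verbatim.
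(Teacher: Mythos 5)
Your proof is correct and follows the paper's argument essentially verbatim: the goodness of $w$ comes from the Liouville construction via Lemma~\ref{lem:gcd} (with $q_k|w|/4=\alpha_k\rho^{N'}|w|$ and $2q_k\le|w|^{r-1}$ because $|w|\ge q_k^{M'}$), the non-miracle property comes from the definition of $j^D_k$, and $\alpha_k$-normality then follows from Lemma~\ref{lem:N'-good=>normal} (resp.\ Lemma~\ref{lem:Uniquly N'-good=>normal}), exactly as in the paper. One small correction: your parenthetical that ``$N'>N$ takes care of the precise exponent'' is backwards, since the bound $q_{k+1}^{1/N'}\le|w|$ is \emph{weaker} than the hypothesis $q_{k+1}^{1/N}\le|w|$ of Lemma~\ref{lem:N'-good=>normal}; the needed inequality instead follows from the containment $|w|\in I^D_k$ that you already cite, i.e. $|w|\ge q_{k+1}^{1/3r^5}$, together with $3r^5<N=4M'r^5$.
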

\begin{proof}
Let $w$ be a slit of level $j^D_k$.  Since $H_{j^D_k}\subset I^C_k$, we have 
  $$|w|^{r-1}=|w|^{\frac{1}{M}}>|w|^{\frac{3}{M'}}\geq 2q_k.$$  
By Lemma~\ref{lem:LR}, we have $d(w,k)\le2$ and since $w$ was obtained via the Liouville construction,  Lemma~\ref{lem:gcd} implies the inverse slope of $w$ has a convergent with height between $q_k|w|/4$ and $2q_k|w|$, or, by the above, between $\alpha_k\rho^{N'}|w|$ and $|w|^r$.  
This means $w$ is $(\alpha_k\rho^{N'},|w|^{r-1})$-good. Because all slits at level $j^D_k$ are not miracles (the definition of $j^D_k$), then $w$ is $\alpha_k$-normal according to Lemma~\ref{lem:N'-good=>normal}.  
\end{proof}

\subsubsection{Diophantine region}\label{ss:Diophantine}
For the levels $j$ satisfying $j^D_k\le j<j^B_{k'}$, we apply Proposition~\ref{prop:normal} with the parameter $\alpha=\alpha_kr^{j-j^D_k}$ to all slits $w$ of level $j$, then the slits of level $j+1$ are consist of all the $\alpha r$-normal children.

For the levels $j^D_k\le j<j^B_{k'}$, we set 
$$\delta_j = \frac{4\rho^{N'}}{q_kr^{j-j^D_k}} \quad\text{ and }\quad 
        \rho_j = \frac{c_0}{4\rho^{N'+1}}.$$  

\begin{lemma}\label{lem:DR}
For $j^D_k\le j\le j^B_{k'}$, every slit $w$ of level $j$ is $\alpha_kr^{j-j^D_k}$-normal.  
Morevover, if $j<j^B_{k'}$ then the cross-products of each slit of level $j$ with its children are less than $\delta_j$ and the number of children is at least $\rho_j|w|^{r-1}\delta_j$.  
\end{lemma}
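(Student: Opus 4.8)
The plan is to argue by induction on the level $j$, with $j$ running over $j^D_k\le j\le j^B_{k'}$, in close parallel with Lemma~\ref{lem:LR} but using Proposition~\ref{prop:normal} in place of the Liouville input. The base case $j=j^D_k$ is exactly Lemma~\ref{lem:LR->DR}: every slit of level $j^D_k$ is $\alpha_k$-normal, i.e.\ $\alpha_k r^{\,j^D_k-j^D_k}$-normal. This is the one place where the non-miracle property of the level-$j^D_k$ slits (arranged by the definition of $j^D_k$ together with Lemma~\ref{2 levels with 1 danger slit}) is used.

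For the inductive step I would fix $j$ with $j^D_k\le j<j^B_{k'}$, assume every slit $w$ of level $j$ is $\alpha$-normal with $\alpha:=\alpha_k r^{\,j-j^D_k}$, and apply Proposition~\ref{prop:normal} to each such $w$. This requires checking its hypotheses. First, $\alpha>1$, which follows since $q_k\ge q_{k_0}>2^7\rho^{N'}$ forces $\alpha_k=q_k/4\rho^{N'}>1$. Second, the height chain $q_{k+1}^{1/N}\le|w|<5|w|^r<q_{k'}^{1/3r}$: Lemma~\ref{lem:range}(ii) gives $|w|\in I^D_k=[q_{k+1}^{1/3r^5},q_{k'}^{1/3r})$, and $q_{k+1}^{1/N}\le q_{k+1}^{1/3r^5}$ because $N=4M'r^5>3r^5$; moreover $5(\sup H_j)^r=\sup H_{j+1}\le\sup H_{j^B_{k'}}<q_{k'}^{1/3r}$, the last steps being valid since $j+1\le j^B_{k'}$ and the intervals $H_\ell$ are increasing by Lemma~\ref{lem:H_j}. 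The remaining and central hypothesis is inequality (\ref{ieq:normal}).

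To verify (\ref{ieq:normal}) I would set $t:=j-j^D_k$ and use $|w|\ge\inf H_j=(\inf H_{j^D_k})^{r^t}$ together with $\inf H_{j^D_k}\ge q_{k+1}^{1/3r^5}>q_k^{N/3r^5}=q_k^{4M'/3}$. Since $(r-1)^2=1/M^2$ and $M'\ge 3M^2$, this yields $|w|^{(r-1)^2}\ge q_k^{(4M'/3M^2)r^t}\ge q_k^{4r^t}$, which is already $>c$ (e.g.\ because $q_{k_0}>c^M$). On the other side, writing $\alpha_k=q_k/4\rho^{N'}$ gives $480\alpha^2\rho^{N'+3}/c_0=(30\rho^3/c_0)\,\rho^{-N'}q_k^2 r^{2t}$, and the constant factor is absorbed since $q_k\ge q_{k_0}>60c_0^{-1}\rho^{N'+3}$; so the inequality reduces to $q_k^{4r^t}\ge q_k^{3}r^{2t}$ for every $t\ge 0$, which holds because $4r^t-3\ge 1$ and the left-hand exponent grows doubly exponentially in $t$ while $r^{2t}$ grows only exponentially, with $\log q_k\ge\log q_{k_0}$ large. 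I expect this to be the only real obstacle: making (\ref{ieq:normal}) hold simultaneously over the whole range $j^D_k\le j\le j^B_{k'}$, which may be long because $q_{k'}$ can be far larger than $q_{k+1}$, forces one to balance the doubly-exponential growth of $|w|$ against the merely exponential growth of $\alpha=\alpha_k r^{t}$, relying on the specific choices of $q_{k_0}$, $N$, and $M'$. Everything else is bookkeeping about the intervals $H_\ell$ and $I^D_k$.

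With the hypotheses in hand, Proposition~\ref{prop:normal} produces at least $\frac{c_0|w|^{r-1}}{4\alpha\rho^{N'+1}}$ slits $w'=w+2v$ satisfying (\ref{def:child}) that are $\alpha r$-normal; since $\alpha r=\alpha_k r^{(j+1)-j^D_k}$, these are precisely the level-$(j+1)$ slits, which closes the induction for the normality statement. For the ``moreover'' part with $j<j^B_{k'}$: each such child satisfies $|w\times v|<1/\alpha$ by (\ref{def:child}), and $1/\alpha=4\rho^{N'}/(q_k r^{\,j-j^D_k})=\delta_j$ by the definition of $\delta_j$ in \S\ref{ss:Diophantine}; and the number of children is at least $\frac{c_0|w|^{r-1}}{4\alpha\rho^{N'+1}}=\left(\frac{c_0}{4\rho^{N'+1}}\right)|w|^{r-1}\cdot\frac{1}{\alpha}=\rho_j|w|^{r-1}\delta_j$, again by the definitions of $\rho_j$ and $\delta_j$. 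This completes the argument.
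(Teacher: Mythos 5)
Your proof is correct and follows essentially the same route as the paper: the base case from Lemma~\ref{lem:LR->DR}, an inductive application of Proposition~\ref{prop:normal} with $\alpha=\alpha_k r^{j-j^D_k}$ (checking the height hypotheses via Lemma~\ref{lem:range} and the intervals $H_j$), and reading off $\delta_j$ and $\rho_j$ from its conclusion. The only cosmetic difference is that you verify (\ref{ieq:normal}) directly at every level through the bound $|w|^{(r-1)^2}\ge q_k^{4r^{j-j^D_k}}$, whereas the paper checks it only at $j=j^D_k$ and then propagates it, observing that the right-hand side grows by a larger factor per level than the left.
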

\begin{proof}
The case $j=j^D_k$ of the first assertion follows from Lemma~\ref{lem:LR->DR} while the remaining cases follow from Proposition~\ref{prop:normal}.  

When we apply Proposition~\ref{prop:normal}, we should firstly verify the inequality (\ref{ieq:normal}) holds, i.e. if 
\begin{equation}\label{ieq:normal:2}
  60q_k^2r^{2(j-j^D_k)}\rho^{N'+3}\le c_0|w|^{(r-1)^2} 
\end{equation}
and
\begin{equation}
  |w|^{(r-1)^2}>c.
\end{equation}
The second condition is given by (\ref{ieq:w_0>5}).
We also note that it is enough to check (\ref{ieq:normal:2}) in case $j=j^D_k$ since the left-hand side increases by a factor $r^2$ as $j$ increases by one, while the right-hand side increases by a factor $|w|^{(r-1)^3}>q_{k_0}^{3(r-1)}>5>r^2$.  
Moreover, since $|w|^{(r-1)^2}>q_k^3$, (\ref{ieq:normal:2}) in the case $j=j^D_k$ follows from $60\rho^{N'+3}<c_0q_{k_0}$, which is guaranteed by the third term in (\ref{def:k_0}).  
  
Then we apply Proposition~\ref{prop:normal} to an $\alpha$-normal slit, the cross-products with its children are less than $1/\alpha$, which is $\delta_j$ if $\alpha=\alpha_kr^{j-j^D_k}$.  
The number of children is at least $$\frac{c_0|w|^{r-1}}{4\alpha\rho^{N'+1}} 
   = \rho_j|w|^{r-1}\delta_j.$$ 
\end{proof}

\begin{lemma}\label{lem:DR->BR}
Every slit $w$ of level $j^B_{k'}$ is $(\alpha_k,|w|^{r-1})$-good.  
\end{lemma}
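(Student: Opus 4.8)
The plan is to read Lemma~\ref{lem:DR->BR} straight off Lemma~\ref{lem:DR} together with the definition of an $\alpha$-normal slit, with essentially no extra work. Fix consecutive elements $k<k'$ of $\ell_N$ with $k\ge k_0$ and let $w$ be a slit of level $j^B_{k'}$. By Lemma~\ref{lem:DR} (the case $j=j^B_{k'}$), $w$ is $\alpha$-normal with $\alpha=\alpha_k r^{\,j^B_{k'}-j^D_k}$, and Lemma~\ref{lem:indices}(i) gives $j^D_k\le j^B_{k'}$, hence $\alpha\ge\alpha_k$. Now unpack normality at the single parameter $t=1$: condition (\ref{def:normal}) yields $\Psi(w)\cap[\alpha\rho|w|,|w|^{r}]\neq\emptyset$, i.e. the inverse slope of $w$ has a convergent of height $q$ with $\alpha\rho|w|\le q\le|w|^{r}=|w|^{r-1}\cdot|w|$. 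Since $\alpha\rho\ge\alpha_k$, this same convergent lies in $[\alpha_k|w|,\,|w|^{r-1}|w|]$, so $w$ is $(\alpha_k,|w|^{r-1})$-good, which is the assertion.

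The only point requiring attention is that the $t=1$ instance of (\ref{def:normal}) is legitimate, i.e. that $1\in[1,T]$ where $T$ solves $\alpha\rho^{T}=|w|^{r-1}$; equivalently $|w|^{r-1}>\alpha\rho$. This is already built into \S\ref{s:Init}. One way to see it: in the proof of Lemma~\ref{lem:DR} the hypothesis (\ref{ieq:normal}) of Proposition~\ref{prop:normal} is verified at every Diophantine level with exactly this $\alpha$, and (\ref{ieq:normal}) forces $\alpha\le|w|^{(r-1)^2/2}\sqrt{c_0/(480\rho^{N'+3})}$; since the constant is $<1$, since $(r-1)^2/2<r-1$ (as $r<3$), and since $|w|\ge|w_0|\ge q_{k_0}^{M'}$ dwarfs $\rho$ by (\ref{def:k_0}), we obtain $\alpha\rho<|w|^{r-1}$. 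Alternatively one argues directly from the heights: Lemma~\ref{lem:H_j} and the definitions give $\inf H_{j^D_k}\ge q_{k+1}^{1/3r^5}\,(>q_k^{M'})$ and $\sup H_{j^B_{k'}}<q_{k'}^{1/3r}$, so, writing $s:=r^{\,j^B_{k'}-j^D_k}$ and using $\inf H_{j^B_{k'}}=(\inf H_{j^D_k})^{s}$, one gets $s<r^{4}\log q_{k'}/\log q_{k+1}$ while $|w|^{r-1}\ge q_k^{(M'/M)s}$; since $M'/M\ge 3M>3$ by (\ref{def:M'}) and $\alpha\rho<q_k s$, comparing exponents yields $|w|^{r-1}>\alpha\rho$ with a wide margin.

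I do not expect a genuine obstacle here: this lemma merely re-expresses Lemma~\ref{lem:DR} in the ``good'' language needed to restart a Liouville construction at the next index $j^C_{k'}$, and the only care required is the bookkeeping ensuring $T\ge1$, which is inherited from the parameter choices of \S\ref{s:Init}.
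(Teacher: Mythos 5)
Your proof is correct and follows the same route as the paper: invoke Lemma~\ref{lem:DR} at level $j^B_{k'}$ to get $\alpha$-normality with $\alpha=\alpha_k r^{\,j^B_{k'}-j^D_k}\ge\alpha_k$, then specialize the normality condition to $t=1$ to extract a convergent of height in $[\alpha_k|w|,|w|^r]$. Your extra paragraph checking $T\ge1$ (i.e.\ $\alpha\rho\le|w|^{r-1}$) is careful bookkeeping the paper leaves implicit, but it does not change the argument.
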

\begin{proof}
Let $w$ be a slit of level $j^B_{k'}$.  Lemma~\ref{lem:DR} implies 
  that $w$ is $\alpha$-normal for some $\alpha>\alpha_k$.  
By the case $t=1$ in the definition of normality, this means $w$ is 
  $(\alpha,|w|^{r-1})$-good, i.e. its inverse slope has a convergent whose height is between $\alpha|w|$ and $|w|^r$.  
Since $\alpha>\alpha_k$ the height of this convergent is between 
  $\alpha_k|w|$ and $|w|^r$.  
Hence, $w$ is $(\alpha_k,|w|^{r-1})$-good.  
\end{proof}

\subsubsection{Bounded region}\label{ss:Bounded}
For the levels $j$ satisfying $j^B_k\le j< j^C_k, k>k_0$, we will require the property of  $(\alpha,\beta)$-good instead of normal slits because the length of this region (See (ii) in Lemma~\ref{lem:indices}) is small relative to the decrease of $\alpha$ in Lemma~\ref{lem:good}.

The slits of level $j+1$ will be constructed by applying Lemma~\ref{lem:good} to all slits $w$ of level $j$ with the parameters 
\begin{equation}\label{alpha:beta}
  \alpha=\alpha_{\Tilde{k}}-\frac{j-j^B_k}{2} \quad\text{ and }\quad \beta=|w|^{r-1}.  
\end{equation}

For the levels $j^B_k\le j< j^C_k, k>k_0$, we set $$\delta_j = \frac{4\rho^{N'}}{q_{\Tilde{k}}} \quad\text{ and } \quad 
        \rho_j = \frac{c_0}{2}.$$  

\begin{lemma}\label{lem:BR}
For $j^B_k\le j\le j^C_k$, every slit $w$ of level $j$ is $(\alpha_{\Tilde{k}}/2,|w|^{r-1})$-good.  
Moreover, if $j<j^C_k$ then the cross-products of each slit of level $j$ with its children are less than $\delta_j$ 
  and the number of children is at least $\rho_j|w|^{r-1}\delta_j$.  
\end{lemma}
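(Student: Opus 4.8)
The plan is to establish both assertions by a single induction on the level $j$, running from $j=j^B_k$ up to $j=j^C_k$, carrying at level $j$ the sharper inductive hypothesis that every slit $w$ of level $j$ is $(\alpha_j,|w|^{r-1})$-good with $\alpha_j:=\alpha_{\Tilde k}-\tfrac12(j-j^B_k)$, i.e. with exactly the first parameter used in the construction~(\ref{alpha:beta}). The base case $j=j^B_k$ is Lemma~\ref{lem:DR->BR} applied with $\Tilde k$ and $k$ in the roles of $k$ and $k'$: it says every slit of level $j^B_k$ is $(\alpha_{\Tilde k},|w|^{r-1})$-good, and $\alpha_{j^B_k}=\alpha_{\Tilde k}$. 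For the inductive step, fix a slit $w$ of level $j$ with $j^B_k\le j<j^C_k$; its children are by definition the elements of $\Delta(w,\alpha_j,|w|^{r-1})$, so Lemma~\ref{lem:good:children} gives that each child $w'=w+2v$ is $(\alpha_j-\tfrac12,|w|^{r-1})$-good. Since $w$ and $v$ lie in the positive cone, $|w'|=|w|+2|v|>|w|$, hence $|w|^{r-1}<|w'|^{r-1}$; enlarging the second parameter only weakens the condition, so $w'$ is $(\alpha_{j+1},|w'|^{r-1})$-good with $\alpha_{j+1}=\alpha_j-\tfrac12$, which closes the induction and shows every level-$j$ slit with $j^B_k\le j\le j^C_k$ is $(\alpha_j,|w|^{r-1})$-good.

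To obtain the stated conclusion I then bound $\alpha_j$ from below. By Lemma~\ref{lem:indices}(ii) the bounded region has length $j^C_k-j^B_k\le\log_r(3M')+4$, while the third factor in~(\ref{def:k_0}) forces $\alpha_{k_0}=q_{k_0}/4\rho^{N'}>\log_r(3M')+4$; as $\Tilde k\ge k_0$ we have $\alpha_{\Tilde k}\ge\alpha_{k_0}$, so $\tfrac12(j-j^B_k)<\tfrac12\alpha_{\Tilde k}$ and thus $\alpha_j>\alpha_{\Tilde k}/2$ throughout. A slit that is $(\alpha_j,|w|^{r-1})$-good is a fortiori $(\alpha_{\Tilde k}/2,|w|^{r-1})$-good, which is the first assertion. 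Running the induction also requires checking, at each level $j<j^C_k$, the hypotheses $1<\alpha_j<c_0|w|^{r-1}$ of Lemmas~\ref{lem:good} and~\ref{lem:good:children}: the lower bound is immediate from $\alpha_j>\alpha_{\Tilde k}/2\ge\alpha_{k_0}/2\gg1$, and for the upper bound one uses that any level-$j$ slit with $j\ge j^B_k$ has $|w|\ge\inf H_{j^B_k}\ge q_k^{1/3r^3}$ (the estimate carried out inside the proof of Lemma~\ref{lem:indices}(ii)); combining this with $q_k>q_{\Tilde k}^N$ and $N=4M'r^5$ gives $|w|>q_{\Tilde k}^{M'}$, so $|w|^{r-1}>q_{\Tilde k}^{M'/M}\ge q_{\Tilde k}^{3}$ and therefore $c_0|w|^{r-1}>q_{\Tilde k}>\alpha_{\Tilde k}\ge\alpha_j$.

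For the ``moreover'' part, fix $j$ with $j^B_k\le j<j^C_k$ and a slit $w$ of level $j$. Each of its children is $w'=w+2v$ with $v\in\Delta(w,\alpha_j,|w|^{r-1})$, so the defining inequalities~(\ref{def:Delta}) give $|w\times v|<1/\alpha_j\le 2/\alpha_{\Tilde k}$; that is, the cross-products are controlled by $\delta_j=4\rho^{N'}/q_{\Tilde k}=1/\alpha_{\Tilde k}$ up to a universal constant. For the number of children, Lemma~\ref{lem:good} (whose hypothesis $\alpha_j<c_0|w|^{r-1}$ was just verified, and with $\alpha_j>1$) yields $\#\Delta(w,\alpha_j,|w|^{r-1})\ge c_0|w|^{r-1}/\alpha_j\ge c_0|w|^{r-1}/\alpha_{\Tilde k}=2\rho_j|w|^{r-1}\delta_j\ge\rho_j|w|^{r-1}\delta_j$, using $\alpha_j\le\alpha_{\Tilde k}$ together with $\rho_j=c_0/2$ and $\delta_j=1/\alpha_{\Tilde k}$.

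The main obstacle is the quantitative bookkeeping of the parameter $\alpha_j$: one must ensure it never drops below $\alpha_{\Tilde k}/2$ (and in particular stays above $1$) across the whole bounded region — this is precisely why the length bound of Lemma~\ref{lem:indices}(ii) and the lower bound on $q_{k_0}$ in~(\ref{def:k_0}) are invoked — and one must verify the upper bound $\alpha_j<c_0|w|^{r-1}$ of Lemma~\ref{lem:good} for the smallest heights, namely those at level $\approx j^B_k$, which is the genuinely numerical point and is what pins down the chain of relations among $N$, $M'$ and $M$; the small mismatch in constants in the cross-product bound ($2/\alpha_{\Tilde k}$ versus $\delta_j=1/\alpha_{\Tilde k}$) is harmless and can be absorbed either into $\delta_j$ or into the later verification that $\sum_j\delta_j<\infty$.
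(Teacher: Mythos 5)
Your proof is correct and follows essentially the same route as the paper: the base case comes from Lemma~\ref{lem:DR->BR}, goodness is propagated level by level via Lemma~\ref{lem:good:children} with exactly the parameters (\ref{alpha:beta}), the bound $j^C_k-j^B_k\le\log_r(3M')+4\le\alpha_{\Tilde{k}}$ from Lemma~\ref{lem:indices}(ii) together with the third term of (\ref{def:k_0}) keeps $\alpha_j\ge\alpha_{\Tilde{k}}/2$, and Lemma~\ref{lem:good} supplies the child count, with your explicit verification of the hypothesis $\alpha_j<c_0|w|^{r-1}$ being a detail the paper leaves implicit. The only divergence is the constant in the cross-product bound: what the argument actually yields is $1/\alpha_j\le 2/\alpha_{\Tilde{k}}=2\delta_j$ (the paper asserts $<\delta_j$ from $\alpha\ge\alpha_{\Tilde{k}}/2$, which carries the same factor-of-two slip), and, as you observe, this is harmless since the later summation in Lemma~\ref{lem:cross} already uses the per-level bound $8\rho^{N'}/q_{\Tilde{k}}$ in this region.
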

\begin{proof}
Firstly, we note that every slit $w$ of level $j$ is $(\alpha,\beta)$-good, where
$\alpha$ and $\beta$ are the parameters given in (\ref{alpha:beta}).  
Indeed, for $j=j^B_k$ this follows from Lemma~\ref{lem:DR->BR} while for $j^B_k<j\le j^C_k$ it follows from Lemma~\ref{lem:good:children}.  
Lemma~\ref{lem:indices}.ii and the third relation in (\ref{def:k_0}) imply 
  $$j-j^B_k\le j^C_k-j^B_k\le\log_r(3M')+4\le \alpha_{\Tilde{k}}$$ 
  from which we see that the first assertion holds.  

For children constructed using the Lemma~\ref{lem:good} applied to a $(\alpha,\beta)$-good slit, the cross-products are less than $1/\alpha$, 
which is $<\delta_j$, since $\alpha\geq\alpha_{\Tilde{k}}/2$.  
And since $\alpha\le\alpha_{\Tilde{k}}$, the number of children is at least 
  $$\frac{c_0|w|^{r-1}}{\alpha_{\Tilde{k}}} = \rho_j|w|^{r-1}\delta_j$$ 
  giving the second assertion.  
\end{proof}

Finally, for the levels $j=j^C_k$ with $k>k_0$, we set 
  $$\delta_j = \frac{8\rho^{N'}}{q_{\Tilde{k}}} \quad\text{ and } \quad 
        \rho_j = \frac{q_{\Tilde{k}}}{16\rho^{N'}q_k}.$$  
\begin{lemma}\label{lem:BR->LR}
For any slit $w$ of level $j=j^C_k$ with $k>k_0$, the cross-products of $w$ with its children are less than $\delta_j$ and the number of children is at least $\rho_j|w|^{r-1}\delta_j$.  
\end{lemma}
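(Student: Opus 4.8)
The plan is to run the Liouville construction of \S\ref{s:Liouville} on $w$ with index $k$: I would declare the children of $w$ to be the slits in $\Lambda(w,k)$, bound their number from below by Lemma~\ref{lem:liouville}, and bound each cross-product $|w\times v|$ from above by Lemma~\ref{lem:max:area}. What makes this level genuinely different from the interior Liouville levels handled in Lemma~\ref{lem:LR} is that the slit $w$ at level $j^C_k$ was produced by the bounded region construction, not by a Liouville construction indexed by $k$, so the shortcut $d(w,k)\le2$ is unavailable; the cross-product estimate must instead come from a lower bound on the height $|u|$ of the Liouville convergent of $w$ indexed by $k$, and that lower bound is exactly where the $(\alpha,\beta)$-goodness inherited through the bounded region enters.

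First I would make the Liouville convergent $u$ available and locate it among the convergents of $\theta_w:=\tfrac{\lambda+m}{\mu+n}$. Since $j=j^C_k$ lies in the range $j^C_k\le j\le j^D_k$, Lemma~\ref{lem:range}(i) gives $|w|\in I^C_k$, hence $(2q_k)^M\le|w|<q_{k+1}^{1/2}/(\Tilde{C}q_k)$, so the hypothesis~(\ref{height restriction}) of Lemma~\ref{Lemma:Liouville convergent} holds and $u$ exists. That lemma also tells us the convergent of $\theta_w$ immediately after $u$ has height $>q_{k+1}^{1/2}/2C_1$, which exceeds $|w|^r$ because $|w|<q_{k+1}^{1/3r}$; hence $u$ is the convergent of $\theta_w$ of largest height not exceeding $|w|^r$.

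The heart of the matter is the cross-product bound, and this is the step I expect to be the main obstacle. Here I would invoke Lemma~\ref{lem:BR} (applicable since $j^B_k\le j^C_k$): it asserts that $w$ is $(\alpha_{\Tilde{k}}/2,|w|^{r-1})$-good, so $\theta_w$ has a convergent of height $q''$ with $\tfrac{\alpha_{\Tilde{k}}}{2}|w|\le q''\le|w|^{r}$; by the previous paragraph $q''\le|u|$, whence
$$|u|\ \ge\ \frac{\alpha_{\Tilde{k}}}{2}\,|w|\ =\ \frac{q_{\Tilde{k}}}{8\rho^{N'}}\,|w|.$$
Since every child $w+2v\in\Lambda(w,k)$ satisfies $|w|^r\le|v|\le2|w|^r<q_{k+1}^{1/2}/\Tilde{C}$ (using $|w|<q_{k+1}^{1/3r}$ and the largeness of $q_{k_0}$), Lemma~\ref{lem:max:area} applies and gives
$$|w\times v|\ <\ \frac{2|w|}{|u|}\ \le\ \frac{4}{\alpha_{\Tilde{k}}}\ =\ \frac{16\rho^{N'}}{q_{\Tilde{k}}},$$
which is $<\delta_j$ up to a universal constant (the constant in $\delta_j=8\rho^{N'}/q_{\Tilde{k}}$ being calibrated to this estimate through the normalization $\alpha_{\Tilde{k}}=q_{\Tilde{k}}/4\rho^{N'}$, and any residual factor being harmless for the series $\sum_j\delta_j$ in \S\ref{s:Lower}). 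The reason this step is delicate, rather than a mechanical repeat of Lemma~\ref{lem:LR}, is precisely the loss of ``$d(w,k)\le2$'': one must instead have transported the good parameter $\alpha\ge\alpha_{\Tilde{k}}/2$ all the way down through the Diophantine and bounded regions (Lemmas~\ref{lem:DR->BR} and \ref{lem:BR}) in order to keep $|u|$ large.

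The count is then routine. From $|w|\ge\inf H_{j^C_k}\ge q_k^{M'}$ and $M'(r-1)=M'/M\ge3M>1$ I get $|w|^{r-1}\ge q_k^{3M}\ge2q_k$, so Lemma~\ref{lem:liouville} yields $\#\Lambda(w,k)\ge|w|^{r-1}/2q_k$; and since $\rho_j\delta_j=\tfrac{q_{\Tilde{k}}}{16\rho^{N'}q_k}\cdot\tfrac{8\rho^{N'}}{q_{\Tilde{k}}}=\tfrac1{2q_k}$, this is exactly the asserted lower bound $\rho_j|w|^{r-1}\delta_j$ on the number of children. Everything beyond the cross-product step is bookkeeping already packaged in Lemma~\ref{lem:range}.
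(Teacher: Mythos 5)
Your argument is essentially the paper's own proof: Lemma~\ref{lem:BR} supplies the $(\alpha_{\Tilde{k}}/2,|w|^{r-1})$-goodness at level $j^C_k$, Lemma~\ref{Lemma:Liouville convergent} (via Lemma~\ref{lem:range}) shows the convergent following the Liouville convergent $u$ has height exceeding $|w|^r$ so that $|u|\ge\alpha_{\Tilde{k}}|w|/2$, and then Lemma~\ref{lem:max:area} and Lemma~\ref{lem:liouville} give the cross-product bound and the count, with $\rho_j\delta_j=\tfrac{1}{2q_k}$ exactly as in the paper. The factor-of-two mismatch you flag (the bound is $4/\alpha_{\Tilde{k}}=16\rho^{N'}/q_{\Tilde{k}}=2\delta_j$) is an arithmetic slip in the paper itself, which writes $4/\alpha_{\Tilde{k}}=8\rho^{N'}/q_{\Tilde{k}}$; as you note it is harmless, since it only rescales $\delta_j$ and $\rho_j$ at these levels without affecting $\sum_j\delta_j$ or the dimension estimate.
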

\begin{proof}
Suppose $w$ is a slit of level $j^C_k$ with $k>k_0$.  The case $j=j^C_k$ of Lemma~\ref{lem:BR} implies $w$ is $(\alpha_{\Tilde{k}}/2,|w|^{r-1})$-good.  
Let $u$ be the Liouville convergent of $w$ indexed by $k$.  
By Lemma~\ref{Lemma:Liouville convergent} the height $q'$ of the next convergent is 
  $$q'>\frac{q_{k+1}^{1/2}}{\Tilde{C}} > q_{k+1}^{1/3r} > \left(\sup H_{j^C_k}\right)^r \ge |w|^r.$$  
Since $w$ is $(\alpha_{\Tilde{k}}/2,|w|^{r-1})$-good, we must have 
  $|u|\ge\alpha_{\Tilde{k}}|w|/2$ so that, by Lemma~\ref{lem:max:area} 
  the cross-products of $w$ with its children are $$<
   \frac{2|w|}{|u|} \le \frac{4}{\alpha_{\Tilde{k}}} = \frac{8\rho^{N'}}{q_{\Tilde{k}}}.$$
By Lemma~\ref{lem:liouville}, the number of children is at least 
  $|w|^{r-1}/2q_k = \rho_j|w|^{r-1}\delta_j$.  
\end{proof}

The construction of the tree of slits is now complete. In the following, we demonstrate how a Cantor set can be defined using this tree of slits.

\subsection{The Cantor set arising from the tree of slits}\label{ss:Cantor}

For a slit \(w\), let \(I(w)\) denote the interval of length  
\[
\diam I(w) = \frac{4}{|w|^{r+1}}
\]  
centered at the inverse slope of the direction of \(w\). The following lemma provides estimates for the sizes of these intervals and the gaps between them.
\begin{lemma}\cite[Lemma 5.1]{2011Dichotomy}\label{lem:gaps}
Assume $|w_0|^{r(r-1)}\ge64$ and $\delta_j<\frac{1}{16}$.  
Let $w_{j+1}$ be a child of a slit $w_j$ of level $j$.  Then 
\begin{itemize}
\item $I(w_{j+1})\subset I(w_j)$, and 
\item if $w'_{j+1}$ is another child of $w_j$, then 
  $\dist(I(w_{j+1}),I(w'_{j+1})) \ge \frac{1}{16|w_j|^{2r}}.$ 
\end{itemize}
\end{lemma}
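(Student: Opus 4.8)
The plan is to reduce both assertions to the elementary identity that, for two vectors $u_1,u_2$ with positive second coordinate, the centers of $I(u_1)$ and $I(u_2)$ — which are their inverse slopes ($x$-coordinate over $y$-coordinate) — differ by exactly $|u_1\times u_2|/(|u_1|\,|u_2|)$. Writing $c(u)$ for the center of $I(u)$, the whole proof then comes down to bounding this numerator for a parent--child pair and for a pair of distinct children. First I would normalise: in the Diophantine construction (\ref{def:child}) and in the Liouville construction via $\Lambda(w,k)$, a child $w_{j+1}$ of $w:=w_j$ has the form $w_{j+1}=w+2v$ with $v$ a primitive vector in $\Z\times\Z_{>0}$ satisfying $|w|^r\le|v|\le 2|w|^r$ and $|w\times v|<\delta_j$; since the height of $w+2v$ is $|w|+2|v|$ and $|w|\le|w|^r$, this gives $2|w|^r\le|w_{j+1}|\le5|w|^r$. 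I would also record $|w|=|w_j|\ge|w_0|$, so the hypothesis yields $|w|^{r(r-1)}\ge64$.

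For the containment, I would use $w\times w_{j+1}=2(w\times v)$ to get $|c(w)-c(w_{j+1})| = 2|w\times v|/(|w|\,|w_{j+1}|) < \delta_j/|w|^{r+1} < \tfrac{1}{16}|w|^{-(r+1)}$, using $|w_{j+1}|\ge2|w|^r$ and $\delta_j<\tfrac1{16}$. The radius of $I(w_{j+1})$ is $2|w_{j+1}|^{-(r+1)}\le 2(2|w|^r)^{-(r+1)} = 2^{-r}|w|^{-r(r+1)}\le\tfrac12|w|^{-(r+1)}$, since $r(r+1)\ge r+1$ and $|w|\ge1$. The sum of these two quantities is at most $\tfrac9{16}|w|^{-(r+1)}<2|w|^{-(r+1)}$, which is the radius of $I(w_j)$, so the triangle inequality gives $I(w_{j+1})\subset I(w_j)$.

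For the separation, let $w_{j+1}=w+2v$ and $w'_{j+1}=w+2v'$ be two distinct children, so $v\ne v'$. Two distinct primitive vectors in $\Z\times\Z_{>0}$ are never parallel, hence $|v\times v'|\ge1$. Expanding $w_{j+1}\times w'_{j+1} = 2(w\times v')-2(w\times v)+4(v\times v')$ and using $|w\times v|,|w\times v'|<\delta_j<\tfrac1{16}$, I get $|w_{j+1}\times w'_{j+1}|>4-4\delta_j>\tfrac{15}4$, whence $|c(w_{j+1})-c(w'_{j+1})|>\tfrac{15/4}{25}|w|^{-2r}=\tfrac3{20}|w|^{-2r}$ using $|w_{j+1}|,|w'_{j+1}|\le5|w|^r$. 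The sum of the two radii is $2|w_{j+1}|^{-(r+1)}+2|w'_{j+1}|^{-(r+1)}\le 4(2|w|^r)^{-(r+1)}\le|w|^{-r(r+1)}=|w|^{-2r}\cdot|w|^{-r(r-1)}\le\tfrac1{64}|w|^{-2r}$ by $|w|^{r(r-1)}\ge64$. Subtracting, $\dist(I(w_{j+1}),I(w'_{j+1}))\ge|c(w_{j+1})-c(w'_{j+1})| - 2|w_{j+1}|^{-(r+1)} - 2|w'_{j+1}|^{-(r+1)} > (\tfrac3{20}-\tfrac1{64})|w|^{-2r} > \tfrac1{16}|w|^{-2r}$, which is the asserted bound.

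I do not expect a genuine obstacle; the computation is routine bookkeeping of exponents once the normalisation is in place. The one point that requires a moment's care is the inequality $|v\times v'|\ge1$ for two distinct children: it uses that $v$ and $v'$ are \emph{primitive} loops with positive second coordinate, so that $v\ne v'$ genuinely forces $v\times v'\ne0$, and — unlike the normality arguments of \S\ref{s:Diophantine} — this step is uniform and needs no splitting into the uniquely rational and totally irrational cases. As the above shows, the hypothesis $\delta_j<\tfrac1{16}$ is what makes the containment and the lower bound for $|w_{j+1}\times w'_{j+1}|$ work, while $|w_0|^{r(r-1)}\ge64$ is used only to dominate the radii of the children by the gap between their centers.
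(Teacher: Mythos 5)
Your proof is correct: both assertions follow from the inverse-slope difference formula $|c(u_1)-c(u_2)|=\frac{|u_1\times u_2|}{|u_1||u_2|}$ (heights in the denominator) combined with the child normalization $|w_j|^r\le|v|\le2|w_j|^r$, $|w_j\times v|<\delta_j<\tfrac1{16}$, the primitivity argument giving $|v\times v'|\ge1$ for distinct children, and $|w_j|^{r(r-1)}\ge|w_0|^{r(r-1)}\ge64$ to dominate the children's radii, and the exponent bookkeeping checks out. The present paper only cites this statement from \cite{2011Dichotomy} without reproducing a proof, and your computation is essentially the standard argument of that reference, so there is nothing to add.
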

Let $$F_j=\bigcup_w I(w)$$ where the union is taken over all slits of level $j$.
Then the cantor set associated with the tree of slits will be defined as
$$
F=\bigcap_{j \geq 0} F_j.
$$
From (\ref{ieq:|w_{j+1}|}) we have 
\begin{equation}\label{ieq:|w_j|}
  |w_0|^{r^j} \le |w_j| \le 5^{\frac{r^j-1}{r-1}}|w_0|^{r^j},
\end{equation}
so that the number of children is at least 
\begin{equation}\label{def:m_j}
  m_j := \rho_j\delta_j|w_0|^{r^j(r-1)} 
\end{equation}
  while the smallest gap between the associated intervals is at least 
\begin{equation*}\label{def:eps_j}
   \epsilon_j := \frac{1}{16\cdot5^{2r\frac{r^j-1}{r-1}}|w_0|^{2r^{j+1}}}, 
\end{equation*}
by Lemma~\ref{lem:gaps}.  
Then Falconer's lower bound estimate is 
\begin{equation*}
\Hdim F \ge \liminf_j \frac{\log (m_0\cdots m_j)}{-\log m_{j+1}\epsilon_{j+1}}.  
\end{equation*} 
If $\lim_{j\to\infty}m_j\epsilon_j=0$, as is necessarily the case if the 
  length of the longest interval in $F_j$ tends to zero as $j\to\infty$, 
  then 
\begin{align}\label{eq:Falc}
      \Hdim F \ge \liminf_j d_j
\end{align}
where
\begin{align}
    d_j 
    \notag&= \frac{\log m_j}{-\frac{m_{j+1}\epsilon_{j+1}}{m_j\epsilon_j}}\\
     \notag &= \frac{r^j(r-1)\log|w_0| + \log(\rho_j\delta_j)}
               {r^j(r^2-1)\log |w_0| + 2r^{j+1}\log5 -\log(\rho_{j+1}\delta_{j+1}/\rho_j\delta_j)}\\
     &= \frac{1 - \frac{-\log(\rho_j\delta_j)}{r^j(r-1)\log|w_0|}}
              {1+r + \frac{2r\log5}{(r-1)\log|w_0|} 
                   + \frac{\log(\rho_j\delta_j/\rho_{j+1}\delta_{j+1})}{r^j(r-1)\log|w_0|}}\label{eq:local}.  
\end{align}
Now making $d_j$ close to $\tfrac12$ will mean making $r$ close to $1$ 
  and making the terms 
\begin{equation}\label{eq:num}
  \frac{-\log(\rho_j\delta_j)}{r^j(r-1)\log|w_0|}
\end{equation}
  and 
\begin{equation}\label{eq:den}
  \frac{2r\log5}{(r-1)\log|w_0|} + 
  \frac{\log(\rho_j\delta_j/\rho_{j+1}\delta_{j+1})}{r^j(r-1)\log|w_0|}
\end{equation}
small. 
In \S\ref{s:Tree}, we have selected \(\delta_j\) and \(\rho_j\) which satisfied each step of construction.  
Conditions \(\delta_j < \frac{1}{16}\), as required by Lemma~\ref{lem:gaps}, and \(m_j \geq 2\), as required by Falconer's estimate, were verified in \S\ref{s:Tree}, along with the fact that \(|w_0|\) can be chosen sufficiently large to ensure that \(d_j\) is close to \(\tfrac{1}{2}\).

First, we verify the hypotheses needed for Falconer's estimate.  
Recall the definition $m_j=\rho_j|w_0|^{r^j(r-1)}\delta_j$ in (\ref{def:m_j}).  
\begin{lemma}
$\delta_j<\tfrac{1}{16}$ and $m_j\ge2$ for $j\ge0$.  
\end{lemma}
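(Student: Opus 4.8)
The plan is to verify the two inequalities region by region, using the explicit formulas for $\delta_j$ and $\rho_j$ fixed in \S\ref{s:Tree} together with the height estimates of Lemma~\ref{lem:range} and Lemma~\ref{lem:indices}. First I would observe that the transition levels need no separate treatment here: at $j=j^C_k$ with $k>k_0$ one has $\rho_j\delta_j=\frac{q_{\Tilde{k}}}{16\rho^{N'}q_k}\cdot\frac{8\rho^{N'}}{q_{\Tilde{k}}}=\frac1{2q_k}$, the same product used throughout the Liouville region; at $j=j^B_{k'}$ one has $\rho_j\delta_j=\frac{c_0}2\cdot\frac{4\rho^{N'}}{q_k}=\frac{2c_0\rho^{N'}}{q_k}$, the product used throughout the bounded region; and $j=j^D_k$ is just the $j-j^D_k=0$ instance of the Diophantine formula. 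So it is enough to argue in the three regions.

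For $\delta_j<\tfrac1{16}$: the value of $\delta_j$ is always one of $4/q_{k_0}$, $4/q_k$, $4\rho^{N'}/(q_kr^{j-j^D_k})$, $4\rho^{N'}/q_{\Tilde{k}}$, $8\rho^{N'}/q_{\Tilde{k}}$, hence at most $8\rho^{N'}/q_m$ for some $m\in\ell_N$ with $m\ge k_0$ (so $q_m\ge q_{k_0}$). By the last entry of (\ref{def:k_0}), $q_{k_0}>2^7\rho^{N'}$, whence $\delta_j<8\rho^{N'}/(2^7\rho^{N'})=\tfrac1{16}$.

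For $m_j\ge2$ I would rewrite $m_j$ from (\ref{def:m_j}) as $\rho_j\delta_j(\inf H_j)^{r-1}$, noting $\inf H_j=|w_0|^{r^j}$ is strictly increasing in $j$. In the Liouville region $j^C_k\le j<j^D_k$ we have $\rho_j\delta_j=\frac1{2q_k}$ and, by the second definition of $j^C_k$ and monotonicity, $\inf H_j\ge\inf H_{j^C_k}\ge q_k^{M'}$, so $m_j\ge\tfrac12q_k^{M'(r-1)-1}=\tfrac12q_k^{(M'/M)-1}\ge2$ since $M'/M\ge3M>3$. In the Diophantine region $j^D_k\le j<j^B_{k'}$ we have $\rho_j\delta_j=\frac{c_0}{\rho q_kr^{j-j^D_k}}$, and $m_{j+1}/m_j=|w_0|^{r^j(r-1)^2}/r\ge|w_0|^{(r-1)^2}/r>5/r>1$ by (\ref{ieq:w_0>5}); so it suffices to check $j=j^D_k$, where $H_{j^D_k}\subset I^D_k$ (Lemma~\ref{lem:IKCD3}, cf.\ the proof of Lemma~\ref{lem:range}) forces $\inf H_{j^D_k}\ge q_{k+1}^{1/(3r^5)}$, hence $(\inf H_{j^D_k})^{r-1}\ge q_{k+1}^{1/(3Mr^5)}>q_k^{N/(3Mr^5)}=q_k^{4M'/(3M)}>q_k^4$ and $m_{j^D_k}>c_0q_k^3/\rho\ge2$ by (\ref{def:k_0}). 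In the bounded region $j^B_k\le j<j^C_k$ ($k>k_0$) we have $\rho_j\delta_j=\frac{2c_0\rho^{N'}}{q_{\Tilde{k}}}$; the proof of Lemma~\ref{lem:indices}(ii) gives $\inf H_j\ge\inf H_{j^B_k}\ge q_k^{1/(3r^3)}$, while $q_{\Tilde{k}}<q_k^{1/N}$ since $\Tilde{k}\in\ell_N$ and $\Tilde{k}+1\le k$; as $\frac1{3Mr^3}>\frac1N$ (equivalently $4M'r^2>3M$, which holds because $M'\ge3M^2$), we get $(\inf H_j)^{r-1}\ge q_k^{1/(3Mr^3)}>q_k^{1/N}>q_{\Tilde{k}}$, so $m_j>2c_0\rho^{N'}\ge2$ (here $c_0\rho^{N'}\ge1$ since $\rho>1$ and $N'$ is large).

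The accounting of the five $\delta_j$-formulas and of the region endpoints is routine. The one delicate point is the bounded region, where $\delta_j$ is governed by the \emph{earlier} denominator $q_{\Tilde{k}}$ while the slit heights at level $j$ are tied to the \emph{later} denominator $q_k$; bridging this gap is exactly what forces the combination of the lower bound $\inf H_{j^B_k}\ge q_k^{1/(3r^3)}$ extracted from the proof of Lemma~\ref{lem:indices}(ii), the crude estimate $q_{\Tilde{k}}<q_k^{1/N}$ coming from $\Tilde{k}\in\ell_N$, and the inequality $N>3Mr^3$ between exponents — which in turn is where the specific choices $N=4M'r^5$ and $M'\ge3M^2$ from (\ref{def:N}) and (\ref{def:M'}) are used.
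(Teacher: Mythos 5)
Your proof is correct and follows essentially the same route as the paper's: bound $\delta_j$ by $8\rho^{N'}/q_{k_0}$ against the term $2^7\rho^{N'}$ in (\ref{def:k_0}) (note this is the fourth, not the last, entry of that maximum), and verify $m_j\ge 2$ region by region from the explicit values of $\rho_j\delta_j$ and the defining properties of the indices $j^A_k$. You are merely more explicit than the paper, which compresses the Diophantine and bounded regions into a single monotonicity remark after checking $j=j^D_k$; your separate treatment of the bounded region (including the harmless appeal to $c_0\rho^{N'}\ge 1$, which holds since $\rho>3/2$, $N'$ is enormous, and $c_0$ can be taken to be $4/(27\pi)$) is consistent with the paper's parameter choices.
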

\begin{proof}
By the forth relation in (\ref{def:k_0}) we see that $\delta_j\le\frac{8\rho^{N'}}{q_{k_0}} < \frac{1}{2^4}$.  
Then we consider two regions divided by the methods of construction.
When $j^C_k\le j<j^D_k$ we have $\rho_j\delta_j=1/2q_k$ and since $|w_0|^{r^{j^C_k}}\ge q_k^{M'}$, by the definition of $j^C_k$, we have $|w_0|^{r^j(r-1)}\ge q_k^{M'/M}\ge q_k^{3M}$. Then we have
 $$m_j=\frac{q_k^{3M-1}}{2}\ge\frac{2\rho}{c_0}>2$$ by the second relation in (\ref{def:k_0}).
For $j^D_k\le j < j^C_{k'}$ the expression $|w_0|^{r^j(r-1)}$ increases 
  faster than $\rho_j\delta_j$ decreases, so it is enough to check 
  the case $j=j^D_k$, for which, by the above, we have 
  $m_j\ge\frac{c_0}{\rho}m_{j-1}\ge2$.  
\end{proof}

Next, we calculate the lower bound on the Hausdorff dimension of $F$.  
The next lemma shows that it is close to $\tfrac12$ by the choices made in \S\ref{s:Init}.  

\begin{lemma}\label{lem:d_j}
$\liminf_{j\to\infty} d_j>\frac12-\eps.$  
\end{lemma}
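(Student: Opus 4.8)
Recall from \S\ref{s:Init} that $r,\delta$ were chosen so that $\tfrac{1-\delta}{1+r+2\delta}>\tfrac12-\eps$, that $M=\tfrac1{r-1}>1$ and $M'\ge Mr/\delta$, and that $N=4M'r^5$; in particular $N\ge 4Mr^6/\delta$. Also $|w_0|\ge q_{k_0}^{M'}$, and since $\ell_N$ is infinite we may take $q_{k_0}$, hence $|w_0|$, as large as we please; in addition to \eqref{def:k_0} we assume
\begin{equation}\label{ieq:w0:large}
  \frac{2r\log 5}{(r-1)\log|w_0|}<\delta\qquad\text{and}\qquad\frac{M\log 5}{\log|w_0|}<r-1 .
\end{equation}
Write $a_j$ and $b_j$ for the quantities in \eqref{eq:num} and \eqref{eq:den}, so that \eqref{eq:local} says $d_j=\tfrac{1-a_j}{1+r+b_j}$. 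The plan is to prove
$$\limsup_{j\to\infty}a_j\le\delta\qquad\text{and}\qquad\limsup_{j\to\infty}b_j\le 2\delta .$$
Granting this, for any $\eta>0$ both $a_j<\delta+\eta$ and $b_j<2\delta+\eta$ hold for all large $j$, hence $\liminf_j d_j\ge\tfrac{1-\delta-\eta}{1+r+2\delta+\eta}$; letting $\eta\to0$ and using the choice of $\delta$ gives $\liminf_j d_j\ge\tfrac{1-\delta}{1+r+2\delta}>\tfrac12-\eps$.

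To bound $a_j$ I would go through the three kinds of levels in turn, comparing $-\log(\rho_j\delta_j)$ with $r^j(r-1)\log|w_0|=(r-1)\log(\inf H_j)$ using the values of $\rho_j,\delta_j$ fixed in \S\ref{s:Tree}. On a Liouville level (including level $0$ and every level $j^C_k$) one has $\rho_j\delta_j=\tfrac1{2q_k}$, and on a Diophantine level $\rho_j\delta_j=\tfrac{c_0}{\rho q_kr^{\,j-j^D_k}}$; in both cases $\inf H_j\ge q_k^{M'}$ (by Lemma~\ref{lem:range}(i) and the monotonicity of $j\mapsto\inf H_j$), so $\log q_k\le\tfrac1{M'}\log|w_j|$. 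Using \eqref{ieq:|w_j|} in the form $\log|w_j|<r^j(\log|w_0|+M\log 5)$, the leading part of $a_j$ is
$$\frac{\log q_k}{r^j(r-1)\log|w_0|}\ \le\ \frac M{M'}\Bigl(1+\frac{M\log 5}{\log|w_0|}\Bigr)\ <\ \frac\delta r\cdot r\ =\ \delta ,$$
by $M'\ge Mr/\delta$ and \eqref{ieq:w0:large}, and the remaining contributions to $a_j$ (all of the form $\bigl(O(1)+O(j)\bigr)\big/\bigl(r^j(r-1)\log|w_0|\bigr)$) tend to $0$. On a bounded level $j^B_k\le j<j^C_k$ one has $\rho_j\delta_j=\tfrac{2c_0\rho^{N'}}{q_{\Tilde k}}>\tfrac1{q_{\Tilde k}}$; since $\Tilde k\in\ell_N$ forces $q_k>q_{\Tilde k}^{\,N}$ and the proof of Lemma~\ref{lem:indices}(ii) gives $\inf H_{j^B_k}\ge q_k^{1/3r^3}$, we get $\log(\inf H_j)\ge\tfrac N{3r^3}\log q_{\Tilde k}$, hence
$$a_j\ <\ \frac{\log q_{\Tilde k}}{(r-1)\log(\inf H_j)}\ \le\ \frac{3r^3M}{N}\ \le\ \frac{3\delta}{4r^3}\ <\ \delta .$$
This gives $\limsup_j a_j\le\delta$.

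For $b_j$, the term $\tfrac{2r\log 5}{(r-1)\log|w_0|}$ is $<\delta$ by \eqref{ieq:w0:large}. For the other term of $b_j$, I would use that $\rho_j\delta_j$ is constant along each Liouville and each bounded region and changes by the factor $r$ within a Diophantine region, so off the transitions this term is $0$ or $O(r^{-j})$; at a Liouville$\to$Diophantine transition it is $O\bigl((\log\inf H_j)^{-1}\bigr)\to0$, and at a Diophantine$\to$bounded transition it is negative. The only transition needing a computation is bounded$\to$Liouville, at $j=j^C_k-1$: there $\rho_j\delta_j/\rho_{j+1}\delta_{j+1}=\tfrac{4c_0\rho^{N'}q_k}{q_{\Tilde k}}$, and since $\inf H_{j^C_k-1}=(\inf H_{j^C_k})^{1/r}\ge q_k^{M'/r}$ the term is at most $\tfrac{Mr}{M'}\bigl(1+\tfrac{\log(4c_0\rho^{N'})}{\log q_k}\bigr)\le\delta+o_k(1)$. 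Combining, $\limsup_j b_j\le\delta+\delta=2\delta$, which finishes the argument.

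The only genuinely delicate point is the calibration of the two scales $M'$ and $N$: $M'$ must be large enough that $\log q_k$ is at most a $\delta/r$-fraction of $\log|w_j|$ on the Liouville and Diophantine levels, and $N$---relative to the exponent $1/3r^3$ coming from the proof of Lemma~\ref{lem:indices}(ii)---must be large enough that $\log q_{\Tilde k}$ is a small fraction of $\log(\inf H_j)$ on the intervening bounded levels, where the height climbs only through the gap $q_k>q_{\Tilde k}^{\,N}$. The choices $M'\ge Mr/\delta$ and $N=4M'r^5$ in \eqref{def:M'}--\eqref{def:N} are exactly what force both $a_j$ and the bounded$\to$Liouville jump in $b_j$ below $\delta$; every remaining term is either $\eps$-controlled via the $\log 5$ contributions in \eqref{ieq:w0:large} or is $o(1)$ in $j$.
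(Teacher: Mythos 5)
Your argument is correct and is essentially the paper's own proof: the same reduction via (\ref{eq:local}) to showing that (\ref{eq:num}) is at most $\delta$ and that the two terms of (\ref{eq:den}) contribute at most $2\delta$, followed by the same region-by-region analysis of $\rho_j\delta_j$ (Liouville, Diophantine, bounded, and the bounded-to-Liouville transition) using the calibrations $M'\ge Mr/\delta$ and $N=4M'r^5$. The extra largeness conditions you impose on $|w_0|$ are harmless and in fact already follow from (\ref{def:k_0}) and the choice of $M'$ (e.g.\ $(r-1)\log|w_0|\ge (r/\delta)\log q_{k_0}$ with $q_{k_0}>25$ gives the first of them), so no modification of the construction is actually needed.
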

\begin{proof}
By (\ref{eq:local}) it is enough to show that the term (\ref{eq:num}) and both terms in (\ref{eq:den}) are bounded by $\delta$.  
Using the choice of $M'$ in (\ref{def:M'}), it would be enough to show that each term is bounded by $\frac{Mr}{M'}$ for all large enough $j$.   

We first consider the expression (\ref{eq:den}). Using (\ref{ieq:w_0>5}) we see that the first term in (\ref{eq:den}) satisfies 
  $$\frac{2r\log5}{(r-1)\log|w_0|}\le\frac{2r}{M}.$$  
Then by the choice of $\rho_j$ and $\delta_j$ in \S\ref{s:Tree}, we see that for $j\neq j^C_{k-1}$ we have 
  $$\frac{\rho_j\delta_j}{\rho_{j+1}\delta_{j+1}} \in \left\{1,\frac{\rho}{c_0},r, 
      \frac{1}{2\rho^{N'+1} r^{j^B_{k'}-j^D_k-1}},  \right\}$$ 
while for $j=j^C_{k-1}$ we have 
  $$\frac{\rho_j\delta_j}{\rho_{j+1}\delta_{j+1}} = \frac{2c_0\rho^{N'}q_k}{q_{\Tilde{k}}}.$$  
Then, in the second case, we have \footnote{Here $A_j\lesssim B_j$ is an abbreviation for $\liminf A_j\le\liminf B_j$. }$$\frac{\log(\rho_j\delta_j/\rho_{j+1}\delta_{j+1})}{r^j(r-1)\log|w_0|} \le 
    M\left(\frac{\log q_k+\log 2c_0\rho^{N'}}{r^{j^C_k-1}\log|w_0|}\right) \lesssim \frac{Mr}{M'}$$ 
since $\inf H_{j^C_k}\in I^C_k$; in the first case the left hand side above is $\lesssim0$.  

We now turn to the expression (\ref{eq:num}). 
For $j^C_k\le j<j^D_k$ we have  
  $$\frac{-\log(\rho_j\delta_j)}{r^j(r-1)\log|w_0|} 
     \le M\left(\frac{\log q_k}{r^{j^C_k}\log|w_0|}\right)
     \le \frac{M}{M'}.$$  
Next consider $j^D_k\le j<j^B_{k'}$.  Using $jr^{-j}\log r\le1$, 
  we have 
\begin{align*}
  \frac{-\log(\rho_j\delta_j)}{r^j(r-1)\log|w_0|} 
  &\le M\left(\frac{\log q_k+(j-j^D_i)\log r+\log(\rho/c_0)}{r^j\log|w_0|}\right) \\
  &\lesssim \frac{M(\log(q_k)+1)}{r^{j^D_k}\log|w_0|} 
   \lesssim \frac{Mr^5\log q_k}{\log q_{k+1}} < \frac{Mr^5}{N} = \frac{M}{4M'}.  
\end{align*}
Finally, we turn to the possibility that $j_i^B\leq j< j_i^C$ ($i\ge1$).  
Since $j^C_k-j^B_k\le \log_r(3M')+4$, we have 
\begin{align*}
  \frac{-\log(\rho_j\delta_j)}{r^j(r-1)\log|w_0|} 
  &\le M\left(\frac{\log q_{\Tilde{k}}-\log(2c_0\rho^{N'})}{r^{j^B_i}\log|w_0|}\right) 
  \lesssim \frac{MM'r^4\log q_{\Tilde{k}}}{r^{j^C_k}\log|w_0|} \\
  &\le \frac{Mr^4\log q_{\Tilde{k}}}{\log q_k} < \frac{Mr^4}{N} < \frac{M}{4M'} 
\end{align*}
  and the lemma follows.  
\end{proof}

The proof of Theorem~\ref{thm:dichotomy} in the case where $l_N$ is infinite will be complete with the proof of the following lemma. 
\begin{lemma}\label{lem:cross}
If $(\lambda,\mu)$ satisfies (\ref{PM:conv}) then $F\subset\NE(X_{\lambda,\mu},\omega)$.  
\end{lemma}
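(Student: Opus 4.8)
The plan is to verify the summable cross-products condition~(\ref{ieq:sumx}) for the sequence of slits along any branch of the tree of slits, so that Theorem~\ref{thm:sumx} applies and produces a nonergodic direction. Fix an infinite branch $w_0, w_1, w_2, \dots$ of the tree; by construction each $w_{j+1}$ is related to $w_j$ by a Dehn twist about some loop $v_j$, and the cross-product $|w_j\times v_j|$ is bounded above by $\delta_j$ (this was checked level-by-level in \S\ref{s:Tree}: Lemma~\ref{lem:LR}, Lemma~\ref{lem:DR}, Lemma~\ref{lem:BR}, and Lemma~\ref{lem:BR->LR}). Hence it suffices to prove $\sum_j \delta_j < \infty$, and then the inverse slopes of the $w_j$ converge to a point $\theta$ which, since $\theta\in\bigcap_j I(w_j)\subset F$ and the $I(w_j)$ are nested by Lemma~\ref{lem:gaps}, shows that every point of $F$ is a limit of such a branch and hence lies in $\NE(X_{\lambda,\mu},\omega)$.

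So the real content is the estimate $\sum_j\delta_j<\infty$, and here is where condition~(\ref{PM:conv}) enters. I would split the sum according to the three regions. In the Diophantine region $j^D_k\le j<j^B_{k'}$ we have $\delta_j=\frac{4\rho^{N'}}{q_k r^{j-j^D_k}}$, so summing the geometric series over this block of levels gives a contribution $O(\rho^{N'}/q_k)$, which is summable in $k$ since $\sum 1/q_k$ converges (the $q_k$ grow at least geometrically). In the Liouville region $j^C_k\le j<j^D_k$ we have $\delta_j=4/q_k$ for each of the roughly $j^D_k-j^C_k$ levels; by Lemma~\ref{lem:indices}(i)--(ii) together with the geometric growth of $\inf H_j$, the number of such levels is comparable to $\log_r\log q_{k+1}$ (since the heights must traverse from $q_k^{M'}$ up to $q_{k+1}^{1/3r}$, a range whose logarithm is $\asymp \log q_{k+1}$, and the logarithms of the heights grow geometrically with ratio $r$), so the total Liouville contribution for index $k$ is $O\!\big(\frac{\log\log q_{k+1}}{q_k}\big)$, which is exactly the term appearing in~(\ref{PM:conv}). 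The bounded region $j^B_k\le j<j^C_k$ has length at most $\log_r(3M')+4$ levels with $\delta_j=4\rho^{N'}/q_{\tilde k}$, contributing $O(1/q_{\tilde k})$, again summable; and the single transition level $j=j^C_k$ contributes $8\rho^{N'}/q_{\tilde k}$, likewise summable. Adding these up, $\sum_j\delta_j \lesssim \sum_k \frac{1}{q_k} + \sum_k \frac{\log\log q_{k+1}}{q_k} < \infty$ by~(\ref{PM:conv}).

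The main obstacle is pinning down the count of levels in each Liouville region precisely enough to recognize the $\frac{\log\log q_{k+1}}{q_k}$ term: one must use that $\inf H_{j+1}=(\inf H_j)^r$ (Lemma~\ref{lem:H_j}), so $\log_{|w_0|}\inf H_j$ is a geometric sequence with ratio $r$, and then the number of levels $j$ with $H_j\subset I^C_k$ but $H_{j+1}\not\subset I^C_k$-ish endpoints is the number of terms of a geometric progression that fit between $\log q_k$-scale and $\log q_{k+1}$-scale, namely $\asymp \log_r\!\big(\frac{\log q_{k+1}}{\log q_k}\big)\le \log_r\log q_{k+1}$. Everything else is routine summation of geometric series. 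Once $\sum_j\delta_j<\infty$ is established, Theorem~\ref{thm:sumx} gives $\theta\in\NE(X_{\lambda,\mu},\omega)$ for the limit of each branch, and since distinct branches give distinct limit points all lying in $F$ (using the gap estimate in Lemma~\ref{lem:gaps} to see the branch-to-point map is well-defined and its image is all of $F$), we conclude $F\subset\NE(X_{\lambda,\mu},\omega)$.
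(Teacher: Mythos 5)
Your proposal is correct and follows essentially the same route as the paper: reduce to $\sum_j\delta_j<\infty$ via Theorem~\ref{thm:sumx}, split the sum into the Liouville, bounded, and Diophantine blocks, bound the length of each Liouville block by $\log_r\log q_{k+1}$ (so its contribution is $O\bigl(\frac{\log\log q_{k+1}}{q_k}\bigr)$, summable by (\ref{PM:conv})), and sum geometric series over the other blocks. The only cosmetic difference is that the paper justifies the convergence of the remaining $\sum 1/q_k$-type terms using that the sums run over $k\in\ell_N$ (where $q$ grows superpolynomially between consecutive indices), rather than your appeal to geometric growth of all $q_k$, but this does not affect correctness.
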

\begin{proof}
It suffices to check that $\sum\delta_j<\infty$ for in that case, 
every sequence $\dots, w_j, v_j, w_{j+1},\dots$ constructed above satisfies (\ref{ieq:sumx}) and $F\subset\NE(X_{\lambda,\mu})$, by Theorem~\ref{thm:sumx}.  
We break the sum into three intervals: $j^B_k\le j\le j^C_k$, 
  $j^C_k<j<j_k^D$, and $j_k^D\leq j<j^B_{k'}$.  

Let $2n_k=\log_{q_k}q_{k+1}$ be such that $q_{k+1}^{1/2}=q_k^{n_k}$.  
It follows easily from the definitions that 
  $$j^D_k-j^C_k < \log_r n_k < \frac{\log\log q_{k+1}}{\log r}$$ 
  so that (\ref{PM:conv}) implies 
  $$\sum_{k\in\ell_N}\sum_{j^C_k<j<j^D_k}|w_j\times v_j| \le 
    \frac{4}{\log r}\sum_{k\in\ell_N}\frac{\log\log q_{k+1}}{q_k}<\infty.$$
Since $j_k^C-j_k^B\le\log_r(3M')+4$ we have 
  $$\sum_{k\in\ell_N}\sum_{j^B_k\le j\le j^C_k}|w_j\times v_j| 
     \le \sum_{k\in\ell_N} \frac{8\rho^{N'}(\log_r(3M')+4)}{q_{\Tilde{k}}}<\infty.$$  
Finally, 
  $$\sum_{k\in\ell_N}\sum_{j^D_k\le j< j^B_{k'}}|w_j\times v_j| 
      \le \sum_{k\in \ell_N}\frac{2R\rho^{N'}}{q_k}<\infty$$  
  where $R=\sum_{j\ge0}r^{-j}$.  
\end{proof}

\section{A special case where $\ell_N$ is finite}\label{s:A specail case}
We recall the definition of $\ell_N$ given in (\ref{def:ell_N}). If $\ell_N$ is finite, then we can choose a large number $k_0$ such that 
\begin{align}\label{def:k_0 for ell_N is finite}
    q_{k_0}>\max(480\rho^{N'+3}/c_0,c^M,48C')
\end{align}
and $q_k\notin \ell_N$ for all $k\geq k_0$. 

We begin by choosing a slit $w_0$ with $|w_0|>q_{k_0}^{M'}$ such that $w_0$ is not miracle and $w_0$ has a convergent $q$ with $|w_0|<q<|w_0|^r$ 

\begin{lemma}\label{lem:specail normal case}
There exists some slit $w_0$ with $|w_0|>q_{k_0}^{M'}$ such that $w_0$ has a convergent $q$ with $|w_0|<q<|w_0|^r$.
\end{lemma}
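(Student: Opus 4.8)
The plan is to obtain $w_0$ by applying a Dehn twist to a suitably chosen separating slit. The key point is that a Dehn twist about a loop preserves the relevant cross-product: if $v=(p,q)$ is a loop then $(w+2jv)\times v=w\times v$ for every $j\in\mathbb{Z}$, whereas $|w+2jv|=|w|+2j|v|\to\infty$ as $j\to\infty$ when $q>0$. So I would first produce a separating slit $w$ and a loop $v$ with $|v|=q>0$ and $A:=|w\times v|\in(0,1)$; then, putting $w_0=w+2jv$ for a large integer $j$, the slit $w_0$ satisfies $|w_0\times v|=A$, so its inverse slope $\theta$ obeys $\bigl|\theta-\tfrac{p}{q}\bigr|=\dfrac{|w_0\times v|}{|w_0|\,|v|}=\dfrac{A}{|w_0|\,q}$.

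Once $|w_0|>2Aq$ the right-hand side is less than $\tfrac1{2q^2}$, so Theorem~\ref{lem:estimate for continued fractions} gives that the reduced fraction $\tfrac{p}{q}$ is a convergent of $\theta$, say $\tfrac{p}{q}=\tfrac{p_i}{q_i}$ with $q_i=q$. Choosing $j$ outside an exceptional set of at most one value keeps $\theta$ irrational (this is automatic when $(\lambda,\mu)$ is totally irrational), so there is a next convergent, with denominator $q^{*}$, and Theorem~\ref{thm:continued1} yields $\tfrac1{q(q+q^{*})}<\tfrac{A}{|w_0|\,q}\le\tfrac1{q\,q^{*}}$, i.e.\ $q^{*}\in\bigl(\tfrac{|w_0|}{A}-q,\ \tfrac{|w_0|}{A}\bigr]$. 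Since $0<A<1$ and $q$ is fixed, as soon as $|w_0|$ exceeds $\max\bigl(q_{k_0}^{M'},\,2Aq,\,A^{-1/(r-1)},\,\tfrac{Aq}{1-A}\bigr)$ we obtain simultaneously $q^{*}\le\tfrac{|w_0|}{A}<|w_0|^{r}$ and $q^{*}>\tfrac{|w_0|}{A}-q>|w_0|$. Taking $j$ large enough thus produces a separating slit $w_0$ with $|w_0|>q_{k_0}^{M'}$ having a convergent of height $q^{*}\in(|w_0|,|w_0|^{r})$, which is the assertion.

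The one delicate point, which I expect to be the main obstacle, is to produce the pair $(w,v)$ with $|w\times v|\in(0,1)$, which is not obvious when the direction $(\lambda:\mu)$ is rational. Here I would exploit that separating slits have the form $w=(\lambda+2a,\mu+2b)$, so that for a fixed primitive loop $v=(p,q)$ one has $w\times v=(\lambda q-\mu p)+2(aq-bp)$; as $\gcd(p,q)=1$, the integer $aq-bp$ runs over all of $\mathbb{Z}$ when $a,b$ vary, hence $w\times v$ runs over the coset $(\lambda q-\mu p)+2\mathbb{Z}$, which meets $(-1,1)\setminus\{0\}$ precisely when $\lambda q-\mu p\notin\mathbb{Z}$. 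Such a loop exists: if $\lambda q-\mu p\in\mathbb{Z}$ held for two non-parallel loops, solving the resulting rational $2\times2$ linear system would force $(\lambda,\mu)\in\mathbb{Q}^{2}$, contradicting irrationality; hence all but at most one parallelism class of loops is usable, one may pick a usable $v$ with $q>0$, and then choose $a,b$ (and $j$) so that $b+jq>0$, ensuring $w_0\in V_2^{+}$. If the non-miracle property is also required, one observes that the convergent property just established holds for every sufficiently large admissible $j$, while a counting argument in the spirit of Lemma~\ref{Lem:danger slit} shows that miracle slits occur for only a negligible proportion of such $j$, so this extra demand can be met as well.
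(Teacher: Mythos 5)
Your argument is correct, and its skeleton coincides with the paper's: both proofs fix a loop $v$ whose cross-product with a slit is a small nonzero constant, twist repeatedly about $v$ so that the height grows while the cross-product is unchanged, and then use Theorems~\ref{lem:estimate for continued fractions} and~\ref{thm:continued1} to conclude that $v$ is a convergent of the new slit whose successor has height roughly $|w_0|/|w\times v|$, which lands in $(|w_0|,|w_0|^r)$ once $|w_0|$ is large. Where you genuinely diverge is in how the pair $(w,v)$ with $0<|w\times v|<1$ is produced: the paper starts from an arbitrary slit of height $>q_{k_0}^{M'}$, splits into cases according to whether its first convergent above $3|w|$ already lies below $|w|^r$, and otherwise invokes the counting result Lemma~\ref{lem:good} (via the $(3,\beta)$-good property) to extract an element of $\Delta(w,3,\beta)$, i.e.\ a loop with $1/\beta<|w\times v|<1/3$; you instead bypass that machinery with an elementary coset argument, observing that $w\times v$ runs over $(\lambda q-\mu p)+2\mathbb{Z}$ as the separating slit varies and that irrationality of $(\lambda,\mu)$ rules out $\lambda q-\mu p\in\mathbb{Z}$ for all but one parallelism class of loops. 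Your route is more self-contained (no appeal to the Cheung--Hubert--Masur counting lemma, no case split) and you are also more careful than the paper on two minor points: you note that at most one value of $j$ can make the inverse slope rational (so the next convergent exists), and your twist $w+2jv$ manifestly stays in $V_2$, whereas the paper's $w+kv$ with general $k\ge 2$ would need $k$ even for that. The only soft spot is your closing remark on avoiding miracle slits, which is sketched rather than proved; but this is not required by the statement of the lemma -- the paper, too, handles non-miracularity separately afterwards via Lemma~\ref{Lem:danger slit} -- so it does not affect the validity of your proof of the statement as given.
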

\begin{proof}
Let $w$ with $|w|>q_{k_0}^{M'}$ and $q$ be the first convergent of $w$ such that $q>3|w|$. We denote $\beta=\frac{q+1}{|w|}$ and if $\beta<|w|^{r-1}$ we finish the proof. Otherwise, $\beta\geq |w|^{r-1}$. By definition, $w$ is $(3,\beta)$-good. Then by applying Lemma~\ref{lem:good:children}, $$\#\Delta(w,3,\beta)\geq \frac{c_0\beta}{3}\geq\frac{c_0|w|^{r-1}}{3}>180.$$ 
Let $2v=2(p,q)\in \Delta(w,3,\beta)-w$, and $w'=w+kv=(\lambda+m',\mu+n')$ be such that $|w'|^{r-1}>\beta$ for some integer $k\geq2$. Then
we have 
$$\left|\frac{\lambda+m'}{\mu+n'}-\frac{p}{q}\right| = \frac{|w'\times v|}{|w'||v|}<\frac{|w\times v|}{k|v|^2}<\frac{1}{2q^2}.$$ 
Then by (\ref{convergents}), $v$ is a convergent of $w'$.  
Let $q'$ be the height of the next convergent of $w'$.  
Then by (\ref{align:estimate for convergents}), we have
\begin{equation}\label{ieq:cfc'}
  \frac{1}{q'+q} < \frac{|w\times v|}{|w'|} < \frac{1}{q'}.
\end{equation}
From the left hand side above we have 
$$q' > \frac{|w'|}{|w\times v|} - q > (3-\frac{1}{k})|w'|>2|w'|.$$  
for $|w'|>k|v|=kq$, and from the right hand side of (\ref{ieq:cfc'})
$$q' < \frac{|w'|}{|w\times v|} < \beta|w'|<|w'|^r$$  
for $|w'|^{r-1}>\beta$.
\end{proof}

By Lemma~\ref{Lem:danger slit}, we can always choose a slit $w$ that satisfies Lemma~\ref{lem:specail normal case} is not a miracle slit.
The next lemma shows that this slit is $\alpha$-normal where $\alpha=\frac{2}{\rho^{N'}}$ \footnote{To simplify the exposition, we choose to omit the case when $(\lambda, \mu)$ is uniquely rational, as the proof is quite similar to that of Lemma~\ref{lem:Uniquly N'-good=>normal} and Lemma~\ref{lem: specail good=>normal}.}.

\begin{lemma}\label{lem: specail good=>normal}
Let $w$ with $|w|>q_{k_0}^{M'}$ is not a miracle slit. If $w$ is $(2,|w|^{r-1})$-good, then it is $\alpha$-normal.
\end{lemma}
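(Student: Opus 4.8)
The plan is to follow the proof of Lemma~\ref{lem:N'-good=>normal} almost verbatim, replacing the role played there by the short-interval hypothesis $q_{k+1}^{1/N}\le|w|<q_{k'}^{1/3r}$ with the finiteness of $\ell_N$. Observe first that $\alpha\rho^{N'}=2$ for $\alpha=2/\rho^{N'}$, so the assumption that $w$ is $(2,|w|^{r-1})$-good is exactly the assumption that $w$ is $(\alpha\rho^{N'},|w|^{r-1})$-good. I would argue by contradiction: suppose $w$ is $(\alpha\rho^{N'},|w|^{r-1})$-good but not $\alpha$-normal. Let $v=(p,q)$ be the convergent of the inverse slope of $w$ of maximal height $q<|w|^r$; goodness gives $\alpha\rho^{N'}|w|\le q<|w|^r$, and non-normality forces the next convergent $v'=(p',q')$ to satisfy $q'>|w|^{1+(r-1)N'}$. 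Exactly as in Lemma~\ref{lem:N'-good=>normal}, this yields $|w\times v|<q^{-4(N+1)}$ and hence, with $w=(\lambda+m,\mu+n)$,
\[
  |q\lambda-p\mu+(qm-pn)|<\frac{1}{q^{4(N+1)}}.
\]

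The next step is to introduce the pair of lines $(\ell,\eta_j)$ associated to $w$, with $\ell=(q,-p,qm-pn)$. Since $w$ is not miracle, either $\ell\neq\pm\eta_j$ (Case 1) or $\ell=\pm\eta_j$ with $\eta_{j+1}^-<(\eta_j^-)^{r^3}<(2q)^{r^3}$ (Case 2). In Case 1 the lines $\ell,\eta_j$ are non-parallel (as in Lemma~\ref{lem:N'-good=>normal}) and I take $\dot v_h=\mathbf{p}_h/q_h$ to be their intersection; in Case 2 I take $\dot v_h$ to be the intersection of $\eta_j$ and $\eta_{j+1}$, using $r^3<\tfrac32$ to control its denominator. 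In both cases the computation of Lemma~\ref{lem:N'-good=>normal} gives $q_h\le 4q^2$ and $\|(\lambda,\mu)-\dot v_h\|_\infty<q^{-(4N+1)}$ for $q$ large; combining this with $\|\cdot\|\asymp_{C'}\|\cdot\|_\infty$, Theorem~\ref{thm:proof of best}, and the lower bound $q>|w|>q_{k_0}^{M'}>48C'$ from~(\ref{def:k_0 for ell_N is finite}) yields $\|(\lambda,\mu)-\dot v_h\|<\tfrac{1}{2q_h^2}$, so $\dot v_h$ is a best approximation vector of $(\lambda,\mu)$.

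The concluding step is where the argument diverges from Lemma~\ref{lem:N'-good=>normal}. Applying~(\ref{C_2}) to $\dot v_h$ gives
\[
  \frac{1}{2q_hq_{h+1}}\le\|q_h(\lambda,\mu)-\mathbf{p}_h\|=q_h\,\|(\lambda,\mu)-\dot v_h\|<\frac{C''q_h}{q^{4N+1}}
\]
for a constant $C''$ depending only on the norm, whence $q_{h+1}>q^{4N+1}/(2C''q_h^2)>q^{3N}$ since $q_h\le4q^2$ and $q>q_{k_0}^{M'}$ is large. In particular $q_{h+1}>(4q^2)^N\ge q_h^N$, so $h\in\ell_N$; and $q_{h+1}>q^{3N}>q>|w|>q_{k_0}^{M'}>q_{k_0}$, so (since $\{q_k\}$ is strictly increasing) $h\ge k_0$. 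But $k_0$ was chosen in~(\ref{def:k_0 for ell_N is finite}) so that $q_k\notin\ell_N$ for all $k\ge k_0$, a contradiction. Hence $w$ is $\alpha$-normal. As noted in the footnote, the uniquely rational case runs along the same lines, with the line $\eta$ of~(\ref{def:eta}) in place of $\eta_j$ (cf.\ Lemma~\ref{lem:Uniquly N'-good=>normal}), and is omitted.

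The main obstacle is not conceptual but the constant bookkeeping inherited from Lemma~\ref{lem:N'-good=>normal}: one must check that the norm-comparison factor $C'$ and the constant from Theorem~\ref{thm:proof of best} are all dominated by $q_{k_0}>48C'$, and that each of the two geometric cases does produce $q_h\le4q^2$ together with the claimed smallness of $\dot v_h-(\lambda,\mu)$. These verifications are identical to those already carried out there, so the only genuinely new content is the short concluding paragraph, in which the finiteness of $\ell_N$ does the work that the interval comparison did when $\ell_N$ is infinite.
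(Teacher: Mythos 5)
Your proposal is correct and follows essentially the same route as the paper: the same contradiction argument via the pair of lines $(\ell,\eta_j)$, the intersection point $\dot v_h$ as a best approximation vector, and the deduction $h\in\ell_N$, with your concluding step ($q_{h+1}>q_{k_0}$ forces $h\ge k_0$, contradicting the choice of $k_0$) being the mirror image of the paper's ($h\in\ell_N$ forces $h<k_0$, so $q_{h+1}\le q_{k_0}$, contradicting $q_{h+1}>q_{k_0}^{3N}$). The only nitpick is that in Case 2 the denominator bound is $q_h<q^3$ rather than $4q^2$ (coming from $\eta_{j+1}^-<(2q)^{r^3}$), but this changes only constants and exponents that your (and the paper's) margins absorb.
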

\begin{proof}
We use the similar argument with the proof in Lemma~\ref{lem:N'-good=>normal} and prove by contradiction. Assume $w$ is not $\alpha$-normal.  Let $v^\prime=(p^\prime,q^\prime)$ be the next convergent. If $q^\prime\leq |w|^{1+(r-1)N^\prime}$, then $w$ is $\alpha$-normal by definition. So we must have
    \begin{align*}
        q^\prime>|w|^{1+(r-1)N^\prime}.
    \end{align*}
    Then 
    \begin{align*}
        \frac{q^\prime}{|w|}>|w|^{(r-1)N^\prime}\geq q^{(1-\frac{1}{r})N^\prime}=q^{4(N+1)}.
    \end{align*}
    So it can be deduced that
    \begin{align*}
        |w\times v|<\frac{|w|}{|v^\prime|}<\frac{1}{q^{4(N+1)}}.
    \end{align*}
    Writing $w=\left(\lambda+m,\mu+n\right)$ we have
    \begin{align*}
        |q(\lambda+m)-p(\mu+n)|<\frac{1}{q^{4(N+1)}}.
    \end{align*}
    Equivalently,
    \begin{align}\label{align:eta_l'}
        |q\lambda-p\mu+(qm-pn)|<\frac{1}{q^{4(N+1)}}.
    \end{align}
Let $(\ell,\eta_j)$ be the pair of lines associated to $w$ for $l=(q,-p,qm-pn)$ by definition.  Since $w$ is not a miracle slit, by hypothesis, we have only two cases to consider.

Case 1: $\ell\neq\pm\eta_j$\\
 By the definition of nearest rational affine line, we have
    \begin{align}\label{aligh:eta_j'}
        |a_j\lambda+b_j\mu+c_j|<\frac{1}{q^{4(N+1)}}.
    \end{align}
Note that $\ell$ and $\eta_j$ are not parallel. If not, we notice the distance between $\ell$ and $\eta$ is larger than $\frac{1}{2q}$. While the distances from $(\lambda,\mu)$ to $\ell$ and $\eta_j$ are both less than $\frac{1}{q^{4(N+1)}}$, which is a contradiction. 

The intersection point of $\ell$ and $\eta_j$ is 
\begin{align*}
        \left(\frac{c_jp+(qm-pn)b_j}{a_jp+b_jq},\frac{c_jq+(pn-qm)a_j}{a_jp+b_jq}\right) =: \left(\frac{p_{h,1}}{q_h},\frac{p_{h,2}}{q_h}\right) =: \dot{v}_h
\end{align*}
in lowest terms. Recalling that $\eta_j^-\le\ell^-$ 
we have    $$q_h\leq |a_jp+b_jq| \leq 2\eta_j^-q\leq 4q^2.$$
From (\ref{align:eta_l'}) and (\ref{aligh:eta_j'}) it follows that 
\begin{align*}
      \|(\lambda,\mu)-\dot{v}_h\|_{\infty}<\frac{2\max(|a_j+q|,|b_j+p|)}{q^{4(N+1)}|a_jp+b_jq|}<\frac{6}{q^{4N+3}}.
    \end{align*}
    By the definition of $C^\prime$,
    \begin{align*}
         \|(\lambda,\mu)-\dot{v}_h\| < C^\prime\|(\lambda,\mu)-\dot{v}_h\|_{\infty}<\frac{1}{2(4q^2)^2}<\frac{1}{2q_h^2}
    \end{align*}
     which implies that $\dot{v}_h$ is a best approximation vector.
     By (\ref{C_2}), we have
     \begin{align}
         \frac{1}{2q_hq_{h+1}}<\frac{6C'}{q^{4N+3}}.
     \end{align}
     Since $q>|w|>48C'$, we have
     \begin{align*}
         q_{h+1}>\frac{q^{4N+3}}{12C'q_h}>\frac{q^{4N+1}}{48C'}>q^{3N}>(4q^2)^N\geq(q_h)^N
     \end{align*}
     from which it follows that $h\in l_N$.
 So by definition of $q_{k_0}$, we have $q_{h+1}\leq q_{k_0}$.
 While $q_{h+1}>q^{3N}>q_{k_0}^{3N}$ which is a contradiction.

Case 2: $\ell=\pm\eta_j$ and 
    \begin{align}\label{|eta_j+1|'}
        \eta_{j+1}^-<(\eta_j^-)^{r^3}<(2q)^{r^3}.
    \end{align}
    By the definition of nearest oriented affine hyperplane, we have
    \begin{align}\label{align:eta_j+1'}
        |a_{j+1}\lambda+b_{j+1}\mu+c_{j+1}|<\frac{1}{q^{4(N+1)}}
    \end{align}
   
    The intersection of $\eta_j$ and $\eta_{j+1}$ is $$\left(\frac{c_{j+1}p+(qm-pn)b_{j+1}}{a_{j+1}p+b_{j+1}q},\frac{c_{j+1}q+(pn-qm)a_{j+1}}{a_{j+1}p+b_{j+1}q}\right)=: \left(\frac{p_{h,1}}{q_h},\frac{p_{h,2}}{q_h}\right) =: \dot{v}_h$$ 
    in lowest terms. So it follows by (\ref{|eta_j+1|'}) and $r^3<3/2$ that $q_h\leq a_{j+1}p+b_{j+1}q\leq 2|\eta_{j+1}|q\leq 2^{r^3+1}q^{r^3+1} <q^3$. Then from (\ref{align:eta_l'}) and (\ref{align:eta_j+1'}) it follows that 
    \begin{align*}        
        \|(\lambda,\mu)-\dot{v}_h\|_{\infty}<\frac{2\max(|a_{j+1}+q|,|b_{j+1}+p|)}{q^{4(N+1)}|a_{j+1}p+b_{j+1}q|}<\frac{1}{q^{4N+1}}.
    \end{align*}
    With the same argument in case 1, we have
    \begin{align*}
        \|(\lambda,\mu)-\dot{v}_h\| < C^\prime\|(\lambda,\mu)-\dot{v}_h\|_{\infty}<\frac{1}{2(4q^2)^2}<\frac{1}{2q_h^2}
    \end{align*}
    which implies that $\dot{v}_h$ is a best approximation vector. 
    Since
     \begin{align}
         \frac{1}{2q_hq_{h+1}}<\frac{C'}{q^{4N+1}}.
     \end{align}
     we have
    \begin{align*}
         q_{h+1}>\frac{q^{4N+1}}{2C'q_h}>(q^3)^N>(q_h)^N
     \end{align*}
    which implies $h\in l_N$. We follow the previous prove in case 1 and get $q_{h+1}\leq q_{k_0}$
    which is also impossible.  
\end{proof}

Then one can check that the conclusion in Proposition~\ref{prop:normal} will holds for slits $w$ when replacing the first norm condition by $|w|>q_{k_0}^{M'}$.

We set $ \alpha_j=\alpha r^j, \delta_j = \frac{1}{\alpha r^j}=\frac{\rho^{N'}}{2r^j}$ and $\rho_j=\frac{c_0}{4\rho^{N'+1}}$. 
\begin{lemma}
For $j\geq 1$, every slit $w$ of level $j$ is $\alpha_kr^{j}$-normal.  Moreover, the cross-products of each slit of level $j$ with its children are less than $\delta_j$ and the number of children is at least $\rho_j|w|^{r-1}\delta_j$.  
\end{lemma}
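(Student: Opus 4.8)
The plan is an induction on the level $j$, transcribing the proof of Lemma~\ref{lem:DR} into the finite-$\ell_N$ regime, where only the Diophantine construction is used and there is no Liouville or bounded region to interleave. For the base case $j=0$ I would invoke Lemma~\ref{lem:specail normal case} to fix a slit $w_0$ with $|w_0|>q_{k_0}^{M'}$ that is $(1,|w_0|^{r-1})$-good, hence $(2,|w_0|^{r-1})$-good; by Lemma~\ref{Lem:danger slit} one may arrange in addition that $w_0$ is not a miracle slit (this needs $|w_0|^{(r-1)^2}>c$, which follows from $q_{k_0}>c^M$ and $M'\ge 3M^2$ in (\ref{def:k_0 for ell_N is finite})); Lemma~\ref{lem: specail good=>normal} then gives that $w_0$ is $\alpha$-normal, i.e. $\alpha_0$-normal.

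For the inductive step I would assume that every slit $w$ of level $j$ is $\alpha_j$-normal and apply to it the form of Proposition~\ref{prop:normal} in which the first height hypothesis is replaced by $|w|>q_{k_0}^{M'}$ --- the replacement flagged in the remark preceding the lemma, legitimate because in the finite case the proofs of Lemma~\ref{lem:N'-good=>normal} and of the clustering lemmas carry over verbatim once one observes that an index $h$ with $q_{h+1}>q_h^N$ would lie in $\ell_N$, forcing $q_h<q_{k_0}$ and contradicting $q_{h+1}>q^{3N}>q_{k_0}^{3N}$ (exactly the device used in Lemma~\ref{lem: specail good=>normal}). Applied with $\alpha=\alpha_j$ and $\beta=|w|^{r-1}$, the proposition produces the level-$(j+1)$ slits $w'=w+2v$ satisfying (\ref{def:child}), each of which is $\alpha_j r=\alpha_{j+1}$-normal, giving the normality assertion at the next level. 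From (\ref{def:child}) one reads off $|w\times v|<1/\alpha_j=\delta_j$, and the lower bound $c_0|w|^{r-1}/(4\alpha_j\rho^{N'+1})$ on the number of such $w'$ is exactly $\rho_j|w|^{r-1}\delta_j$; applying this with each $j$ (noting $w_0$ is $\alpha_0$-normal) yields the cross-product and children bounds at every level $j\ge1$.

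The only step requiring real verification is the hypothesis (\ref{ieq:normal}) of Proposition~\ref{prop:normal}, namely $|w|^{(r-1)^2}\ge\max\bigl(480\alpha_j^2\rho^{N'+3}/c_0,\,c\bigr)$, now with $\alpha=\alpha_j=\alpha r^j$ growing geometrically. The $c$-term is immediate, since $|w|\ge|w_0|>q_{k_0}^{M'}$ together with $M'\ge 3M^2$ and $q_{k_0}>c^M$ give $|w|^{(r-1)^2}=|w|^{1/M^2}>q_{k_0}^{3}>c$. For the other term I would use that (\ref{def:child}) forces $|w_{i+1}|\ge|w_i|^r$, so any level-$j$ slit satisfies $|w|\ge|w_0|^{r^j}>q_{k_0}^{M'r^j}$ and hence $|w|^{(r-1)^2}\ge q_{k_0}^{3r^j}$, whereas $480\alpha_j^2\rho^{N'+3}/c_0=1920\,r^{2j}\rho^{3-N'}/c_0$ grows only exponentially in $j$; the case $j=0$ is immediate from $q_{k_0}>480\rho^{N'+3}/c_0$ and $\alpha_0=2/\rho^{N'}<1$, and for $j\ge1$ the doubly-exponential growth of the left side dwarfs the factor $r^{2j}$. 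I expect this constant-chasing --- unwinding the inequalities built into (\ref{def:k_0 for ell_N is finite}) --- to be the main (and essentially only) obstacle; everything else is a faithful copy of the infinite-$\ell_N$ argument from \S\ref{s:Tree}.
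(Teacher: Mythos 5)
Your proposal is correct and follows essentially the same route as the paper: the base case of normality comes from Lemma~\ref{lem: specail good=>normal}, the inductive step from Proposition~\ref{prop:normal} with the height hypothesis replaced by $|w|>q_{k_0}^{M'}$, and the hypothesis (\ref{ieq:normal}) is verified at $j=0$ and propagated by comparing the growth of the two sides as $j$ increases, after which $\delta_j$ and $\rho_j$ are read off exactly as you do. The only slip is the parenthetical ``$(1,|w_0|^{r-1})$-good, hence $(2,|w_0|^{r-1})$-good'' (that implication goes the other way), but it is harmless because the proof of Lemma~\ref{lem:specail normal case} actually produces a convergent of height greater than $2|w_0|$, so the $(2,|w_0|^{r-1})$-goodness needed to invoke Lemma~\ref{lem: specail good=>normal} is available.
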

\begin{proof}
The case $j=0$ of the first assertion follows from Lemma~\ref{lem: specail good=>normal} while the remaining cases follow from Lemma~\ref{lem: specail good=>normal} and Proposition~\ref{prop:normal}.  

When we apply Proposition~\ref{prop:normal}, we should firstly verify the inequality (\ref{ieq:normal}) holds, i.e. if 
\begin{equation}\label{ieq:normal:2'}
  60q_{k_0}^2 r^{2j}\rho^{N'+3}\le c_0|w|^{(r-1)^2} 
\end{equation}
and
\begin{equation}
  |w|^{(r-1)^2}>c.
\end{equation}
The second condition is given by (\ref{ieq:w_0>5}).
We also note that it is enough to check (\ref{ieq:normal:2'}) in the case $j=0$ since the left hand side increases by a factor $r^2$ as $j$ increments by one, while the right hand side increases by a factor $|w|^{(r-1)^3}>q_{k_0}^{3(r-1)}>5>r^2$.  
Moreover, since $|w|^{(r-1)^2}>q_k^3$, (\ref{ieq:normal:2'}) in the case $j=0$ follows from $480\rho^{N'+3}<c_0q_{k_0}$, which is guaranteed by the third term in (\ref{def:k_0}).  
  
Then we apply Proposition~\ref{prop:normal} to an $\alpha$-normal slit, the cross-products with its children are less than $1/\alpha$, which is $\delta_j$ if $\alpha=\alpha_kr^{j}$.  
The number of children is at least $$\frac{c_0|w|^{r-1}}{4\alpha\rho^{N'+1}} 
   = \rho_j|w|^{r-1}\delta_j.$$ 
\end{proof}

Next, we need to first demonstrate that the tree structure constructed under this special case satisfies the conditions required in Lemma~\ref{lem:gaps}. By verifying condition (\ref{ieq:sumx}), we can then prove that a subset of non-ergodic directions can be constructed. Finally, using the Falconer's lower bound estimate  (equation (\ref{eq:Falc})) in \S\ref{s:Lower}, we conclude that its Hausdorff dimension is \( 1/2 \).

\begin{lemma}\label{lem:delta_j,m_j}
There exists $J \geq 0$ such that for all $j \geq J$, $\delta_j < \tfrac{1}{16}$ and $m_j \geq 2$.
\end{lemma}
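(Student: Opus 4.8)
The plan is to reduce Lemma~\ref{lem:delta_j,m_j} to the estimates already carried out in the generic ($\ell_N$ infinite) case, since in this special setting the parameters are far simpler: $\delta_j=\frac{\rho^{N'}}{2r^j}$ is explicitly a geometrically decaying sequence and $m_j=\rho_j\delta_j|w_0|^{r^j(r-1)}$ with $\rho_j=\frac{c_0}{4\rho^{N'+1}}$ a fixed constant. So unlike the general case there is only \emph{one} region (a single Diophantine region running over all levels $j\ge 0$), and no transition terms to track.

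First I would handle $\delta_j<\tfrac1{16}$. Since $\delta_j=\frac{\rho^{N'}}{2r^j}$ is strictly decreasing in $j$, it suffices to find one index $J$ with $\delta_J<\tfrac1{16}$, i.e. $r^J>8\rho^{N'}$; this holds for all $j\ge J:=\lceil \log_r(8\rho^{N'})\rceil+1$ because $r>1$. Then I would handle $m_j\ge 2$: writing $m_j=\frac{c_0}{4\rho^{N'+1}}\cdot\frac{\rho^{N'}}{2r^j}\cdot|w_0|^{r^j(r-1)}=\frac{c_0}{8\rho r^j}|w_0|^{r^j(r-1)}$, the point is that the doubly-exponential factor $|w_0|^{r^j(r-1)}$ dominates the single-exponential factor $r^j$. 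Concretely, since $|w_0|>q_{k_0}^{M'}$ and $|w_0|^{(r-1)}>1$ strictly (indeed $|w_0|^{(r-1)^2}>5$ by the analogue of (\ref{ieq:w_0>5})), the sequence $\frac{|w_0|^{r^j(r-1)}}{r^j}\to\infty$, so $m_j\to\infty$; in particular $m_j\ge 2$ for all $j$ beyond some index. Taking $J$ to be the maximum of the two indices produced gives the claim.

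The only mild subtlety — and the step I expect to require the most care — is checking that $m_j\ge 2$ already for \emph{small} $j$, in particular $j=0,1$, rather than merely for $j$ large; but the statement only asserts existence of $J$, so this is not actually needed, and I would simply take $J$ large enough that both bounds hold. If one did want the sharper "all $j\ge 0$" version one would invoke $|w_0|>q_{k_0}^{M'}$ together with the third relation in (\ref{def:k_0 for ell_N is finite}), namely $q_{k_0}>480\rho^{N'+3}/c_0$, exactly as in the proof of the corresponding lemma in \S\ref{s:Lower}: for $j=0$ one gets $m_0=\frac{c_0}{8\rho}|w_0|^{r-1}>\frac{c_0}{8\rho}q_{k_0}^{M'(r-1)}>\frac{c_0}{8\rho}q_{k_0}^{3M\cdot(r-1)}=\frac{c_0}{8\rho}q_{k_0}^3\gg 2$, and for $j\ge1$ the factor $|w_0|^{r^j(r-1)}$ grows by $|w_0|^{r^j(r-1)^2}>5$ each step while $r^j$ only grows by $r<5$, so $m_j$ is increasing. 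Either way the lemma follows; I would write it in the weaker "$\exists J$" form to keep the proof short.

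\begin{proof}
Recall that in this special case we set
$$\delta_j=\frac{1}{\alpha r^j}=\frac{\rho^{N'}}{2r^j},\qquad \rho_j=\frac{c_0}{4\rho^{N'+1}},$$
so that, by (\ref{def:m_j}),
$$m_j=\rho_j\delta_j|w_0|^{r^j(r-1)}=\frac{c_0}{8\rho\, r^j}\,|w_0|^{r^j(r-1)}.$$
Since $r>1$, the sequence $\delta_j$ is strictly decreasing, so $\delta_j<\tfrac1{16}$ for all $j\ge J_1$, where $J_1:=\lceil\log_r(8\rho^{N'})\rceil+1$.

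For the second assertion, note that by the analogue of (\ref{ieq:w_0>5}) for the present choice of $w_0$ (using $|w_0|>q_{k_0}^{M'}$ and the first two terms of (\ref{def:k_0 for ell_N is finite})) we have $|w_0|^{(r-1)^2}>c>5$, and in particular $\kappa:=|w_0|^{(r-1)^2}>r$. Hence for $j\ge1$
$$\frac{m_{j+1}}{m_j}=\frac{1}{r}\,|w_0|^{r^j(r-1)^2}=\frac{\kappa^{r^j}}{r}\ge\frac{\kappa}{r}>1,$$
so $(m_j)_{j\ge1}$ is increasing and $m_j\to\infty$. Therefore there is $J_2\ge1$ with $m_j\ge2$ for all $j\ge J_2$.

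Taking $J=\max(J_1,J_2)$ completes the proof.
\end{proof}
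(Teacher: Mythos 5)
Your proposal is correct and follows essentially the same route as the paper: $\delta_j=\rho^{N'}/(2r^j)$ decays geometrically, and $m_j$ eventually dominates because the doubly-exponential factor $|w_0|^{r^j(r-1)}$ (with $|w_0|^{(r-1)^2}>c>5>r$) beats the single-exponential $r^j$. The only difference is cosmetic: the paper verifies $m_1\ge 2$ directly from $|w_0|^{r-1}>q_{k_0}>16\rho/c_0$ (so the bound holds from $j=1$ on), whereas you prove only the eventual statement via the ratio $m_{j+1}/m_j=\kappa^{r^j}/r>1$, which is all the lemma asserts.
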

\begin{proof}
By definition, $\delta_j = \frac{\rho^{N'}}{2r^j}$. There exists $J$ such that $\delta_J \leq \tfrac{1}{16}$.  
Moreover, since $m_j = \rho_j|w_0|^{r^j(r-1)}\delta_j$, and the growth rate of $|w_0|^{r^j(r-1)}$ exceeds the decay rate of $\rho_j\delta_j$, it suffices to verify the case $j = 1$.  
For $j = 1$, using the fact that $|w|^{r-1} > q_{k_0} > \frac{16\rho}{c_0}$, we have  
$$
m_1 \geq \frac{c_0|w|^{r-1}}{8\rho} \geq 2.
$$  
\end{proof}

We now prove that the constructed set $F \subset \NE(X_{\lambda,\mu},\omega)$.

\begin{lemma}
For the $J$ constructed in Lemma~\ref{lem:delta_j,m_j}, when $j \geq J$, $\sum \delta_j < \infty$.
\end{lemma}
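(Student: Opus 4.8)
The plan is immediate: $\delta_j$ decays geometrically, so its tail sum converges, and then Theorem~\ref{thm:sumx} applies along every branch of the tree exactly as in the proof of Lemma~\ref{lem:cross}.

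First I would recall the choice made just before Lemma~\ref{lem:delta_j,m_j}: with $\alpha = \tfrac{2}{\rho^{N'}}$ we have $\delta_j = \tfrac{1}{\alpha r^j} = \tfrac{\rho^{N'}}{2 r^j}$ for every $j \ge 0$. Since $r > 1$, the series $\sum_{j \ge J} r^{-j}$ is a convergent geometric series with sum $\tfrac{r^{1-J}}{r-1}$, whence
\[
\sum_{j \ge J} \delta_j \;=\; \frac{\rho^{N'}}{2}\sum_{j \ge J} r^{-j} \;=\; \frac{\rho^{N'}\, r^{1-J}}{2(r-1)} \;<\; \infty .
\]
This is the assertion of the lemma.

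To record the intended consequence (mirroring Lemma~\ref{lem:cross}), I would then invoke the cross-product bound of the preceding lemma: for each $j \ge J$ every slit $w$ of level $j$ has all cross-products with its children bounded by $\delta_j$. Fixing any infinite branch $\dots, w_j, v_j, w_{j+1}, \dots$ of the tree truncated at level $J$ — where Lemma~\ref{lem:delta_j,m_j} guarantees $\delta_j < \tfrac{1}{16}$ and $m_j \ge 2$, so that Lemma~\ref{lem:gaps} applies and the nested intervals $I(w_j)$ are well defined — the estimate above gives $\sum_{j \ge J} |w_j \times v_j| \le \sum_{j \ge J} \delta_j < \infty$, so by Theorem~\ref{thm:sumx} the inverse slopes of the $w_j$ converge to a direction in $\NE(X_{\lambda,\mu},\omega)$; since every point of the associated Cantor set $F$ lies in such a nested sequence of intervals, $F \subset \NE(X_{\lambda,\mu},\omega)$.

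There is no genuine obstacle here; the only point requiring attention is bookkeeping, namely that the tree and the Cantor set must be taken from level $J$ onward rather than from level $0$, since the hypotheses $\delta_j < \tfrac{1}{16}$ and $m_j \ge 2$ needed for Lemma~\ref{lem:gaps} and for Falconer's estimate are only guaranteed for $j \ge J$ by Lemma~\ref{lem:delta_j,m_j}. Truncating at level $J$ affects neither the Hausdorff dimension of $F$ nor the containment $F \subset \NE(X_{\lambda,\mu},\omega)$, so the lower bound $\Hdim \NE(X_{\lambda,\mu},\omega) \ge \tfrac12 - \eps$ obtained via \eqref{eq:Falc} and Lemma~\ref{lem:d_j} goes through unchanged in this special case.
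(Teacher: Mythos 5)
Your proof is correct and follows the same route as the paper: since $\delta_j = \tfrac{\rho^{N'}}{2r^j}$ with $r>1$, the tail $\sum_{j\ge J}\delta_j$ is a convergent geometric series, which bounds $\sum_{j\ge J}|w_j\times v_j|$. The additional remarks about truncating the tree at level $J$ and invoking Theorem~\ref{thm:sumx} to get $F\subset\NE(X_{\lambda,\mu},\omega)$ are consistent with the paper's surrounding discussion but are not needed for the lemma itself.
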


\begin{proof}
$$
\sum_{j\geq J}|w_j\times v_j| 
\leq \sum_{j\geq J}\frac{\rho^{N'}}{2r^j}<\infty.
$$
\end{proof}

Finally, we apply the Falconer-type lower bound on the dimension from \S\ref{ss:Cantor} to the estimate (\ref{eq:Falc}), and deduce that the Hausdorff dimension is equal to \( \frac{1}{2} \).

\begin{lemma}
\[
\liminf_{j \to \infty} d_j > \frac{1}{2} - \varepsilon.
\]
\end{lemma}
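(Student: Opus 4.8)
The plan is to specialize the computation of Lemma~\ref{lem:d_j} to the uniform parameters fixed in this section. Recall that here $\rho_j=\frac{c_0}{4\rho^{N'+1}}$ is \emph{constant} in $j$ and $\delta_j=\frac{\rho^{N'}}{2r^j}$, so that
$$\rho_j\delta_j=\frac{c_0}{8\rho\,r^j},\qquad \frac{\rho_j\delta_j}{\rho_{j+1}\delta_{j+1}}=r .$$
The quantities $d_j$ are defined exactly as in \S\ref{ss:Cantor} from the tree built above (the hypotheses $\delta_j<\frac1{16}$, $m_j\ge2$ and $m_j\epsilon_j\to0$ needed for the Falconer bound (\ref{eq:Falc}) hold for $j\ge J$ by Lemma~\ref{lem:delta_j,m_j} and the fact that $|w_j|\to\infty$), and since $\liminf_j d_j$ is unaffected by finitely many terms it suffices to work with $j$ large. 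By the expression (\ref{eq:local}) it is then enough to show that the term (\ref{eq:num}) and both summands of (\ref{eq:den}) are $<\delta$ for all large $j$; granting this, $d_j>\frac{1-\delta}{1+r+2\delta}>\frac12-\eps$ by the choices of $r$ and $\delta$ made in \S\ref{s:Init}.

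First I would dispose of the two terms that decay in $j$. Since $-\log(\rho_j\delta_j)=j\log r+\log(8\rho/c_0)$, the term (\ref{eq:num}) equals
$$\frac{j\log r+\log(8\rho/c_0)}{r^j(r-1)\log|w_0|}\longrightarrow 0\qquad(j\to\infty),$$
because $j\,r^{-j}\to0$; in particular it is $<\delta$ for all large $j$. Likewise the second summand of (\ref{eq:den}) is $\frac{\log r}{r^j(r-1)\log|w_0|}\to0$, hence eventually $<\delta$.

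The only contribution that does not vanish as $j\to\infty$ is the first summand of (\ref{eq:den}), namely $\frac{2r\log5}{(r-1)\log|w_0|}$, and this is the single place where one invokes that $|w_0|$ is large. Using $\frac{1}{r-1}=M$, $|w_0|>q_{k_0}^{M'}$, the choice $M'\ge Mr/\delta$ from (\ref{def:M'}), and that $q_{k_0}$ is taken in (\ref{def:k_0 for ell_N is finite}) large enough that $\log q_{k_0}>2\log5$ (which holds since $c^M$ alone already exceeds $25$ because $M=\frac1{r-1}>6$), we get
$$\frac{2r\log5}{(r-1)\log|w_0|}\le\frac{2Mr\log5}{M'\log q_{k_0}}<\frac{Mr}{M'}\le\delta .$$
Combining the three bounds gives $d_j>\frac{1-\delta}{1+r+2\delta}$ for all large $j$, and the lemma follows. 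I do not expect any genuine obstacle: the uniform geometric decay of $\delta_j$ and the constancy of $\rho_j$ make this strictly simpler than Lemma~\ref{lem:d_j}, where the alternating Liouville and Diophantine blocks forced the delicate estimates; the present argument merely reuses the bound on $\frac{2r\log5}{(r-1)\log|w_0|}$ established there.
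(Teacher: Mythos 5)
Your proposal is correct and follows essentially the same route as the paper: reduce via (\ref{eq:local}) to bounding the term (\ref{eq:num}) and the two summands of (\ref{eq:den}), handle the two $j$-dependent terms by the geometric decay $r^{-j}$, and control the constant term $\frac{2r\log5}{(r-1)\log|w_0|}$ using $|w_0|>q_{k_0}^{M'}$ and the choice of $M'$. If anything, your bound $\frac{2r\log5}{(r-1)\log|w_0|}\le\frac{Mr}{M'}\le\delta$ is slightly cleaner than the paper's intermediate bound $\frac{2r}{M}$, but this is a refinement of the same argument rather than a different one.
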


\begin{proof}
By \eqref{eq:local}, it suffices to show that both terms in \eqref{eq:num} and \eqref{eq:den} are controlled by \( \delta \).  
Using the choice of \( M \) and \( M' \) in \eqref{def:M'}, for sufficiently large \( j \), it is enough to ensure that each term is small.

We first consider the expression in \eqref{eq:den}. By \eqref{def:k_0 for ell_N is finite}, the first term satisfies
\begin{align}
\frac{2r\log 5}{(r-1)\log|w_0|} \leq \frac{2r}{M}.
\end{align}
Moreover, by the definition of \( \rho_j \) and \( \delta_j \), we have
\begin{align}
\frac{\log(\rho_j\delta_j/\rho_{j+1}\delta_{j+1})}{r^j(r-1)\log|w_0|} \leq \frac{\log r}{(r-1)\log|w_0|} \lesssim 0.
\end{align}

Now we consider the expression in \eqref{eq:num}. For sufficiently large \( j \), since \( jr^{-j} \log r \leq 1 \), we obtain
\begin{align}
\frac{-\log(\rho_j\delta_j)}{r^j(r-1)\log|w_0|} 
\leq M \left( \frac{\log 8 + j \log r + \log(\rho/c_0)}{r^j \log|w_0|} \right) 
\lesssim \frac{M}{M'}.
\end{align}
This completes the proof.
\end{proof}

\bibliography{references}
\bibliographystyle{alpha}

\end{document}